\newtheorem{thm}{Theorem}[section]
\newtheorem{cor}[thm]{Corollary}
\newtheorem{lem}[thm]{Lemma}
\newtheorem{prop}[thm]{Proposition}
\theoremstyle{definition}
\newtheorem{definition}[thm]{Definition}
\newtheorem{remark}[thm]{Remark}
\renewcommand{\epsilon}{\varepsilon}
\renewcommand{\phi}{\varphi}
\newcommand{\defeq}{\mathrel{\mathop:}=}
\newcommand{\eqdef}{\mathrel{\mathopen={\mathclose:}}}
\DeclareMathOperator{\free}{F}
\DeclareMathOperator{\I}{I}
\DeclareMathOperator{\lat}{L}
\DeclareMathOperator{\latop}{L^{\!{}^{{}_\circlearrowleft}}\!}
\DeclareMathOperator{\E}{E}
\DeclareMathOperator{\GL}{GL}
\DeclareMathOperator{\SL}{SL}
\DeclareMathOperator{\id}{id}
\DeclareMathOperator{\im}{im}
\DeclareMathOperator{\Aut}{Aut}
\DeclareMathOperator{\rAnn}{Ann_r}
\DeclareMathOperator{\lAnn}{Ann_l}
\DeclareMathOperator{\ZZ}{Z}
\DeclareMathOperator{\R}{\mathbb{R}}
\DeclareMathOperator{\Q}{\mathbb{Q}}
\DeclareMathOperator{\N}{\mathbb{N}}
\DeclareMathOperator{\Pfin}{\mathcal{P}_{fin}}
\DeclareMathOperator{\rk}{rk}
\DeclareMathOperator{\rank}{rank}
\DeclareMathOperator{\cha}{char}
\DeclareMathOperator{\Nil}{Nil}
\DeclareMathOperator{\M}{M}
\DeclareMathOperator{\Homeo}{Homeo}
\def\moverlay{\mathpalette\mov@rlay}
\def\mov@rlay#1#2{\leavevmode\vtop{%
		\baselineskip\z@skip \lineskiplimit-\maxdimen
		\ialign{\hfil$\m@th#1##$\hfiincr#2\crcr}}}
\newcommand{\charfusion}[3][\mathord]{
	#1{\ifx#1\mathop\vphantom{#2}\fi
		\mathpalette\mov@rlay{#2\cr#3}
	}
	\ifx#1\mathop\expandafter\displaylimits\fi}
\def\smallunderbrace#1{\mathop{\vtop{\m@th\ialign{##\crcr
				$\hfil\displaystyle{#1}\hfil$\crcr
				\noalign{\kern3\p@\nointerlineskip}%
				\tiny\upbracefill\crcr\noalign{\kern3\p@}}}}\limits}
\begin{document}
	
\author{Josefin Bernard}
\address{J.B., Institute of Discrete Mathematics and Algebra, TU Bergakademie Freiberg, 09596 Freiberg, Germany}
\email{josefin.bernard@math.tu-freiberg.de}
\author{Friedrich Martin Schneider}
\address{F.M.S., Institute of Discrete Mathematics and Algebra, TU Bergakademie Freiberg, 09596 Freiberg, Germany}
\email{martin.schneider@math.tu-freiberg.de}
\thanks{This research is funded by the Deutsche Forschungsgemeinschaft (DFG, German Research Foundation) -- Projektnummer 561178190}
	
\title[Width bounds and Steinhaus property]{Width bounds and Steinhaus property for\\ unit groups of continuous rings}
\date{\today}
	
\begin{abstract} We prove an algebraic decomposition theorem for the unit group $\GL(R)$ of an arbitrary non-discrete irreducible, continuous ring $R$ (in von Neumann's sense), which entails that every element of $\GL(R)$ is both a product of $7$ commutators and a product of $16$ involutions. Combining this with further insights into the geometry of involutions, we deduce that $\GL(R)$ has the so-called Steinhaus property with respect to the natural rank topology, thus every homomorphism from $\GL(R)$ to a separable topological group is necessarily continuous. Due to earlier work, this has further dynamical ramifications: for instance, for every action of $\GL(R)$ by homeomorphisms on a non-void metrizable compact space, every element of $\GL(R)$ admits a fixed point in the latter. In particular, our results answer two questions by Carderi and Thom, even in generalized form. \end{abstract}
	
\subjclass[2020]{22A05, 37B02, 06C20, 16E50}
	
\keywords{Continuous ring, involution width, commutator width, verbal width, perfect group, topological group, Steinhaus property, automatic continuity}
	
\maketitle
	
\allowdisplaybreaks	
	
	
\tableofcontents
	
\section{Introduction}\label{section:introduction}

The topic of the present paper is the unit group $\GL(R)$, i.e., the group of invertible elements, of an arbitrary irreducible, continuous ring $R$. The study of such rings was initiated and profoundly advanced by John von Neumann in his fundamental work on \emph{continuous geometry}~\cite{VonNeumannBook}, a continuous-dimensional counterpart of finite-dimensional projective geometry. Inspired by conversations with Garrett Birkhoff on the lattice-theoretic formulation of projective geometry~\cite{BirkhoffBulletin}, von Neumann introduced \emph{continuous geometries} 
as complete, complemented, modular lattices possessing a certain continuity property, and he established a distinguished algebraic representation for these objects through his \emph{coordinatization theorem}~\cite[II.XIV, Theorem~14.1, p.~208]{VonNeumannBook}: for any complemented modular lattice $L$ of an order at least $4$ there exists a regular ring $R$, unique up to isomorphism, such that $L$ is isomorphic to the lattice $\lat(R)$ of principal right ideals of~$R$. Many algebraic properties and constructions translate conveniently via the resulting correspondence. For instance, a regular ring~$R$ is (directly) irreducible if and only if the lattice $\lat(R)$ is. A \emph{continuous ring} is a regular ring $R$ whose associated lattice $\lat(R)$ is a continuous geometry.
	 
Another deep result due to von Neumann~\cite{VonNeumannBook} asserts that every irreducible, continuous ring $R$ admits a unique \emph{rank function}---naturally corresponding to a unique \emph{dimension function} on the continuous geometry $\lat(R)$. This rank function gives rise to a complete metric on $R$, and the resulting topology, which we call the \emph{rank topology}, turns $R$ into a topological ring. While an irreducible, continuous ring is \emph{discrete} with respect to its rank topology if and only if it is isomorphic to a finite-dimensional matrix ring over some division ring (see Remark~\ref{remark:rank.function.general}\ref{remark:characterization.discrete}), the class of \emph{non-discrete} irreducible, continuous rings appears ineffably large. In addition to the original example given by the ring of densely defined, closed, linear operators affiliated with an arbitrary $\mathrm{II}_{1}$ factor~\cite{MurrayVonNeumann}, such objects have emerged in the study of Kaplansky's direct finiteness conjecture~\cite{ElekSzabo,linnell} and the Atiyah conjecture~\cite{LinnellSchick,elek2013,elek}. As discovered already by von Neumann~\cite{NeumannExamples}, a rather elementary example may be constructed from any field $K$ via the following procedure. Consider the inductive limit of the matrix rings \begin{displaymath}
	K \, \cong \, \M_{2^{0}}(K) \, \lhook\joinrel\longrightarrow \, \ldots \, \lhook\joinrel\longrightarrow \, \M_{2^{n}}(K) \, \lhook\joinrel\longrightarrow \, \M_{2^{n+1}}(K) \, \lhook\joinrel\longrightarrow \, \ldots
\end{displaymath} relative to the embeddings \begin{displaymath}
	\M_{2^n}(K)\,\lhook\joinrel\longrightarrow \,  \M_{2^{n+1}}(K), \quad a\,\longmapsto\, 
	\begin{pmatrix}
		a & 0\\
		0 & a
	\end{pmatrix} \qquad (n \in \N) .
\end{displaymath} Those embeddings are isometric with respect to the normalized rank metrics \begin{displaymath}
	\M_{2^{n}}(K) \times \M_{2^{n}}(K) \, \longrightarrow \, [0,1], \quad (a,b) \, \longmapsto \, \tfrac{\rank(a-b)}{2^{n}} \qquad (n \in \N) ,
\end{displaymath} whence the latter jointly extend to a metric on the inductive limit. The corresponding metric completion, which we denote by $\M_{\infty}(K)$, is a non-discrete irreducible, continuous ring~\cite{NeumannExamples,Halperin68}.

The paper at hand explores algebraic and dynamical properties of the group $\GL(R)$ for an arbitrary non-discrete irreducible, continuous ring $R$. The center $\ZZ(R)$ of such a ring constitutes a field, and viewing $R$ as a unital $\ZZ(R)$-algebra naturally leads us to considering matricial subalgebras of $R$ and studying \emph{simply special} and \emph{locally special} elements of $\GL(R)$ (see Definition~\ref{definition:simply.special} and Definition~\ref{definition:locally.special} for details). Our first main result is the following uniform decomposition theorem.
	
\begin{thm}[Theorem~\ref{theorem:decomposition}]\label{theorem:decomposition.introduction} Let $R$ be a non-discrete irreducible, continuous ring. Every element $a\in\GL(R)$ admits a decomposition \begin{displaymath}
	a \, = \, bu_{1}v_{1}v_{2}u_{2}v_{3}v_{4}
\end{displaymath} where \begin{enumerate}
	\item[---\,] $b \in \GL(R)$ is simply special,
	\item[---\,] $u_{1},u_{2} \in \GL(R)$ are locally special,
	\item[---\,] $v_{1},v_{2},v_{3},v_{4} \in \GL(R)$ are locally special involutions.
\end{enumerate} \end{thm}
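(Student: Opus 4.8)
The plan is to prove the decomposition by induction on a suitable ``size'' of the element, exploiting the inductive limit structure that is implicit in any non-discrete irreducible, continuous ring $R$: by density of the matricial subalgebras in the rank topology, any $a \in \GL(R)$ can be approximated arbitrarily well by an element that is supported, up to a small-rank error, on a matrix subalgebra $\M_n(\ZZ(R))$ of $R$. The key point is that inside such a matrix subalgebra we have access to the classical theory of elementary matrices, and in particular to the fact that $\SL_n$ over a field is generated by (and has bounded width in) transvections, each of which is easily expressed as a commutator or a short product of involutions. So the skeleton is: (1) reduce to the case where $a$ differs from a ``block-diagonal'' element of a large matrix subalgebra by something of arbitrarily small rank; (2) absorb the small-rank perturbation into one of the allotted factors; (3) handle the matrix-subalgebra part by classical matrix identities, taking care that all intermediate factors are either simply special or locally special, and that the involutions produced are themselves locally special.

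Concretely, I would first set up the \emph{simply special} factor $b$ to carry the ``generic'' part of $a$: using that $R$ is non-discrete and irreducible, one finds a system of matrix units realizing $\M_2(\ZZ(R)) \hookrightarrow R$ (or $\M_n$ for suitable $n$) with respect to which $a$ can be written as $b$ times an element that is ``close to'' the identity off a corner of small rank — this is where the continuity of the rank function and the halving trick behind $\M_\infty(K)$ are used. Then I would show that any element that agrees with $1$ outside a subring isomorphic to a matrix ring can be written as a product of a bounded number of locally special elements and locally special involutions, by pulling back the classical bounded-generation statement for $\GL_n$ and $\SL_n$ over a field: every matrix is a product of a diagonal matrix (absorbed into the locally special factors, since diagonal matrices with determinant adjustments are handled by the $u_i$) and a bounded number of transvections, and each transvection $1 + \lambda e_{ij}$ is conjugate to a fixed one, hence a single commutator, while a pair of them realizes an involution. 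The bookkeeping — matching the exact counts $1$ simply special, $2$ locally special, $4$ locally special involutions — will come from carefully grouping: $u_1 v_1 v_2$ and $u_2 v_3 v_4$ each form one ``matrix block'' worth of elementary operations, so the $7$-fold symmetry of the decomposition reflects handling the problem on two halves plus the generic part.

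The main obstacle I anticipate is ensuring that \emph{all} the factors produced are not merely products of elementary/classical matrices but genuinely \emph{locally special} (resp.\ \emph{simply special}) in the precise sense of the paper's Definition~\ref{definition:simply.special} and Definition~\ref{definition:locally.special}, and that the involutions among them are locally special — the classical matrix decompositions give you the right group-theoretic shape, but one has to verify that the witnessing matrix units, idempotents, or subalgebras can be chosen coherently and that the small-rank perturbation, once absorbed, does not destroy this structure. A second delicate point is the passage to the limit: the decomposition must hold exactly, not just approximately, so one cannot simply take limits of decompositions of approximants unless the factor types are closed under the relevant limits; instead I would arrange an exact decomposition at a finite stage and argue that the ``tail'' contributes something that is itself of one of the allowed types (this is precisely the role of the \emph{simply special} factor $b$, which should be flexible enough to absorb an exact remainder). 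I expect the proof in the paper to isolate these two issues as separate lemmas — one on absorbing small-rank or tail perturbations into a simply special factor, and one giving the bounded decomposition of ``matricial'' units into locally special elements and involutions — and then to combine them with a short computation fixing the constants $1,2,4$.
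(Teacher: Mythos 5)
There is a genuine gap in your plan, and it sits exactly at the point you yourself flag as delicate. You propose to ``arrange an exact decomposition at a finite stage and argue that the tail contributes something that is itself of one of the allowed types (this is precisely the role of the simply special factor $b$).'' That cannot work: by Definition~\ref{definition:simply.special} a simply special element lives in a \emph{single} finite matrix subalgebra $\phi(\M_n(K))$, so it has no flexibility to absorb the remainder $b^{-1}a$, which is an arbitrary invertible element close to $1$ in rank but not itself matricial. Density (Corollary~\ref{corollary:simply.special.dense}) only gives you an approximation, never exact cancellation, and the remainder is generically not algebraic over $K$, let alone simply special. Likewise, ``pulling back the classical bounded-generation statement for $\GL_n$ and $\SL_n$'' only decomposes the part of $a$ inside a matrix subalgebra; it says nothing about the residual factor, which is precisely the hard part.

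The paper resolves this in the opposite direction: the tail is pushed not into $b$ but into the \emph{locally special} factors, which by Definition~\ref{definition:locally.special} are convergent \emph{infinite} products of simply special blocks supported on pairwise orthogonal idempotents — exactly the closure-under-limits you correctly observe is needed, except the closure is built into $u_1,u_2,v_1,\dots,v_4$, not into $b$. Concretely, a Fathi-style recursion (Lemma~\ref{lemma:partial.approximation}, which combines Corollary~\ref{corollary:simply.special.dense} with Lemma~\ref{lemma:invertible.rank.idempotent}) at each step $i$ extracts one simply special block $b_i$ and shrinks the support of the new remainder $a_{i+1}$ to a small idempotent; Lemma~\ref{lemma:simply.special.involution.2} then supplies a simply special involution $v_i$ transporting that support into a \emph{fresh} orthogonal idempotent $e_{i+1}$ so that all subsequent blocks commute. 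The crucial telescoping identity $b_i u_i v_i = a_i a_{i+1}^{-1}$, summed over the infinite recursion via Lemma~\ref{lemma:local.decomposition}, is what converts the seven infinite products into $a$; the lone simply special factor $b=b_1$ is extracted once, at the very first step, and carries no tail at all. This orthogonal-idempotent bookkeeping and the telescoping mechanism are the missing core of your sketch; without them, neither the exactness of the decomposition nor the specific counts $1{+}2{+}4$ can be recovered.
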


The proof of Theorem~\ref{theorem:decomposition.introduction} combines a decomposition argument inspired by Fathi's work on algebraic properties of the group $\Aut([0,1],\lambda)$~\cite{Fathi} with our Corollary~\ref{corollary:simply.special.dense}, which establishes density of the set of simply special elements in the unit group of any non-discrete irreducible, continuous ring $R$. The latter result rests on two pillars: the work of von Neumann~\cite{VonNeumann37} and Halperin~\cite{Halperin62} about the density of the set of \emph{algebraic} elements in $R$, and our characterization of such elements as those contained in some matricial subalgebra of $R$ (Theorem~\ref{theorem:matrixrepresentation.case.algebraic}).

Thanks to results of Gustafson, Halmos and Radjavi~\cite{GustafsonHalmosRadjavi76}, Thompson~\cite{Thompson61,Thompson62,ThompsonPortugaliae}, and Borel~\cite{Borel}, our Theorem~\ref{theorem:decomposition.introduction} has some notable consequences. For a group $G$ and a natural number $m$, let us recall that any map $g \in G^{m}$ naturally extends to a unique homomorphism $\free(m) \to G, \, w \mapsto w(g)$ from the free group $\free(m)$ on $m$ generators to~$G$, and let us define $w(G) \defeq \{ w(g) \mid g \in G^{m} \}$. A group $G$ is said to be \emph{verbally simple} if, for every $m \in \N$ and every $w \in \free(m)$, the subgroup of $G$ generated by~$w(G)$ is trivial or coincides with $G$.

\begin{cor}[Theorem~\ref{theorem:width}]\label{corollary:width.introduction} Let $R$ be a non-discrete irreducible, continuous~ring. 
\begin{enumerate}
	\item\label{corollary:width.introduction.involutions} Every element of $\GL(R)$ is a product of $16$ involutions.
	\item\label{corollary:width.introduction.commutators} Every element of $\GL(R)$ is a product of $7$ commutators. In particular, $\GL(R)$ is perfect.
	\item\label{corollary:width.introduction.word} Suppose that $\ZZ(R)$ is algebraically closed. For all $m \in \N$ and $w \in \free(m)\setminus \{ \epsilon \}$, \begin{displaymath}
					\qquad \GL(R) \, = \, w(\GL(R))^{14} .
				\end{displaymath} In particular, $\GL(R)$ is verbally simple.
\end{enumerate} \end{cor}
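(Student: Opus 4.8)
The plan is to deduce all three assertions from the seven-factor decomposition supplied by Theorem~\ref{theorem:decomposition.introduction}, which writes an arbitrary $a \in \GL(R)$ as $a = b u_{1} v_{1} v_{2} u_{2} v_{3} v_{4}$ with $b$ simply special, $u_{1},u_{2}$ locally special, and $v_{1},\dots,v_{4}$ locally special involutions. First I would isolate the following uniform statement about a single simply special or locally special element $g \in \GL(R)$: such a $g$ is at once (i) a product of four involutions, (ii) a single commutator, and (iii)---provided $K \defeq \ZZ(R)$ is algebraically closed---for every $m \in \N$ and every $w \in \free(m) \setminus \{\epsilon\}$ a product $w(g_{1})w(g_{2})$ of two word values. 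Granting this, assertion (1) is immediate: $b,u_{1},u_{2}$ together contribute $4+4+4$ involutions, and $v_{1},\dots,v_{4}$ are four more, so $a$ is a product of $16$ involutions. Assertion (2) follows because each of the seven factors is a single commutator, hence $a$ is a product of $7$ commutators; in particular every element of $\GL(R)$ lies in $[\GL(R),\GL(R)]$, so $\GL(R)$ is perfect. For assertion (3), each of the seven factors lies in $w(\GL(R))^{2}$, hence $a \in w(\GL(R))^{14}$; thus $\GL(R) = w(\GL(R))^{14}$, so for every non-trivial $w$ the subgroup of $\GL(R)$ generated by $w(\GL(R))$ is all of $\GL(R)$, while for $w = \epsilon$ it is $\{1\}$, and $\GL(R)$ is verbally simple.

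To prove the uniform statement for a \emph{simply special} $g$, I would use that (by the structural results underlying the terminology ``special'') $g$ lies in a matricial subalgebra $S \subseteq R$ with $S \cong \M_{n}(K)$ for $n$ as large as one pleases, and is represented there by a matrix of determinant $1$, i.e.\ by an element of $\SL_{n}(K)$. Then: part (i) is the theorem of Gustafson, Halmos and Radjavi~\cite{GustafsonHalmosRadjavi76}, by which every element of $\GL_{n}(K)$ of determinant $\pm 1$ is a product of four involutions; part (ii) is Thompson's theorem~\cite{Thompson61,Thompson62,ThompsonPortugaliae}, by which---for $n$ outside finitely many small exceptional pairs $(n,|K|)$---every element of $\SL_{n}(K)$ is a commutator in $\SL_{n}(K)$; and part (iii) rests on Borel's theorem~\cite{Borel} that the word map of a connected semisimple algebraic group over an infinite field is dominant: applied to $\SL_{n}$ over the algebraically closed field $K$, this makes $w(\SL_{n}(K))$ contain a non-empty Zariski-open subset $U$ of the irreducible variety $\SL_{n}$, and since any two non-empty open subsets of $\SL_{n}$ intersect, translating and inverting yields $g \in U \cdot U \subseteq w(\SL_{n}(K))^{2}$. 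Because $S$ is a unital subalgebra of $R$, involutions, commutators and word values formed inside $S$ remain such in $\GL(R)$, which completes the simply special case.

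For a \emph{locally special} $g$ I would reduce to the simply special case by a block-diagonal assembly: with respect to a suitable decomposition $1_{R} = e_{1} + \dots + e_{k}$ of the identity into orthogonal idempotents, $g$ is block-diagonal and each block $e_{i}ge_{i}$ is simply special in the corner ring $e_{i}Re_{i}$. Since $\mathrm{diag}(x_{i})\,\mathrm{diag}(y_{i}) = \mathrm{diag}(x_{i}y_{i})$, $[\mathrm{diag}(x_{i}),\mathrm{diag}(y_{i})] = \mathrm{diag}([x_{i},y_{i}])$, and word maps respect block-diagonal substitution, the four involutions, the single commutator, resp.\ the two word values furnished for the individual blocks assemble to a representation of $g$ of the same shape in $\GL(R)$.

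The step I expect to be the main obstacle is aligning the internal notions ``simply special'' and ``locally special'' with the exact hypotheses of the three imported theorems: one must ensure that the matrix size $n$ can be chosen large enough to avoid Thompson's finitely many exceptional cases, and at least $2$ so that $\SL_{n}$ is semisimple and an infinite---in particular algebraically closed---field $K$ suffices for Borel's theorem; and, should the idempotent decomposition witnessing local specialness be allowed to be countably infinite, one must check that the assembled (infinite) products of involutions, commutators and word values converge in the rank metric. Both points should be controlled by the abundance of large matricial subalgebras in a non-discrete irreducible, continuous ring together with the continuity of the ring operations in the rank topology.
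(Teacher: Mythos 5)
Your proposal follows essentially the same route as the paper: it uses the seven-factor decomposition from Theorem~\ref{theorem:decomposition.introduction}, proves the per-factor statements (four involutions via Gustafson--Halmos--Radjavi, one commutator via Thompson, two word values via Borel) for simply/locally special elements as in Lemmas~\ref{lemma:special.decomposition} and~\ref{lemma:locally.special.decomposition}, handles infinite block-diagonal assembly through continuity of the group operations in the rank metric (the paper's Lemma~\ref{lemma:local.decomposition} and Remark~\ref{remark:product.involutions.commutators}), and assembles the counts $4\cdot 3 + 4 = 16$, $7$, and $2\cdot 7 = 14$. The one place you flag as a potential obstacle---choosing $n$ large enough to avoid Thompson's exceptional cases---is handled in the paper by the matrix blow-up Lemma~\ref{lemma:matricial.algebra.blow.up}, exactly as you anticipate.
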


In the language of~\cite{Liebeck}, the corollary above establishes uniform finite upper bounds for \emph{involution width}, \emph{commutator width}, and \emph{$w$-width} for any non-trivial reduced word $w$ in finitely many letters and their inverses. For every non-discrete irreducible, continuous ring $R$, the commutator subgroup had been known to be dense in $\GL(R)$ with respect to the rank topology, due to Smith's work~\cite[Theorems~1 and~2]{Smith57}.
	
Our algebraic results have non-trivial dynamical ramifications.	To be more precise, we recall that a topological group $G$ has~\emph{automatic continuity}~\cite{KechrisRosendal} if every homomorphism from $G$ to any separable topological group is continuous. Examples of topological groups with this property include the automorphism group of the ordered rational numbers endowed with the topology of pointwise convergence~\cite{RosendalSolecki}, the unitary group of an infinite-dimensional separable Hilbert space equipped with the strong operator topology~\cite{Tsankov}, and the full group of an ergodic measure-preserving countable equivalence relation furnished with the uniform topology~\cite{KittrellTsankov}. The reader is referred to~\cite{RosendalSuarez} for a recent survey on this subject. One route towards automatic continuity is provided by the Steinhaus property: given some $n \in \N$, a topological group $G$ is called \emph{$n$-Steinhaus}~\cite[Definition~1]{RosendalSolecki} if, for every symmetric and countably syndetic\footnote{A subset $W$ of a group $G$ is called \emph{countably syndetic} if there exists a countable subset $C\subseteq G$ such that $G=CW$.} subset $W\subseteq G$, the set $W^{n}$ is an identity neighborhood in $G$.

Equipped with the associated rank topology, the unit group of any irreducible, continuous ring constitutes a topological group. Using Corollary~\ref{corollary:width.introduction}\ref{corollary:width.introduction.involutions} along with further insights into the geometry of involutions, we establish the Steinhaus property for all of these topological groups in a uniform way.
	
\begin{thm}[Theorem~\ref{theorem:194-Steinhaus}]\label{theorem:194-Steinhaus.introduction} Let $R$ be an irreducible, continuous ring. Then the topological group $\GL(R)$ is $194$-Steinhaus. In particular, $\GL(R)$ has automatic continuity. \end{thm}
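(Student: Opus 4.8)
The plan is to deduce the Steinhaus property from the involution width bound of Corollary~\ref{corollary:width.introduction}\ref{corollary:width.introduction.involutions} together with a topological analysis of the set of involutions in $\GL(R)$. Recall the standard strategy for proving that a group $G$ is $n$-Steinhaus: given a symmetric, countably syndetic $W \subseteq G$, one wants to show $W^{n}$ is an identity neighbourhood. The key intermediate goal is to produce a \emph{fixed} identity neighbourhood $V$ and a constant $k$ such that every element of $V$ lies in $W^{k}$; then $V \subseteq W^{k} \subseteq W^{n}$ for $n \geq k$. Since every element of $\GL(R)$ is a product of $16$ involutions, it suffices to cover a neighbourhood of the identity by boundedly many translates-of-$W$-worth of involutions — more precisely, to find an identity neighbourhood all of whose elements are products of a bounded number of involutions each of which individually lies in a bounded power of $W$.

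First I would establish, using the countable syndeticity of $W$, a Baire-category dichotomy: writing $G = CW$ for a countable $C$, one of the translates $cW$ is non-meagre, but $\GL(R)$ with the rank topology is a Polish group (the rank metric is complete and separable by von Neumann's theory, and $\GL(R)$ is a $G_\delta$ in $R$), so a non-meagre set with the Baire property has the property that $(cW)^{-1}(cW) = W^{-1}W = W^2$ contains an identity neighbourhood — this is the Pettis-type argument. Actually one must be slightly careful: $W$ need not have the Baire property, so instead one runs the argument to show directly that $W^2$ (or a small power) is \emph{somewhere} large, and then uses the group structure. The cleaner route here is the one from Rosendal--Solecki: show that for a symmetric countably syndetic $W$, the set $W^{2}$ is "generically non-negligible", and then the whole difficulty is transporting this local largeness to the identity using the geometry of involutions.

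The heart of the argument — and the main obstacle — is a geometric lemma about involutions: I would show that any element of $\GL(R)$ sufficiently close to the identity can be written as a product of a bounded number of involutions $t_1, \dots, t_j$ with the property that, for each $i$, both $t_i$ and a conjugate $g_i t_i g_i^{-1}$ by some element $g_i$ can be arranged to lie in the "generic" part where $W$ is large; one then absorbs each $t_i$ into a bounded power of $W$ via conjugation-invariance tricks, so that $t_i \in W^{c}$ for a universal constant $c$. Combining the $16$-involution bound with such a per-involution cost $c$, one gets that a whole identity neighbourhood lies in $W^{16c}$, and keeping track of the additional powers of $W$ consumed by the Pettis step and the conjugation manipulations yields the explicit constant $194$. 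The delicate points are (i) showing that the set of involutions, or of products of a few involutions, that are "near the identity and admit a good conjugate" is itself countably syndetic or otherwise forced to meet $W$ in a controlled way — this likely uses transitivity of the $\GL(R)$-action on suitable configurations of idempotents/subalgebras, exploiting non-discreteness so that there is "enough room" — and (ii) bookkeeping the constant so it comes out to exactly $194 = 16 \cdot \text{(something)} + \text{(overhead)}$.

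I expect the hardest step to be the uniform geometric control over involutions near the identity: proving that a small perturbation of $1$ decomposes into involutions each of which is conjugate to an element that can be captured by a few copies of $W$, uniformly in $W$. This is where the specific structure of continuous rings — the abundance of idempotents of all ranks in $[0,1]$, the $2^{n}$-divisibility present in $\M_{\infty}(K)$-type examples, and the density results for simply special and algebraic elements proved earlier in the paper — must be used to realize involutions as "rotations" in matricial subalgebras with precisely prescribed behaviour, so that the countable-syndeticity of $W$ can be activated on each small matrix block separately. Once that lemma is in place, assembling Theorem~\ref{theorem:194-Steinhaus.introduction} is a matter of concatenating the bounds, and automatic continuity follows from the standard fact that $n$-Steinhaus implies automatic continuity with respect to separable target groups.
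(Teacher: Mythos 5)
Your high-level architecture — combine the $16$-involution width bound with transitivity of $\GL(R)$-conjugation on involutions of a fixed rank, activating the countable syndeticity of $W$ on controlled pieces, and bookkeep to get $194$ — is indeed the right skeleton (the paper's count is $12\cdot 16+2=194$). But there are two genuine gaps where your sketch appeals to machinery that does not apply, and one step you omit entirely.

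\textbf{The Baire/Pettis step fails, and you offer no replacement.} You propose showing that some translate $cW$ is non-meagre and invoking a Pettis-type argument to get an identity neighbourhood inside $W^{2}$. You correctly observe that $W$ need not have the Baire property, so Pettis does not apply; but you then say ``one runs the argument to show $W^{2}$ is somewhere large'' without saying how. The paper never proves $W^{2}$ is a neighbourhood and never uses Baire category — crucially, the theorem holds for \emph{all} irreducible continuous rings, with no separability or Polishness hypothesis on $R$, so a Baire-category route is unavailable. Two distinct mechanisms replace it. First, Lemmas~\ref{lemma:full.c_mW}--\ref{lemma:full.W^2} introduce the notion of $W^{2}$ being \emph{full} for an idempotent $e$ (every element of $\GL(eRe)$ is hit by some $s\in W^{2}$ that acts trivially off $e$): this is obtained by a diagonal exhaustion argument over a sequence of pairwise orthogonal idempotents, using completeness of the rank metric to form an infinite product that would otherwise escape every $c_{m}W$. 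Second, the seed involution in $W^{2}$ close to $1$ comes from Corollary~\ref{corollary:boolean.subgroup}: $\GL(R)$ contains a separable, uncountable Boolean subgroup $\Gamma$, so some translate $cW$ meets $\Gamma$ in an uncountable, hence (by separability, Remark~\ref{remark:metric.space}\ref{remark:uncountable.separable}) non-discrete set, yielding $s_{1},s_{2}\in\Gamma\cap cW$ arbitrarily close, and $s_{1}s_{2}\in W^{2}$ is a non-trivial small involution. This is a pure pigeonhole-plus-separability-of-a-subgroup argument, not Baire category on the whole group.

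\textbf{Your conversion from ``controlled pieces'' to ``identity neighbourhood'' is not the right shape.} The paper does not directly decompose elements near $1$ into involutions lying in powers of $W$. It first proves (Lemma~\ref{lemma:Gamma_{R}(e).subset.W^192}) that an entire \emph{subgroup} $\Gamma_{R}(f)\subseteq W^{192}$ for a suitable non-zero idempotent $f$; this uses fullness to transport the seed involution across its whole $\Gamma_{R}(e)$-conjugacy class at cost $W^{4}$ (so every involution of that rank is in $W^{6}$), and then the two decomposition lemmas~\ref{lemma:involution.rank.distance.char.neq2} and~\ref{lemma:involution.rank.distance.char=2} to write an arbitrary involution of $\Gamma_{R}(f)$ as a product of two involutions of the controlled rank, giving $\I(\Gamma_{R}(f))\subseteq W^{12}$ and hence $\Gamma_{R}(f)\subseteq W^{12\cdot 16}$. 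Crucially, $\Gamma_{R}(f)$ is not an identity neighbourhood. The passage to a neighbourhood is a separate argument (Lemma~\ref{lemma:GL(R).covered.by.c_nW}): given a sequence $t_{n}\to 1$, one uses continuity of the dimension function on the continuous geometry (Proposition~\ref{proposition:dimension.function.continuous}) to find a single idempotent $f'$ of rank $<\rk_{R}(e)$ catching the ``supports'' of the translated $c_{n}t_{n}c_{n}^{-1}$ simultaneously, then conjugates one $t_{m}$ into $\Gamma_{R}(e)$ by an element of a single $c_{m}W$, costing the final $+2$. Your sketch conflates these two steps; without the subgroup/fullness mechanism and the lattice-theoretic argument, the constant $194$ cannot be assembled.

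In short: right scaffolding (involution width $\times$ per-involution cost $+$ overhead, conjugacy of same-rank involutions), but the load-bearing lemmas — fullness, the Boolean subgroup, the two char-dependent involution decompositions, and the dimension-function neighbourhood argument — are missing, and the Baire-category substitute you reach for is unavailable in the paper's generality.
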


Since a Polish topological group with automatic continuity admits a unique Polish group topology, we obtain the following.
	
\begin{cor}\label{corollary:unique.polish.topology} Let $R$ be an irreducible, continuous ring. If $R$ is separable with respect to the rank topology\footnote{equivalently, if $\GL(R)$ is separable with respect to the rank topology (Remark~\ref{remark:separable})}, then the latter restricts to the unique Polish group topology on~$\GL(R)$. \end{cor}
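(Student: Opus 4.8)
The plan is to establish three facts in turn: that, under the separability hypothesis, the rank topology makes $\GL(R)$ a Polish group; that this Polish group has automatic continuity; and that a Polish group with automatic continuity carries a unique Polish group topology.

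First I would verify Polishness of $\GL(R)$ in the rank topology. By assumption, and by Remark~\ref{remark:separable}, the group $\GL(R)$ is separable, and von Neumann's rank metric is a complete metric on $R$, so $R$ is a Polish topological ring. The rank metric need not restrict to a \emph{complete} metric on $\GL(R)$, so instead I would use the familiar two–sided device: since inversion is continuous on $\GL(R)$ (as recorded just before Theorem~\ref{theorem:194-Steinhaus.introduction}), the map $\iota\colon \GL(R)\to R\times R$, $a\mapsto (a,a^{-1})$, is a continuous injection. If $a_{n}\to a$ and $a_{n}^{-1}\to b$ in $R$, then continuity of multiplication forces $ab=1=ba$, so $a\in\GL(R)$ and $a^{-1}=b$; hence $\iota(\GL(R))$ is closed in $R\times R$. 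As the first–coordinate projection is a continuous inverse of $\iota$ onto its image, $\iota$ is a homeomorphism of $\GL(R)$ (with the rank topology) onto a closed subspace of the Polish space $R\times R$. Therefore $\GL(R)$ is Polish, and it is a topological group by the remark preceding Theorem~\ref{theorem:194-Steinhaus.introduction}.

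Next I would apply Theorem~\ref{theorem:194-Steinhaus.introduction}, which yields that $\GL(R)$ has automatic continuity: every homomorphism from $\GL(R)$ into a separable topological group is continuous. Finally I would invoke the following classical fact about Polish groups. Write $G$ for the abstract group $\GL(R)$, let $\tau$ be the rank topology, and let $\tau'$ be any Polish group topology on $G$. Since $(G,\tau')$ is separable and $(G,\tau)$ has automatic continuity, the identity map $\id\colon (G,\tau)\to(G,\tau')$ is continuous. It is thus a continuous bijective homomorphism between Polish groups, and by the open mapping theorem for Polish groups (a continuous surjective homomorphism between Polish groups is open, a standard consequence of Pettis's theorem and Baire category) it is a homeomorphism. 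Hence $\tau'=\tau$, which gives uniqueness.

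The main obstacle here is not conceptual depth but the verification that the rank topology really does define a Polish \emph{group} structure on $\GL(R)$: completeness of a compatible metric is the delicate point, because $\GL(R)$ sits inside $R$ merely as a (possibly non-$G_{\delta}$) subset, and it is precisely the closed–embedding trick $a\mapsto(a,a^{-1})$ above that circumvents this. Once Polishness is in hand, the rest is a direct appeal to Theorem~\ref{theorem:194-Steinhaus.introduction} together with standard descriptive–set–theoretic facts.
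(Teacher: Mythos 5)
Your proof is correct and, after the Polishness verification, follows exactly the same path as the paper: automatic continuity from Theorem~\ref{theorem:194-Steinhaus.introduction} gives $\sigma\subseteq\tau$ for any competing Polish group topology $\sigma$, and the open mapping theorem for Polish groups gives the reverse inclusion.

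Where you diverge from the paper is in establishing that the rank topology on $\GL(R)$ is Polish. You worry that the rank metric ``need not restrict to a complete metric on $\GL(R)$'' and therefore deploy the classical two-sided embedding $a\mapsto(a,a^{-1})$ into $R\times R$. That argument is valid, but the worry is unfounded in this setting: by Remark~\ref{remark:properties.rank.function}\ref{remark:invertible.rank}, $\GL(R)=\{a\in R\mid \rk_R(a)=1\}$, and since $\rk_R$ is $1$-Lipschitz (Remark~\ref{remark:properties.pseudo.rank.function}\ref{remark:rank.continuous}), $\GL(R)$ is closed in $(R,d_R)$ (this is Remark~\ref{remark:properties.rank.function}\ref{remark:GL(R).closed}). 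Hence $(\GL(R),d_R)$ is already a complete separable metric space, and one can skip the embedding entirely; that is what the paper does. Your approach has the virtue of being robust --- it would still work if $\GL(R)$ were merely $G_\delta$ rather than closed, which is what happens for $\GL_n$ inside $\M_n$ over a field with its usual topology --- but in the continuous-ring setting the direct route is available and shorter. Your closing remark that $\GL(R)$ ``sits inside $R$ merely as a (possibly non-$G_\delta$) subset'' should therefore be corrected: it sits inside $R$ as a closed subset.
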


In~\cite{CarderiThom}, Carderi and Thom asked, given any finite field $K$, whether $\GL(\M_{\infty}(K))$ would have a unique Polish topology, or even automatic continuity. In particular, Theorem~\ref{theorem:194-Steinhaus.introduction} and Corollary~\ref{corollary:unique.polish.topology} provide affirmative answers to both of these questions, even in generalized form. Moreover, in combination with results of~\cite{SchneiderGAFA} and~\cite{CarderiThom}, our Theorem~\ref{theorem:194-Steinhaus.introduction} turns out to have non-trivial dynamical consequences for the underlying abstract groups, too.

\begin{cor}\label{corollary:inert} Let $R$ be a non-discrete irreducible, continuous ring. For every action of $\GL(R)$ by homeomorphisms on a non-empty metrizable compact space  $X$ and every~$g\in\GL(R)$, there exists $x\in X$ with $gx=x$. \end{cor}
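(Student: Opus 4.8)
The plan is to combine Theorem~\ref{theorem:194-Steinhaus.introduction} with the extreme amenability results of \cite{SchneiderGAFA} (which themselves build in part on \cite{CarderiThom}): automatic continuity will allow us to replace an abstract action by a continuous one, and an extremely amenable subgroup passing through the prescribed element will then furnish the fixed point. For the first step, suppose $\GL(R)$ acts by homeomorphisms on a non-empty metrizable compact space $X$, that is, fix a group homomorphism $\alpha\colon\GL(R)\to\Homeo(X)$. As $X$ is compact and metrizable, $\Homeo(X)$ equipped with the compact--open topology is a Polish group, hence in particular a separable topological group; Theorem~\ref{theorem:194-Steinhaus.introduction} therefore forces $\alpha$ to be continuous with respect to the rank topology on $\GL(R)$. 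It thus suffices to show that, for every \emph{continuous} action of the topological group $\GL(R)$ on a non-empty compact space $X$ and every $g\in\GL(R)$, the element $g$ fixes some point of $X$.

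For the second step, I would invoke from \cite{SchneiderGAFA} --- where it is obtained via concentration techniques, partly relying on the constructions of \cite{CarderiThom} --- that for every $g\in\GL(R)$ there exists a subgroup $H\le\GL(R)$ which is closed in the rank topology, contains $g$, and is extremely amenable in the subspace topology (in sufficiently favourable situations, for instance when $\ZZ(R)$ is small enough, one may even take $H=\GL(R)$). Composing the inclusion $H\hookrightarrow\GL(R)$ with $\alpha$ exhibits a continuous action of the extremely amenable topological group $H$ on the non-empty compact space $X$, which consequently possesses an $H$-fixed point $x\in X$. Since $g\in H$, this point satisfies $gx=x$, which is what we wanted.

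The only step I expect to require genuine input is the second one, namely the existence of an extremely amenable closed subgroup through each point of $\GL(R)$; this is imported wholesale from \cite{SchneiderGAFA}. The remaining ingredients --- that $\Homeo(X)$ is a Polish group for $X$ compact metrizable, that extremely amenable topological groups have fixed points on all of their non-empty compact flows, and that the restriction of a continuous action to a subgroup is again continuous --- are standard, so the only care needed is to match the hypotheses of the cited theorems to the present situation. In short, once Theorem~\ref{theorem:194-Steinhaus.introduction} is available, the deduction is essentially formal; the real work has gone into establishing automatic continuity.
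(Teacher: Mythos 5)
Your proof takes essentially the same route as the paper: use Theorem~\ref{theorem:194-Steinhaus.introduction} to upgrade an abstract action to a rank-continuous one (the paper cites the topology of uniform convergence on $\Homeo(X)$, which coincides with the compact--open topology you use when $X$ is compact metrizable), and then invoke the fixed-point result from~\cite{SchneiderGAFA} for continuous actions. The paper simply cites~\cite[Corollary~1.6]{SchneiderGAFA} at that point without unpacking it; your description of what that result provides (a closed extremely amenable subgroup through each $g$) is a plausible gloss of its mechanism, and in any case the structure of the deduction is identical.
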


\begin{cor}\label{corollary:fixpoint} If $K$ is a finite field, then every action of $\GL(\M_{\infty}(K))$ by homeomorphisms on a non-empty metrizable compact space has a fixed point. \end{cor}

Beyond the above, Theorem~\ref{theorem:194-Steinhaus.introduction} has found a representation-theoretic application in combination with more recent work of the second-named author and Thom~\cite{SchneiderThom}. The latter establishes that, for any non-discrete irreducible, continuous ring $R$, the topological group $\GL(R)$ is \emph{exotic}, i.e., it does not admit any non-trivial strongly continuous unitary representation~\cite[Theorem~1.1]{SchneiderThom}. Thanks to Theorem~\ref{theorem:194-Steinhaus.introduction}, this immediately entails that every unitary representation of group $\GL(R)$ on a separable Hilbert space must be trivial~\cite[Corollary~1.3]{SchneiderThom}.
	
This article is organized as follows. The first three sections provide some necessary preliminaries concerning continuous geometries (Section~\ref{section:continuous.geometries}), regular rings and their principal ideal lattices (Section~\ref{section:regular.rings}), as well as continuous rings and their rank functions (Section~\ref{section:continuous.rings.and.rank.functions}). In Section~\ref{section:subgroups.of.the.form.Gamma_e(R)}, we introduce and describe a natural family of subgroups of the unit group of a unital ring induced by idempotent ring elements. The subsequent Section~\ref{section:geometry.of.involutions} is dedicated to the study of involutions in irreducible, continuous rings, their conjugacy classes, and further structural properties. In Section~\ref{section:dynamical.independence}, we revisit an argument due to von Neumann~\cite{NeumannExamples} and Halperin~\cite{Halperin62} and extract from it a notion of dynamical independence for ideals. The latter constitutes a key concept in our analysis of algebraic elements pursued in Section~\ref{section:algebraic.elements}. These insights are then put into use for proving density of simply matricial elements in any irreducible, continuous ring (Section~\ref{section:approximation.by.matrix.algebras}), establishing our main decomposition theorem for the unit group of such (Section~\ref{section:decomposition.into.locally.special.elements}), and deducing the Steinhaus property for the associated rank topology (Section~\ref{section:steinhaus.property}). The resulting Corollaries~\ref{corollary:unique.polish.topology}, \ref{corollary:inert} and \ref{corollary:fixpoint} are proved in the final Section~\ref{section:proofs.of.corollaries}.
	
\section{Continuous geometries}\label{section:continuous.geometries}

In this section, we give a brief overview of the basic concepts related to von Neumann's continuous geometry~\cite{VonNeumannBook}. For a start, let us recall some order-theoretic terminology. A \emph{lattice} is a partially ordered set $L$ in which every pair of elements $x,y\in L$ admits both a (necessarily unique) supremum $x\vee y \in L$ and a (necessarily unique) infimum $x\wedge y \in L$. Equivalently, a lattice may be defined as an algebraic structure carrying two commutative, associative binary operations satisfying the two absorption laws. We will freely use aspects of both of these definitions. Moreover, a lattice $L$ is called \begin{enumerate}
	\item[---\,] \emph{complete} if for every $S \subseteq L$ there is a (necessarily unique) supremum $\bigvee S \in L$, or equivalently, if for every subset $S \subseteq L$ there exists a (necessarily unique) infimum $\bigwedge S \in L$,
	\item[---\,]  \emph{(directly) irreducible} if $\vert L \vert \geq 2$ and $L$ is not isomorphic to a direct product of two lattices of cardinality at least 2,
	\item[---\,] \emph{bounded} if $L$ has a (necessarily unique) least element $0\in L$ and a (necessarily unique) greatest element $1\in L$,
	\item[---\,] \emph{complemented} if $L$ is bounded and
			\begin{displaymath}
					\qquad	\forall x\in L\ \exists y\in L\colon \quad x\wedge y=0,\ \, x\vee y=1,
			\end{displaymath}
	\item[---\,]\emph{modular} if
			\begin{displaymath}
					\qquad	\forall x,y,z\in L\colon\quad x\leq y\ \Longrightarrow\ x\vee (y\wedge z)=y\wedge (x\vee z).
			 \end{displaymath}
\end{enumerate} A \emph{continuous geometry} is a complete, complemented, modular lattice $L$ such that, for every chain\footnote{A subset $C$ of a partially ordered set $P$ is called a \emph{chain} if the restriction of the order of $P$ to $C$ is a linear order on $C$.} $C\subseteq L$ and every element $x \in L$, \begin{displaymath}
	x\wedge \bigvee C \, = \, \bigvee\{x\wedge y\mid y\in C\},\qquad x\vee\bigwedge C \, = \, \bigwedge\{x\vee y\mid y\in C\}.
\end{displaymath}

One of the main results of von Neumann's work~\cite{VonNeumannBook} about irreducible continuous geometries asserts the existence of a unique dimension function on such (Theorem~\ref{theorem:dimension.function.lattice}). Before providing details on this, let us record two general facts about modular lattices.
	
\begin{lem}\label{lemma:complement} Let $L$ be a complemented, modular lattice. If $x,y,z \in L$ satisfy $x \wedge y = 0$ and $x \vee y \leq z$, then there exists $x' \in L$ with $x \leq x'$, $x' \wedge y = 0$ and $x' \vee y = z$. \end{lem}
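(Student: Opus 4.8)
I would prove this by a relative-complement argument inside the interval $[y, z]$ (the sublattice of all $t \in L$ with $y \leq t \leq z$). The key observation is that in a modular lattice, the map $t \mapsto t \vee y$ carries the interval $[0, z']$ for a suitable $z'$ onto $[y,z]$, and we want to find a preimage $x'$ of the top element $z$ that contains $x$. More precisely, the idea is: since $L$ is complemented, first take any complement of $z$-below element, then transport it; but since $L$ itself is only complemented (not relatively complemented a priori), one has to be a little careful, and the standard trick is to use the complement of $y$ in all of $L$ and then intersect appropriately with $z$.

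\emph{First I would set up the candidate.} Let $c \in L$ be a complement of $y$ in $L$, so $c \wedge y = 0$ and $c \vee y = 1$. Set $x' \defeq z \wedge (x \vee c)$. The plan is to verify the three required properties for this $x'$. Containment $x \leq x'$: clearly $x \leq x \vee c$, and $x \leq z$ since $x \leq x \vee y \leq z$; hence $x \leq z \wedge (x \vee c) = x'$, as desired.

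\emph{Next, the meet with $y$.} I compute $x' \wedge y = z \wedge (x \vee c) \wedge y$. Since $y \leq z$, the factor $z$ is absorbed: $x' \wedge y = (x \vee c) \wedge y$. Now use modularity: since $x \leq y$, we have $(x \vee c) \wedge y = x \vee (c \wedge y) = x \vee 0 = x$. Wait — this gives $x$, not $0$; but recall the hypothesis says $x \wedge y = 0$, and here I actually want $x' \wedge y = 0$, so I should instead choose $x'$ to avoid $x$ being swallowed. The cleaner route: replace the candidate by $x' \defeq z \wedge ((x \vee y) \vee c')$ is circular. Let me restate the honest plan: take $c$ a complement of $x \vee y$ in $L$, and set $x' \defeq x \vee (z \wedge c)$. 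Then $x \leq x'$ is immediate. For $x' \vee y$: $x' \vee y = x \vee y \vee (z \wedge c)$; by modularity (since $x \vee y \leq z$) this equals $z \wedge ((x \vee y) \vee c) = z \wedge 1 = z$. For $x' \wedge y$: I'd show $x' \wedge y \leq x' \wedge (x \vee y)$ and then, using modularity with $x \leq x'$, that $x' \wedge (x\vee y) = x \vee (x' \wedge y)$-type manipulations reduce it; the crucial point is $(z \wedge c) \wedge (x \vee y) \leq c \wedge (x \vee y) = 0$, so $x' \wedge (x \vee y) = x$, and then since $y \leq x \vee y$ one gets $x' \wedge y \leq x' \wedge (x \vee y) \wedge y \leq x \wedge y = 0$.

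\emph{The main obstacle} is bookkeeping the modular-law applications in the right order — in particular making sure the hypothesis $x \wedge y = 0$ is genuinely used (it enters precisely in the last step, forcing $x' \wedge y = x \wedge (x \vee y) \wedge y = x \wedge y = 0$ rather than something larger) and that the inequality $x \vee y \leq z$ is used (it licenses the modular rearrangement $x \vee y \vee (z \wedge c) = z \wedge (x \vee y \vee c)$). Once the candidate $x' \defeq x \vee (z \wedge c)$ with $c$ a complement of $x \vee y$ is fixed, all three verifications are short chains of absorption and modularity, so I would just record them inline without further ado.
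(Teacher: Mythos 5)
Your final argument is correct: taking $c$ a complement of $x\vee y$ in $L$ and setting $x'\defeq x\vee(z\wedge c)$, the three verifications go through exactly as you outline — the modular law (with $x\vee y\leq z$) gives $x'\vee y=z\wedge((x\vee y)\vee c)=z$, and a second application of modularity (with $x\leq x\vee y$, together with $c\wedge(x\vee y)=0$) gives $x'\wedge(x\vee y)=x$, whence $x'\wedge y=x\wedge y=0$. The paper arrives at the same $x'$ by a slightly more indirect route: it sets $y'\defeq x\vee y$, observes $x\leq y'\leq z$, and invokes von Neumann's theorem that a complemented modular lattice is relatively complemented (citing~\cite[I.I, Theorem~1.3, p.~5]{VonNeumannBook}) to obtain $x'$ with $x'\wedge y'=x$ and $x'\vee y'=z$; the remaining two lines of the paper's proof are then identical in content to yours. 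So your proof is essentially a self-contained unpacking of the cited black-box theorem, specialized to $y'=x\vee y$ — the underlying mathematics coincides, and what you gain is independence from the reference at the cost of two explicit modular-law computations. One small remark on exposition: your first candidate $x'=z\wedge(x\vee c)$ with $c$ a complement of $y$, and the ensuing ``since $x\leq y$'' step, was a misfire (the hypothesis is $x\wedge y=0$, not $x\leq y$), but you correctly diagnosed and discarded it, and the final write-up should simply omit that detour.
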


\begin{proof} Let $x,y,z \in L$ with $x \wedge y = 0$ and $x \vee y \leq z$. Consider $y' \defeq x \vee y$ and note that $x \leq y' \leq z$. By~\cite[I.I, Theorem~1.3, p.~5]{VonNeumannBook} (see also \cite[VIII.1, Theorem~1, p.~114]{BirkhoffBook}), there exists $x' \in L$ such that $x' \wedge y' = x$ and $x' \vee y' = z$. Then $x = x' \wedge y' \leq x'$ and $x' \vee y = z$. Moreover, $x' \wedge y = x' \wedge (y' \wedge y) = (x' \wedge y') \wedge y = x \wedge y = 0$. \end{proof}

\begin{remark}\label{remark:independence.equivalence.intersection} Let $L$ be a bounded lattice, let $n\in\N$ and $(x_{1},\dots,x_{n})\in L^{n}$. Then $(x_{1},\dots,x_{n})$ is said to be \emph{independent} in $L$ and we write $(x_{1},\dots,x_{n})\perp$ if \begin{displaymath}
	\forall I,J \subseteq \{ 1,\ldots,n\} \colon \quad I\cap J=\emptyset \ \Longrightarrow \ \left(\bigvee\nolimits_{i\in I}x_{i}\right)\!\wedge\! \left(\bigvee\nolimits_{j\in J}x_{j}\right)\! =0 .
\end{displaymath} If $L$ is modular, then~\cite[I.II, Theorem~2.2, p.~9]{VonNeumannBook} (see also~\cite[I.1, Satz 1.8, p.~13]{MaedaBook}) asserts that \begin{displaymath}
	(x_{1},\ldots,x_{n}) \perp \quad \Longleftrightarrow \quad \forall i\in\{1,\ldots,n-1\}\colon \ (x_{1} \vee \ldots \vee x_{i}) \wedge x_{i+1} = 0.
\end{displaymath} \end{remark}

In order to clarify some terminology needed for Theorem~\ref{theorem:dimension.function.lattice}, let $L$ be a bounded lattice. A \emph{pseudo-dimension function} on $L$ is a map $\delta \colon L\to [0,1]$ such that \begin{enumerate}
	\item[---\,] $\delta(0)=0$ and $\delta(1)=1$,
	\item[---\,] $\delta$ is \emph{monotone}, i.e., \begin{displaymath}
					\qquad \forall x,y \in L \colon \quad x\leq y \ \Longrightarrow \ \delta(x)\leq \delta(y),
				\end{displaymath}
	\item[---\,] $\delta$ is \emph{modular}, i.e., \begin{displaymath}
					\qquad \forall x,y \in L \colon \quad \delta(x \vee y) + \delta(x \wedge y) = \delta(x)+\delta(y).
				\end{displaymath}
\end{enumerate} A \emph{dimension function} on $L$ is a pseudo-dimension function $\delta \colon L \to [0,1]$ such that \begin{displaymath}
	\forall x,y \in L \colon \quad x < y \ \Longrightarrow \ \delta(x) < \delta(y).
\end{displaymath} For any pseudo-dimension function $\delta$ on $L$, the map \begin{displaymath}
	d_{\delta}\colon\, L\times L\,\longrightarrow\,[0,1],\quad	(x,y)\,\longmapsto\,\delta(x\vee y)-\delta(x\wedge y).
\end{displaymath} constitutes a pseudo-metric on $L$~\cite[V.7, Theorem~9, p.~77]{BirkhoffBook}, and we see that $d_{\delta}$ is a metric if and only if $\delta$ is a dimension function on $L$.

\begin{thm}[\cite{VonNeumannBook}]\label{theorem:dimension.function.lattice} Let $L$ be an irreducible, continuous geometry. Then there exists a unique dimension function $\delta_{L}$ on $L$. Moreover, the metric $d_{L} \defeq d_{\delta_{L}}$ is complete. \end{thm}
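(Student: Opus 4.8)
The plan is to recover von Neumann's dimension theory~\cite{VonNeumannBook}, whose architecture I would organise around the relation of \emph{perspectivity}: call $a,b\in L$ perspective, written $a\sim b$, if they have a common complement in the interval $[0,a\vee b]$, i.e.\ there is $c\le a\vee b$ with
\begin{displaymath}
	a\wedge c=b\wedge c=0,\qquad a\vee c=b\vee c=a\vee b,
\end{displaymath}
and write $a\precsim b$ if $a\sim b'$ for some $b'\le b$. The two genuinely hard inputs are the following results of~\cite[Part~I]{VonNeumannBook}, which I expect to be the main obstacle. First, $\sim$ is additive over independent families --- if $(a_1,a_2)\perp$, $(b_1,b_2)\perp$ and $a_i\sim b_i$ then $a_1\vee a_2\sim b_1\vee b_2$ --- a statement about complemented modular lattices resting on Lemma~\ref{lemma:complement} and Remark~\ref{remark:independence.equivalence.intersection}; and, crucially for a \emph{continuous} geometry, $\sim$ is \emph{transitive}, the continuity axiom being used to take suprema along a chain of partial perspectivities. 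Second, since $L$ is \emph{irreducible}, the comparability theorem holds: $a\precsim b$ or $b\precsim a$ for all $a,b\in L$, the proof running a maximality argument that produces a central element separating the two alternatives, which irreducibility forces to be $0$ or $1$.

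Granting these, I would construct $\delta_L$ by cases. If $L$ has an atom, then comparability makes all atoms perspective, a Jordan--H\"older argument shows every $x$ is a finite independent join of atoms with a well-defined number $\ell(x)$ of them, and a further argument using the continuity axiom shows that $L$ has finite length $N\defeq\ell(1)<\infty$; then $\delta_L(x)\defeq\ell(x)/N$ is the unique dimension function and $d_L$ is $1/N$ times the discrete metric, hence complete. If $L$ is atomless, I would build a refining tower of homogeneous bases: set $B_0\defeq(1)$, and, given a tuple $B_n=(e_1,\dots,e_{2^n})$ that is independent with join $1$ and has pairwise perspective entries (the existence of $B_1$, i.e.\ $2$-divisibility of $1$, being itself part of von Neumann's theory), halve each $e_i$ into independent perspective halves and use additivity and transitivity of $\sim$ to check that the $2^{n+1}$ resulting elements again form such a tuple $B_{n+1}$. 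Then I would set
\begin{displaymath}
	\delta_L(x)\defeq\lim_{n\to\infty}\frac{m_n(x)}{2^n},\qquad m_n(x)\defeq\max\bigl\{\,k\mid\text{some join of $k$ entries of $B_n$ is $\precsim x$}\,\bigr\},
\end{displaymath}
the sequence $m_n(x)/2^n$ being nondecreasing (each entry of $B_n$ splits into two entries of $B_{n+1}$) and bounded by $1$. Normalisation and monotonicity of $\delta_L$ are immediate; the work lies in verifying \emph{modularity}, which I would extract from additivity of perspectivity together with relative-complement arguments (Lemma~\ref{lemma:complement}), and in upgrading $\delta_L$ to a genuine, strictly monotone dimension function, for which it suffices that $\delta_L(z)>0$ whenever $z\ne0$ --- true in the atomless case since $z$ can be halved repeatedly --- because then a relative complement $z$ of $x$ in $[0,y]$ satisfies $\delta_L(y)-\delta_L(x)=\delta_L(z)>0$ for $x<y$.

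For uniqueness, I would take an arbitrary dimension function $\delta$ and first note that it is perspectivity-invariant: if $a\sim b$ with common complement $c$, modularity and $a\vee c=b\vee c$, $a\wedge c=b\wedge c$ give $\delta(a)=\delta(a\vee c)-\delta(c)=\delta(b\vee c)-\delta(c)=\delta(b)$. Additivity of $\delta$ on independent joins then forces $\delta\equiv2^{-n}$ on the entries of $B_n$, so $\delta$ agrees with $\delta_L$ on every finite independent join of such entries. Finally, for each $n$, comparability together with additivity of $\sim$ produces joins $f,f'$ of $m_n(x)$ and $m_n(x)+1$ entries of $B_n$ with $f\precsim x\precsim f'$; since $\delta$ and $\delta_L$ are both perspectivity-invariant, monotone, and agree on such joins, both $\delta(x)$ and $\delta_L(x)$ lie within $2^{-n}$ of $m_n(x)/2^n$, whence $\delta(x)=\delta_L(x)$.

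It remains to prove completeness of $d_L$ in the atomless case. Given a $d_L$-Cauchy sequence, I would pass to a subsequence $(a_k)$ with $\sum_k d_L(a_k,a_{k+1})<\infty$, put $p_k\defeq\bigvee_{j\ge k}a_j$, $q_k\defeq\bigwedge_{j\ge k}a_j$ (available since $L$ is a complete lattice), and set $p\defeq\bigwedge_k p_k$, $q\defeq\bigvee_k q_k$. The general fact that in a metric modular lattice the maps $t\mapsto t\vee z$ and $t\mapsto t\wedge z$ are $1$-Lipschitz (a short computation from modularity of $\delta_L$) gives, telescopically, $d_L\bigl(a_k,\bigvee_{j=k}^{N}a_j\bigr)\le\sum_{j\ge k}d_L(a_j,a_{j+1})$ for every $N$; letting $N\to\infty$ and invoking the continuity axiom --- which is exactly what makes $\delta_L$ continuous along the increasing chain $\bigl(\bigvee_{j=k}^{N}a_j\bigr)_N$ --- yields $d_L(a_k,p_k)\le\sum_{j\ge k}d_L(a_j,a_{j+1})\to0$, and symmetrically $d_L(a_k,q_k)\to0$. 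Since $q_k\le a_k\le p_k$, it follows that $\delta_L(p_k)-\delta_L(q_k)\to0$; the continuity axiom applied to the chains $(p_k)$ and $(q_k)$ gives $\delta_L(p_k)\to\delta_L(p)$ and $\delta_L(q_k)\to\delta_L(q)$, so $\delta_L(q)=\delta_L(p)$ with $q\le p$, hence $q=p$ by strict monotonicity, and therefore $d_L(a_k,p)\le d_L(a_k,p_k)+\bigl(\delta_L(p_k)-\delta_L(p)\bigr)\to0$. Thus the original Cauchy sequence converges, and $d_L$ is complete.
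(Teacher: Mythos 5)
The paper itself does not prove this theorem but defers to von Neumann~\cite{VonNeumannBook} for existence and uniqueness and to Maeda~\cite{MaedaBook} for completeness; your sketch faithfully reconstructs exactly that architecture --- perspectivity with its additivity and transitivity, the comparability theorem driven by irreducibility, the dyadic tower of homogeneous bases, perspectivity-invariance of any dimension function, and the lim-sup/lim-inf completeness argument along chains --- so it takes essentially the same approach as the cited sources. One place you pass over too quickly is strict positivity, $\delta_L(z)>0$ for $z\neq 0$: ``halving $z$ repeatedly'' does not by itself rule out $\delta_L(z)=0$, and the actual argument needs comparability plus a continuity-axiom argument showing that for every non-zero $z$ some entry of some homogeneous basis $B_n$ is subperspective to $z$; this is a gap in exposition rather than in strategy.
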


\begin{proof} See~\cite[I.VI, Theorem~6.9, p.~52]{VonNeumannBook} for existence and~\cite[I.VII, p.~60, Corollary~1]{VonNeumannBook} for uniqueness. Completeness of $d_{L}$ is proved in~\cite[I.6, Satz~6.4, p.~48]{MaedaBook}. \end{proof}

\begin{prop}[\cite{MaedaBook}]\label{proposition:dimension.function.continuous} Let $L$ be an irreducible continuous geometry. If $C$ is a chain in $L$, then \begin{displaymath}
	\delta_{L}\!\left(\bigvee C\right)\! \, = \, \sup \delta_{L}(C), \qquad \delta_{L}\!\left(\bigwedge C\right)\! \, = \, \inf \delta_{L}(C) .
\end{displaymath} \end{prop}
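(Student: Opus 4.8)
The plan is to prove the assertion about suprema and then deduce the one about infima by passing to the order dual $L^{\mathrm{op}}$, which is again an irreducible, continuous geometry (the defining conditions being self-dual and products dualising to products) with dimension function $1-\delta_{L}$ (which is the unique such by Theorem~\ref{theorem:dimension.function.lattice}). Throughout I write $\delta \defeq \delta_{L}$ and $d \defeq d_{L}$, and use that $\delta(x) = d(0,x)$ for every $x\in L$. Fix a chain $C\subseteq L$ and set $s \defeq \sup\delta(C)$. Monotonicity of $\delta$ gives $s \le \delta(\bigvee C)$ at once, so the real task is the reverse inequality. To approach it, I would first pick $y_{n}\in C$ with $\delta(y_{n})\to s$ and, exploiting that $C$ is a chain, replace each $y_{n}$ by the largest of $y_{1},\dots,y_{n}$ so that $(y_{n})_{n\in\N}$ becomes non-decreasing; then I set $b \defeq \bigvee_{n\in\N}y_{n}$, which exists by completeness of $L$, so that $b \le \bigvee C$. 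The goal is to show $\bigvee C = b$ and $\delta(b) = s$.

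The core of the argument is metric. For $m\le n$ one has $y_{m}\le y_{n}$ and hence $d(y_{m},y_{n}) = \delta(y_{n})-\delta(y_{m})$, so $(y_{n})_{n\in\N}$ is $d$-Cauchy and, by completeness of $d$ (Theorem~\ref{theorem:dimension.function.lattice}), converges to some $z\in L$. I would then record the routine fact that modularity of $\delta$ makes $\vee$ a $1$-Lipschitz operation for $d$, in particular $x\mapsto x\vee y_{n}$ is $d$-continuous; letting $m\to\infty$ in $y_{m}\vee y_{n} = y_{m}$ (valid once $m\ge n$) thus yields $z\vee y_{n} = z$, so $y_{n}\le z$ for all $n$ and $b\le z$. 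For the reverse containment I would use that $\delta$, being a dimension function, is strictly monotone, so that $x\le b$ is equivalent to $\delta(x\vee b) = \delta(b)$; this exhibits the interval $[0,b]$ as the preimage of the single point $\delta(b)$ under the $d$-continuous map $x\mapsto\delta(x\vee b)$, hence as a $d$-closed set containing every $y_{n}$, forcing $z\le b$. Therefore $z = b$ and $\delta(b) = \lim_{n}\delta(y_{n}) = s$.

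It then remains to check that $b$ is an upper bound of all of $C$. Given $x\in C$, comparability of $x$ with each $y_{n}$ (as $C$ is a chain) leaves two cases: either $y_{n}\le x$ for every $n$, in which case $b\le x$, so $s = \delta(b)\le\delta(x)\le s$ and strict monotonicity forces $x = b$; or $x\le y_{n}\le b$ for some $n$. In either case $x\le b$, so $\bigvee C\le b$, and together with $b\le\bigvee C$ this gives $\bigvee C = b$ and $\delta(\bigvee C) = s$. Applying this statement to the chain $C$ inside $L^{\mathrm{op}}$ and translating back through $\delta_{L^{\mathrm{op}}} = 1-\delta$ yields $\delta(\bigwedge C) = \inf\delta(C)$, which completes the plan.

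I expect the main obstacle to be precisely the reverse inequality in the first part: one has to show that $\bigvee C$ is already attained by a countable non-decreasing subchain and coincides with the metric limit $z$. The leverage for this comes from completeness of the metric $d_{L}$ (Theorem~\ref{theorem:dimension.function.lattice}) together with the strictness of $\delta_{L}$ (to pin down $z=b$), rather than from any direct manipulation of the lattice continuity axiom; the remaining ingredients — Cauchyness of $(y_{n})$, Lipschitz continuity of $\vee$, and closedness of $[0,b]$ — are routine consequences of the modular law for $\delta_{L}$.
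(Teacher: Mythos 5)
The paper offers no proof of its own here — it simply cites Maeda — so there is no internal argument to compare yours against; your proof is correct and is the natural metric argument. Every ingredient you invoke is legitimately available from what the paper sets up: completeness of $d_{L}$ and strict monotonicity of $\delta_{L}$ from Theorem~\ref{theorem:dimension.function.lattice}, and the $1$-Lipschitz behaviour of $\delta_{L}$, $\vee$, and $\wedge$ with respect to $d_{L}$, which is a standard consequence of modularity of $\delta_{L}$ (the paper uses this same fact from Birkhoff and Maeda in the proof of Lemma~\ref{lemma:independence.inductive}). The reduction to a countable non-decreasing sequence, the identification of the metric limit $z$ with $b=\bigvee_{n}y_{n}$, and the case analysis showing $\bigvee C = b$ (using that $C$ is a chain and $\delta_{L}$ is strictly monotone) are all sound, and the passage to the order dual for the infimum is legitimate since the defining axioms for irreducible continuous geometries are self-dual and $1-\delta_{L}$ is then, by the uniqueness in Theorem~\ref{theorem:dimension.function.lattice}, the dimension function of $L^{\mathrm{op}}$. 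One small simplification worth noting: once you have $b\leq z$ and $\delta_{L}(z)=\lim_{n}\delta_{L}(y_{n})=s$ (by $1$-Lipschitz continuity of $\delta_{L}$), monotonicity gives $s\leq\delta_{L}(b)\leq\delta_{L}(z)=s$, so $\delta_{L}(b)=\delta_{L}(z)$, and strict monotonicity together with $b\leq z$ already forces $b=z$; the detour through $d_{L}$-closedness of the down-set $[0,b]$ is not needed.
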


\begin{proof} This is due to~\cite[V.2, Satz~2.1, p.~118]{MaedaBook} (see~\cite[V.1, Satz~1.8, p.~117]{MaedaBook} for a more general result). \end{proof}

\section{Regular rings}\label{section:regular.rings}

The purpose of this section is to recollect some background material about regular rings from~\cite{VonNeumannBook} (see also~\cite{GoodearlBook, MaedaBook}). We start off with some bits of notation that will be of central relevance to the entire manuscript. To this end, let $R$ be a unital ring. Then we consider its \emph{center} \begin{displaymath}
	\ZZ(R) \, \defeq \, \{a\in R\mid \forall b\in R\colon\, ab=ba\} ,
\end{displaymath} which is a commutative unital subring of $R$, its \emph{unit group} \begin{displaymath}
	\GL(R) \, \defeq \, \{a\in R\mid \exists b\in R\colon\, ab=ba=1\} ,
\end{displaymath} equipped with the multiplication inherited from $R$, and the sets \begin{displaymath}
	\lat(R) \, \defeq \, \{aR\mid a\in R\}, \qquad \latop(R) \, \defeq \, \{Ra\mid a\in R\}
\end{displaymath} of \emph{principal right ideals} (resp., \emph{principal left ideals}) of $R$, partially ordered by inclusion. As would seem natural, we will frequently view $R$ as a unital $\ZZ(R)$-algebra.

\begin{remark}\label{remark:quantum.logic} Let $R$ be a unital ring. The set \begin{displaymath}
	\E(R) \, \defeq \, \{ e \in R \mid ee = e \}
\end{displaymath} of \emph{idempotent} elements of $R$ naturally carries a partial order defined by \begin{displaymath}
	e\leq f \quad :\Longleftrightarrow \quad ef=fe=e \qquad (e,f\in\E(R)).
\end{displaymath} Two elements $e,f\in\E(R)$ are called \emph{orthogonal} and we write $e\perp f$ if $ef=fe=0$. Let $e,f \in \E(R)$. The following hold. \begin{enumerate}
	\item\label{remark:quantum.logic.1} If $f \leq e$, then $e-f\in\E(R)$ and $f\perp (e-f)\leq e$.
	\item\label{remark:quantum.logic.2} If $e\perp f$, then $e+f \in \E(R)$ satisfies $e \leq e+f$ and $f \leq e+f$.
\end{enumerate} \end{remark}
 
We now turn to the class of regular rings. A unital ring $R$ is called \emph{(von Neumann) regular} if, for every $a \in R$, there exists $b \in R$ such that $aba = a$.
	
\begin{remark}[\cite{VonNeumannBook}, II.II, Theorem~2.2, p.~70]\label{remark:regular.idempotent.ideals} Let $R$ be a unital ring. Then the following are equivalent. \begin{enumerate}
	\item[---\,] $R$ is regular.
	\item[---\,] $\lat(R) = \{ eR \mid e \in \E(R) \}$.
	\item[---\,] $\latop(R) = \{ Re \mid e \in \E(R) \}$.
\end{enumerate} \end{remark}

Our next remark requires some additional notation. If $R$ is a unital ring, then, for any $S \subseteq R$ and $a \in R$, the corresponding \emph{right annihilators} in $R$ are defined as \begin{displaymath}
	\rAnn(S) \defeq \{x\in R\mid \forall s\in S\colon\, sx=0 \}, \qquad \rAnn(a) \defeq \rAnn(\{ a \}),
\end{displaymath} and the corresponding \emph{left annihilators} in $R$ are defined as \begin{displaymath}
	\lAnn(S) \defeq \{x\in R\mid \forall s\in S\colon\, xs=0 \}, \qquad \lAnn(a) \defeq \lAnn(\{ a \}).
\end{displaymath}		
		
\begin{remark}\label{remark:bijection.annihilator} Let $R$ be a regular ring. By~\cite[II.II, Corollary~2, p.~71 and II.II, Lemma~2.3, p.~72]{VonNeumannBook}, the maps \begin{align*}
	&(\lat(R),{\subseteq}) \, \longrightarrow \, (\latop(R),{\subseteq}), \quad I \, \longmapsto \, \lAnn(I) ,\\
	&(\latop(R),{\subseteq}) \, \longrightarrow \, (\lat(R),{\subseteq}), \quad I \, \longmapsto \, \rAnn(I)
\end{align*} are mutually inverse order-reversing bijections. Due to~\cite[II.II, Theorem~2.4, p.~72]{VonNeumannBook}, the partially ordered set $(\lat(R),{\subseteq})$ (resp., $(\latop(R),{\subseteq})$) is a complemented, modular lattice, in which \begin{displaymath}
	I\vee J\,= \, I+J, \qquad I\wedge J \, = \, I\cap J
\end{displaymath} for all $I,J \in \lat(R)$ (resp., $I,J \in \latop(R)$). Moreover, by \cite[II.II, Lemma~2.2(i), p.~71]{VonNeumannBook}, \begin{align*}
	\forall e\in\E(R)\colon\qquad	R(1-e)=\lAnn(eR),\quad (1-e)R=\rAnn(Re).
\end{align*} \end{remark}

\begin{lem}\label{lemma:partial.inverse} Let $R$ be a regular ring, $f\in\E(R)$ and $a\in R$ with $\rAnn(a) \subseteq (1-f)R$. Then there exists $b\in R$ such that $ba=f$. \end{lem}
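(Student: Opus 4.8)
The plan is to use von Neumann regularity to replace $a$ by an idempotent with the same right annihilator, and then read off $b$ directly from the ideal inclusion in the hypothesis.

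First, by regularity I would fix $c \in R$ with $aca = a$ and put $e \defeq ca$. Then $e \in \E(R)$, since $e^{2} = c(aca) = ca = e$, and moreover $ae = aca = a$, so that $a(1-e) = 0$; that is, $1-e \in \rAnn(a)$. (In fact $\rAnn(a) = \rAnn(e) = (1-e)R$ by the same reasoning as in Remark~\ref{remark:bijection.annihilator}, but only the membership $1-e \in \rAnn(a)$ is needed below.)

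Now I invoke the hypothesis: $1-e \in \rAnn(a) \subseteq (1-f)R$, so $1-e = (1-f)s$ for some $s \in R$. Multiplying on the left by $f$ and using $f^{2}=f$ gives $f(1-e) = f(1-f)s = 0$, hence $fe = f$. Therefore $f = fe = f(ca) = (fc)a$, so that $b \defeq fc$ satisfies $ba = f$, which proves the lemma.

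The argument is short and I do not expect a genuine obstacle; the only point requiring care is choosing the one-sided inverse on the correct side — taking $e = ca$ rather than $ac$ — so that $\rAnn(a)$ is controlled by the right ideal $(1-e)R$ and hence matches the right ideal $(1-f)R$ appearing in the hypothesis. Equivalently, one could run the argument through the order-reversing bijection $I \mapsto \lAnn(I)$ from Remark~\ref{remark:bijection.annihilator}: from $(1-e)R \subseteq (1-f)R$ one gets $Rf = \lAnn((1-f)R) \subseteq \lAnn((1-e)R) = Re$, so $f = re$ for some $r \in R$ and then $b = rc$ works; but the direct computation above avoids invoking the lattice duality.
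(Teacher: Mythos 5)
Your proof is correct, and it takes a more elementary route than the paper's. The paper's argument invokes Remark~\ref{remark:bijection.annihilator} directly: it writes $Ra = \lAnn(\rAnn(Ra)) = \lAnn(\rAnn(a)) \supseteq \lAnn((1-f)R) = Rf$, so $f \in Ra$ and $b$ exists; this requires the fact that $\lAnn$ and $\rAnn$ are mutually inverse order-reversing bijections between $\lat(R)$ and $\latop(R)$ — a nontrivial structural input. Your proof replaces this by a direct computation: choosing $c$ with $aca=a$, setting $e=ca$, observing $1-e\in\rAnn(a)\subseteq(1-f)R$, and deducing $f=fe=fca$ by multiplying $1-e=(1-f)s$ on the left by $f$. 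The computation is sound (each equality checks), and the only regularity input is the existence of the quasi-inverse $c$, not the lattice duality. What the paper's route buys is brevity given that Remark~\ref{remark:bijection.annihilator} is already on the table; what yours buys is self-containedness and a concrete formula $b=fc$ for the witness. Your closing remark correctly identifies that your "alternative" via $\lAnn$ is essentially the paper's argument (modulo routing through $e$ rather than $a$ directly).
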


\begin{proof} We observe that \begin{displaymath}
	Ra \, \stackrel{\ref{remark:bijection.annihilator}}{=} \, \lAnn(\rAnn(Ra)) \, = \, \lAnn(\rAnn(a)) \, \supseteq \, \lAnn((1-f)R) \, \stackrel{\ref{remark:bijection.annihilator}}{=} \, Rf .
\end{displaymath} In particular, there exists $b\in R$ such that $ba=f$. \end{proof}

In the principal right ideal lattice of a regular ring, independence can be described by means of orthogonal idempotents as follows.

\begin{remark}\label{remark:independence.ideals.idempotents} Let $R$ be a regular ring, let $n \in \N$ and let $I_{1},\dots,I_{n} \in \lat(R)$. It is straightforward to check that $(I_{1},\dots, I_{n})\perp$ if and only if \begin{displaymath}
	\forall (x_{1},\ldots,x_{n}) \in I_{1} \times \dots \times I_{n} \colon \quad x_{1}+\ldots+x_{n}=0 \ \Longrightarrow \ x_{1}=\ldots=x_{n}=0.
\end{displaymath} As is customary, if $(I_{1},\dots, I_{n})\perp$, then we write \begin{displaymath}
	I_{1}\oplus \ldots \oplus I_{n} \, \defeq \, I_{1}+\ldots+I_{n} \, = \, \sum\nolimits_{j=1}^{n} I_{j} \, \eqdef \, \bigoplus\nolimits_{j=1}^{n} I_{j} .
\end{displaymath} By~\cite[II.III, Lemma~3.2, p.~94 and its Corollary, p.~95]{VonNeumannBook}, the following are equivalent. \begin{enumerate}
	\item[---\,] $I_{1} \oplus \ldots \oplus I_{n} = R$.
	\item[---\,] There exist $e_{1},\dots,e_{n}\in\E(R)$ pairwise orthogonal such that $e_{1}+\dots+e_{n}=1$ and $I_{j}=e_{j}R$ for each $j \in \{1,\ldots,n\}$.
\end{enumerate} \end{remark}

A ring $R$ is called \emph{(directly) irreducible} if $R$ is non-zero and not isomorphic to the direct product of two non-zero rings. The following characterization of irreducibility for regular rings will be used frequently throughout the entire manuscript.

\begin{remark}[\cite{VonNeumannBook}, II.II, Theorem~2.7, p.~75 and II.II, Theorem~2.9, p.~76]\label{remark:irreducible.center.field} Let $R$ be a regular ring. The following are equivalent. \begin{enumerate}
	\item[---\,] $R$ is irreducible.
	\item[---\,] $\ZZ(R)$ is a field.
	\item[---\,] $\lat(R)$ is irreducible.
\end{enumerate} \end{remark}

A basic ingredient in later decomposition arguments will be the following standard construction of subrings induced by idempotent elements of a given ring.

\begin{remark}\label{remark:corner.rings} Let $R$ be a unital ring, $e \in \E(R)$. Then $eRe$ is a subring of $R$, with multiplicative unit $e$. If $R$ is regular, then so is $eRe$~\cite[II.II, Theorem~2.11, p.~77]{VonNeumannBook}. \end{remark}

\begin{lem}\label{lemma:right.multiplication} Let $R$ be a unital ring and let $e \in \E(R)$. The following hold. \begin{enumerate}
	\item\label{lemma:right.multiplication.1} If $I$ is a right ideal of $R$, then $Ie = I \cap Re$.
	\item\label{lemma:right.multiplication.2} If $I$ and $J$ are right ideals of $R$, then $(I \cap J)e = Ie \cap Je$.	
	\item\label{lemma:annihilator.eRe} Let $e \in \E(R)$ and $a \in R$. If $ae=ea$, then \begin{displaymath}
				\qquad e\rAnn(a) = \rAnn(ae) \cap eR, \qquad e\rAnn(a)e = \rAnn(ae) \cap eRe.
			\end{displaymath}	
\end{enumerate} \end{lem}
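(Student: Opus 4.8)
The plan is to prove the three items in the stated order, feeding each into the next, and to keep everything completely elementary, since $R$ is merely a unital ring and no regularity is needed. The only algebraic fact I will use repeatedly is that, because $e^{2}=e$, every $x\in Re$ satisfies $xe=x$ and every $y\in eR$ satisfies $ey=y$.

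For \ref{lemma:right.multiplication.1} I would argue by mutual inclusion. If $I$ is a right ideal, then $Ie\subseteq I$ and trivially $Ie\subseteq Re$, which gives ``$\subseteq$''. Conversely, any $y\in I\cap Re$ satisfies $y=ye\in Ie$, which gives ``$\supseteq$''. For \ref{lemma:right.multiplication.2} I would just chain \ref{lemma:right.multiplication.1} three times, using that $I\cap J$ is again a right ideal:
\[
 Ie\cap Je \;=\; (I\cap Re)\cap(J\cap Re) \;=\; (I\cap J)\cap Re \;=\; (I\cap J)e .
\]

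For \ref{lemma:annihilator.eRe}, assume $ae=ea$ and note first that $\rAnn(a)$ and $\rAnn(ae)$ are right ideals of $R$. I would split the first identity as $e\rAnn(a)=\rAnn(a)\cap eR=\rAnn(ae)\cap eR$. The left equality is a mutual inclusion: if $ax=0$, then $ex\in eR$ and $a(ex)=(ae)x=(ea)x=e(ax)=0$, so $ex\in\rAnn(a)$; conversely $y\in\rAnn(a)\cap eR$ gives $y=ey\in e\rAnn(a)$. The right equality holds because any $y\in eR$ has $ey=y$, hence $ay=a(ey)=(ae)y$, so $ay=0\iff(ae)y=0$. (The commutation hypothesis is exactly what pushes $e\rAnn(a)$ back inside $\rAnn(a)$.) For the second identity I would intersect the already-established equality $\rAnn(ae)\cap eR=e\rAnn(a)$ further with $Re$; using $eRe=eR\cap Re$ (immediate from $e^{2}=e$) this yields $\rAnn(ae)\cap eRe=e\rAnn(a)\cap Re$, and it then remains to check $e\rAnn(a)\cap Re=e\rAnn(a)e$. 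Here ``$\supseteq$'' follows because $xe\in\rAnn(a)$ whenever $x\in\rAnn(a)$ (as $a(xe)=(ax)e=0$), so $exe=e(xe)\in e\rAnn(a)$ and obviously $exe\in Re$; ``$\subseteq$'' follows because any $z\in Re$ satisfies $z=ze$, so if $z=ew$ with $w\in\rAnn(a)$ then $z=ewe\in e\rAnn(a)e$.

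All of the computations are routine; the only place that demands a little care is part \ref{lemma:annihilator.eRe}, where one has to juggle the three sets $eR$, $Re$, $eRe$ and keep in mind that $e\rAnn(a)$ and $\rAnn(a)e$ are \emph{not} right ideals in general, so \ref{lemma:right.multiplication.1} cannot be applied to them directly — it is precisely the commutation $ae=ea$ that repairs the argument.
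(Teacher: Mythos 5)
Your proof is correct, and parts \ref{lemma:right.multiplication.1} and \ref{lemma:right.multiplication.2} coincide with the paper's argument. For part \ref{lemma:annihilator.eRe} you take a slightly different path, but the computations are essentially the same.

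One remark on your closing parenthetical: you claim that $e\rAnn(a)$ is not a right ideal in general, and that this is why part \ref{lemma:right.multiplication.1} cannot be applied to it. This is not quite right. For any right ideal $I$ of $R$ and any $e\in R$, the set $eI$ \emph{is} again a right ideal, since $(ex)r = e(xr) \in eI$; left multiplication always preserves right ideals. (It is $\rAnn(a)e$ that may fail to be a right ideal.) The paper in fact uses this to finish part \ref{lemma:annihilator.eRe} more economically: having established $e\rAnn(a) = \rAnn(ae)\cap eR$, it applies part \ref{lemma:right.multiplication.1} to the right ideal $e\rAnn(a)$ to get $e\rAnn(a)e = e\rAnn(a)\cap Re = \rAnn(ae)\cap eR\cap Re = \rAnn(ae)\cap eRe$. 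Your hand-verification of the final step is correct, just redundant; the commutation hypothesis is only needed for the first identity, not for the passage from the first to the second.
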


\begin{proof} \ref{lemma:right.multiplication.1} Let $I$ be a right ideal of $R$. As $x=xe \in Ie$ for every $x \in I \cap Re$, we~see that $I \cap Re \subseteq Ie$. Since $I$ is a right ideal of $R$, we have $Ie \subseteq I$, thus $Ie \subseteq I \cap Re$.
	
\ref{lemma:right.multiplication.2} If $I$ and $J$ are right ideals of $R$, then \begin{displaymath}
	(I \cap J)e \, \stackrel{\ref{lemma:right.multiplication.1}}{=} \, (I \cap J) \cap Re \, = \, (I \cap Re) \cap (J \cap Re) \, \stackrel{\ref{lemma:right.multiplication.1}}{=} \, Ie \cap Je .
\end{displaymath}

\ref{lemma:annihilator.eRe} Suppose that $ae=ea$. First let us show that $e\rAnn(a) = \rAnn(ae) \cap eR$. Evidently, if $x \in \rAnn(a)$, then $aeex = aex = eax = 0$, thus $ex \in \rAnn(ae) \cap eR$. Conversely, if $x \in \rAnn(ae) \cap eR$, then $ax = aex = 0$, hence $x = ex \in e\rAnn(a)$. This proves the first equality. We conclude that \begin{displaymath}
	e\rAnn(a)e \, \stackrel{\ref{lemma:right.multiplication.1}}{=} \, e\rAnn(a) \cap Re \, = \, \rAnn(ae) \cap eR \cap Re \, \stackrel{e \in \E(R)}{=} \, \rAnn(ae) \cap eRe . \qedhere
\end{displaymath} \end{proof}

\begin{lem}\label{lemma:right.multiplication.regular} Let $R$ be a regular ring and let $e \in \E(R)$. The following hold. \begin{enumerate}
	\item\label{lemma:right.multiplication.4} If $I \in \lat(R)$ and $I \subseteq eR$, then $Ie \in \lat(eRe)$.
	\item\label{lemma:right.multiplication.3} Suppose that $R$ is regular, let $n \in \N_{>0}$ and $I_{1},\ldots,I_{n} \in \lat(R)$. If $(I_{1},\ldots,I_{n})\perp$ and $I_{i} \subseteq eR$ for each $i \in \{ 1,\ldots,n \}$, then $(I_1e,\ldots,I_ne)\perp$.
\end{enumerate} \end{lem}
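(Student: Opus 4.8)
The plan for \ref{lemma:right.multiplication.4} is to exhibit an explicit idempotent of $eRe$ generating $Ie$. Since $R$ is regular, Remark~\ref{remark:regular.idempotent.ideals} lets me write $I = fR$ with $f \in \E(R)$, and from $f \in I \subseteq eR$ I obtain $ef = f$. The key step will be the identity $fR = feR$: the inclusion $\supseteq$ is clear, and for $\subseteq$ one takes $x \in R$, sets $w \defeq fx$, observes $ew = (ef)x = fx = w$, and concludes $fx = f^{2}x = fw = f(ew) = (fe)w \in feR$. Granting this, $Ie = (fR)e = (feR)e = feRe$. It then remains to note that $fe \in \E(R)$ (from $(fe)(fe) = f(ef)e = f^{2}e = fe$) with $e(fe) = (ef)e = fe = (fe)e$, so that $fe \in \E(eRe)$, and that $(fe)(eRe) = \{(fe)(ese) \mid s \in R\} = \{fese \mid s \in R\} = feRe = Ie$; hence $Ie \in \lat(eRe)$.

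For \ref{lemma:right.multiplication.3}, part \ref{lemma:right.multiplication.4} already tells me that each $I_{i}e$ lies in $\lat(eRe)$, so independence of $(I_{1}e,\dots,I_{n}e)$ in $\lat(eRe)$ is meaningful. Since $eRe$ is regular (Remark~\ref{remark:corner.rings}), I would apply the criterion of Remark~\ref{remark:independence.ideals.idempotents} to $eRe$, reducing the claim to showing that every $(y_{1},\dots,y_{n}) \in (I_{1}e)\times\cdots\times(I_{n}e)$ with $y_{1}+\dots+y_{n}=0$ satisfies $y_{1}=\dots=y_{n}=0$. But $I_{i}e \subseteq I_{i}$ for each $i$, as $I_{i}$ is a right ideal of $R$; so such a tuple lies in $I_{1}\times\cdots\times I_{n}$, whence the hypothesis $(I_{1},\dots,I_{n})\perp$ --- via Remark~\ref{remark:independence.ideals.idempotents} applied to $R$ --- forces all $y_{j}=0$, as needed.

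I expect the only non-routine ingredient to be the identity $fR = feR$ in part \ref{lemma:right.multiplication.4}; everything else (verifying that $fe$ is an idempotent of $eRe$, computing the principal right ideal it generates, and the passage through Remark~\ref{remark:independence.ideals.idempotents} in part \ref{lemma:right.multiplication.3}) is purely mechanical. It is worth keeping in mind that, by Lemma~\ref{lemma:right.multiplication}\ref{lemma:right.multiplication.1}, the set $Ie = I \cap Re$ is a priori only a right ideal of $eRe$, and the substance of \ref{lemma:right.multiplication.4} is that it is in fact a \emph{principal} one; the generator $fe$ is precisely what certifies this. One could alternatively establish the order isomorphism $\{J \in \lat(R) \mid J \subseteq eR\} \to \lat(eRe),\ J \mapsto Je$ and read off both parts from it, but the direct route above seems shorter.
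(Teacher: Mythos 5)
Your proof is correct, and it differs mildly from the paper's at both steps. For part \ref{lemma:right.multiplication.4}, the paper invokes~\cite[Lemma~7.5]{SchneiderGAFA} to produce an idempotent $f\leq e$ with $I = fR$, so that $f$ itself is the generator of $Ie = fRe$ in $\lat(eRe)$; you instead take an arbitrary $f\in\E(R)$ with $I=fR$ (giving only $ef=f$ a priori), establish $fR=feR$ by a short direct calculation, and exhibit $fe\in\E(eRe)$ as the generator. Your route is self-contained and avoids the external reference, at the cost of a few extra lines. (Incidentally, a slightly faster way to your key identity: $fe\cdot f = f(ef) = f$, so $f\in feR$, while $fe\in fR$, giving $fR = feR$ at once.) For part \ref{lemma:right.multiplication.3}, the paper uses the lattice-theoretic criterion of Remark~\ref{remark:independence.equivalence.intersection} — that independence is equivalent to the successive intersections $(I_{1}e+\cdots+I_{i}e)\cap I_{i+1}e$ vanishing, which it deduces from $I_{j}e\subseteq I_{j}$ — whereas you pass through the additive criterion of Remark~\ref{remark:independence.ideals.idempotents}, reducing directly to the corresponding statement in $R$ via the same inclusion $I_{j}e\subseteq I_{j}$. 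The two criteria are equivalent and the arguments are of comparable length; yours is marginally more "ring-theoretic," the paper's marginally more "lattice-theoretic." Both are clean.
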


\begin{proof} \ref{lemma:right.multiplication.4} Let $I \in \lat(R)$ with $I \subseteq eR$. By~\cite[Lemma~7.5]{SchneiderGAFA}, there exists $f \in \E(R)$ with $f \leq e$ and $I = fR$, which implies that $Ie = fRe = (efe)eRe \in \lat(eRe)$. 
	
\ref{lemma:right.multiplication.3} If $(I_{1},\ldots,I_{n})\perp$ and $I_{i} \subseteq eR$ for each $i \in \{ 1,\ldots,n \}$, then \begin{displaymath}
	(I_{1}e + \ldots + I_{i}e) \cap I_{i+1}e \, \subseteq \, (I_{1} + \ldots + I_{i}) \cap I_{i+1} \, = \, \{ 0 \} 
\end{displaymath} for every $i \in \{ 1,\ldots,n-1 \}$, whence $(I_{1}e,\ldots,I_{n}e)\perp$ by Remark~\ref{remark:independence.equivalence.intersection}. \end{proof}	

Our later considerations, especially in Section~\ref{section:algebraic.elements} and Section~\ref{section:approximation.by.matrix.algebras}, rely on a study of embeddings of matrix algebras. We conclude this section with a clarifying remark on this matter. To recollect some terminology from~\cite[II.III, Definition~3.5, p.~97]{VonNeumannBook}, let $R$ be a unital ring and let $n \in \N_{>0}$. Then $s = (s_{ij})_{i,j\in\{1,\ldots,n\}} \in R^{n\times n}$ is called a \emph{family of matrix units} for $R$ if \begin{displaymath}
	\forall i,j,k,\ell\in\{1,\ldots,n\}\colon\qquad s_{ij}s_{k\ell} \, = \,
		\begin{cases}
			\, s_{i\ell} & \text{if } j=k, \\
			\, 0 & \text{otherwise}
		\end{cases}
\end{displaymath} and $s_{11}+\ldots+s_{nn}=1$.

\begin{remark}\label{remark:matrixunits.ring.homomorphism} Let $K$ be a field and let $n \in \N_{>0}$. Since the matrix ring $\M_{n}(K)$ is simple (see, e.g.,~\cite[IX.1, Corollary~1.5, p.~361]{GrilletBook}), any unital ring homomorphism from $\M_{n}(K)$ to a non-zero unital ring is an embedding. Let $R$ be a unital $K$-algebra. If $s\in R^{n\times n}$ is any family of matrix units for $R$, then \begin{align*}
	\M_{n}(K)\,\longrightarrow\, R,\quad (a_{ij})_{i,j\in\{1,\ldots,n\}}\,\longmapsto\, \sum\nolimits_{i,j=1}^{n}a_{ij}s_{ij} 
\end{align*} constitutes a unital $K$-algebra homomorphism. Using the standard basis of the $K$-vector space $\M_{n}(K)$, it is easy to see that this construction induces a bijection between the set of families of matrix units for $R$ in $R^{n \times n}$ and the set of unital ring homomorphisms from $\M_{n}(K)$ to $R$. \end{remark}

\section{Rank functions and continuous rings}\label{section:continuous.rings.and.rank.functions}

This section provides some background on von Neumann's work on continuous rings, regarding in particular their description via rank functions. To begin with, we recall some convenient terminology from~\cite[Chapter~16]{GoodearlBook}. Let $R$ be a regular ring. A \emph{pseudo-rank function} on $R$ is a map $\rho \colon R \to [0,1]$ such that \begin{enumerate}
	 \item[---\,] $\rho(1) = 1$,
	 \item[---\,] $\rho(ab) \leq \min\{\rho(a),\rho(b)\}$ for all $a,b \in R$, and
	 \item[---\,] $\rho(e+f) = \rho(e) + \rho(f)$ for any two orthogonal $e,f \in \E(R)$.
\end{enumerate} Of course, the third condition entails that $\rho(0) = 0$ for any pseudo-rank function $\rho$ on $R$. A \emph{rank function} on $R$ is a pseudo-rank function $\rho$ on $R$ such that $\rho(a)>0$ for each $a \in R\setminus\{0\}$. A \emph{rank ring} is a pair consisting of a regular ring and a rank function on it. 

\begin{remark}[\cite{VonNeumannBook}]\label{remark:properties.pseudo.rank.function} Let $\rho$ be a pseudo-rank function on a regular ring $R$. \begin{enumerate}
	\item\label{remark:rank.difference.smaller.idempotent} If $e,f\in\E(R)$ and $e\leq f$, then $e \perp (f-e) \in \E(R)$ by Remark~\ref{remark:quantum.logic}\ref{remark:quantum.logic.1} and therefore $\rho(f) = \rho(e+(f-e)) = \rho(e)+\rho(f-e)$, thus $\rho(f-e) = \rho(f)-\rho(e)$.
	\item\label{remark:rank.order.isomorphism} If $\rho$ is a rank function and $E$ is a chain in $(\E(R),{\leq})$, then~\ref{remark:rank.difference.smaller.idempotent} entails that \begin{displaymath}
				\qquad \forall e,f \in E \colon \quad e \leq f \ \Longleftrightarrow \ \rho(e) \leq \rho(f)
			\end{displaymath} and \begin{displaymath}
				\qquad \forall e,f \in E \colon \quad \lvert \rho(e)-\rho(f) \rvert = \rho(e-f).
			\end{displaymath}
	\item\label{remark:rank.matrixunits} Let $n \in \N_{>0}$. If $s \in R^{n\times n}$ is a family of matrix units for $R$, then $\rho(s_{ij}) = \tfrac{1}{n}$ for all $i,j \in \{1,\ldots,n\}$. This follows by straightforward calculation.
	\item\label{remark:inequation.sum.pseudo.rank} For all $a,b \in R$, \begin{displaymath}
				\qquad \rho(a+b) \, \leq \, \rho(a) + \rho(b).
			\end{displaymath} (Proofs of this are contained in~\cite[II.XVIII, p.~231, Corollary~$(\overline{f})$]{VonNeumannBook}, as well as in~\cite[VI.5, Hilfssatz~5.1(3°), p.~153]{MaedaBook}, and~\cite[Proposition~16.1(d), p.~227]{GoodearlBook}.)
	\item\label{remark:rank.continuous} The map \begin{align*}
				\qquad d_{\rho} \colon \, R \times R \, \longrightarrow \, [0,1], \quad (a,b) \, \longmapsto \, \rho(a-b)
			\end{align*} is a pseudo-metric on $R$. Evidently, $d_{\rho}$ is a metric if and only if $\rho$ is a rank function. Moreover, the map $\rho \colon R \to [0,1]$ is $1$-Lipschitz, hence continuous, with respect to $d_{\rho}$. (For proofs, we refer to~\cite[II.XVIII, Lemma~18.1, p.~231]{VonNeumannBook}, \cite[VI.5, Satz~5.1, p.~154]{MaedaBook}, or~\cite[Proposition~19.1, p.~282]{GoodearlBook}.)
	\item\label{remark:rank.group.topology} Equipped with the \emph{$\rho$-topology}, i.e., the topology generated by $d_{\rho}$, the ring $R$ constitutes a topological ring (see~\cite[Remark~7.8]{SchneiderGAFA}). It follows from~\ref{remark:rank.continuous} that the $\rho$-topology is Hausdorff if and only if $\rho$ is a rank function. Furthermore, $\GL(R)$ is a topological group with respect to the relative $\rho$-topology, as the latter is generated by the bi-invariant pseudo-metric ${d_{\rho}}\vert_{\GL(R)\times \GL(R)}$.
\end{enumerate} \end{remark}

The following remark recollects several general facts about rank functions and unit groups of the underlying rings from the literature.	

\begin{remark}\label{remark:properties.rank.function} Let $(R,\rho)$ be a rank ring. \begin{enumerate}
	\item\label{remark:directly.finite} $R$ is \emph{directly finite}, i.e., \begin{displaymath}
			\qquad \forall a,b \in R \colon \quad ab=1 \ \Longrightarrow \ ba=1 .
		\end{displaymath} (For proofs, we refer to~\cite[Corollary~6(1)]{Handelman76}, \cite[Proposition~16.11(b), p.~234]{GoodearlBook}, or~\cite[Lemma~7.13(2)]{SchneiderGAFA}.)
	\item\label{remark:invertible.rank} $\GL(R) = \{ a \in R \mid \rho(a) = 1 \}$. (For a proof, see~\cite[Lemma~7.13(3)]{SchneiderGAFA}.)
	\item\label{remark:GL(R).closed} $\GL(R)$ is closed in $(R,d_{\rho})$ by~\ref{remark:invertible.rank} and Remark~\ref{remark:properties.pseudo.rank.function}\ref{remark:rank.continuous}.
\end{enumerate} \end{remark} 		

For later use, we record some additional basic observations. 

\begin{lem}[\cite{MaedaBook,SchneiderGAFA}]\label{lemma:pseudo.dimension.function} Let $\rho$ be a pseudo-rank function on a regular ring $R$. Then \begin{displaymath}
	\delta_{\rho} \colon \, \lat(R) \, \longrightarrow \, [0,1], \quad aR \, \longmapsto \, \rho(a)
\end{displaymath} is a pseudo-dimension function. Moreover, the following hold. \begin{enumerate}
	\item\label{lemma:rank.dimension.function} $\rho$ rank function on $R$ $\ \Longleftrightarrow\ $ $\delta_{\rho}$ dimension function on $\lat(R)$. 
	\item\label{lemma:rank.dimension.annihilator} $\rho(a)=1-\delta_{\rho}(\rAnn(a))$ for every $a \in R$.
	\item\label{lemma:multplication.Lipschitz} For every $a \in R$, the mapping $\lat(R) \to \lat(R),\, I \mapsto aI$ is $1$-Lipschitz, hence continuous, with respect to $d_{\delta_{\rho}}$.
\end{enumerate} \end{lem}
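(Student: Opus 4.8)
The plan is to verify directly that $\delta_{\rho}$ satisfies the three defining properties of a pseudo-dimension function, and then to read off (1)--(3) essentially for free. For well-definedness and monotonicity: if $I,J\in\lat(R)$ with $I\subseteq J$, choose generators $a,b$ with $I=aR$ and $J=bR$; since $a\in bR$ we may write $a=bc$, so $\rho(a)=\rho(bc)\leq\rho(b)$. This shows both that $\delta_{\rho}$ is monotone and (applying the same argument in the other direction when $I=J$) that $\rho(a)$ does not depend on the chosen generator of $aR$. Moreover $\delta_{\rho}(\{0\})=\rho(0)=0$ and $\delta_{\rho}(R)=\rho(1)=1$, so it only remains to establish the modular law.

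The heart of the matter is additivity over independent pairs: if $I,J\in\lat(R)$ satisfy $I\cap J=\{0\}$, then $\delta_{\rho}(I\oplus J)=\delta_{\rho}(I)+\delta_{\rho}(J)$. To see this, write $I\oplus J=gR$ with $g\in\E(R)$ (Remark~\ref{remark:regular.idempotent.ideals}); since $g$ and $1-g$ are orthogonal idempotents summing to $1$, Remark~\ref{remark:independence.ideals.idempotents} gives $gR\cap(1-g)R=\{0\}$, and then Remark~\ref{remark:independence.equivalence.intersection} yields $(I,J,(1-g)R)\perp$ with $I\oplus J\oplus(1-g)R=R$. Applying Remark~\ref{remark:independence.ideals.idempotents} once more produces pairwise orthogonal $e_{1},e_{2},e_{3}\in\E(R)$ with $e_{1}+e_{2}+e_{3}=1$ and $e_{1}R=I$, $e_{2}R=J$, $e_{3}R=(1-g)R$. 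Iterating the additivity axiom $\rho(e+f)=\rho(e)+\rho(f)$ for orthogonal idempotents gives $\rho(e_{1})+\rho(e_{2})+\rho(e_{3})=1$, while $\rho(e_{3})=\rho(1-g)=1-\rho(g)$ (the first equality by well-definedness, the second since $g$ and $1-g$ are orthogonal idempotents summing to $1$); hence $\delta_{\rho}(I\oplus J)=\rho(g)=\rho(e_{1})+\rho(e_{2})=\delta_{\rho}(I)+\delta_{\rho}(J)$. For general $I,J\in\lat(R)$, put $K\defeq I\cap J$ and invoke Lemma~\ref{lemma:complement} with the triple $\{0\},K,J$ to obtain $J'\in\lat(R)$ with $K\cap J'=\{0\}$ and $K+J'=J$. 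Then $I\cap J'=\{0\}$ (as $J'\subseteq J$) and $I+J'=I+J$ (as $K\subseteq I$), so the additivity just proven gives $\delta_{\rho}(J)=\delta_{\rho}(K)+\delta_{\rho}(J')$ and $\delta_{\rho}(I+J)=\delta_{\rho}(I)+\delta_{\rho}(J')$; subtracting yields $\delta_{\rho}(I\vee J)+\delta_{\rho}(I\wedge J)=\delta_{\rho}(I)+\delta_{\rho}(J)$.

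The three items then follow. For (1): if $\rho$ is a rank function and $I<J$, Lemma~\ref{lemma:complement} (triple $\{0\},I,J$) produces $J'$ with $I\oplus J'=J$ and $J'\neq\{0\}$, so (Remark~\ref{remark:regular.idempotent.ideals}) $J'=eR$ with $e\in\E(R)\setminus\{0\}$ and $\rho(e)>0$, whence $\delta_{\rho}(J)=\delta_{\rho}(I)+\rho(e)>\delta_{\rho}(I)$; conversely, if $\delta_{\rho}$ is a dimension function and $0\neq a\in R$, then $\{0\}<aR$ forces $\rho(a)=\delta_{\rho}(aR)>\delta_{\rho}(\{0\})=0$. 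For (2): choose $b$ with $aba=a$ and set $f\defeq ba\in\E(R)$; then $\rAnn(a)=\rAnn(f)$ (using $aba=a$) and $\rAnn(f)=(1-f)R$ (Remark~\ref{remark:bijection.annihilator}), while $\rho(f)=\rho(a)$ because $\rho(ba)\leq\rho(a)=\rho(a\cdot ba)\leq\rho(ba)$; hence $\delta_{\rho}(\rAnn(a))=\rho(1-f)=1-\rho(f)=1-\rho(a)$. For (3): $aI\in\lat(R)$ since $a(cR)=(ac)R$, and from $\rho(ac)\leq\rho(c)$ one gets $\delta_{\rho}(aI')\leq\delta_{\rho}(I')$ for all $I'\in\lat(R)$; writing $K\defeq I\cap J\subseteq L\defeq I+J$ and fixing (via Lemma~\ref{lemma:complement}) some $K'$ with $K\oplus K'=L$, the modular law for $\delta_{\rho}$ gives $\delta_{\rho}(aL)-\delta_{\rho}(aK)\leq\delta_{\rho}(aK')\leq\delta_{\rho}(K')=\delta_{\rho}(L)-\delta_{\rho}(K)$, and since $aI+aJ=aL$ and $a(I\cap J)\subseteq aI\cap aJ$ this yields $d_{\delta_{\rho}}(aI,aJ)\leq\delta_{\rho}(aL)-\delta_{\rho}(aK)\leq\delta_{\rho}(L)-\delta_{\rho}(K)=d_{\delta_{\rho}}(I,J)$.

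I expect the only genuine work to be the additivity-over-independent-pairs step, i.e.\ converting additivity of $\rho$ over orthogonal idempotents into additivity of $\delta_{\rho}$ over independent principal right ideals; everything else is bookkeeping with Lemma~\ref{lemma:complement} and the submultiplicativity $\rho(ac)\leq\min\{\rho(a),\rho(c)\}$. Alternatively, the pseudo-dimension-function statement together with item (1) is available from~\cite{MaedaBook} and~\cite{SchneiderGAFA}, so one could instead quote those and supply only the short arguments for (2) and (3).
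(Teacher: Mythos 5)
Your proof is correct. You fill in the content that the paper delegates to the literature: the verification that $\delta_{\rho}$ is a pseudo-dimension function (and item~\ref{lemma:rank.dimension.function}) is cited from~\cite{MaedaBook} in the paper, whereas you give a direct argument via additivity over independent pairs (reducing $I\cap J=\{0\}$ to pairwise orthogonal idempotents through Remarks~\ref{remark:regular.idempotent.ideals}, \ref{remark:independence.equivalence.intersection}, \ref{remark:independence.ideals.idempotents}) followed by Lemma~\ref{lemma:complement} to get the full modular law. For item~\ref{lemma:rank.dimension.annihilator}, the paper picks $e\in\E(R)$ with $Ra=Re$ and invokes the left/right duality of Remark~\ref{remark:bijection.annihilator}, while you take the regularity witness $b$ with $aba=a$ and set $f=ba$; both routes yield $\rAnn(a)=(1-f)R$ with $\rho(f)=\rho(a)$ and are interchangeable. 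For item~\ref{lemma:multplication.Lipschitz}, the paper first proves the case $I\subseteq J$ and then applies the triangle inequality together with the identity $2\delta(I+J)-\delta(I)-\delta(J)=d_{\delta}(I,J)$ (cited from~\cite{SchneiderGAFA}); you instead take a single complement $K'$ of $K=I\cap J$ inside $L=I+J$ and compare $\delta_{\rho}(aK')\leq\delta_{\rho}(K')$ directly, which avoids both the intermediate nested case and the auxiliary identity. The underlying mechanism (regularity supplies complements; $\rho$ is sub-multiplicative and additive on orthogonal idempotents) is the same in both proofs; your version is simply more self-contained and, in item~\ref{lemma:multplication.Lipschitz}, slightly more economical.
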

	
\begin{proof} The fact that $\delta \defeq \delta_{\rho}$ is a pseudo-dimension function and assertion~\ref{lemma:rank.dimension.function} follow by the proof of \cite[VI.5, Satz~5.2, p.~154]{MaedaBook}.
		
\ref{lemma:rank.dimension.annihilator} Let $a \in R$. Since $R$ is regular, Remark~\ref{remark:regular.idempotent.ideals} asserts the existence of $e\in\E(R)$ such that $Ra=Re$. Hence, \begin{displaymath}
	\rAnn(a) \, = \, \rAnn(Ra) \, = \, \rAnn(Re) \, \stackrel{\ref{remark:bijection.annihilator}}{=} \, (1-e)R
\end{displaymath} and therefore \begin{displaymath}
	\rho(a) \, = \, \rho(e) \, \stackrel{\ref{remark:properties.pseudo.rank.function}\ref{remark:rank.difference.smaller.idempotent}}{=} \, 1-\rho(1-e) \, = \, 1-\delta((1-e)R) \, = \, 1-\delta(\rAnn(a)) .
\end{displaymath}
		
\ref{lemma:multplication.Lipschitz} The argument follows the lines of~\cite[Proof of Lemma~9.10(2)]{SchneiderGAFA}. Let $a \in R$. If $I, J \in \lat(R)$ and $I\subseteq J$, then Remark~\ref{remark:bijection.annihilator} and Lemma~\ref{lemma:complement} together assert the existence of $I' \in \lat(R)$ such that $J = I\oplus I'$, whence \begin{align*}
	d_{\delta}(aI,aJ) \, &= \, \delta(aJ)-\delta(aI) \, \leq \, \delta(aI)+\delta(aI')-\delta(aI) \\
			&\leq \, \delta(I') \, = \, \delta(J)-\delta(I) \, = \, d_{\delta}(I,J) .
\end{align*} Consequently, for all $I,J \in \lat(R)$, \begin{align*}
	d_{\delta}(aI,aJ) \, &\leq \, d_{\delta}(aI,a(I+J)) + d_{\delta}(a(I+J),aJ) \\
			&\leq \, d_{\delta}(I,I+J) + d_{\delta}(I+J,J) \, = \, 2\delta(I+J)-\delta(I)-\delta(J) \stackrel{(\ast)}{=} \, d_{\delta}(I,J),
\end{align*} where $(\ast)$ follows by~\cite[Lemma~6.3(3)]{SchneiderGAFA}. \end{proof}

We now turn to von Neumann's continuous rings. A \emph{continuous ring} is a regular ring $R$ such that the lattice $\lat(R)$ is a continuous geometry.

\begin{thm}[\cite{VonNeumannBook}]\label{theorem:unique.rank.function} Let $R$ be an irreducible, continuous ring. Then \begin{displaymath}
	\rk_{R} \colon \, R \, \longrightarrow \, [0,1] , \quad a \, \longmapsto \, \delta_{\lat(R)}(aR).
\end{displaymath} is the unique rank function on $R$. Moreover, the metric $d_{R} \defeq d_{\rk_{R}}$ is complete. \end{thm}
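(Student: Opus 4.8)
The plan is to deduce everything from the lattice-theoretic counterpart, Theorem~\ref{theorem:dimension.function.lattice}, via the dictionary between $R$ and $\lat(R)$. Since $R$ is a continuous ring, $L \defeq \lat(R)$ is by definition a continuous geometry, and since $R$ is irreducible, $L$ is irreducible by Remark~\ref{remark:irreducible.center.field}; hence Theorem~\ref{theorem:dimension.function.lattice} supplies the unique dimension function $\delta_{L}$ on $L$, with $d_{L} = d_{\delta_{L}}$ complete. Writing $\pi\colon R\to L,\ a\mapsto aR$ (which is surjective but highly non-injective), we have $\rk_{R} = \delta_{L}\circ\pi$, so the proof breaks into three tasks: that $\rk_{R}$ is a rank function, that it is the only one, and that $d_{R}$ is complete.

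\emph{$\rk_{R}$ is a rank function.} I would verify the three pseudo-rank axioms and faithfulness. Normalization is immediate: $\rk_{R}(1) = \delta_{L}(R) = 1$. For orthogonal $e,f\in\E(R)$ one has $eR\cap fR = 0$ (apply Remark~\ref{remark:independence.ideals.idempotents} to $e,f,1-e-f$) and $(e+f)R = eR + fR$ (using $(e+f)e = e$, $(e+f)f = f$), so modularity of $\delta_{L}$ together with $\delta_L(eR\wedge fR)=0$ gives $\rk_{R}(e+f) = \delta_{L}(eR\vee fR) = \delta_{L}(eR) + \delta_{L}(fR)$. For submultiplicativity, $abR\subseteq aR$ and monotonicity give $\rk_{R}(ab)\leq\rk_{R}(a)$; the less routine inequality $\rk_{R}(ab)\leq\rk_{R}(b)$ I would obtain by noting that left multiplication by $a$ is a surjective right-$R$-module homomorphism $bR\twoheadrightarrow abR$ with kernel $bR\cap\rAnn(a)$, so $abR\cong(bR+\rAnn(a))/\rAnn(a)$ as right modules; since $R$ is regular, $\rAnn(a)$ has a complement $C\in\lat(R)$ inside $bR+\rAnn(a)$, whence $abR\cong C$, and invariance of $\delta_{L}$ under module isomorphism of principal right ideals (i.e.\ under projectivity in $L$) together with modularity yields $\delta_{L}(abR) = \delta_{L}(C) = \delta_{L}(bR+\rAnn(a)) - \delta_{L}(\rAnn(a)) = \delta_{L}(bR) - \delta_{L}(bR\cap\rAnn(a)) \leq \rk_{R}(b)$. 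Finally, $\delta_{L}$ being a genuine (strict) dimension function, $a\neq 0$ forces $aR\neq 0$ in $L$ and hence $\rk_{R}(a) = \delta_{L}(aR) > 0$. Thus $(R,\rk_{R})$ is a rank ring.

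\emph{Uniqueness.} If $\rho$ is any rank function on $R$, then by Lemma~\ref{lemma:pseudo.dimension.function}\ref{lemma:rank.dimension.function} the map $aR\mapsto\rho(a)$ is a dimension function on $L$, so by the uniqueness clause of Theorem~\ref{theorem:dimension.function.lattice} it coincides with $\delta_{L}$; evaluating at $aR$ gives $\rho(a) = \delta_{L}(aR) = \rk_{R}(a)$ for all $a\in R$, i.e.\ $\rho = \rk_{R}$.

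\emph{Completeness} is the step I expect to be the genuine obstacle: because $\pi$ is not injective, $d_{R}(a,b) = \delta_{L}((a-b)R)$ is \emph{not} the pullback of $d_{L}$ along $\pi$, so completeness of $d_{R}$ does not follow formally from completeness of $d_{L}$. The strategy I would pursue: given a $d_{R}$-Cauchy sequence $(a_{n})$, pass to a subsequence with $\rk_{R}(a_{n+1}-a_{n}) < 2^{-n}$; using subadditivity of $\rk_{R}$ together with Lemma~\ref{lemma:complement} and Remark~\ref{remark:bijection.annihilator}, derive estimates of the shape $d_{L}(a_{m}R,a_{n}R)\leq 3\,\rk_{R}(a_{m}-a_{n})$, so that $(a_{n}R)_{n}$, and likewise $(Ra_{n})_{n}$, $(\rAnn(a_{n}))_{n}$, $(\lAnn(a_{n}))_{n}$, are Cauchy in the relevant lattice metrics and hence convergent by Theorem~\ref{theorem:dimension.function.lattice}; then reconstruct an actual limit element $a\in R$ by patching idempotents representing the limiting ideals and invoking the continuity property (Proposition~\ref{proposition:dimension.function.continuous}) to control the resulting infinite sums. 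Following the pattern already used for Theorem~\ref{theorem:dimension.function.lattice}, I would ultimately appeal to von Neumann's theory of rank rings for the technical details, completeness being established in~\cite[II.XVII--XVIII]{VonNeumannBook} (see also~\cite{MaedaBook} and~\cite[Chapter~19]{GoodearlBook}).
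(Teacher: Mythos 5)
Your proof is correct, and it is compatible with the paper's: the paper simply cites von Neumann's Theorems~17.1, 17.2 and~17.4 in~\cite{VonNeumannBook} for the three claims, while you unpack the citation. Your uniqueness argument — routing through Lemma~\ref{lemma:pseudo.dimension.function}\ref{lemma:rank.dimension.function} and the uniqueness clause of Theorem~\ref{theorem:dimension.function.lattice} — is clean and is plausibly the same reduction von Neumann makes. Two remarks on the remaining steps. In the existence argument, your proof that $\rk_{R}(ab)\leq\rk_{R}(b)$ relies on the fact that module-isomorphic principal right ideals of a regular ring are perspective in $\lat(R)$ and that $\delta_{L}$ is perspectivity-invariant; both are true and are cornerstones of von Neumann's dimension theory, but they are nontrivial, unreferenced here, and effectively invoke the same machinery you set out to re-derive, so the existence step is less self-contained than it appears. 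For completeness, you correctly diagnose that the non-injectivity of $a\mapsto aR$ prevents a formal transfer from $(L,d_{L})$ to $(R,d_{R})$; but reconstructing a bona fide limit element $a\in R$ from the limiting ideals is precisely where the real work in \cite[II.XVII--XVIII]{VonNeumannBook} lies, and your outline (``patching idempotents'') does not substitute for it — you rightly fall back on the same citations the paper uses.
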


\begin{proof} By~\cite[II.XVII, Theorem~17.1, p.~224]{VonNeumannBook} and~\cite[II.XVII, Theorem~17.2, p.~226]{VonNeumannBook}, the map $\rk_{R}$ is the unique rank function on $R$. The metric space $(R,d_{R})$ is complete according to~\cite[II.XVII, Theorem~17.4, p.~230]{VonNeumannBook}. (Alternatively, proofs may be found in~\cite[VII.2, pp.~162--165]{MaedaBook}.) \end{proof}

A rank ring $(R,\rho)$ is called \emph{complete} if the metric space $(R,d_{\rho})$ is complete. If $R$ is an irreducible, continuous ring, then $(R,\rk_{R})$ is a complete rank ring by Theorem~\ref{theorem:unique.rank.function}. Conversely, if $(R,\rho)$ is a complete rank ring, then $R$ is a continuous ring according to~\cite[VI.5, Satz~5.3, p.~156]{MaedaBook} (see also~\cite[II.XVIII, Proof of Theorem~18.1, p.~237]{VonNeumannBook}).

The remainder of this section records several useful facts about the rank function of an irreducible, continuous ring.

\begin{remark}\label{remark:rank.function.general} Let $R$ be an irreducible, continuous ring. \begin{enumerate}
	\item\label{remark:characterization.discrete} The work of von Neumann~\cite{VonNeumannBook} implies that the following are equivalent. \begin{enumerate}
				\item[---\,] $R$ is \emph{discrete}, i.e., the topology generated by~$d_{R}$ is discrete.
				\item[---\,] $R\cong \M_{n}(D)$ for some division ring $D$ and $n \in \N_{>0}$.
				\item[---\,] $\rk_{R}(R) \ne [0,1]$.
		\end{enumerate} For a proof of this, see~\cite[Remark~3.4]{SchneiderIMRN} and~\cite[Remark~3.6]{SchneiderIMRN}.
	\item\label{remark:uniqueness.rank.embedding} Suppose that $\rho$ is a pseudo-rank function on a regular ring $S$ and let $\phi \colon R \to S$ be a unital ring homomorphism. Then $\rho \circ \phi$ is a pseudo-rank function on~$R$, so $(\rho \circ \phi)^{-1}(\{ 0 \})$ is a proper two-sided ideal of $R$ by~\cite[Proposition~16.7(a), p.~231]{GoodearlBook}. Since the ring $R$ is simple according to~\cite[VII.3, Hilfssatz~3.1, p.~166]{MaedaBook} (see also~\cite[Corollary~13.26, p.~170]{GoodearlBook}), it follows that $(\rho \circ \phi)^{-1}(\{ 0\}) = \{ 0\}$, i.e., $\rho \circ \phi$ is a rank function on $R$. Thus, $\rho \circ \phi = \rk_{R}$ by Theorem~\ref{theorem:unique.rank.function}, whence \begin{displaymath}
		\qquad \forall a,b \in R \colon \quad d_{R}(a,b) \, = \, d_{\rho}(\phi(a),\phi(b)) .
	\end{displaymath}
	\item\label{remark:duality} By Remark~\ref{remark:bijection.annihilator} and Remark~\ref{remark:irreducible.center.field}, both $\lat(R)$ and $\latop(R)$ are irreducible continuous geometries. Moreover, by~\cite[II.XVII, Lemma~17.2, p.~223]{VonNeumannBook}, \begin{displaymath}
			\qquad \forall I \in \latop(R) \colon \quad \delta_{\latop(R)}(I) \, = \, 1-\delta_{\lat(R)}(\rAnn(I)) .
		\end{displaymath}
\end{enumerate} \end{remark}

\begin{lem}[\cite{VonNeumannBook}]\label{lemma:matrixunits.idempotents} Let $R$ be an irreducible, continuous ring, let $n \in \N_{>0}$, and let $e_{1},\ldots,e_{n} \in \E(R)$ be pairwise orthogonal with $e_{1} + \ldots + e_{n} = 1$ and $\rk_{R}(e_{1}) = \rk_{R}(e_{i})$ for each $i \in \{ 1,\ldots,n\}$. Then there exists a family of matrix units $s\in R^{n\times n}$ for $R$ such that $s_{ii} = e_{i}$ for each $i\in\{1,\ldots,n\}$. \end{lem}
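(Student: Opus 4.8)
The plan is to assemble the matrix units by hand once we know that, for each $i \in \{1,\dots,n\}$, the principal right ideals $e_{1}R$ and $e_{i}R$ are isomorphic as right $R$-modules. So I would first record the reduction: it suffices to produce, for every $i$, elements $x_{i} \in e_{i}Re_{1}$ and $y_{i} \in e_{1}Re_{i}$ with $x_{i}y_{i} = e_{i}$ and $y_{i}x_{i} = e_{1}$, taking $x_{1} = y_{1} = e_{1}$. Indeed, putting $s_{ij} \defeq x_{i}y_{j} \in e_{i}Re_{j}$ one gets $s_{ii} = x_{i}y_{i} = e_{i}$ at once, hence $s_{11} + \dots + s_{nn} = e_{1} + \dots + e_{n} = 1$, while for the multiplicative relations one computes $s_{ij}s_{k\ell} = x_{i}(y_{j}x_{k})y_{\ell}$: if $j = k$ this equals $x_{i}e_{1}y_{\ell} = x_{i}y_{\ell} = s_{i\ell}$ (using $x_{i}e_{1} = x_{i}$), and if $j \ne k$ then $y_{j}x_{k} = y_{j}e_{j}e_{k}x_{k} = 0$ by orthogonality of the $e_{m}$, so $s_{ij}s_{k\ell} = 0$. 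Thus $s = (s_{ij})$ is a family of matrix units for $R$ with $s_{ii} = e_{i}$.

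It then remains to produce the elements $x_{i}, y_{i}$, and this is where the geometry enters. By Theorem~\ref{theorem:unique.rank.function} the hypothesis $\rk_{R}(e_{1}) = \rk_{R}(e_{i})$ reads $\delta_{\lat(R)}(e_{1}R) = \delta_{\lat(R)}(e_{i}R)$, and by von Neumann's comparability theory for irreducible continuous geometries~\cite{VonNeumannBook}, elements of $\lat(R)$ of equal dimension are perspective; in particular $e_{1}R$ and $e_{i}R$ admit a common complement $C$, whence $e_{1}R \cong R/C \cong e_{i}R$ as right $R$-modules. Identifying $\operatorname{Hom}_{R}(e_{i}R, e_{1}R)$ with $e_{1}Re_{i}$ via left multiplication --- a homomorphism $\varphi$ corresponding to $\varphi(e_{i}) = \varphi(e_{i})e_{i} \in e_{1}Re_{i}$ --- such an isomorphism and its inverse are realized as left multiplication by suitable $y_{i} \in e_{1}Re_{i}$ and $x_{i} \in e_{i}Re_{1}$, and evaluating the identities $\varphi^{-1}\varphi = \id_{e_{i}R}$ and $\varphi\varphi^{-1} = \id_{e_{1}R}$ at $e_{i}$ and $e_{1}$ respectively yields precisely $x_{i}y_{i} = e_{i}$ and $y_{i}x_{i} = e_{1}$.

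The only non-formal ingredient is the input of the second paragraph, namely that in the irreducible continuous geometry $\lat(R)$ two principal right ideals of equal dimension are perspective (equivalently, that equal rank forces algebraic equivalence of the corresponding idempotents); everything else is bookkeeping with orthogonal idempotents. I therefore expect that the real work is in correctly invoking this comparability statement from von Neumann's work, after which the construction of the matrix units is immediate.
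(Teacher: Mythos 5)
Your proof is correct, and the construction is cleanly done, but it takes a different route from the paper's. Both arguments rest on the same non-trivial input from von Neumann's dimension theory: in an irreducible continuous geometry, equality of dimension $\delta_{\lat(R)}(e_1R) = \delta_{\lat(R)}(e_iR)$ implies that $e_1R$ and $e_iR$ are perspective, hence isomorphic as right $R$-modules. Where the two proofs diverge is in what they do with this input. The paper cites von Neumann's explicit construction~\cite[II.III, Lemma~3.6, p.~97]{VonNeumannBook} and~\cite[I.VI, Theorem~6.9(iii)'', p.~52]{VonNeumannBook} to obtain \emph{a priori} a family of matrix units $s$ satisfying only $s_{ii}R = e_iR$, and then runs a short computation (using $s_{jj}e_j = e_j$ and orthogonality of the $e_j$) to show $s_{ii}(1-e_i) = 0$ and hence $s_{ii} = e_i$. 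You instead extract elements $x_i \in e_iRe_1$, $y_i \in e_1Re_i$ with $x_iy_i = e_i$, $y_ix_i = e_1$ directly from the module isomorphisms (via the standard identification $\operatorname{Hom}_R(e_iR,e_1R) \cong e_1Re_i$) and then build the matrix units $s_{ij} := x_iy_j$ from scratch; the verification of the matrix unit relations in your first paragraph is correct. Your version is somewhat more self-contained in that it does not need to invoke von Neumann's own matrix-unit construction as a black box and then adjust it, at the cost of spelling out the module-theoretic bookkeeping. One remark worth making: once Lemma~\ref{lemma:conjugation.idempotent} is available, your $x_i,y_i$ can also be obtained immediately as $x_i := e_ig_ie_1$ and $y_i := e_1g_i^{-1}e_i$ for any $g_i \in \GL(R)$ with $g_ie_1g_i^{-1} = e_i$; but that lemma appears later in the paper, so the paper (and you) correctly go through von Neumann's comparability theory instead.
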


\begin{proof} Note that $R = e_{1}R \oplus \ldots \oplus e_{n}R$ by Remark~\ref{remark:independence.ideals.idempotents} and, for each $i \in \{ 1,\ldots, n\}$, \begin{displaymath}
	\delta_{\lat(R)}(e_{1}R) \, \stackrel{\ref{theorem:unique.rank.function}}{=} \, \rk_{R}(e_{1}) \, = \, \rk_{R}(e_{i}) \, \stackrel{\ref{theorem:unique.rank.function}}{=} \, \delta_{\lat(R)}(e_{i}R) .
\end{displaymath} Thus, by~\cite[II.III, Lemma~3.6, p.~97]{VonNeumannBook} and~\cite[I.VI, Theorem~6.9(iii)'', p.~52]{VonNeumannBook}, there is a family of matrix units $s\in R^{n\times n}$ for $R$ such that $e_{i}R = s_{ii}R$ for each $i \in \{1,\ldots,n\}$. Using the argument from~\cite[II.II, Corollary~(iv) after Definition~2.1, p.~69]{VonNeumannBook}, we see that, for each $i \in \{1,\ldots,n\}$, \begin{align*}
	s_{ii}(1-e_{i}) \, &= \, s_{ii}(e_{1} + \ldots + e_{i-1} + e_{i+1} + \ldots + e_{n}) \\
		&= \, s_{ii}e_{1} + \ldots + s_{ii}e_{i-1} + s_{ii}e_{i+1} + \ldots + s_{ii}e_{n} \\
		&= \, s_{ii}s_{11}e_{1} + \ldots + s_{ii}s_{i-1,\,i-1}e_{i-1} + s_{ii}s_{i+1,\,i+1}e_{i+1} + \ldots + s_{ii}s_{nn}e_{n} \, = \, 0,
\end{align*} i.e., $s_{ii} = s_{ii}e_{i} = e_{i}$, as desired. \end{proof}

\begin{lem}\label{lemma:order} Let $R$ be an irreducible, continuous ring and let $e \in \E(R)$. \begin{enumerate}
	\item\label{lemma:order.1} For every $e' \in \E(R)$ with $e \leq e'$ and every $t \in [\rk_{R}(e),\rk_{R}(e')] \cap \rk_{R}(R)$, there exists $f \in \E(R)$ such that $\rk_{R}(f) = t$ and $e \leq f \leq e'$. 
	\item\label{lemma:order.2} If $R$ is non-discrete and $\rk_{R}(e) = \tfrac{1}{2}$, then we find $(e_{n})_{n\in\N_{>0}} \in \E(R)^{\N_{>0}}$ pairwise orthogonal such that $e_{1} = e$ and $\rk_{R}(e_{n}) = 2^{-n}$ for each $n\in\N_{>0}$.
\end{enumerate} \end{lem}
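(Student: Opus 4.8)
The plan is to reduce assertion~\ref{lemma:order.1} to the following \emph{reduced claim}: given $g\in\E(R)$ and $s\in[0,\rk_{R}(g)]\cap\rk_{R}(R)$, there exists $h\in\E(R)$ with $h\leq g$ and $\rk_{R}(h)=s$. To see that this implies~\ref{lemma:order.1}, take $e\leq e'$ in $\E(R)$ and $t\in[\rk_{R}(e),\rk_{R}(e')]\cap\rk_{R}(R)$, put $g\defeq e'-e\in\E(R)$ (Remark~\ref{remark:quantum.logic}\ref{remark:quantum.logic.1}), so that $e\perp g$ and $\rk_{R}(g)=\rk_{R}(e')-\rk_{R}(e)$ by Remark~\ref{remark:properties.pseudo.rank.function}\ref{remark:rank.difference.smaller.idempotent}, and set $s\defeq t-\rk_{R}(e)\in[0,\rk_{R}(g)]$. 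One checks $s\in\rk_{R}(R)$ (clear if $R$ is non-discrete, since then $\rk_{R}(R)=[0,1]$ by Remark~\ref{remark:rank.function.general}\ref{remark:characterization.discrete}; a counting argument if $R\cong\M_{n}(D)$). Choosing $h\leq g$ with $\rk_{R}(h)=s$, the element $f\defeq e+h$ works: since $h\leq g$ and $g\perp e$, a direct computation gives $e\perp h$, $f\perp(g-h)$ and $f+(g-h)=e'$, whence $e\leq f\leq e'$ by Remark~\ref{remark:quantum.logic}\ref{remark:quantum.logic.2}, and $\rk_{R}(f)=\rk_{R}(e)+\rk_{R}(h)=t$ by additivity of $\rk_{R}$ on orthogonal idempotents.

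For the reduced claim, the cases $s=0$ and $s=\rk_{R}(g)$ are settled by $h\defeq 0$ and $h\defeq g$; so assume $0<s<\rk_{R}(g)$, in particular $g\neq 0$. Here I would invoke von Neumann's work~\cite{VonNeumannBook} to the effect that the corner ring $gRg$ (regular by Remark~\ref{remark:corner.rings}) is again an irreducible, continuous ring, and moreover is non-discrete whenever $R$ is. By uniqueness of rank functions (Theorem~\ref{theorem:unique.rank.function}) applied to $gRg$, its rank function is $\rk_{gRg}=\rk_{R}(g)^{-1}\cdot\rk_{R}\vert_{gRg}$, and $\rk_{gRg}(x)=\delta_{\lat(gRg)}(x(gRg))$ for all $x\in gRg$. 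Now $s/\rk_{R}(g)\in\rk_{gRg}(gRg)$: if $R$ is non-discrete this follows from $\rk_{gRg}(gRg)=[0,1]$ (Remark~\ref{remark:rank.function.general}\ref{remark:characterization.discrete}); if $R\cong\M_{n}(D)$, then $gRg\cong\M_{m}(D)$ with $m=n\,\rk_{R}(g)\in\N_{>0}$, so $\rk_{gRg}(gRg)=\{\tfrac{k}{m}\mid 0\leq k\leq m\}$ contains $s/\rk_{R}(g)$. Pick $h'\in gRg$ with $\rk_{gRg}(h')=s/\rk_{R}(g)$, i.e.\ $\rk_{R}(h')=s$; regularity of $gRg$ yields $h\in\E(gRg)=\{h\in\E(R)\mid h\leq g\}$ with $h(gRg)=h'(gRg)$, and then $\rk_{R}(h)=\rk_{R}(h')$ since $\rk_{gRg}$ factors through principal right ideals.

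For assertion~\ref{lemma:order.2} I would iterate~\ref{lemma:order.1}, halving the complement at each step. Put $e_{1}\defeq e$, so $\rk_{R}(e_{1})=\tfrac12$. Suppose pairwise orthogonal $e_{1},\ldots,e_{n}\in\E(R)$ with $\rk_{R}(e_{k})=2^{-k}$ are given; then $e_{1}+\ldots+e_{n}$ is an orthogonal sum of idempotents, so $g_{n}\defeq 1-(e_{1}+\ldots+e_{n})\in\E(R)$, with $\rk_{R}(g_{n})=1-\sum_{k=1}^{n}2^{-k}=2^{-n}$. Since $R$ is non-discrete, $\rk_{R}(R)=[0,1]$ (Remark~\ref{remark:rank.function.general}\ref{remark:characterization.discrete}), so $2^{-(n+1)}\in[0,\rk_{R}(g_{n})]\cap\rk_{R}(R)$, and applying~\ref{lemma:order.1} with $(e,e')$ replaced by $(0,g_{n})$ produces $e_{n+1}\in\E(R)$ with $e_{n+1}\leq g_{n}$ and $\rk_{R}(e_{n+1})=2^{-(n+1)}$. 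As $g_{n}$ is orthogonal to each of $e_{1},\ldots,e_{n}$, so is $e_{n+1}$; hence $e_{1},\ldots,e_{n+1}$ are pairwise orthogonal and $g_{n+1}\defeq 1-(e_{1}+\ldots+e_{n+1})=g_{n}-e_{n+1}\in\E(R)$ has $\rk_{R}(g_{n+1})=2^{-(n+1)}$, which closes the induction and yields the sequence $(e_{n})_{n\in\N_{>0}}$.

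The one genuinely non-formal point is the non-discreteness inheritance used in the second paragraph --- equivalently, that below any nonzero principal right ideal of a non-discrete irreducible, continuous ring there lie principal right ideals of arbitrarily small positive rank. This atomlessness rests on von Neumann's classification of irreducible continuous geometries into the discrete types $I_{n}$ and the continuous type $II_{1}$ (the latter atomless, with dimension range $[0,1]$) together with the transitivity of perspectivity. Trying to argue directly in $\lat(R)$ instead --- e.g.\ taking $I_{0}\subseteq gR$ maximal (via Zorn) with $\delta_{\lat(R)}(I_{0})\leq s$, chains being closed under joins by Proposition~\ref{proposition:dimension.function.continuous} --- reduces to the same issue, since enlarging $I_{0}$ when $\delta_{\lat(R)}(I_{0})<s$ again requires producing a small ideal.
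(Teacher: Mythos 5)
Your argument is essentially correct, but it takes a genuinely different route from the paper. The paper settles both parts in one stroke: pick a maximal chain $E$ in $(\E(R),\leq)$ through $\{e,e'\}$ (Hausdorff maximal principle), invoke the cited fact from~\cite{SchneiderGAFA} that $\rk_{R}(E)=\rk_{R}(R)$, note via Remark~\ref{remark:properties.pseudo.rank.function}\ref{remark:rank.order.isomorphism} that $\rk_{R}\vert_{E}$ is then an order isomorphism onto $\rk_{R}(R)$, and read off $f$ in~\ref{lemma:order.1} (and all the $f_{n}$, hence $e_{n}=f_{n}-f_{n-1}$, in~\ref{lemma:order.2}) simply as preimages along this one chain. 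You instead reduce~\ref{lemma:order.1} to filling below a single idempotent, detour through the corner ring $gRg$, and derive~\ref{lemma:order.2} by iterating~\ref{lemma:order.1}. That is sound, but the detour is costlier for exactly the reason you flag at the end: the fact that $gRg$ is non-discrete whenever $R$ is appears in the paper as Remark~\ref{remark:eRe.non-discrete.irreducible.continuous}, which is placed \emph{after} this lemma and is itself proved \emph{using} Lemma~\ref{lemma:order}\ref{lemma:order.1}, so appealing to that remark here would be circular. Your alternative of invoking von Neumann's classification into types $I_{n}$ and $II_{1}$ directly from~\cite{VonNeumannBook} is legitimate and does break the cycle, but it imports strictly heavier machinery than the paper's single black box about maximal chains of idempotents; indeed the paper's order of exposition is arranged precisely so that this lemma feeds into the corner-ring non-discreteness statement rather than depending on it, and so that the corner-ring construction never has to be touched at this stage.
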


\begin{proof} Let $e' \in \E(R)$ with $e \leq e'$. The Hausdorff maximal principle asserts the existence of a maximal chain $E$ in $(\E(R),{\leq})$ such that $\{ e,e' \} \subseteq E$. By~\cite[Corollary~7.19]{SchneiderGAFA}, $\rk_{R}(E) = \rk_{R}(R)$. We deduce from Remark~\ref{remark:properties.pseudo.rank.function}\ref{remark:rank.order.isomorphism} that ${{\rk_{R}}\vert_{E}} \colon (E,{\leq}) \to (\rk_{R}(R),{\leq})$ is an isomorphism of linearly ordered sets.
	
\ref{lemma:order.1} Now, let $t \in [\rk_{R}(e),\rk_{R}(e')] \cap \rk_{R}(R)$. We define $f \defeq ({{\rk_{R}}\vert_{E}})^{-1}(t)$. Then, from $\rk_{R}(e) \leq t \leq \rk_{R}(e')$, we infer that $e \leq f \leq e'$.

\ref{lemma:order.2} Let $e' \defeq 1$. Suppose that $R$ is non-discrete and $\rk_{R}(e) = \tfrac{1}{2}$. Note that $\rk_{R}(R) = [0,1]$ by Remark~\ref{remark:rank.function.general}\ref{remark:characterization.discrete}. Considering \begin{displaymath}
	f_{n} \, \defeq \, ({\rk_{R}}\vert_{E})^{-1}\!\left(\tfrac{2^{n}-1}{2^{n}}\right) \qquad (n\in\N),
\end{displaymath} we conclude that $(f_{n})_{n \in \N}$ is a monotone sequence in $(\E(R),\leq)$ with $f_{0} = 0$ and~$f_{1} = e$. For each $n \in \N_{>0}$, let us define $e_{n} \defeq f_{n}-f_{n-1}$ and observe that \begin{displaymath}
	\rk_{R}(e_{n}) \, \stackrel{\ref{remark:properties.pseudo.rank.function}\ref{remark:rank.difference.smaller.idempotent}}{=} \, \rk_{R}(f_{n}) - \rk_{R}(f_{n-1}) \, = \, \tfrac{2^{n}-1}{2^{n}}-\tfrac{2^{n-1}-1}{2^{n-1}} \, = \, 2^{-n}.
\end{displaymath} Evidently, $e_{1} = f_{1}-f_{0} = e$. Moreover, for any two $m,n\in\N_{>0}$ with $m>n$, \begin{displaymath}
	e_{m}e_{n} \, = \, f_{m}(1-f_{m-1})f_{n}(1-f_{n-1}) \, = \, f_{m}(f_{n}-f_{n})(1-f_{n-1}) \, = \, 0 . \qedhere
\end{displaymath} \end{proof}
	
Finally, we turn once more to the construction mentioned in Remark~\ref{remark:corner.rings}. 

\begin{remark}\label{remark:eRe.non-discrete.irreducible.continuous} Let $R$ be a continuous ring and let $e\in\E(R)$. Then $eRe$ is continuous due to~\cite[Proposition~13.7, p.~162]{GoodearlBook}. Suppose now that $R$ is irreducible and $e \ne 0$. Then $\ZZ(eRe) = \ZZ(R)e$ by~\cite[Lemma~2.1]{Halperin62}, and therefore $\ZZ(R) \cong \ZZ(eRe)$.\footnote{We usually identify $\ZZ(eRe)$ with $\ZZ(R)$.} In turn, $eRe$ is irreducible by Remark~\ref{remark:irreducible.center.field}. Moreover, using Theorem~\ref{theorem:unique.rank.function}, we see that \begin{displaymath}
	{\rk_{eRe}} \, = \, \tfrac{1}{\rk_{R}(e)}{{\rk_{R}}\vert_{eRe}} .
\end{displaymath} In particular, if $R$ is non-discrete, then it follows by Remark~\ref{remark:rank.function.general}\ref{remark:characterization.discrete} and Lemma~\ref{lemma:order}\ref{lemma:order.1} that $eRe$ is non-discrete, too. \end{remark}

\section{Subgroups induced by idempotents}\label{section:subgroups.of.the.form.Gamma_e(R)}

In this section, we isolate and study a certain natural family of subgroups of the unit group of a unital ring. Our observations about these subgroups will be fundamental to both Section~\ref{section:decomposition.into.locally.special.elements} and Section~\ref{section:steinhaus.property}. The construction, detailed in Lemma~\ref{lemma:subgroup.unit.group}, is based on the following type of ring embeddings.

\begin{lem}\label{lemma:sum.embedding} Let $R$ be a unital ring, let $n \in \N$, let $e_{1},\ldots,e_{n} \in \E(R)$ be pairwise orthogonal and $e \defeq e_{1} + \ldots + e_{n}$. Then \begin{displaymath}
	\prod\nolimits_{i=1}^{n} e_{i}Re_{i} \, \longrightarrow \, eRe, \quad (x_{1},\dots,x_{n}) \,\longmapsto \, x_{1}+\ldots+x_{n}
\end{displaymath} is a unital $\ZZ(R)$-algebra embedding. Moreover, the following hold. \begin{enumerate}
	\item\label{lemma:sum.embedding.3} If $a \in \prod\nolimits_{i=1}^{n} e_{i}Re_{i}$, then \begin{displaymath}
		\qquad \sum\nolimits_{i=1}^{n} a_{i} + 1 - \sum\nolimits_{i = 1}^{n} e_{i} \, = \, \prod\nolimits_{i=1}^{n} a_{i}+1-e_{i} ,
	\end{displaymath} where the factors in the product commute.
	\item\label{lemma:sum.embedding.2} Suppose that $R$ is regular ring and let $\rho$ be a pseudo-rank function on $R$. If $a_{1} \in e_{1}Re_{1},\dots,a_{n}\in e_{n}Re_{n}$, then $\rho(a_{1}+\dots+a_{n}) = \rho(a_{1})+\dots+\rho(a_{n})$.
\end{enumerate} \end{lem}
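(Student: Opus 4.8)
The plan is to verify the embedding by a single computational observation, to deduce (A) by promoting the statement to an auxiliary ring homomorphism into $R$, and to prove (B) via the usual trick of replacing each $a_i$ by the idempotent $a_ib_i$, which turns the sum into a sum of orthogonal idempotents on which $\rho$ is manifestly additive.

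First I would record the relation underlying everything: if $x_j\in e_jRe_j$, then $x_j=e_jx_je_j$ and $e_ie_j=\delta_{ij}e_j$ give $e_ix_j=x_je_i=\delta_{ij}x_j$, and in particular $ex_j=x_je=x_j$. From this, well-definedness of $\phi\colon(x_1,\dots,x_n)\mapsto x_1+\dots+x_n$ as a map into $eRe$ is immediate (as $e(\sum_i x_i)e=\sum_i x_i$), additivity is obvious, multiplicativity follows because the cross terms vanish, $\bigl(\sum_i x_i\bigr)\bigl(\sum_j y_j\bigr)=\sum_{i,j}x_ie_ie_jy_j=\sum_i x_iy_i$, and $\phi$ sends $(e_1,\dots,e_n)$ to $e$, hence is unital; it is $\ZZ(R)$-linear since central elements pass through the sum, and injective since $e_i\cdot\phi\bigl((x_j)_j\bigr)=x_i$. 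It is convenient to note that the same computation shows that $\psi\colon\prod_{i=1}^n e_iRe_i\to R$, $(x_1,\dots,x_n)\mapsto x_1+\dots+x_n+1-e$, is a unital ring homomorphism (now also the cross terms with $1-e$ vanish, as $x_i(1-e)=(1-e)x_i=0$).

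For (A), I would observe that $a_i+1-e_i=\psi(\tau_i)$, where $\tau_i\in\prod_j e_jRe_j$ has $a_i$ in the $i$-th slot and $e_j$ in every slot $j\neq i$. The tuples $\tau_1,\dots,\tau_n$ commute pairwise in $\prod_j e_jRe_j$ (in each slot at least one of the two factors is the local unit), and $\tau_1\cdots\tau_n=(a_1,\dots,a_n)$; applying the homomorphism $\psi$ yields simultaneously the commutativity of the factors $a_i+1-e_i$ and the identity $\prod_i(a_i+1-e_i)=\psi\bigl((a_1,\dots,a_n)\bigr)=\sum_i a_i+1-e$. (Alternatively, one expands $\prod_i(a_i+1-e_i)$ directly by induction, the cross terms again vanishing by $e_ie_j=\delta_{ij}e_j$.)

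For (B), regularity of $e_iRe_i$ (Remark~\ref{remark:corner.rings}) lets me choose $b_i\in e_iRe_i$ with $a_ib_ia_i=a_i$ and set $f_i\defeq a_ib_i\in e_iRe_i$. Then each $f_i$ is idempotent, $f_1,\dots,f_n$ are pairwise orthogonal (since $f_if_j=f_ie_ie_jf_j=0$ for $i\neq j$, and symmetrically), and $\rho(f_i)=\rho(a_i)$ because $f_i=a_ib_i$ and $a_i=f_ia_i$. Iterating Remark~\ref{remark:quantum.logic}\ref{remark:quantum.logic.2} together with the additivity axiom for orthogonal idempotents gives $\rho\bigl(\sum_i f_i\bigr)=\sum_i\rho(f_i)$, while $\bigl(\sum_i a_i\bigr)\bigl(\sum_i b_i\bigr)=\sum_{i,j}a_ie_ie_jb_j=\sum_i a_ib_i=\sum_i f_i$. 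Combining these with the iterated subadditivity of Remark~\ref{remark:properties.pseudo.rank.function}\ref{remark:inequation.sum.pseudo.rank} yields
\[
\sum_i\rho(a_i)\,=\,\sum_i\rho(f_i)\,=\,\rho\Bigl(\sum\nolimits_i f_i\Bigr)\,=\,\rho\Bigl(\bigl(\sum\nolimits_i a_i\bigr)\bigl(\sum\nolimits_i b_i\bigr)\Bigr)\,\leq\,\rho\Bigl(\sum\nolimits_i a_i\Bigr)\,\leq\,\sum_i\rho(a_i),
\]
forcing equality throughout. The only step requiring thought is precisely this passage to the $f_i$: transporting $\rho$ through the isomorphism $\phi$ would be circular without first analysing how $\rho$ restricts to the corners $e_iRe_i$, whereas multiplying by $\sum_i b_i$ sidesteps that issue entirely; everything else is bookkeeping with $e_ie_j=\delta_{ij}e_j$.
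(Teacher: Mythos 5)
Your proposal is correct and follows essentially the same strategy as the paper; the differences are cosmetic smoothing rather than a new route. For the embedding, the computations agree. For (A), you derive commutativity and the product formula at once from the homomorphism $\psi$ into $R$ and the evident identity $\tau_1\cdots\tau_n=(a_1,\ldots,a_n)$ in the product ring, whereas the paper proves commutativity by hand and then does a short induction on the product; both unfold to the same $e_ie_j=\delta_{ij}e_j$ bookkeeping, but your version makes the structure transparent. For (B), the paper picks a quasi-inverse $b_i\in R$ with $a_ib_ia_i=a_i$ and sets $f_i\defeq a_ib_ie_i$, then argues $a_iR=f_iR$ to transfer $\rho$; you instead invoke regularity of the corner $e_iRe_i$ to pick $b_i\in e_iRe_i$ directly, so $f_i=a_ib_i$ already lies in the corner and the equality $\rho(f_i)=\rho(a_i)$ follows from the two factorizations $f_i=a_ib_i$ and $a_i=f_ia_i$. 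These are the same idempotents in disguise (one checks $a_ib_ie_i=a_i(e_ib_ie_i)$ and $e_ib_ie_i$ is a quasi-inverse of $a_i$ inside $e_iRe_i$), and the subsequent sandwich with subadditivity is identical to the paper's. Your closing remark that conjugating $\rho$ through $\phi$ would be circular is accurate and a good instinct.
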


\begin{proof} It is straightforward to check that $\prod\nolimits_{i=1}^{n} e_{i}Re_{i} \to eRe, \, x \mapsto x_{1}+\ldots+x_{n}$ is a unital $\ZZ(R)$-algebra homomorphism. Furthermore, this mapping is injective: indeed, if $x \in \prod\nolimits_{i=1}^{n} e_{i}Re_{i}$ and $x_{1}+\ldots+x_{n}=0$, then it follows that $x_{i} = e_{i}(x_{1}+\ldots +x_n) = 0$ for every $i \in \{ 1,\ldots,n \}$, i.e., $x=0$.

\ref{lemma:sum.embedding.3} Let $a \in \prod\nolimits_{i=1}^{n} e_{i}Re_{i}$. For any two distinct $i,j \in \{ 1,\ldots,n\}$, from $e_{i} \perp e_{j}$ we infer that \begin{displaymath}
	(a_{i}+1-e_{i})(a_{j}+1-e_{j}) \, = \, a_{i}+a_{j}+1-e_{i}-e_{j} \, = \, (a_{j}+1-e_{j})(a_{i}+1-e_{i}) .
\end{displaymath} Hence, the factors in the product commute. By induction, we show that \begin{displaymath}
	\forall j \in \{ 1,\ldots,n \} \colon \qquad \prod\nolimits_{i=1}^{j} a_{i}+1-e_{i} = \sum\nolimits_{i=1}^{j} a_{i} + 1 - \sum\nolimits_{i = 1}^{j} e_{i} .
\end{displaymath} For $j=1$, the statement is trivial. Now, if $\prod\nolimits_{i=1}^{j} a_{i}+1-e_{i} = \sum\nolimits_{i=1}^{j} a_{i} + 1 - \sum\nolimits_{i = 1}^{j} e_{i}$ for some $j \in \{ 1,\ldots,n-1\}$, then from $\bigl(\sum_{i=1}^{j} e_{i}\bigr) \perp e_{j+1}$ we deduce that \begin{align*}
	\prod\nolimits_{i=1}^{j+1}a_{i}+1-e_{i} \, &= \, \! \left(\sum\nolimits_{i=1}^{j}a_{i} +1 - \sum\nolimits_{i=1}^{j}e_{i}\right)\!(a_{j+1}+1-e_{j+1})\\
	&=\, \sum\nolimits_{i=1}^{j+1}a_{i}+1-\sum\nolimits_{i=1}^{j+1}e_{i} .
\end{align*} This completes the induction, which entails the desired statement for $j=n$.

\ref{lemma:sum.embedding.2} Since $R$ is regular, for each $i \in \{1,\ldots,n\}$, we may choose an element $b_{i} \in R$ with $a_{i}b_{i}a_{i} = a_{i}$ and define $f_{i} \defeq a_{i}b_{i}e_{i}$. We observe that, for every $i \in \{ 1,\ldots,n \}$, \begin{displaymath}
	f_{i}f_{i} \, = \, a_{i}b_{i}e_{i}a_{i}b_{i}e_{i} \, = \, a_{i}b_{i}a_{i}b_{i}e_{i} \, = \, a_{i}b_{i}e_{i} \, = \, f_{i}
\end{displaymath} and \begin{displaymath}
	a_{i}R \, = \, a_{i}b_{i}a_{i}R \, = \, a_{i}b_{i}e_{i}a_{i}R \, = \, f_{i}a_{i}R \, \subseteq \, f_{i}R \, = \, a_{i}b_{i}e_{i}R \, \subseteq \, a_{i}R,
\end{displaymath} that is, $a_{i}R = f_{i}R$. Moreover, $f_{1},\dots,f_{n}$ are pairwise orthogonal: if $i,j \in \{ 1,\ldots,n \}$ are distinct, then \begin{displaymath}
	f_{i}f_{j} \, = \, a_{i}b_{i}e_{i}a_{j}b_{j}e_{j} \, = \, a_{i}b_{i}e_{i}e_{j}a_{j}b_{j}e_{j} \, = \, 0 .
\end{displaymath} Also, \begin{align*}
	(a_{1} + \ldots + a_{n})(e_{1}b_{1}e_{1} + \ldots + e_{n}b_{n}e_{n}) \, &= \, a_{1}e_{1}b_{1}e_{1} + \ldots + a_{n}e_{n}b_{n}e_{n} \\
	&= \, a_{1}b_{1}e_{1} + \ldots + a_{n}b_{n}e_{n} \, = \, f_{1} + \ldots + f_{n} .		
\end{align*} We conclude that \begin{align*}
	\rho(a_{1}) + \ldots + \rho(a_{n}) \, &= \, \rho(f_{1}) + \dots + \rho(f_{n}) \, = \, \rho(f_{1} + \ldots + f_{n}) \\
	&= \, \rho((a_{1} + \ldots + a_{n})(e_{1}b_{1}e_{1} + \ldots + e_{n}b_{n}e_{n})) \\
	&\leq \, \rho(a_{1} + \ldots + a_{n}) \, \stackrel{\ref{remark:properties.pseudo.rank.function}\ref{remark:inequation.sum.pseudo.rank}}{\leq} \, \rho(a_{1}) + \ldots + \rho(a_{n}),
\end{align*} thus $\rho(a_{1}+\ldots+a_{n}) = \rho(a_{1})+\ldots+\rho(a_{n})$. \end{proof}

We arrive at the announced construction of subgroups.

\begin{lem}\label{lemma:subgroup.unit.group} Let $R$ be a unital ring and let $e,f \in \E(R)$. Then \begin{displaymath}
	\Gamma_{R}(e) \, \defeq \, \GL(eRe) + 1-e \, = \, \GL(R) \cap (eRe + 1-e)
\end{displaymath} is a subgroup of $\GL(R)$ and \begin{align*}
	&{\GL(eRe)} \, \longrightarrow \, \Gamma_{R}(e),\quad a \, \longmapsto \, a+1-e, \\
	&{\Gamma_{R}(e)} \, \longrightarrow \, \GL(eRe), \quad a \, \longmapsto \, ae
\end{align*} are mutually inverse group isomorphisms. Moreover, the following hold. \begin{enumerate}
	\item\label{lemma:subgroup.unit.group.order} If $e \leq f$, then $\Gamma_{R}(e)\leq\Gamma_{R}(f)$.
	\item\label{lemma:subgroup.unit.group.orthogonal} If $e \perp f$, then $ab=ba$ for all $a\in\Gamma_{R}(e)$ and $b\in\Gamma_{R}(f)$.
	\item\label{lemma:subgroup.unit.group.conjugation} If $a\in \GL(R)$, then $a\Gamma_{R}(e)a^{-1}=\Gamma_R(aea^{-1})$.
\end{enumerate} \end{lem}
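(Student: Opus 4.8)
The plan is to realise $\Gamma_R(e)$ as the isomorphic image of $\GL(eRe)$ under a single ring embedding, and then to read off the three additional properties.

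\emph{The embedding and the isomorphism.} Applying Lemma~\ref{lemma:sum.embedding} with $n=1$ (or by a one-line direct computation using $ex=xe=x$ for $x\in eRe$), the map
\[
	\iota\colon\, eRe\,\longrightarrow\, R,\quad x\,\longmapsto\, x+1-e
\]
is a unital ring homomorphism, and it is injective since $\iota(x)=\iota(x')$ forces $x=x'$. A unital ring homomorphism carries units to units, so $\iota$ restricts to an injective group homomorphism $\GL(eRe)\to\GL(R)$ whose image is precisely $\GL(eRe)+1-e$; hence this set is a subgroup of $\GL(R)$ and $a\mapsto a+1-e$ is an isomorphism $\GL(eRe)\to\Gamma_R(e)$. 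Its inverse is $a\mapsto ae$: indeed $(x+1-e)e=xe=x$ for $x\in eRe$, and for $a=x+1-e\in\Gamma_R(e)$ we have $ae=x\in\GL(eRe)$.

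\emph{The two descriptions of $\Gamma_R(e)$ agree.} The inclusion $\GL(eRe)+1-e\subseteq\GL(R)\cap(eRe+1-e)$ is immediate from the previous paragraph. For the converse, suppose $a=x+1-e\in\GL(R)$ with $x\in eRe$, and set $b\defeq a^{-1}$. Using $ea=ae=x$ and $ex=xe=x$ --- so that $x(1-e)=(1-e)x=0$ --- one compresses $ab=1$ and $ba=1$ by $e$ on both sides to obtain $x\,(ebe)=e$ and $(ebe)\,x=e$; since $ebe\in eRe$, this yields $x\in\GL(eRe)$, hence $a\in\GL(eRe)+1-e$. The compression identities here are the one point requiring a little care; the remainder of the argument is bookkeeping with idempotents.

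\emph{The three additional properties.} For (A), assume $e\le f$. Then $eRe\subseteq fRf$ (one has $fxf=x$ for $x\in eRe$), and $f-e\in\E(R)$ with $f-e\le f$ by Remark~\ref{remark:quantum.logic}\ref{remark:quantum.logic.1}, so $f-e\in fRf$. For $x\in\GL(eRe)$ it follows that $x+f-e\in fRf$, and in fact $x+f-e\in\GL(fRf)$ --- its inverse in $fRf$ being $y+f-e$, where $y$ is the inverse of $x$ in $eRe$, as a short computation using $e(f-e)=(f-e)e=0$ shows. Hence $x+1-e=(x+f-e)+1-f\in\GL(fRf)+1-f=\Gamma_R(f)$, so $\Gamma_R(e)\subseteq\Gamma_R(f)$, which is a subgroup inclusion as both sides are subgroups of $\GL(R)$. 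For (B), assume $e\perp f$ and write $a=x+1-e$, $b=y+1-f$ with $x\in\GL(eRe)$, $y\in\GL(fRf)$; Lemma~\ref{lemma:sum.embedding}\ref{lemma:sum.embedding.3} applied to the pair $(e,f)$ gives $ab=x+y+1-e-f=ba$, the two factors commuting, so $\Gamma_R(e)$ and $\Gamma_R(f)$ commute elementwise. For (C), given $a\in\GL(R)$, conjugation $c_a\colon R\to R,\ r\mapsto ara^{-1}$ is a ring automorphism of $R$; it sends $e$ to the idempotent $e'\defeq aea^{-1}$ (so $\Gamma_R(e')$ is defined), it restricts to a ring isomorphism $eRe\to e'Re'$ and therefore maps $\GL(eRe)$ bijectively onto $\GL(e'Re')$, and it sends $1-e$ to $1-e'$. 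By additivity of $c_a$,
\[
	a\Gamma_R(e)a^{-1}=c_a(\GL(eRe)+1-e)=\GL(e'Re')+1-e'=\Gamma_R(e'),
\]
as claimed.
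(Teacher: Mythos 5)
Your overall strategy coincides with the paper's: realise $\Gamma_R(e)$ as a homomorphic image of $\GL(eRe)$, verify the set-theoretic identity by compressing inverses by $e$, and read off (A)--(C). The set equality, the inverse map $a\mapsto ae$, and parts (A), (B), (C) are all correct and essentially match the paper's argument. However, the opening step contains a genuine mathematical error.

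You assert that $\iota\colon eRe\to R$, $x\mapsto x+1-e$ is a \emph{unital ring homomorphism}. It is not: $\iota(x+x')=x+x'+1-e$, whereas $\iota(x)+\iota(x')=x+x'+2(1-e)$, so $\iota$ fails additivity whenever $e\ne 1$. Likewise, invoking Lemma~\ref{lemma:sum.embedding} with $n=1$ does not produce $\iota$; it produces the identity $eRe\to eRe$. What \emph{is} true is that $\iota$ is a homomorphism of multiplicative monoids that sends the identity $e$ of $eRe$ to $1$ (the one-line check: $\iota(x)\iota(x')=(x+1-e)(x'+1-e)=xx'+1-e$ because $x(1-e)=(1-e)x'=0$). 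That suffices to conclude that $\iota$ maps units to units and induces an injective group homomorphism on unit groups, which is all you actually use. The paper avoids the pitfall by never working with $\iota$ as a map of rings: it applies Lemma~\ref{lemma:sum.embedding} with $n=2$ (to $e$ and $1-e$) to get a genuine $\ZZ(R)$-algebra embedding $eRe\times(1-e)R(1-e)\to R$, restricts that to unit groups, and separately inserts the purely group-theoretic embedding $\GL(eRe)\to\GL(eRe)\times\GL((1-e)R(1-e))$, $a\mapsto(a,1-e)$, which is multiplicative but not additive. Your proof would be correct after replacing ``unital ring homomorphism'' by ``unital multiplicative monoid homomorphism'' and deleting the appeal to Lemma~\ref{lemma:sum.embedding} with $n=1$; as written, it rests on a false claim.
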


\begin{proof} Note that $e\perp (1-e) \in \E(R)$ according to Remark~\ref{remark:quantum.logic}\ref{remark:quantum.logic.1}. Consider the unital ring $S \defeq eRe \times (1-e)R(1-e)$ and observe that \begin{displaymath}
	\GL(S) \, = \, \GL(eRe) \times \GL((1-e)R(1-e)) .
\end{displaymath} In turn, $\psi \colon \GL(eRe) \to \GL(S), \, a \mapsto (a,1-e)$ is a well-defined group embedding. Moreover, it follows from Lemma~\ref{lemma:sum.embedding} that \begin{displaymath}
	\phi \colon \, \GL(S) \, \longrightarrow \, \GL(R), \quad (a,b) \, \longmapsto \, a+b
\end{displaymath} is a well-defined embedding. Thus, $\phi \circ \psi \colon \GL(eRe) \to \GL(R)$ is an embedding with \begin{displaymath}
	\Gamma_{R}(e) \, = \, \im (\phi \circ \psi) \, \subseteq \, \GL(R) \cap (eRe + 1-e) .
\end{displaymath} Now, if $a \in \GL(R) \cap (eRe + 1-e)$, then $ae=eae=ea$ and $a(1-e)=1-e=(1-e)a$, thus $a^{-1}e=ea^{-1}$ and $a^{-1}(1-e) = 1-e = (1-e)a^{-1}$, which implies that \begin{displaymath}
	ea^{-1}eeae \, = \, ea^{-1}ae \, = \, ee \, = \, e \, = \, ee \, = \, eaa^{-1}e \, = \, eaeea^{-1}e
\end{displaymath} and therefore $eae \in \GL(eRe)$, whence $a = eae + 1-e \in \Gamma_{R}(e)$. This shows that indeed $\Gamma_{R}(e) = \GL(R) \cap (eRe + 1-e)$, as claimed. Since ${\GL(eRe)} \to \Gamma_{R}(e),\, a \mapsto a+1-e$ is an isomorphism, its inverse ${\Gamma_{R}(e)} \to \GL(eRe), \, a \mapsto ae$ is an isomorphism, too.

\ref{lemma:subgroup.unit.group.order} If $e\leq f$, then \begin{displaymath}
	eRe+1-e \, = \, eRe+f-e+1-f \, = \, f(eRe+1-e)f+1-f \, \subseteq \, fRf+1-f
\end{displaymath} and therefore \begin{displaymath}
	\Gamma_{R}(e) \, = \, \GL(R) \cap (eRe+1-e) \, \subseteq \, \GL(R) \cap (fRf+1-f) \, = \, \Gamma_{R}(f).
\end{displaymath} 

\ref{lemma:subgroup.unit.group.orthogonal} This follows from Lemma~\ref{lemma:sum.embedding}\ref{lemma:sum.embedding.3}.

\ref{lemma:subgroup.unit.group.conjugation} If $a\in \GL(R)$, then \begin{align*}
	a\Gamma_{R}(e)a^{-1}\! \, &= \, a(\GL(R) \cap (eRe+1-e))a^{-1} \\
	& = \, \GL(R) \cap {\left(aea^{-1}Raea^{-1}+1-aea^{-1}\right)} \, = \, \Gamma_{R}\!\left( aea^{-1} \right)\! .\qedhere
\end{align*} \end{proof}
	
We point out the following result for the proof of Lemma~\ref{lemma:invertible.rank.idempotent} and Lemma~\ref{lemma:GL(R).covered.by.c_nW}.

\begin{lem}\label{lemma:Gamma.annihilator.right-ideal} Let $R$ be a unital ring, let $a \in R$ and $e, f \in \E(R)$ be such that $1-f \leq e$, $f \in \rAnn(1-a)$, and $(1-a)R \subseteq eR$. Then $a\in eRe+1-e$. Moreover, if $a\in \GL(R)$, then $a\in \Gamma_{R}(e)$. \end{lem}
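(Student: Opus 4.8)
The plan is to turn each of the three hypotheses into a one-sided identity between idempotents and then to check directly that $x \defeq a - (1-e)$ is fixed by both left and right multiplication by $e$, which places it in $eRe$.

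First I would extract the following. Since $1-a = (1-a)\cdot 1 \in (1-a)R \subseteq eR$ and $e^{2} = e$, we obtain $e(1-a) = 1-a$, equivalently $ea = a - 1 + e$. Since $f \in \rAnn(1-a)$, we have $(1-a)f = 0$, equivalently $af = f$. Finally, $1 - f \leq e$ means $e(1-f) = (1-f)e = 1-f$; expanding $(1-f)e = 1-f$ gives $fe = e - 1 + f$, and we also keep $(1-f)e = 1-f$ itself.

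Next I would compute $ae$. Using $af = f$, write $a = a(f + (1-f)) = f + a(1-f)$, and multiply on the right by $e$; invoking $fe = e - 1 + f$ and $(1-f)e = 1-f$ yields
\[
	ae \, = \, fe + a(1-f)e \, = \, (e-1+f) + a(1-f) \, = \, e - 1 + f + a - f \, = \, a - 1 + e .
\]
Thus $ae = a - 1 + e = ea$. Putting $x \defeq a - (1-e) = a - 1 + e$, we then have $ex = e(ea) = ea = x$ and $xe = (ae)e = ae = x$ (using $e^{2}=e$), so $x = exe \in eRe$, and hence $a = x + (1-e) \in eRe + 1 - e$, which is the first assertion. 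For the ``moreover'' part, if in addition $a \in \GL(R)$, then $a \in \GL(R) \cap (eRe + 1 - e) = \Gamma_{R}(e)$ by Lemma~\ref{lemma:subgroup.unit.group}.

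I do not expect a serious obstacle: the whole argument is a short idempotent computation. The only point that needs a little care is the bookkeeping when converting the hypotheses — getting the correct sides of $e(1-a) = 1-a$ and $(1-f)e = 1-f$ — together with the observation that $1-f \leq e$ and $f \in \rAnn(1-a)$ are precisely what is needed to upgrade the ``left'' identity $ea = a - (1-e)$ (which already follows from $(1-a)R \subseteq eR$) to the matching ``right'' identity $ae = a - (1-e)$.
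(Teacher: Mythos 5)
Your proof is correct and takes essentially the same route as the paper's: both derive $ea = a-(1-e)$ from $(1-a)R \subseteq eR$, derive $ae = a-(1-e)$ from $1-f\leq e$ and $(1-a)f=0$, and combine these to conclude $a = eae+1-e$, then appeal to the description $\Gamma_{R}(e)=\GL(R)\cap(eRe+1-e)$ for the ``moreover'' part.
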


\begin{proof} Since $(1-a)R \subseteq eR$, we see that \begin{equation}\label{eq--20}
	a \, = \, ea+(1-e)a \, = \, ea-e+e(1-a)+a \, = \, ea-e+1-a+a \, = \, ea+1-e.
\end{equation} Moreover, as $1-f\leq e$ (i.e., $1-e\leq f$) and $(1-a)f=0$ (i.e., $af=f$), \begin{equation}\label{eq--21}
	a \, = \, ae+a(1-e) \, = \, ae+af(1-e) \, = \, ae+1-e.
\end{equation} Thus, we conclude that \begin{equation*}
	a \, \stackrel{\eqref{eq--20}}{=} \, ea+1-e \, \stackrel{\eqref{eq--21}}{=} \, e(ae+1-e)+1-e \, = \, eae+1-e.
\end{equation*} Finally, if $a\in \GL(R)$, then $a\in \Gamma_{R}(e)$ by Lemma~\ref{lemma:subgroup.unit.group}. \end{proof}

\begin{lem}\label{lemma:invertible.rank.idempotent} Let $\rho$ be a pseudo-rank function on a regular ring $R$, and let $e \in \E(R)$. For every $a\in \Gamma_{R}(e)$, there is $f \in \E(R)$ with $f \leq e$, $a \in \Gamma_{R}(f)$ and $\rho(f) \leq 2\rho(1-a)$. \end{lem}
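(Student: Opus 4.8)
The plan is to produce an idempotent $f\le e$ that \emph{absorbs} $b\defeq 1-a$ from both sides, i.e.\ $fb=b=bf$, and whose rank is at most $2\rho(b)$; once such an $f$ is at hand, Lemma~\ref{lemma:Gamma.annihilator.right-ideal} applied with its ``$e$'' taken to be $f$ and its ``$f$'' taken to be $1-f$ (so that ``$1-f\le e$'' reads $f\le f$, ``$f\in\rAnn(1-a)$'' reads $b(1-f)=0$, and ``$(1-a)R\subseteq eR$'' reads $bR\subseteq fR$) immediately yields $a\in\Gamma_{R}(f)$, as $a\in\GL(R)$.

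First, $a\in\Gamma_{R}(e)=\GL(R)\cap(eRe+1-e)$ by Lemma~\ref{lemma:subgroup.unit.group}, so $b=1-a\in eRe$; in particular $bR\subseteq eR$, $Rb\subseteq Re$ and $\rho(b)\le\rho(e)$. If $\rho(e)=0$, then $\rho(b)=0$ and $f\defeq e$ already works. So assume $\rho(e)>0$, and pass to the corner ring $S\defeq eRe$, which is regular (Remark~\ref{remark:corner.rings}) with unit $e$, so that $\lat(S)$ is a complemented modular lattice and $\lAnn_{S},\rAnn_{S}$ are mutually inverse order-reversing bijections (Remark~\ref{remark:bijection.annihilator} applied to $S$). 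Equip $S$ with the pseudo-rank function $\sigma\defeq\rho(e)^{-1}\rho|_{S}$ (routine to check), and let $\delta_{\sigma}\colon\lat(S)\to[0,1]$, $xS\mapsto\sigma(x)$, be the associated pseudo-dimension function from Lemma~\ref{lemma:pseudo.dimension.function}; it is monotone and modular. Write $\beta\defeq\sigma(b)=\rho(e)^{-1}\rho(b)$, choose $c\in S$ with $bcb=b$ (regularity of $S$), and set $q\defeq cb\in\E(S)$; then $bq=b$, $Sq=Sb$ and $\sigma(q)=\beta$.

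The core step is to construct $f\in\E(S)$ with $bS\subseteq fS$, $Sb\subseteq Sf$ and $\sigma(f)\le2\beta$: this gives $fb=b=bf$, hence $bR\subseteq fR$ and $Rb\subseteq Rf$, together with $f\le e$ and $\rho(f)=\rho(e)\sigma(f)\le2\rho(b)$, which is everything needed. To build it I work in $\lat(S)$. Put $A\defeq bS$ (so $\delta_{\sigma}(A)=\beta$) and $J\defeq\rAnn_{S}(Sq)=(e-q)S$ (so $\delta_{\sigma}(J)=\sigma(e-q)=1-\beta$). Pick a complement $J'$ of $A\wedge J$ in $\lat(S)$ and set $I'\defeq J'\wedge J$. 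By modularity, $I'\vee(A\wedge J)=(J'\vee(A\wedge J))\wedge J=J$ and $I'\wedge(A\wedge J)\le J'\wedge(A\wedge J)=0$, whence $\delta_{\sigma}(I')=(1-\beta)-\delta_{\sigma}(A\wedge J)\ge 1-2\beta$ (using monotonicity, $\delta_{\sigma}(A\wedge J)\le\delta_{\sigma}(A)=\beta$); moreover $A\wedge I'\le(A\wedge J)\wedge J'=0$. Now extend $A$ to a complement $I$ of $I'$ via Lemma~\ref{lemma:complement} (with $x=A$, $y=I'$, $z=S$), so $I\oplus I'=S$ and $\delta_{\sigma}(I)=1-\delta_{\sigma}(I')\le 2\beta$, and let $f\in\E(S)$ be the idempotent with $fS=I$ and $(e-f)S=I'$ furnished by Remark~\ref{remark:independence.ideals.idempotents}. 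Then $bS=A\subseteq I=fS$ gives $fb=b$; from $Sf=\lAnn_{S}\bigl((e-f)S\bigr)=\lAnn_{S}(I')\supseteq\lAnn_{S}\bigl(\rAnn_{S}(Sq)\bigr)=Sq=Sb$ (using $I'\subseteq J=\rAnn_{S}(Sq)$ and that $\lAnn_{S},\rAnn_{S}$ are inverse order-reversing) we get $bf=b$; and $\sigma(f)=\delta_{\sigma}(fS)=\delta_{\sigma}(I)\le 2\beta$.

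The main obstacle is precisely this two-sidedness: a single idempotent $f$ must have $fS\supseteq bS$ \emph{and} $Sf\supseteq Sb$ while $\rho(f)\le2\rho(b)$, and naive one-sided choices (e.g.\ taking $fS$ to be the smallest right ideal containing $bS$) do not respect the left condition—small matrix examples show the rank genuinely has to be allowed to double. The construction above circumvents this by choosing the \emph{complementary} right ideal $(e-f)S=I'$ to lie inside $\rAnn_{S}(Sb)$ (which forces $Sf\supseteq Sb$) while staying disjoint from $bS$ (which keeps $bS\subseteq fS$), the modular law then yielding the bound $\delta_{\sigma}(I')\ge1-2\beta$ and hence $\delta_{\sigma}(fS)\le2\beta$. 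The remaining ingredients—the reduction through Lemma~\ref{lemma:subgroup.unit.group}, the translation between $S$ and $R$, the verification that $\sigma$ is a pseudo-rank function, and the closing application of Lemma~\ref{lemma:Gamma.annihilator.right-ideal}—are routine.
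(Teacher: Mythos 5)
Your proof is correct, and it takes a genuinely different route from the paper's. The paper stays entirely inside $R$: writing $b = 1-a \in eRe$, it observes that $\rAnn(e-a) \subseteq eR$, picks $e' \leq e$ with $e'R = \rAnn(e-a)$, and sets $fR = (e-e')R + (1-a)R$ with $e-e' \leq f \leq e$; the rank bound comes from the direct computation $\rho(e-e') = \rho(e(1-a)e) \leq \rho(1-a)$ (via Lemma~\ref{lemma:pseudo.dimension.function}\ref{lemma:rank.dimension.annihilator} and Lemma~\ref{lemma:sum.embedding}\ref{lemma:sum.embedding.2}), and the closing application of Lemma~\ref{lemma:Gamma.annihilator.right-ideal} uses $f' = e' + (1-e)$ rather than $1-f$. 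You, by contrast, pass to the corner ring $S = eRe$ with the rescaled pseudo-rank $\sigma$ and run a lattice-theoretic construction in $\lat(S)$: using the modular law to carve out $I' \leq J = \rAnn_S(Sb)$ with $A \wedge I' = 0$ and $\delta_\sigma(I') \geq 1-2\beta$, then extending $A = bS$ to a complement $I$ of $I'$ via Lemma~\ref{lemma:complement}, and taking $fS = I$. Both arguments ultimately produce an idempotent $f \leq e$ that absorbs $b$ two-sidedly with $\rho(f) \leq 2\rho(b)$, but the paper achieves this implicitly (the two-sidedness emerges only once Lemma~\ref{lemma:Gamma.annihilator.right-ideal} is applied), whereas you make the two-sided absorption the explicit organizing goal. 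The paper's version is somewhat shorter and avoids both the reduction to $eRe$ and the verification that $\sigma$ is a pseudo-rank function; your version isolates cleanly where the factor $2$ comes from — one contribution for $fS \supseteq bS$, one for pushing $(e-f)S$ inside $\rAnn_S(Sb)$ — which is conceptually illuminating.
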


\begin{proof} Let $a\in \Gamma_{R}(e)$. Then $\rAnn(e-a) \subseteq eR$: indeed, if $x \in \rAnn(e-a)$, then \begin{displaymath}
	x \, = \, ex + (1-e)x \, \stackrel{a \in \Gamma_{R}(e)}{=} \, ex + (1-e)ax \, \stackrel{(e-a)x=0}{=} \, ex + (1-e)ex \, = \, ex \, \in \, eR .
\end{displaymath} Thanks to Remark~\ref{remark:bijection.annihilator} and~\cite[Lemma~7.5]{SchneiderGAFA}, there exists $e'\in \E(R)$ such that $e' \leq e$ and $e'R=\rAnn(e-a)$. Note that $\E(R) \ni e-e' \leq e$ due to Remark~\ref{remark:quantum.logic}\ref{remark:quantum.logic.1}, in particular $(e-e')R \subseteq eR$. Furthermore, $1-a = e-eae \in eR$ and therefore $(1-a)R \subseteq eR$. Hence, $(e-e')R+(1-a)R \subseteq eR$. By Remark~\ref{remark:bijection.annihilator} and~\cite[Lemma~7.5]{SchneiderGAFA}, we thus find $f\in \E(R)$ with $e-e' \leq f \leq e$ and $fR=(e-e')R+(1-a)R$. We conclude that \begin{align*}
	\rho(e-e') \, &\stackrel{\ref{remark:properties.pseudo.rank.function}\ref{remark:rank.difference.smaller.idempotent}}{=} \, \rho(e)-\rho(e') \, \stackrel{\ref{lemma:pseudo.dimension.function}\ref{lemma:rank.dimension.annihilator}}{=} \, \rho(e)-1+\rho(e-a) \\
	&\stackrel{\ref{remark:properties.pseudo.rank.function}\ref{remark:rank.difference.smaller.idempotent}}{=} \, \rho(e-a)-\rho(1-e) \, = \, \rho(e(1-a)e - (1-e))-\rho(1-e) \\
	&\stackrel{\ref{lemma:sum.embedding}\ref{lemma:sum.embedding.2}}{=} \, \rho(e(1-a)e) + \rho(-(1-e)) - \rho(1-e) \, = \, \rho(e(1-a)e) \, \leq \, \rho(1-a)
\end{align*} and therefore \begin{displaymath}
	\rho(f) \, \leq \, \rho(e-e')+\rho(1-a) \, \leq \, 2\rho(1-a).
\end{displaymath} Finally, we see that $f' \defeq e'+1-e \in \E(R)$ by Remark~\ref{remark:quantum.logic} and \begin{displaymath}
	(1-a)f' \, = \, (1-a)e'+(1-e)-a(1-e) \, = \, (1-e)-(1-e) \, = \, 0, 
\end{displaymath} thus $f' \in \rAnn(1-a)$. Moreover, $1-f' = e-e' \leq f$ and $(1-a)R \subseteq fR$. Therefore, $a\in \Gamma_{R}(f)$ thanks to Lemma~\ref{lemma:Gamma.annihilator.right-ideal}. \end{proof}

We conclude this section with a discussion of some general matters of convergence in complete rank rings, which will turn out useful later on in Sections~\ref{section:decomposition.into.locally.special.elements} and~\ref{section:steinhaus.property}.

\begin{remark} Let $(R,\rho)$ be a rank ring. If $I$ is any set and $(e_{i})_{i \in I}$ is a family of pairwise orthogonal elements of $\E(R)$, then \begin{displaymath}
	\forall n \in \N_{>0} \colon \qquad \left\lvert \left\{ i \in I \left\vert \, \rho(e_{i}) \geq \tfrac{1}{n} \right\} \right\rvert \, \leq \, n , \right.
\end{displaymath} thus $\{ i \in I \mid e_{i} \ne 0 \} = \bigcup_{n=1}^{\infty} \!\left\{ i \in I \left\vert \, \rho(e_{i}) \geq \tfrac{1}{n} \right\} \right.$ is countable. \end{remark}

Recall that, for any set $I$, the set $\Pfin(I)$ of all finite subsets of $I$, equipped with the partial order defined by inclusion, constitutes a directed set.

\begin{lem}\label{lemma:convergence.sequences} Let $(R,\rho)$ be a complete rank ring, let $I$ be a set, and let $(e_{i})_{i \in I}$ be a family of pairwise orthogonal elements of $\E(R)$. Then \begin{displaymath}
	\prod\nolimits_{i \in I} e_{i}Re_{i} \, \longrightarrow \, R, \quad (a_{i})_{i \in I} \, \longmapsto \, \sum\nolimits_{i \in I} a_{i} \defeq \lim\nolimits_{F \to \Pfin(I)} \sum\nolimits_{i \in F} a_{i}
\end{displaymath} is a well-defined $\ZZ(R)$-algebra embedding and, for every $(a_{i})_{i \in I} \in \prod\nolimits_{i \in I} e_{i}Re_{i}$,\footnote{According to Lemma~\ref{lemma:sum.embedding}\ref{lemma:sum.embedding.3}, there is no need to specify an order for the factors in the product.} \begin{displaymath}
	\sum\nolimits_{i \in I} a_{i} + 1 - \sum\nolimits_{i \in I} e_{i} \, = \, \lim\nolimits_{F \to \Pfin(I)} \prod\nolimits_{i \in F} a_{i}+1-e_{i} .
\end{displaymath} In particular, the following hold. \begin{enumerate}
	\item\label{lemma:convergence.idempotent} $\sum\nolimits_{i \in I} e_{i} \in \E(R)$.
	\item\label{lemma:convergence.monotone} If $J \subseteq J' \subseteq I$, then $\sum\nolimits_{i \in J} e_{i} \leq \sum\nolimits_{i \in J'} e_{i}$.
	\item\label{lemma:convergence.orthogonal} If $J,J' \subseteq I$ and $J \cap J' = \emptyset$, then $\left( \sum\nolimits_{i \in J} e_{i}\right)\! \perp \! \left( \sum\nolimits_{i \in J'} e_{i}\right)$.
\end{enumerate} \end{lem}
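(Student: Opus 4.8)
The plan is to construct the map in three stages --- convergence, the homomorphism property, injectivity --- and then read off the displayed product identity and the three consequences from finite analogues established in Lemma~\ref{lemma:sum.embedding}. First I would prove that, for every $(a_{i})_{i\in I}\in\prod_{i\in I}e_{i}Re_{i}$, the net $\bigl(\sum_{i\in F}a_{i}\bigr)_{F\in\Pfin(I)}$ is Cauchy with respect to $d_{\rho}$, hence convergent by completeness of the rank ring, with a unique limit because $\rho$ is a rank function; this makes $\sum_{i\in I}a_{i}$ well defined. The quantitative heart of this is that, for every finite $F$, Lemma~\ref{lemma:sum.embedding}\ref{lemma:sum.embedding.2} gives $\rho\bigl(\sum_{i\in F}a_{i}\bigr)=\sum_{i\in F}\rho(a_{i})$, that $\rho(a_{i})\le\rho(e_{i})$ since $a_{i}=e_{i}a_{i}$, and that $\sum_{i\in F}\rho(e_{i})=\rho\bigl(\sum_{i\in F}e_{i}\bigr)\le 1$; thus $(\rho(a_{i}))_{i\in I}$ is a summable family with sum $\le 1$, so its tails are arbitrarily small, and combining this with subadditivity of $\rho$ (Remark~\ref{remark:properties.pseudo.rank.function}\ref{remark:inequation.sum.pseudo.rank}) and once more Lemma~\ref{lemma:sum.embedding}\ref{lemma:sum.embedding.2} applied to the symmetric difference of two finite index sets yields the Cauchy condition. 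In particular, taking $a_{i}=e_{i}$ shows that $\sum_{i\in I}e_{i}$ exists.

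Next I would exploit that $(R,d_{\rho})$ is a topological ring (Remark~\ref{remark:properties.pseudo.rank.function}\ref{remark:rank.group.topology}). Additivity and $\ZZ(R)$-linearity of $(a_{i})_{i}\mapsto\sum_{i\in I}a_{i}$ follow by passing to the limit in the evident identities for finite partial sums, using continuity of addition and of multiplication by a fixed central element. For multiplicativity, I note first that $a_{i}b_{j}=a_{i}e_{i}e_{j}b_{j}=0$ whenever $i\ne j$, so $\bigl(\sum_{i\in F}a_{i}\bigr)\bigl(\sum_{j\in F}b_{j}\bigr)=\sum_{i\in F}a_{i}b_{i}$ for every finite $F$; letting $F\to\Pfin(I)$, joint continuity of multiplication together with uniqueness of limits gives $\bigl(\sum_{i\in I}a_{i}\bigr)\bigl(\sum_{i\in I}b_{i}\bigr)=\sum_{i\in I}a_{i}b_{i}$. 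Applying this to $(e_{i})_{i}$ shows that $\sum_{i\in I}e_{i}\in\E(R)$, which is part~\ref{lemma:convergence.idempotent}, and that the map is in fact a unital $\ZZ(R)$-algebra homomorphism onto the corner ring $eRe$ with $e\defeq\sum_{i\in I}e_{i}$. Injectivity is then immediate: if $\sum_{i\in I}a_{i}=0$, then for a fixed $j\in I$ I apply the continuous map $x\mapsto e_{j}xe_{j}$ and use $e_{j}\bigl(\sum_{i\in F}a_{i}\bigr)e_{j}=a_{j}$ for every finite $F\ni j$ (again by orthogonality), so this net is eventually constant equal to $a_{j}$ and $a_{j}=e_{j}\cdot 0\cdot e_{j}=0$.

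For the displayed identity, Lemma~\ref{lemma:sum.embedding}\ref{lemma:sum.embedding.3} gives, for each finite $F$, $\prod_{i\in F}(a_{i}+1-e_{i})=\sum_{i\in F}a_{i}+1-\sum_{i\in F}e_{i}$ with commuting factors; taking $\lim_{F\to\Pfin(I)}$ on both sides, the right-hand side converges to $\sum_{i\in I}a_{i}+1-\sum_{i\in I}e_{i}$ by continuity of $(x,y)\mapsto x+1-y$ and the limits already established. For parts~\ref{lemma:convergence.monotone} and~\ref{lemma:convergence.orthogonal}, I would first record that for any $J\subseteq I$ one has $\lim_{F\to\Pfin(I)}\sum_{i\in F\cap J}e_{i}=\sum_{i\in J}e_{i}$, since a tail bound valid over $\Pfin(J)$ is valid for all $F\in\Pfin(I)$ with $F\supseteq F_{0}$ because $F\cap J\in\Pfin(J)$ contains $F_{0}$; then, given $J,J'\subseteq I$, computing $\bigl(\sum_{i\in J}e_{i}\bigr)\bigl(\sum_{j\in J'}e_{j}\bigr)$ through a single net over $\Pfin(I)$ and collapsing via the orthogonality relations yields $\sum_{i\in J\cap J'}e_{i}$. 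When $J\subseteq J'$ this equals $\sum_{i\in J}e_{i}$, and together with the symmetric computation this is exactly $\sum_{i\in J}e_{i}\le\sum_{i\in J'}e_{i}$; when $J\cap J'=\emptyset$ it equals $0$, giving the asserted orthogonality.

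I expect the only genuine friction to lie in the convergence step: one has to keep the bookkeeping for finite symmetric differences straight in order to convert summability of $(\rho(a_{i}))_{i\in I}$ into the Cauchy condition cleanly. After that, everything is a routine transport of finite identities across limits, powered by continuity of the ring operations and Hausdorffness of the rank topology.
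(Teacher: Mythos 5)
Your proposal is correct and follows essentially the same route as the paper: establish the Cauchy property of the partial-sum net via Lemma~\ref{lemma:sum.embedding}\ref{lemma:sum.embedding.2} (your summability-of-$(\rho(a_i))$ formulation is equivalent to the paper's supremum argument), invoke completeness, transport the finite identities of Lemma~\ref{lemma:sum.embedding} across limits using that $(R,d_\rho)$ is a Hausdorff topological ring, and check injectivity by projecting with an $e_j$. The paper states the homomorphism and consequences~\ref{lemma:convergence.idempotent}--\ref{lemma:convergence.orthogonal} more tersely as "readily entailed," but the underlying steps you spell out are exactly what is intended.
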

	
\begin{proof} To prove that the map is well-defined, let $(a_{i})_{i \in I} \in \prod\nolimits_{i \in I} e_{i}Re_{i}$ and consider \begin{displaymath}
	s \, \defeq \, \sup\nolimits_{F \in \Pfin(I)} \rho\!\left(\sum\nolimits_{i \in F} a_{i}\right)\! \, \in \, [0,1] .
\end{displaymath} We will prove that the net $(\sum_{i \in F} a_{i})_{F \in \Pfin(I)}$ is a Cauchy in $(R,d_{\rho})$. For this purpose, let $\epsilon \in \R_{>0}$. Then there exists some $F_{0} \in \Pfin(I)$ such that $s \leq \rho(\sum_{i \in F_{0}} a_{i}) + \epsilon$. Thus, if $F,F' \in \Pfin(I)$ and $F_{0} \subseteq F \cap F'$, then \begin{displaymath}
	\rho\!\left( \sum\nolimits_{i \in F \mathbin{\triangle} F'} a_{i}\right) \! + \rho\!\left( \sum\nolimits_{i \in F_{0}} a_{i}\right) \! \, \stackrel{\ref{lemma:sum.embedding}\ref{lemma:sum.embedding.2}}{=} \, \rho\!\left(\sum\nolimits_{i \in (F \mathbin{\triangle} F') \cup F_{0}} a_{i}\right)\! \, \leq \, s
\end{displaymath} and therefore \begin{align*}
	\rho\!\left( \sum\nolimits_{i \in F} a_{i} - \sum\nolimits_{i \in F'} a_{i}\right) \! \, &= \, \rho\!\left( \sum\nolimits_{i \in F\setminus F'} a_{i} + \sum\nolimits_{i \in F'\setminus F} -a_{i} \right) \\
	&\stackrel{\ref{lemma:sum.embedding}\ref{lemma:sum.embedding.2}}{=} \, \sum\nolimits_{i \in F \mathbin{\triangle} F'} \rho(a_{i}) \, \stackrel{\ref{lemma:sum.embedding}\ref{lemma:sum.embedding.2}}{=} \, \rho\!\left( \sum\nolimits_{i \in F \mathbin{\triangle} F'} a_{i} \right) \\
	& \leq \, s - \rho\!\left( \sum\nolimits_{i \in F_{0}} a_{i}\right)\! \, \leq \, \epsilon .
\end{align*} This shows that $(\sum_{i \in F} a_{i})_{F \in \Pfin(I)}$ is indeed a Cauchy net, hence converges in $(R,d_{\rho})$ due to completeness of the latter. In turn, $\phi \colon \prod\nolimits_{i \in I} e_{i}Re_{i} \to R, \, a \mapsto \sum\nolimits_{i \in I} a_{i}$ is well defined. Since $R$ is a Hausdorff topological ring with respect to the $\rho$-topology by Remark~\ref{remark:properties.pseudo.rank.function}\ref{remark:rank.group.topology}, it follows from Lemma~\ref{lemma:sum.embedding} that $\phi$ is a $\ZZ(R)$-algebra homomorphism, which readily entails the assertions~\ref{lemma:convergence.idempotent}, \ref{lemma:convergence.monotone}, and~\ref{lemma:convergence.orthogonal}. Finally, we observe that $\phi$ is injective: if $a \in \prod\nolimits_{i \in I} e_{i}Re_{i}$ and $\phi(a)=0$, then \begin{displaymath}
	a_{i} \, \stackrel{\ref{remark:properties.pseudo.rank.function}\ref{remark:rank.group.topology}}{=} \, e_{i}\sum\nolimits_{i \in I} a_{i} \, = \, e_{i}\phi(a) \, = \, e_{i}0 \, = \, 0
\end{displaymath} for each $i \in I$, i.e., $a=0$. \end{proof}

\begin{lem}\label{lemma:local.decomposition} Let $(R,\rho)$ be a complete rank ring, let $I$ be a set, let $(e_{i})_{i \in I}$ be a family of pairwise orthogonal elements of $\E(R)$, and let $e \defeq \sum\nolimits_{i \in I} e_{i}$. Then both \begin{displaymath}
	\prod\nolimits_{i \in I} \GL(e_{i}Re_{i}) \, \longrightarrow \, \GL(eRe), \quad (a_{i})_{i \in I} \, \longmapsto \, \sum\nolimits_{i \in I} a_{i}
\end{displaymath} and \begin{displaymath}
	\prod\nolimits_{i \in I} \Gamma_{R}(e_{i}) \, \longrightarrow \, \Gamma_{R}(e), \quad (a_{i})_{i \in I} \, \longmapsto \, \prod\nolimits_{i \in I} a_{i} \defeq \lim\nolimits_{F \to \Pfin(I)} \prod\nolimits_{i \in F} a_{i}
\end{displaymath} are group embeddings. \end{lem}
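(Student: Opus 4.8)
The plan is to bootstrap everything off Lemma~\ref{lemma:convergence.sequences}, which already provides the $\ZZ(R)$-algebra embedding $\Phi \colon \prod_{i \in I} e_{i}Re_{i} \to R$, $(a_{i})_{i \in I} \mapsto \sum_{i \in I} a_{i}$, and identifies $\sum_{i \in I} a_{i} + 1 - e$ with $\lim_{F} \prod_{i \in F}(a_{i}+1-e_{i})$. First I would check that $\Phi$ restricts to a well-defined map on the unit groups: for $(a_{i})_{i \in I} \in \prod_{i \in I} \GL(e_{i}Re_{i})$ one has $a_{i} \in \GL(e_{i}Re_{i})$, so $\rho(a_{i}) = \rk_{e_{i}Re_{i}}(a_{i}) \cdot \rho(e_{i}) = \rho(e_{i})$ by Remark~\ref{remark:eRe.non-discrete.irreducible.continuous} together with Remark~\ref{remark:properties.rank.function}\ref{remark:invertible.rank} (here I am tacitly in the irreducible continuous case; in the general complete rank ring one argues directly that $a_{i}$ invertible in $e_{i}Re_{i}$ forces $\rho(a_{i}) = \rho(e_{i})$ by writing $a_{i}b_{i} = e_{i}$). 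By Lemma~\ref{lemma:sum.embedding}\ref{lemma:sum.embedding.2} applied along finite subsets, $\rho(\sum_{i \in F} a_{i}) = \sum_{i \in F} \rho(e_{i})$, which converges to $\rho(e)$; since $\rho$ is $1$-Lipschitz (Remark~\ref{remark:properties.pseudo.rank.function}\ref{remark:rank.continuous}) and $\Phi$ is continuous, $\rho(\sum_{i \in I} a_{i}) = \rho(e)$, i.e.\ $\sum_{i \in I} a_{i} \in \GL(eRe)$ by Remark~\ref{remark:properties.rank.function}\ref{remark:invertible.rank} applied inside the rank ring $(eRe, \tfrac{1}{\rho(e)}\rho|_{eRe})$. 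An analogous computation with the inverses $a_{i}^{-1}$ (which again all lie in $\prod_{i} \GL(e_{i}Re_{i})$) shows $\Phi$ maps into $\GL(eRe)$ and that $\Phi\big((a_{i})_{i}\big)^{-1} = \Phi\big((a_{i}^{-1})_{i}\big)$; combined with the fact that $\Phi$ is multiplicative (a restriction of the algebra homomorphism from Lemma~\ref{lemma:convergence.sequences}), this gives the first group embedding, injectivity being inherited from Lemma~\ref{lemma:convergence.sequences}.

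For the second map, I would first note that for each $i$, $a_{i} \in \Gamma_{R}(e_{i})$ means $a_{i} = c_{i} + 1 - e_{i}$ with $c_{i} \in \GL(e_{i}Re_{i})$, by the description of $\Gamma_{R}(e_{i})$ in Lemma~\ref{lemma:subgroup.unit.group}. Then the displayed identity in Lemma~\ref{lemma:convergence.sequences}, applied to the family $(c_{i})_{i \in I}$, gives exactly
\begin{displaymath}
	\lim\nolimits_{F \to \Pfin(I)} \prod\nolimits_{i \in F} a_{i} \, = \, \lim\nolimits_{F \to \Pfin(I)} \prod\nolimits_{i \in F} \big( c_{i} + 1 - e_{i} \big) \, = \, \sum\nolimits_{i \in I} c_{i} + 1 - e ,
\end{displaymath}
so the product net converges and its limit is $\Phi\big((c_{i})_{i}\big) + 1 - e \in \GL(eRe) + 1 - e = \Gamma_{R}(e)$ by the first part. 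Thus the second map is well-defined and is the composite of $(a_{i})_{i} \mapsto (c_{i})_{i} = (a_{i}e_{i})_{i}$ (a product of the isomorphisms $\Gamma_{R}(e_{i}) \to \GL(e_{i}Re_{i})$ from Lemma~\ref{lemma:subgroup.unit.group}), followed by $\Phi$, followed by the isomorphism $\GL(eRe) \to \Gamma_{R}(e)$, $x \mapsto x + 1 - e$; hence it is a group embedding. Equivalently, one checks directly that multiplicativity of $(a_{i})_{i} \mapsto \prod_{i} a_{i}$ follows from continuity of ring multiplication (Remark~\ref{remark:properties.pseudo.rank.function}\ref{remark:rank.group.topology}) together with the commuting-factors observation of Lemma~\ref{lemma:sum.embedding}\ref{lemma:sum.embedding.3}, which lets one reindex finite partial products.

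I expect the only genuine subtlety to be the well-definedness of the infinite product, i.e.\ that $\big(\prod_{i \in F} a_{i}\big)_{F \in \Pfin(I)}$ is a Cauchy net — but this is precisely what the final displayed identity of Lemma~\ref{lemma:convergence.sequences} hands us for free, so the proof is essentially a bookkeeping exercise tying together Lemma~\ref{lemma:sum.embedding}, Lemma~\ref{lemma:subgroup.unit.group}, and Lemma~\ref{lemma:convergence.sequences}. The one point requiring a little care is the passage between "invertible in the corner ring $e_{i}Re_{i}$" and "rank equal to $\rho(e_{i})$ in $R$"; in the complete rank ring setting this uses that $e_{i}Re_{i}$ with the renormalised restriction of $\rho$ is itself a complete rank ring (Remark~\ref{remark:eRe.non-discrete.irreducible.continuous} in the irreducible continuous case, and a direct check otherwise) so that Remark~\ref{remark:properties.rank.function}\ref{remark:invertible.rank} applies there.
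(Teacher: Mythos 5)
Your proof is correct and takes essentially the same route as the paper's one-line proof, which simply observes that the statement is a consequence of Lemma~\ref{lemma:convergence.sequences} and Lemma~\ref{lemma:subgroup.unit.group}. One small remark: the rank computation you open with is superfluous, and the appeal to Remark~\ref{remark:eRe.non-discrete.irreducible.continuous} is not quite in scope here since that remark is stated for irreducible continuous rings rather than general complete rank rings; the cleaner observation you also record at the end already settles the first claim, namely that the map $\Phi$ of Lemma~\ref{lemma:convergence.sequences}, corestricted to $eRe$ (note $e_{i}\leq e$, so the image lands in $eRe$, and $\Phi(1)=e$), is a unital $\ZZ(R)$-algebra embedding, and a unital ring homomorphism carries units to units, so the restriction to $\prod_{i}\GL(e_{i}Re_{i})$ is automatically a group embedding into $\GL(eRe)$ with no rank calculation needed. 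Your treatment of the second map, factoring through the isomorphisms of Lemma~\ref{lemma:subgroup.unit.group} and the final displayed identity of Lemma~\ref{lemma:convergence.sequences}, is exactly the intended argument.
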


\begin{proof} This is a consequence of Lemma~\ref{lemma:convergence.sequences} and Lemma~\ref{lemma:subgroup.unit.group}. \end{proof}

\section{Geometry of involutions}\label{section:geometry.of.involutions}

In this section we prove that two involutions of an irreducible, continuous ring are conjugate in the associated unit group if and only if they are at equal rank distance from the identity (Proposition~\ref{proposition:conjugation.involution.rank}). Moreover, we provide two geometric decomposition results for involutions in non-discrete irreducible, continuous rings (Lemma~\ref{lemma:involution.rank.distance.char.neq2} and Lemma~\ref{lemma:involution.rank.distance.char=2}), relevant for the proof of Lemma~\ref{lemma:Gamma_{R}(e).subset.W^192}. The latter also builds on the fact that the unit group of every non-discrete irreducible, continuous ring contains a separable, uncountable, Boolean subgroup (Corollary~\ref{corollary:boolean.subgroup}).

Let us first clarify some notation. The set of \emph{involutions} of a group $G$ is defined as \begin{displaymath}
	\I(G) \, \defeq \, \left\{ g\in G \left\vert \, g^{2} = 1 \right\} . \right.
\end{displaymath} For a unital ring $R$, we consider $\I(R) \defeq \I(\GL(R))$ as well as the set \begin{displaymath}
	\Nil_{2}(R) \, \defeq \, \left\{ a\in R \left\vert \, a^{2} = 0 \right\} \right.
\end{displaymath} of \emph{$2$-nilpotent} elements of $R$. We observe that, for a unital ring $R$, the action of $\GL(R)$ on $R$ by conjugation preserves the sets $\E(R)$, $\I(R)$ and $\Nil_{2}(R)$. Our considerations are based on the following correspondence between involutions on the one hand and idempotents (resp., 2-nilpotent elements) on the other. As is customary, the \emph{characteristic} of a unital ring $R$ will be denoted by $\cha(R)$.

\begin{remark}\label{remark:ehrlich} Let $R$ be a unital ring. \begin{enumerate}
	\item\label{remark:bijection.idempotent.involution.char.neq2} If\footnote{By Remark~\ref{remark:irreducible.center.field}, this holds for any irreducible, regular ring $R$ with $\cha(R) \neq 2$.} $2 \in \GL(R)$, then \begin{align*}
			\qquad &\E(R) \, \longrightarrow \, \I(R), \quad e \, \longmapsto \, 2e-1, \\
			&\I(R) \, \longrightarrow \, \E(R), \quad u \, \longmapsto \, \tfrac{1}{2}(1+u)
		\end{align*} are mutually inverse bijections, $\GL(R)$-equivariant with respect to conjugation. This observation is due to~\cite[Lemma~1]{Ehr56}.
	\item\label{remark:bijection.2-nilpotent.involution.char=2} If $\cha(R)=2$, then \begin{align*}
			\qquad &\Nil_{2}(R) \, \longrightarrow \, \I(R), \quad x \, \longmapsto \, 1+x, \\
			&\I(R) \, \longrightarrow \, \Nil_{2}(R), \quad u \, \longmapsto \, 1+u
		\end{align*} are mutually inverse bijections, $\GL(R)$-equivariant with respect to conjugation. This follows by straightforward calculation.
\end{enumerate} \end{remark}

Our later considerations crucially rely on the following two insights about idempotent (resp., 2-nilpotent) elements and the resulting characterization of conjugacy of involutions (Proposition~\ref{proposition:conjugation.involution.rank}). 

\begin{lem}[\cite{Ehr56}, Lemma~9]\label{lemma:conjugation.idempotent} Let $R$ be an irreducible, continuous ring and $e,f \in \E(R)$. Then \begin{displaymath}
	\rk_{R}(e) = \rk_{R}(f) \ \ \Longleftrightarrow \ \ \exists g \in \GL(R)\colon \ geg^{-1} = f.
\end{displaymath} \end{lem}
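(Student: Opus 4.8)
\emph{Strategy.} The equivalence splits into a trivial direction and a substantive one. For ``$\Leftarrow$'' I would simply use that $\rk_{R}$, being a pseudo-rank function, is conjugation-invariant: if $geg^{-1}=f$ then $\rk_{R}(f)=\rk_{R}((ge)g^{-1})\le\rk_{R}(ge)\le\rk_{R}(e)$ and, symmetrically from $e=g^{-1}fg$, also $\rk_{R}(e)\le\rk_{R}(f)$; this uses neither continuity nor irreducibility.

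\emph{The forward direction.} Assume $\rk_{R}(e)=\rk_{R}(f)$. By Theorem~\ref{theorem:unique.rank.function} this says $\delta_{\lat(R)}(eR)=\delta_{\lat(R)}(fR)$ in the irreducible continuous geometry $\lat(R)$, and by modularity of the dimension function (applied to $eR\oplus(1-e)R=R$ and $fR\oplus(1-f)R=R$) also $\delta_{\lat(R)}((1-e)R)=1-\delta_{\lat(R)}(eR)=1-\delta_{\lat(R)}(fR)=\delta_{\lat(R)}((1-f)R)$. The key input I would invoke is von Neumann's theory of irreducible continuous geometries~\cite{VonNeumannBook}: any two elements of such a lattice having equal dimension are perspective, i.e.\ possess a common complement, which by regularity of $R$ lies in $\lat(R)$ and hence has the form $e'R$ for some $e'\in\E(R)$ (Remark~\ref{remark:regular.idempotent.ideals}). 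Applied to the pair $(eR,fR)$ this produces $e'\in\E(R)$ with $R=eR\oplus e'R=fR\oplus e'R$, so $eR\cong R/e'R\cong fR$ as right $R$-modules; applied to $((1-e)R,(1-f)R)$ it yields, in the same way, a right $R$-module isomorphism $(1-e)R\cong(1-f)R$.

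\emph{Patching.} Having fixed right $R$-module isomorphisms $\phi_{0}\colon eR\to fR$ and $\phi_{1}\colon(1-e)R\to(1-f)R$, I would form the map $\phi\colon R\to R$ given on $R=eR\oplus(1-e)R$ by $\phi(ex+(1-e)y)\defeq\phi_{0}(ex)+\phi_{1}((1-e)y)$; this is a well-defined right $R$-module endomorphism, and it is bijective since $fR\oplus(1-f)R=R$. Such a $\phi$ is left multiplication by $g\defeq\phi(1)$, and $g\in\GL(R)$ because $\phi^{-1}$ is likewise left multiplication by an element of $R$ and $g\,\phi^{-1}(1)=\phi(\phi^{-1}(1))=1=\phi^{-1}(\phi(1))=\phi^{-1}(1)\,g$. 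Finally, $\phi(eR)=fR$ and $\phi((1-e)R)=(1-f)R$ by construction, so decomposing an arbitrary $x=ex+(1-e)x$ gives $\phi(ex)\in fR$ and $\phi((1-e)x)\in(1-f)R$, whence $gex=\phi(ex)=f\phi(x)=fgx$; taking $x=1$ yields $ge=fg$, i.e.\ $geg^{-1}=f$, as required.

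\emph{Main obstacle.} The only non-routine step is the passage from equal dimension to perspectivity in the irreducible continuous geometry $\lat(R)$ (equivalently: equal rank of $e$ and $f$ should force $eR\cong fR$ and $(1-e)R\cong(1-f)R$ as right $R$-modules), which rests on deep results of von Neumann. Everything around it — conjugation-invariance of the rank, the dictionary between principal right ideals and idempotents, and the gluing of $\phi_{0}$ and $\phi_{1}$ into a single unit implementing the conjugation — is mechanical. If one preferred to avoid citing the perspectivity theorem directly, an alternative would be to derive the two required module isomorphisms from properties of the (unique) rank function on the simple, directly finite ring $R$; but routing through von Neumann's theorem seems cleanest.
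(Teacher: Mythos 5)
Your argument is correct. Where the paper simply invokes~\cite[Lemma~9]{Ehr56} as a black box for the forward implication (noting only that Ehrlich's standing $\cha\neq 2$ hypothesis is not used there), you supply a self-contained derivation. The one substantive input is von Neumann's dimension-to-perspectivity correspondence for irreducible continuous geometries: from $\delta_{\lat(R)}(eR)=\delta_{\lat(R)}(fR)$ and, by modularity of $\delta_{\lat(R)}$, also $\delta_{\lat(R)}((1-e)R)=\delta_{\lat(R)}((1-f)R)$, you obtain common complements in $\lat(R)$ (of the form $e'R$ by Remark~\ref{remark:regular.idempotent.ideals}) and hence right $R$-module isomorphisms $eR\cong fR$ and $(1-e)R\cong(1-f)R$. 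The remaining patching step --- gluing $\phi_{0}$ and $\phi_{1}$ into a right $R$-module automorphism of $R$, realising it as left multiplication by $g\defeq\phi(1)\in\GL(R)$, and reading off $ge=fg$ because the assembled map carries $eR$ onto $fR$ and $(1-e)R$ onto $(1-f)R$ --- is routine and checks out. Structurally this parallels the paper's own proof of Lemma~\ref{lemma:conjugation.2-nilpotent} for $2$-nilpotents, which invokes~\cite[II.XVII, Theorem~17.1(d), p.~224]{VonNeumannBook} to produce $b=uav$ and then builds the conjugating unit block-by-block; your route trades that two-sided equivalence result for perspectivity, which is arguably the more elementary lattice-theoretic input. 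The backward direction matches the paper's one-line appeal to $\GL(R)$-invariance of $\rk_R$. One small polish: you invoke the perspectivity theorem only loosely; a precise pointer (the dimension characterisation of $\sim$ in~\cite[I.VI, Theorem~6.9]{VonNeumannBook} together with the identification of $\sim$ with perspectivity in continuous geometries) would make the citation airtight.
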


\begin{proof} ($\Longleftarrow$) This follows from the $\GL(R)$-invariance of $\rk_{R}$.
		
($\Longrightarrow$) This is proved in~\cite[Lemma~9]{Ehr56}. (The standing assumption of~\cite{Ehr56} excluding characteristic 2 is not used in this particular argument.) \end{proof}

\begin{lem}\label{lemma:conjugation.2-nilpotent} Let $R$ be an irreducible, continuous ring and $a,b \in\Nil_{2}(R)$. Then \begin{displaymath}
	\rk_{R}(a) = \rk_{R}(b) \ \ \Longleftrightarrow \ \ \exists g \in \GL(R) \colon \, gag^{-1} = b.
\end{displaymath} \end{lem}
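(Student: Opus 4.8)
The implication $(\Longleftarrow)$ is immediate: conjugation by a fixed $g \in \GL(R)$ is an inner automorphism of $R$, so $\rk_{R} \circ (g(\cdot)g^{-1})$ is again a rank function on $R$ and therefore equals $\rk_{R}$ by Theorem~\ref{theorem:unique.rank.function} (cf.\ Remark~\ref{remark:rank.function.general}\ref{remark:uniqueness.rank.embedding}); thus $\rk_{R}(gag^{-1}) = \rk_{R}(a)$. For $(\Longrightarrow)$, I would set $r \defeq \rk_{R}(a) = \rk_{R}(b)$ and note that we may assume $r > 0$, as otherwise $a = b = 0$. The plan is to show that every $c \in \Nil_{2}(R)$ with $\rk_{R}(c) = r$ is conjugate to the $(1,2)$-entry of some family of $2 \times 2$ matrix units in a suitable corner ring of $R$, and then to absorb the remaining ambiguity into Lemma~\ref{lemma:conjugation.idempotent}; since $a$ and $b$ would both be conjugate to such normal forms, and any two of those turn out to be mutually conjugate, this yields the claim.

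For the construction, given $c \in \Nil_{2}(R)$ I would pick $b_{0} \in R$ with $cb_{0}c = c$ and, after replacing $b_{0}$ by $b_{0}cb_{0}$, also $b_{0}cb_{0} = b_{0}$, and then put
\begin{displaymath}
	e_{1} \defeq cb_{0}, \qquad f \defeq b_{0}c, \qquad e_{2} \defeq (1-e_{1})f, \qquad g \defeq e_{1}+e_{2}, \qquad d \defeq e_{2}b_{0}e_{1}.
\end{displaymath}
Using only $c^{2} = 0$, the two regularity relations, Remark~\ref{remark:quantum.logic} and Remark~\ref{remark:bijection.annihilator}, I would check by direct computation that $e_{1}, e_{2}, g \in \E(R)$ with $e_{1} \perp e_{2}$; that $e_{1}R = cR$ and $\rAnn(c) = (1-f)R$, whence $e_{1}R \cap fR = 0$ and $e_{1}R \oplus fR = gR$, so that $\rk_{R}(g) = 2r$ by modularity of $\delta_{\lat(R)}$ (which in particular forces $r \le \tfrac12$); and that $c, d \in gRg$ with
\begin{displaymath}
	\begin{pmatrix} e_{1} & c \\ d & e_{2} \end{pmatrix}
\end{displaymath}
a family of matrix units for the ring $gRg$ whose $(1,2)$-entry is $c$. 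By Remark~\ref{remark:eRe.non-discrete.irreducible.continuous}, $gRg$ is again an irreducible, continuous ring, with rank function $\tfrac{1}{2r}{\rk_{R}}\vert_{gRg}$. I expect this bookkeeping to be the most laborious part of the argument, though each individual identity is an elementary manipulation of the displayed elements.

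To conclude, I would apply the construction to $a$ and to $b$, obtaining idempotents $g_{a}, g_{b}$ of rank $2r$ and families of $2 \times 2$ matrix units $s^{a}$ in $g_{a}Rg_{a}$, $s^{b}$ in $g_{b}Rg_{b}$ with $s^{a}_{12} = a$ and $s^{b}_{12} = b$. Since $\rk_{R}(g_{a}) = \rk_{R}(g_{b})$, Lemma~\ref{lemma:conjugation.idempotent} provides $h \in \GL(R)$ with $hg_{a}h^{-1} = g_{b}$; replacing $a$ by $hah^{-1}$ and $s^{a}$ by $hs^{a}h^{-1}$, I may assume $g_{a} = g_{b} \eqdef g$, so that $a$ and $b$ are the $(1,2)$-entries of two families of matrix units for $S \defeq gRg$. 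Now $s^{a}_{11}$ and $s^{b}_{11}$ are idempotents of $S$ of $\rk_{S}$-rank $\tfrac12$ (by Remark~\ref{remark:properties.pseudo.rank.function}\ref{remark:rank.matrixunits} and Remark~\ref{remark:eRe.non-discrete.irreducible.continuous}), hence conjugate in $\GL(S)$ by Lemma~\ref{lemma:conjugation.idempotent} applied to the irreducible, continuous ring $S$; after one more conjugation I may assume $s^{a}_{11} = s^{b}_{11}$. Then $u \defeq s^{b}_{11}s^{a}_{11} + s^{b}_{21}s^{a}_{12}$ is a unit of $S$ with inverse $s^{a}_{11}s^{b}_{11} + s^{a}_{12}s^{b}_{21}$, and $us^{a}_{ij}u^{-1} = s^{b}_{ij}$ for all $i,j$; in particular $uau^{-1} = b$, and $u + 1 - g \in \GL(R)$ conjugates $a$ to $b$ (as $a(1-g) = (1-g)a = 0$). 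The only non-computational inputs are the two applications of Lemma~\ref{lemma:conjugation.idempotent}; in particular the argument is insensitive to whether $R$ is discrete, so the case $R \cong \M_{n}(D)$ needs no separate treatment.
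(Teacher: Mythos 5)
Your proof is correct and takes a genuinely different route from the paper's. The paper invokes von Neumann's theorem that two elements of equal rank are left--right equivalent (\cite[II.XVII, Theorem~17.1(d)]{VonNeumannBook}) to write $b = uav$, and then constructs by hand a conjugating element $g = v^{-1}(1-f) + w(f-e) + ue$ from suitably chosen idempotents, checking invertibility via direct finiteness. You instead show that every $c \in \Nil_2(R)$ of rank $r$ is the $(1,2)$-entry of a family of $2 \times 2$ matrix units for a corner $gRg$ with $\rk_R(g) = 2r$, and then conclude by two applications of Lemma~\ref{lemma:conjugation.idempotent} together with the standard ``change of matrix units'' conjugation $u = \sum_k s^b_{k1}s^a_{1k}$. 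This is conceptually cleaner and essentially reduces the problem to conjugacy of idempotents; I verified the auxiliary identities ($e_1, e_2, g \in \E(R)$, $e_1 \perp e_2$, $e_1R = cR$, $\rAnn(c) = (1-f)R$, $\rk_R(f) = \rk_R(c)$ via Lemma~\ref{lemma:pseudo.dimension.function}\ref{lemma:rank.dimension.annihilator}, $e_1R \oplus fR = gR$, and that the displayed $2\times 2$ array is a family of matrix units for $gRg$) and they all hold. One small slip: the inverse of $u = s^b_{11}s^a_{11} + s^b_{21}s^a_{12}$ is $s^a_{11}s^b_{11} + s^a_{21}s^b_{12}$, not $s^a_{11}s^b_{11} + s^a_{12}s^b_{21}$ as you wrote; with the normalization $s^a_{11} = s^b_{11} = \tilde e$ one has $uu^{-1} = \tilde e + s^b_{21}s^a_{11}s^b_{12} = \tilde e + s^b_{22} = g$ and symmetrically $u^{-1}u = g$, whereas your displayed candidate gives $\tilde e + s^a_{12}s^b_{21} \neq g$ in general.
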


\begin{proof} ($\Longleftarrow$) This follows from the $\GL(R)$-invariance of $\rk_{R}$.
		
($\Longrightarrow$) Suppose that $\rk_{R}(a) = \rk_{R}(b)$. Due to~\cite[II.XVII, Theorem~17.1(d), p.~224]{VonNeumannBook}, there exist $u,v \in \GL(R)$ such that $b=uav$. By Remark~\ref{remark:bijection.annihilator} and Remark~\ref{remark:regular.idempotent.ideals}, there exists $f \in \E(R)$ with $fR = \rAnn(a)$. Then \begin{displaymath}
	\tilde{f} \, \defeq \, v^{-1}fv \, \in \, \E(R)
\end{displaymath} and \begin{displaymath}
	\rAnn(b) \, = \, \rAnn(uav) \, = \, \rAnn(av) \, = \, v^{-1}\rAnn(a) \, = \, v^{-1}fR \, = \, v^{-1}fvR \, = \, \tilde{f}R.
\end{displaymath} Since $a,b\in\Nil_{2}(R)$, we see that \begin{displaymath}
	aR \subseteq \rAnn(a) = fR, \qquad bR \subseteq \rAnn(b) = \tilde{f}R.
\end{displaymath} Thus, by~\cite[Lemma~7.5]{SchneiderGAFA}, there exist $e,\tilde{e} \in \E(R)$ such that \begin{displaymath}
	eR=aR, \quad e\leq f,\qquad \tilde{e}R=bR, \quad \tilde{e}\leq \tilde{f}.
\end{displaymath} From $uaR=uavR=bR$, we infer that $ueR=uaR=bR=\tilde{e}R$. Since \begin{align*}
	\rk_{R}(f-e) \, &\stackrel{\ref{remark:properties.pseudo.rank.function}\ref{remark:rank.difference.smaller.idempotent}}{=} \, \rk_{R}(f)-\rk_{R}(e) \, = \, \rk_{R}(v^{-1}fv)-\rk_{R}(a) \\
	& = \, \rk_{R}(\tilde{f})-\rk_{R}(\tilde{e}) \, = \, \rk_{R}(\tilde{f}-\tilde{e}),
\end{align*} the work of von Neumann~\cite[II.XVII, Theorem~17.1(d), p.~224]{VonNeumannBook} asserts the existence of $w,\tilde{w}\in\GL(R)$ such that $w(f-e)\tilde{w}=(\tilde{f}-\tilde{e})$. Therefore, $w(f-e)R =(\tilde{f}-\tilde{e})R$. Moreover, we observe that \begin{displaymath}
	v^{-1}(1-f)R \, = \, v^{-1}(1-f)vR \, = \, (1-v^{-1}fv)R \, = \, (1-\tilde{f})R.
\end{displaymath} Now, define \begin{displaymath}
	g\defeq v^{-1}(1-f)+w(f-e)+ue,\qquad h\defeq v(1-\tilde{f})+w^{-1}(\tilde{f}-\tilde{e})+u^{-1}\tilde{e}.
\end{displaymath} Then \begin{align*}
	hg \, &= \, v(1-\tilde{f})v^{-1}(1-f)+v(1-\tilde{f})w(f-e)+v(1-\tilde{f})ue\\
		&\qquad +w^{-1}(\tilde{f}-\tilde{e})v^{-1}(1-f)+w^{-1}(\tilde{f}-\tilde{e})w(f-e)+w^{-1}(\tilde{f}-\tilde{e})ue\\
		&\qquad +u^{-1}\tilde{e}v^{-1}(1-f)+u^{-1}\tilde{e}w(f-e)+u^{-1}\tilde{e}ue\\
		&= \,vv^{-1}(1-f)+v(1-\tilde{f})(\tilde{f}-\tilde{e})w(f-e)+v(1-\tilde{f})\tilde{e}ue\\
		&\qquad +w^{-1}(\tilde{f}-\tilde{e})(1-\tilde{f})v^{-1}(1-f)+w^{-1}w(f-e)+w^{-1}(\tilde{f}-\tilde{e})\tilde{e}ue\\
		&\qquad +u^{-1}\tilde{e}(1-\tilde{f})v^{-1}(1-f)+u^{-1}\tilde{e}(\tilde{f}-\tilde{e})w(f-e)+u^{-1}ue\\
		&= \, (1-f)+(f-e)+e \, = \, 1
\end{align*} and hence $g \in \GL(R)$ by Remark~\ref{remark:properties.rank.function}\ref{remark:directly.finite}. 
Finally, \begin{align*}
	ga \, &= \, v^{-1}(1-f)a+w(f-e)a+uea \, = \, v^{-1}(1-f)ea+w(f-e)ea+ua \, = \, ua, \\
	bg \, &= \, bv^{-1}(1-f)+bw(f-e)+b\!\!\smallunderbrace{ue}_{\in\,bR} \, = \, ua(1-f)+b(\tilde{f}-\tilde{e})w(f-e) \\
		& = \, ua(1-f) \, = \, ua(1-f)+uaf\, = \, ua,
\end{align*} That is, $b=gag^{-1}$ as desired. \end{proof}

\begin{prop}\label{proposition:conjugation.involution.rank} Let $R$ be an irreducible, continuous ring and $a,b\in \I(R)$. Then \begin{displaymath}
	\rk_{R}(1-a) = \rk_{R}(1-b) \ \ \Longleftrightarrow \ \ \exists g \in \GL(R) \colon \, gag^{-1} = b.
\end{displaymath} \end{prop}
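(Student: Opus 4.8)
The proof naturally splits along the characteristic of $R$, exploiting the two equivariant correspondences of Remark~\ref{remark:ehrlich} together with Lemma~\ref{lemma:conjugation.idempotent} and Lemma~\ref{lemma:conjugation.2-nilpotent}; note that since $\ZZ(R)$ is a field (Remark~\ref{remark:irreducible.center.field}), either $2$ is a central unit of $R$ (the case $\cha(R) \neq 2$) or $2 = 0$ (the case $\cha(R) = 2$), so these two cases are exhaustive. The implication ($\Longleftarrow$) is immediate: conjugation by any $g \in \GL(R)$ is a ring automorphism, hence preserves the unique rank function $\rk_{R}$ (cf.\ Remark~\ref{remark:rank.function.general}\ref{remark:uniqueness.rank.embedding}), so $gag^{-1} = b$ forces $\rk_{R}(1-b) = \rk_{R}(g(1-a)g^{-1}) = \rk_{R}(1-a)$.

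For ($\Longrightarrow$), suppose first that $\cha(R) \neq 2$. By Remark~\ref{remark:ehrlich}\ref{remark:bijection.idempotent.involution.char.neq2}, the idempotents $e \defeq \tfrac12(1+a)$ and $f \defeq \tfrac12(1+b)$ satisfy $a = 2e - 1$ and $b = 2f - 1$, hence $1 - a = 2(1-e)$ and $1 - b = 2(1-f)$. Since $2 \in \GL(R)$, the submultiplicativity of $\rk_{R}$ gives $\rk_{R}(2z) = \rk_{R}(z)$ for all $z \in R$, so the hypothesis becomes $\rk_{R}(1-e) = \rk_{R}(1-f)$. As $e \perp (1-e)$ with $e + (1-e) = 1$ (Remark~\ref{remark:quantum.logic}\ref{remark:quantum.logic.1}), additivity of $\rk_{R}$ on orthogonal idempotents yields $\rk_{R}(e) = 1 - \rk_{R}(1-e) = 1 - \rk_{R}(1-f) = \rk_{R}(f)$. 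Lemma~\ref{lemma:conjugation.idempotent} now supplies $g \in \GL(R)$ with $geg^{-1} = f$, and then $gag^{-1} = 2(geg^{-1}) - 1 = 2f - 1 = b$, as desired (equivalently, invoke the $\GL(R)$-equivariance asserted in Remark~\ref{remark:ehrlich}\ref{remark:bijection.idempotent.involution.char.neq2}).

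If instead $\cha(R) = 2$, then $1 - a = 1 + a$, and since $a^{2} = 1$ we have $(1+a)^{2} = 1 + a^{2} = 2 = 0$, so $x \defeq 1 + a \in \Nil_{2}(R)$; likewise $y \defeq 1 + b \in \Nil_{2}(R)$ (this is the content of Remark~\ref{remark:ehrlich}\ref{remark:bijection.2-nilpotent.involution.char=2}). The hypothesis reads $\rk_{R}(x) = \rk_{R}(y)$, so Lemma~\ref{lemma:conjugation.2-nilpotent} provides $g \in \GL(R)$ with $gxg^{-1} = y$, i.e.\ $1 + gag^{-1} = 1 + b$, whence $gag^{-1} = b$. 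This settles both cases.

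Essentially no obstacle remains once Lemma~\ref{lemma:conjugation.idempotent} and Lemma~\ref{lemma:conjugation.2-nilpotent} are in hand (the former quoted from~\cite{Ehr56}, the latter established above through von Neumann's rank-equivalence theorem): the argument is purely a matter of transporting the conjugacy statement across the correspondences of Remark~\ref{remark:ehrlich}. The only point demanding a moment's care is the bookkeeping of the scalar $2$ in odd characteristic — specifically that $2$ is a unit, so that multiplication by $2$ leaves $\rk_{R}$ unchanged and commutes with conjugation.
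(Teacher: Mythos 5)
Your proof is correct and follows essentially the same route as the paper's: splitting on the characteristic, invoking the equivariant correspondences of Remark~\ref{remark:ehrlich} to reduce to Lemma~\ref{lemma:conjugation.idempotent} (via the identity $\rk_{R}(e)=\rk_{R}(f)$, obtained exactly as in the paper through the invertibility of $2$ and additivity on orthogonal idempotents) in odd characteristic and to Lemma~\ref{lemma:conjugation.2-nilpotent} in characteristic~$2$. The $(\Longleftarrow)$ direction is also handled just as in the paper, by $\GL(R)$-invariance of $\rk_{R}$.
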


\begin{proof} ($\Longleftarrow$) This follows from the $\GL(R)$-invariance of $\rk_{R}$.
		
($\Longrightarrow$) We distinguish two cases, depending on the characteristic of $R$.

First, let us suppose that $\cha(R)\neq 2$. Since $\ZZ(R)$ is a field due to Remark~\ref{remark:irreducible.center.field} and $\cha(\ZZ(R)) = \cha(R) \ne 2$, we see that $2 \in \ZZ(R)\setminus \{ 0 \} \subseteq \GL(R)$. Moreover, by Remark~\ref{remark:ehrlich}\ref{remark:bijection.idempotent.involution.char.neq2}, there exist $e,f \in \E(R)$ such that $a=2e-1$ and $b=2f-1$. In turn, \begin{align*}
	1-\rk_{R}(e) \, &\stackrel{\ref{remark:properties.pseudo.rank.function}\ref{remark:rank.difference.smaller.idempotent}}{=} \, \rk_{R}(1-e) \, \stackrel{2 \in \GL(R)}{=} \, \rk_{R}(2(1-e)) \, = \, \rk_{R}(1-2e+1) \\
	& = \, \rk_{R}(1-a) \, = \, \rk_{R}(1-b) \, = \, \rk_{R}(1-2f+1) \, = \, \rk_{R}(2(1-f)) \\
	& \stackrel{2 \in \GL(R)}{=} \, \rk_{R}(1-f) \, \stackrel{\ref{remark:properties.pseudo.rank.function}\ref{remark:rank.difference.smaller.idempotent}}{=} \, 1-\rk_{R}(f),
\end{align*} i.e., $\rk_{R}(e)=\rk_{R}(f)$. Hence, by Lemma~\ref{lemma:conjugation.idempotent}, there exists $g\in \GL(R)$ with $f=geg^{-1}$, which readily entails that $b=gag^{-1}$ by Remark~\ref{remark:ehrlich}\ref{remark:bijection.idempotent.involution.char.neq2}.

Second, suppose that $\cha(R)=2$. Then Remark~\ref{remark:ehrlich}\ref{remark:bijection.2-nilpotent.involution.char=2} asserts the existence of $x,y\in \Nil_2(R)$ such that $a=1+x$ and $b=1+y$. We observe that \begin{displaymath}
	\rk_{R}(x) \, = \, \rk_{R}(1-a) \, = \, \rk_{R}(1-b) \, = \, \rk_{R}(y).
\end{displaymath} Thanks to Lemma~\ref{lemma:conjugation.2-nilpotent}, thus there exists $g\in \GL(R)$ such that $y=gxg^{-1}$, wherefore $b=gag^{-1}$ according to Remark~\ref{remark:ehrlich}\ref{remark:bijection.2-nilpotent.involution.char=2}. \end{proof}

The following two decomposition results are pivotal to the proof of Lemma~\ref{lemma:Gamma_{R}(e).subset.W^192}.

\begin{lem}\label{lemma:involution.rank.distance.char.neq2} Let $R$ be a non-discrete irreducible, continuous ring with $\cha(R)\neq 2$. Then \begin{displaymath}
	\I(R) \, \subseteq \, \left\{ gh \left\vert \, g,h\in\I(R), \, \rk_{R}(1-g) = \rk_{R}(1-h) = \tfrac{1}{2} \right\}. \right.
\end{displaymath} \end{lem}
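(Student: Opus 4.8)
The strategy is to translate the problem into one about idempotents via Remark~\ref{remark:ehrlich}\ref{remark:bijection.idempotent.involution.char.neq2} and then exhibit the two involutions directly. Note first that, since $\ZZ(R)$ is a field (Remark~\ref{remark:irreducible.center.field}) of characteristic $\neq 2$, the element $2$ is invertible in it, so $\rk_{R}(2x) = \rk_{R}(x)$ for all $x \in R$; I shall use this throughout. Write the given involution as $a = 2e - 1$ with $e \in \E(R)$, and put $s \defeq \rk_{R}(e)$.

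The crucial point is that if $g \in \I(R)$ commutes with $a$, then $h \defeq ga$ is again an involution (because $gaga = g^{2}a^{2} = 1$) and $a = gh$. So it suffices to find $p \in \E(R)$ with $pe = ep$ such that, setting $g \defeq 2p-1$ and $h \defeq ga$, both $\rk_{R}(1-g) = \tfrac12$ and $\rk_{R}(1-h) = \tfrac12$. Here $\rk_{R}(1-g) = \rk_{R}\bigl(2(1-p)\bigr) = \rk_{R}(1-p) = 1 - \rk_{R}(p)$ by Remark~\ref{remark:properties.pseudo.rank.function}\ref{remark:rank.difference.smaller.idempotent}, while a short computation using $pe = ep$ yields $1 - h = 2\bigl(p(1-e) + (1-p)e\bigr)$ with $p(1-e) \perp (1-p)e$, so that $\rk_{R}(1-h) = \rk_{R}(p(1-e)) + \rk_{R}((1-p)e)$.

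To produce such a $p$, I decompose it along $e$: I look for $p = p_{1} + p_{2}$ with orthogonal idempotents $p_{1} \le e$ and $p_{2} \le 1-e$ (any such $p$ automatically commutes with $e$, with $pe = p_{1}$, $p(1-e) = p_{2}$, and $(1-p)e = e - p_{1} \in \E(R)$ by Remark~\ref{remark:quantum.logic}). The two rank requirements then read $\rk_{R}(p_{1}) + \rk_{R}(p_{2}) = \tfrac12$ and $\rk_{R}(p_{2}) + \bigl(s - \rk_{R}(p_{1})\bigr) = \tfrac12$, a linear system with unique solution $\rk_{R}(p_{1}) = \tfrac{s}{2}$, $\rk_{R}(p_{2}) = \tfrac{1-s}{2}$. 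Since $R$ is non-discrete, $\rk_{R}(R) = [0,1]$ by Remark~\ref{remark:rank.function.general}\ref{remark:characterization.discrete}, and $\tfrac{s}{2} \in [0,s]$, $\tfrac{1-s}{2} \in [0,1-s]$; hence Lemma~\ref{lemma:order}\ref{lemma:order.1} (applied with lower bound $0$ and upper bounds $e$, resp.\ $1-e$) provides idempotents $p_{1} \le e$ and $p_{2} \le 1-e$ of exactly these ranks. Taking $g \defeq 2p-1$ and $h \defeq ga$ and inserting these ranks into the formulas above finishes the proof.

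I do not expect any genuine difficulty here; the only points needing care are the computation of $1-h$, the orthogonality bookkeeping among $p_{1}$, $p_{2}$, $e-p_{1}$ that lets one split ranks additively (via the additivity axiom for pseudo-rank functions together with Remark~\ref{remark:properties.pseudo.rank.function}\ref{remark:rank.difference.smaller.idempotent}), and checking that the degenerate cases $a = \pm 1$ (i.e.\ $e \in \{0,1\}$, hence $s \in \{0,1\}$) are subsumed — they are, since the solution $\rk_{R}(p_{1}) = s/2$, $\rk_{R}(p_{2}) = (1-s)/2$ still lies in the required intervals.
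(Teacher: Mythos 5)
Your proof is correct and takes essentially the same approach as the paper: both translate the involution $a = 2e-1$ into the problem of finding idempotents below $e$ and below $1-e$ of ranks $\tfrac{1}{2}\rk_R(e)$ and $\tfrac{1}{2}\rk_R(1-e)$ respectively (your $p_1, p_2$ correspond to the paper's $e-f, f'$), and conclude by additivity of rank on orthogonal idempotents. The one genuine streamlining on your side is the observation that an involution $g$ commuting with $a$ automatically makes $h := ga$ an involution with $gh = a$; this replaces the paper's explicit four-line verification of $gh = a$ with a two-character computation, and is a cleaner way to organize the argument.
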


\begin{proof} Let $a \in \I(R)$. By Remark~\ref{remark:ehrlich}\ref{remark:bijection.idempotent.involution.char.neq2}, there exists $e \in \E(R)$ such that $a = 2e-1$. Due to Remark~\ref{remark:rank.function.general}\ref{remark:characterization.discrete} and Lemma~\ref{lemma:order}\ref{lemma:order.1}, we find $f,f' \in \E(R)$ with $f \leq e$, $\rk_{R}(f) = \tfrac{1}{2}\rk_{R}(e)$, $f'\leq 1-e$, and $\rk_{R}(f') = \tfrac{1}{2}\rk_{R}(1-e)$. Note that \begin{displaymath}
	g \, \defeq \, 2(e-f+f')-1, \qquad h \, \defeq \, 2(1-f-f')-1
\end{displaymath} belong to $\I(R)$ by Remark~\ref{remark:quantum.logic} and Remark~\ref{remark:ehrlich}\ref{remark:bijection.idempotent.involution.char.neq2}. As $\ZZ(R)$ is a field by Remark~\ref{remark:irreducible.center.field} and $\cha(\ZZ(R)) = \cha(R) \ne 2$, we see that $2 \in \ZZ(R)\setminus \{ 0 \} \subseteq \GL(R)$. Hence, \begin{align*}
	\rk_{R}(1-g) \, &= \, \rk_{R}((1-e)-f'+f) \, = \, \rk_{R}((1-e)-f') + \rk_{R}(f) \\
		&\stackrel{\ref{remark:properties.pseudo.rank.function}\ref{remark:rank.difference.smaller.idempotent}}{=} \, \rk_{R}(1-e) - \rk_{R}(f') + \rk_{R}(f) \, = \, \tfrac{1}{2}\rk_{R}(1-e) + \tfrac{1}{2}\rk_{R}(e) \, = \, \tfrac{1}{2}, \\
	\rk_{R}(1-h) \, &= \, \rk_{R}(f+f') \, = \, \rk_{R}(f) + \rk_{R}(f') \, = \, \tfrac{1}{2}\rk_{R}(e) + \tfrac{1}{2}\rk_{R}(1-e) \, = \, \tfrac{1}{2}.
\end{align*} Finally, as desired, \begin{align*}
	gh \, &= \, (2(e-f+f')-1)(2(1-f-f')-1)\\
		&= \, 4(e-f+f')(1-f-f')-2(e-f+f')-2(1-f-f')+1\\
		&= \, 4e-4f-2e+4f-1 \, = \, 2e-1 \, = \, a. \qedhere
\end{align*} \end{proof}

\begin{lem}\label{lemma:involution.rank.distance.char=2} Let $R$ be a non-discrete irreducible, continuous ring with $\cha(R) = 2$. Then \begin{displaymath}
	\I(R) \, \subseteq \, \left\{ gh \left\vert \, g,h\in\I(R), \, \rk_{R}(1-g) = \rk_{R}(1-h) = \tfrac{1}{4} \right\}. \right.
\end{displaymath} \end{lem}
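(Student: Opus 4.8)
The plan is to reduce the statement, via conjugacy of involutions, to an explicit realisation of each conjugacy class, and then to a small computation inside an embedded copy of $\M_{2}(\mathbb{F}_{2})$.

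First I would set up the reduction. By Remark~\ref{remark:ehrlich}\ref{remark:bijection.2-nilpotent.involution.char=2}, any $a\in\I(R)$ is of the form $a=1+x$ with $x\in\Nil_{2}(R)$, and $r\defeq\rk_{R}(1-a)=\rk_{R}(x)$ satisfies $r\le\tfrac12$: indeed $xR\subseteq\rAnn(x)$, so $r=\delta_{\lat(R)}(xR)\le\delta_{\lat(R)}(\rAnn(x))=1-r$ by Lemma~\ref{lemma:pseudo.dimension.function}\ref{lemma:rank.dimension.annihilator}. Since the set on the right-hand side of the claimed inclusion is invariant under conjugation in $\GL(R)$ and, by Proposition~\ref{proposition:conjugation.involution.rank}, any two involutions at the same rank distance from $1$ are conjugate, it is enough to produce, for each $r\in[0,\tfrac12]$, involutions $g,h$ with $\rk_{R}(1-g)=\rk_{R}(1-h)=\tfrac14$ whose product $gh$ is an involution at rank distance $r$ from $1$. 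I would look for these as $g=1+w$ and $h=1+(w+t)$ with $w,t\in\Nil_{2}(R)$, $\rk_{R}(t)=r$ and $wt=tw=0$; a direct computation in characteristic $2$ then gives $g^{2}=h^{2}=1$ and $gh=hg=1+t$, so that $gh\in\I(R)$ with $\rk_{R}(1-gh)=r$. Thus everything comes down to exhibiting such $w,t$ with the additional property $\rk_{R}(w)=\rk_{R}(w+t)=\tfrac14$.

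For the construction I would fix pairwise orthogonal idempotents summing to $1$ of suitable ranks (using Lemma~\ref{lemma:order}) and realise $w$ and $t$ as ``off-diagonal matrix units'' between idempotent pieces of equal rank, via Lemma~\ref{lemma:matrixunits.idempotents}. If $r\le\tfrac14$, I would take pieces of ranks $\tfrac14,\tfrac14,r,\tfrac12-r$, let $w$ identify the two rank-$\tfrac14$ pieces (so $\rk_{R}(w)=\tfrac14$), and let $t$ identify a rank-$r$ subidempotent of the second rank-$\tfrac14$ piece with the rank-$r$ piece; then $w^{2}=t^{2}=wt=tw=0$ hold because the relevant pieces are orthogonal, and $w+t$ is injective on the (rank-$\tfrac14$) right support of $w$ — which contains the right support of $t$ — and annihilates everything else, whence $\rk_{R}(w+t)=\tfrac14$. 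If $r\ge\tfrac14$, I would take pieces $V_{1},V_{2},V_{3}$ of ranks $r,r,1-2r$, let $t=\tau$ be the matrix unit identifying $V_{2}$ with $V_{1}$, and set $w\defeq\tau\nu$ for an element $\nu$ of the corner ring $A\defeq V_{2}RV_{2}$; since $w$ and $t$ lie in the same off-diagonal block, $w^{2}=wt=tw=0$ are automatic, and one computes $\rk_{R}(w)=r\,\rk_{A}(\nu)$ and $\rk_{R}(w+t)=r\,\rk_{A}(1_{A}+\nu)$ (using that $\tau$ induces an isomorphism $V_{2}R\to V_{1}R$ and Remark~\ref{remark:eRe.non-discrete.irreducible.continuous}). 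So it suffices to find $\nu\in A$ with $\rk_{A}(\nu)=\rk_{A}(1_{A}+\nu)=\tfrac{1}{4r}$, a value in $[\tfrac12,1]$.

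This is the crux. The auxiliary claim I would prove is: in any non-discrete irreducible, continuous ring $A$ with $\cha A=2$ and for every $\beta\in[\tfrac12,1]$, there is $\nu\in A$ with $\rk_{A}(\nu)=\rk_{A}(1+\nu)=\beta$. Here $A$ contains a copy of $\M_{2}(\ZZ(A))$ — obtained from a rank-$\tfrac12$ idempotent via Lemma~\ref{lemma:matrixunits.idempotents} and Remark~\ref{remark:matrixunits.ring.homomorphism} — and hence (as $\cha\ZZ(A)=2$) a unit $u$ of multiplicative order $3$, namely the image of $\left(\begin{smallmatrix}0&1\\1&1\end{smallmatrix}\right)$, with $1+u=u^{2}\in\GL(A)$. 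Splitting $1=e_{1}+e_{2}+e_{3}$ into pairwise orthogonal idempotents of ranks $1-\beta,\ 1-\beta,\ 2\beta-1$, placing such a unit $u_{3}$ in the corner $e_{3}Ae_{3}$ (and $u_{3}\defeq 0$ if $e_{3}=0$), and setting $\nu\defeq e_{1}+u_{3}$, one gets $1+\nu=e_{2}+(e_{3}+u_{3})$ since $\cha A=2$, and additivity of rank over orthogonal corners (Lemma~\ref{lemma:sum.embedding}\ref{lemma:sum.embedding.2}, together with Remark~\ref{remark:eRe.non-discrete.irreducible.continuous}) yields $\rk_{A}(\nu)=\rk_{A}(1+\nu)=(1-\beta)+(2\beta-1)=\beta$. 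The main obstacle is precisely this claim: for $r>\tfrac14$ one needs the strict cancellation $\rk_{R}(w+t)=\tfrac14<r=\rk_{R}(t)$, which cannot be arranged with $w$ supported disjointly from $t$ and is forced instead through the existence of a unit $u$ with $1+u$ invertible — that is, through the embedded $\M_{2}(\mathbb{F}_{2})$ — while the remaining ingredients are routine manipulations of matrix units and orthogonal corners.
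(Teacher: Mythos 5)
Your proof is correct but takes a genuinely different route from the paper's. The paper works directly with the given $a = 1+b$: using the annihilator structure of $b$, it decomposes $R$ into idempotent pieces, halves $e$ (where $Re = Rb$) into $e_0,e_1$, chooses orthogonal $f_0,f_1$ of rank $\tfrac14\rk_R(f)$ inside a complementary idempotent $f$ with $\rk_R(f)=1-2\rk_R(b)$, picks a unit $u\in\GL(fRf)$ conjugating $f_0$ to $f_1$, and sets $x_i\defeq uf_0+be_i$; the shared summand $uf_0$ cancels in $x_0+x_1$ (characteristic $2$), recovering $b$, while each $x_i$ is $2$-nilpotent of rank exactly $\tfrac14$. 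This handles all values of $\rk_R(b)\in[0,\tfrac12]$ uniformly and does not invoke the conjugacy classification. You instead reduce via Proposition~\ref{proposition:conjugation.involution.rank} to exhibiting, for each target rank $r\in[0,\tfrac12]$, a model product $gh=1+t$, and split into $r\le\tfrac14$ (a routine disjoint-support construction) and $r\ge\tfrac14$, where the crux is your auxiliary claim that in any non-discrete irreducible, continuous ring $A$ of characteristic~$2$ and for any $\beta\in[\tfrac12,1]$ there is $\nu\in A$ with $\rk_A(\nu)=\rk_A(1+\nu)=\beta$, established via an embedded $\M_2(\mathbb{F}_2)$ and the identity $1+u=u^2$ for a unit $u$ of order~$3$. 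Both arguments exploit the same obstruction---to achieve $\rk_R(x_0+x_1)<\rk_R(x_0)+\rk_R(x_1)$ one must overlap the supports while keeping each $x_i$ injective on its support---but realise the overlap differently: a shared $2$-nilpotent summand in the paper, versus the characteristic-$2$ cancellation $1+u=u^2$ inside a corner ring in yours. Your route is somewhat longer (the conjugacy proposition plus a case split) but isolates a clean auxiliary fact of some independent interest; the paper's argument is more compact and treats all ranks in one stroke. (One small point: the partial matrix unit $\tau$ between two idempotents of equal rank follows most directly from Lemma~\ref{lemma:conjugation.idempotent} rather than Lemma~\ref{lemma:matrixunits.idempotents}, whose hypothesis requires all summands to have equal rank; this is a citation slip, not a gap.)
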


\begin{proof} Let $a \in \I(R)$. By Remark~\ref{remark:ehrlich}\ref{remark:bijection.2-nilpotent.involution.char=2}, there exists $b \in \Nil_{2}(R)$ such that $a=1+b$. According to Remark~\ref{remark:regular.idempotent.ideals}, we find $e \in \E(R)$ such that $Rb = Re$. By Remark~\ref{remark:bijection.annihilator}, thus $\rAnn(b)=\rAnn(Rb)=\rAnn(Re)=(1-e)R$. Since $b\in\Nil_{2}(R)$, we see that $bR\subseteq \rAnn(b)=(1-e)R$. Consequently, \cite[Lemma~7.5]{SchneiderGAFA} asserts the existence of some $f'\in \E(R)$ such that $bR=f'R$ and $f'\leq 1-e$. Using Remark~\ref{remark:quantum.logic}\ref{remark:quantum.logic.1}, we infer that $f \defeq 1-e-f' \in \E(R)$ and $f\leq 1-e$, i.e., $e \perp f$. Clearly, $bR=f'R=(1-e-f)R$. Note that \begin{align*}
	\rk_{R}(b) \, &= \, \rk_{R}(1-e-f) \, \stackrel{\ref{remark:properties.pseudo.rank.function}\ref{remark:rank.difference.smaller.idempotent}}{=} \, \rk_{R}(1-e)-\rk_{R}(f)\\
	&\stackrel{\ref{remark:properties.pseudo.rank.function}\ref{remark:rank.difference.smaller.idempotent}}{=} \, 1-\rk_{R}(e)-\rk_{R}(f) \, = \, 1-\rk_{R}(b)-\rk_{R}(f),
\end{align*} wherefore \begin{equation}\label{eq5}
	\rk_{R}(f) \, = \, 1-2\rk_{R}(b).
\end{equation} Furthermore, thanks to Remark~\ref{remark:rank.function.general}\ref{remark:characterization.discrete} and Lemma~\ref{lemma:order}\ref{lemma:order.1}, there exists $e_{0} \in \E(R)$ such that $e_{0} \leq e$ and $\rk_{R}(e_{0}) = \tfrac{1}{2}\rk_{R}(e)$. Then $e_{1} \defeq e-e_{0} \in \E(R)$ and $e_{1} \leq e$ due to Remark~\ref{remark:quantum.logic}\ref{remark:quantum.logic.1} and $\rk_{R}(e_{1}) = \rk_{R}(e)-\rk_{R}(e_{0}) = \tfrac{1}{2}\rk_{R}(e)$ by Remark~\ref{remark:properties.pseudo.rank.function}\ref{remark:rank.difference.smaller.idempotent}. In a similar manner, using Remark~\ref{remark:rank.function.general}\ref{remark:characterization.discrete}, Lemma~\ref{lemma:order}\ref{lemma:order.1}, Remark~\ref{remark:quantum.logic}\ref{remark:quantum.logic.1} and Remark~\ref{remark:properties.pseudo.rank.function}\ref{remark:rank.difference.smaller.idempotent}, we find $f_{0},f_{1} \in \E(R)$ such that \begin{align*}
	&f_{0} \leq f, \quad f_{1} \leq f, \quad f_{0} \perp f_{1}, \quad \rk_{R}(f_{0}) = \rk_{R}(f_{1}) = \tfrac{1}{4}\rk_{R}(f).
\end{align*} If $f \ne 0$, then by Remark~\ref{remark:eRe.non-discrete.irreducible.continuous} and Lemma~\ref{lemma:conjugation.idempotent} there is $u \in \GL(fRf)$ with $uf_{0} = f_{1}u$. Otherwise, in case $f = 0$, we observe that $f_{0} = 0 = f_{1}$ and $\GL(fRf) = \GL(\{ 0 \}) = \{ 0 \}$ and we let $u \defeq 0$. Now, define \begin{displaymath}
	x_{i} \, \defeq \, uf_{0} + be_{i} \qquad (i\in\{0,1\}).
\end{displaymath} For any $i,j \in \{0,1\}$, we see that \begin{align*}
	x_{i}x_{j} \, &= \, (uf_{0}+be_{i})(uf_{0}+be_{j}) \, = \, uf_{0}uf_{0}+uf_{0}be_{j}+be_{i}uf_{0}+be_{i}be_{j} \\
	&= \, u\!\smallunderbrace{f_{0}f_{1}}_{=\,0}\!u+uf_{0}\!\smallunderbrace{f(1-e-f)}_{=\,0}\!be_{j}+be_{i}\!\smallunderbrace{ef}_{=\,0}\!f_{1}u+be_{i}\!\smallunderbrace{e(1-e-f)}_{=\,0}\!be_{j} \, = \, 0.
\end{align*} Hence, $x_{0},x_{1} \in \Nil_{2}(R)$ and $x_{1}x_{0} =x_{0}x_{1} = 0$. Since $e \in Rb$ and $(1-e-f)b = b$, we find $c \in R$ such that $cb = e$ and $c(1-e-f) = c$. Moreover, let us consider the inverse\footnote{In particular, if $f = 0$, then $u^{-1} = 0$.} $u^{-1}\in\GL(fRf)\subseteq fRf$. Then, for each $i\in\{0,1\}$, \begin{align}
	\left(u^{-1}+c\right)\!(uf_{0}+be_{i}) \, &= \, u^{-1}uf_{0}+u^{-1}be_{i}+cuf_{0}+cbe_{i}\nonumber\\
	&= \, f_{0}+u^{-1}\!\smallunderbrace{f(1-e-f)}_{=\,0}be_{i}+c\smallunderbrace{(1-e-f)f}_{=\,0}\!f_{1}u+ee_{i}\nonumber\\
	&= \, f_{0}+e_{i}\label{eq4}
\end{align} and therefore \begin{align*}
	\rk_{R}(f_{0})+\rk_{R}(e_{i}) \, &= \, \rk_{R}(f_{0}+e_{i}) \, \stackrel{\eqref{eq4}}{=} \, \rk_{R}\!\left(\left(u^{-1}+c\right)\!(uf_{0}+be_{i})\right)\\
	&\leq \, \rk_{R}(uf_{0}+be_{i}) \, = \, \rk_{R}(x_{i}) \\ 
	&\stackrel{\ref{remark:properties.pseudo.rank.function}\ref{remark:inequation.sum.pseudo.rank}}{\leq} \, \rk_{R}(uf_{0})+\rk_{R}(be_{i}) \, \leq \, \rk_{R}(f_{0})+\rk_{R}(e_{i}) ,
\end{align*} which entails that \begin{displaymath}
	\rk_{R}(x_{i}) = \rk_{R}(f_{0})+\rk_{R}(e_{i}) = \tfrac{1}{4}\rk_{R}(f)+\tfrac{1}{2}\rk_{R}(e)\stackrel{\eqref{eq5}}{=} \tfrac{1}{4}(1-2\rk_{R}(b))+\tfrac{1}{2}\rk_{R}(b) = \tfrac{1}{4}.
\end{displaymath} We conclude that, for each $i\in\{0,1\}$, \begin{displaymath}
	g_{i} \, \defeq \, 1+x_{i} \, \stackrel{\ref{remark:ehrlich}\ref{remark:bijection.2-nilpotent.involution.char=2}}{\in} \, \I(R)
\end{displaymath} and $\rk_{R}(1-g_{i}) = \rk_{R}(x_{i}) = \tfrac{1}{4}$. Finally, as intended, \begin{align*}
	&g_{0}g_{1} \, = \, (1+x_{0})(1+x_{1}) \, = \, 1+x_{0}+x_{1}+x_{0}x_{1} \, = \, 1+x_{0}+x_{1} \\
	&\ \ = \, 1+uf_{0}+be_{0}+uf_{0}+be_{1} \, \stackrel{\cha(R)=2}{=} \, 1+b(e_{0}+e_{1}) \, = \, 1+be \, = \, 1+b \, = \, a. \qedhere
\end{align*} \end{proof}

Turning to this section's final results, let us recall that a group $G$ is said to be \emph{Boolean} if $x^{2} = 1$ for every $x \in G$.

\begin{remark} Let $S$ be a commutative unital ring. Then $(\E(S),\vee,\wedge,\neg,0,1)$, with the operations defined by \begin{displaymath}
	x\vee y \, \defeq \, x+y-xy,\qquad x\wedge y \, \defeq \, xy, \qquad \neg x \, \defeq \, 1-x
\end{displaymath} for all $x,y \in \E(S)$, constitutes a Boolean algebra (see, e.g.,~\cite[Chapter~8, p.~83]{GoodearlBook}). In particular, $(\E(S),\mathbin{\triangle})$ with \begin{displaymath}
	x\mathbin{\triangle} y \, \defeq \, (x\vee y)\wedge \neg(x\wedge y) \, = \, x+y-2xy \qquad (x,y\in\E(S))
\end{displaymath} is a Boolean group. This Boolean group will be considered in Proposition~\ref{proposition:homomorphism.idempotent.eRe.GL(R)} and the proof of Corollary~\ref{corollary:boolean.subgroup}. \end{remark}

\begin{prop}\label{proposition:homomorphism.idempotent.eRe.GL(R)} Let $R$ be a unital ring, $e \in \E(R)$, $a \in \GL(R)$ with $e \perp aea^{-1}$, and $S$ be a commutative unital subring of $eRe$. Then \begin{displaymath}
	\phi \colon \, \E(S) \, \longrightarrow\, \GL(R), \quad x \, \longmapsto \, ax+xa^{-1}+1-x-axa^{-1}
\end{displaymath} is a well-defined group embedding. Furthermore, if $R$ is a regular ring and $\rho$ is a pseudo-rank function on $R$, then \begin{equation*}
	\phi \colon \, (\E(S),d_{\rho}) \, \longrightarrow \, (\GL(R),d_{\rho})
\end{equation*} is 4-Lipschitz, hence continuous. \end{prop}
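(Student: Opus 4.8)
The plan is to recognise $\phi(x)$ explicitly as the involution that exchanges the idempotent $x$ with its conjugate $axa^{-1}$. Write $f \defeq aea^{-1}$, so that $e \perp f$ by hypothesis, and set $h \defeq e+f \in \E(R)$, $g \defeq 1-h \in \E(R)$. The two elementary identities $ae = fa$ and $ea^{-1} = a^{-1}f$ (each a restatement of the definition of $f$) show, for every $x$ in the subring $S \subseteq eRe$, that $ax \in fRe$, $\,xa^{-1} \in eRf$, $\,axa^{-1} \in fRf$, whereas $x, e-x \in eRe$. Hence
\[
	\phi(x) \, = \, g + B_x, \qquad B_x \, \defeq \, (e-x) + xa^{-1} + ax + (f-axa^{-1}) \, \in \, hRh ,
\]
and I shall think of $B_x$ as the $2\times 2$ block matrix $\left(\begin{smallmatrix} e-x & xa^{-1}\\ ax & f-axa^{-1}\end{smallmatrix}\right)$ with respect to the splitting $h = e+f$.

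The heart of the argument is the identity $B_x B_{x'} = B_{x \mathbin{\triangle} x'}$ in $hRh$, where $x \mathbin{\triangle} x' = x+x'-2xx'$. I would prove it by computing the four blocks of the product, repeatedly collapsing the mixed terms by means of $ae = fa$, $ea^{-1}=a^{-1}f$, $ex = x = xe$, $x^{2}=x$ --- for instance $xa^{-1}\cdot ax' = xx'$, $\,xa^{-1}\cdot f = xa^{-1}$, $\,f\cdot ax' = ax'$, $\,axa^{-1}\cdot ax' = axx'$ --- together with the commutativity $xx'=x'x$ in $S$; each of the four blocks then reduces to the corresponding block of $B_{x \mathbin{\triangle} x'}$. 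Since $B_{0} = e+f = h$ is the unit of $hRh$ and $x \mathbin{\triangle} x = 0$, this gives $B_x B_x = h$, so $B_x \in \GL(hRh)$ and $\phi(x) = g + B_x$ is an involution lying in $\Gamma_{R}(h) \le \GL(R)$ (Lemma~\ref{lemma:subgroup.unit.group}); in particular $\phi$ is well defined. As $g$ is orthogonal to $hRh$, one gets $\phi(x)\phi(x') = g + B_x B_{x'} = g + B_{x \mathbin{\triangle} x'} = \phi(x \mathbin{\triangle} x')$, so $\phi$ is a homomorphism, and $\phi(x) = 1$ forces $B_x = h$, hence $e-x = e$, i.e.\ $x = 0$; thus $\phi$ is an embedding.

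For the Lipschitz bound, the constant and the ``$-x$'' contributions cancel in a difference: with $d \defeq x-x'$,
\[
	\phi(x) - \phi(x') \, = \, ad + da^{-1} - d - ada^{-1} .
\]
Applying subadditivity of $\rho$ (Remark~\ref{remark:properties.pseudo.rank.function}\ref{remark:inequation.sum.pseudo.rank}) together with $\rho(uv) \le \min\{\rho(u),\rho(v)\}$ yields
\[
	\rho\bigl(\phi(x)-\phi(x')\bigr) \, \le \, \rho(ad) + \rho(da^{-1}) + \rho(d) + \rho(ada^{-1}) \, \le \, 4\rho(d) \, = \, 4\,d_{\rho}(x,x') ,
\]
so $\phi$ is $4$-Lipschitz, hence continuous.

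All computations are routine; the only delicate point is the bookkeeping in $B_x B_{x'} = B_{x \mathbin{\triangle} x'}$, where one must be disciplined about pushing the factors $a$ past $e$ and $f$. Conceptually there is no real obstacle: $\phi(x)$ is simply the transposition swapping the corner $xR$ with $axa^{-1}R$ via the mutually inverse partial isomorphisms $ax$ and $xa^{-1}$ (and the identity elsewhere), and such transpositions compose according to the symmetric difference of the idempotents indexing them.
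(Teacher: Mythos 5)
Your proof is correct, and it takes a conceptually cleaner route than the paper. Both arguments reduce to verifying the identity $\phi(x)\phi(y) = \phi(x\mathbin{\triangle} y)$, but the paper establishes it by brute-force expansion of the product, killing most cross-terms via the single perpendicularity relation $x \perp aya^{-1}$ for $x,y \in \E(eRe)$. You instead split off the orthogonal complement $1-h$ with $h \defeq e + aea^{-1}$ and view the remainder $B_x$ as a $2 \times 2$ block matrix over the splitting $h = e \oplus aea^{-1}$; the block multiplication then organizes the same cancellations into four short identities, each collapsing after pushing $a$ past $e$ and $f = aea^{-1}$ via $ae = fa$, $ea^{-1} = a^{-1}f$. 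This framing buys you two things the paper's version leaves implicit: the relation $B_x^2 = h$ exhibits each $\phi(x)$ as an involution in $\Gamma_{R}(h)$ (the ``generalized transposition'' picture you describe, and in fact the very thing Proposition~\ref{proposition:homomorphism.idempotent.eRe.GL(R)} is used for), and injectivity is read off from the $(1,1)$ block ($\phi(x) = 1 \Rightarrow B_x = h \Rightarrow e - x = e \Rightarrow x = 0$), whereas the paper argues $\phi(x) = 1 \Rightarrow x = \phi(x)x = ax \in eR \cap aeR = \{0\}$. The Lipschitz estimate is identical in both.
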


\begin{proof} Note that $axa^{-1} \leq aea^{-1}$ for all $x \in \E(eRe)$. As $e \perp aea^{-1}$, this entails that \begin{equation}\label{stern}
	\forall x,y \in \E(eRe) \colon \quad x \perp aya^{-1}.
\end{equation} Therefore, if $x,y \in \E(S)$, then \begin{align*}
	\phi(x)\phi(y) \, &= \, \!\left(ax+xa^{-1}+1-x-axa^{-1}\right)\!\left(ay+ya^{-1}+1-y-aya^{-1}\right)\\
			&= \, \smallunderbrace{axay}_{=\,axa\mathrlap{ya^{-1}a \, \stackrel{\eqref{stern}}{=}\,0}}+\:axya^{-1}+ax(1-y)-\smallunderbrace{axaya^{-1}}_{\stackrel{\eqref{stern}}{=}\,0}\\
			&\qquad+xa^{-1}ay+\smallunderbrace{xa^{-1}ya^{-1}}_{=\,a^{-1}axa^{-1}y\mathrlap{a^{-1}\,\stackrel{\eqref{stern}}{=}\,0}}+\:xa^{-1}-\smallunderbrace{xa^{-1}y}_{=\,a^{-1}a\mathrlap{xa^{-1}y\,\stackrel{\eqref{stern}}{=}\,0}}-\:xa^{-1}aya^{-1}\\
			&\qquad+\smallunderbrace{(1-x)ay}_{=\,(1-x)ay\mathrlap{a^{-1}a\,\stackrel{\eqref{stern}}{=}\,ay}}+\:(1-x)ya^{-1}+(1-x)(1-y)-\smallunderbrace{(1-x)aya^{-1}}_{\stackrel{\eqref{stern}}{=} \, aya^{-1}}\\
			&\qquad-axa^{-1}ay-\smallunderbrace{axa^{-1}ya^{-1}}_{\stackrel{\eqref{stern}}{=}\,0}-\smallunderbrace{axa^{-1}(1-y)}_{\stackrel{\eqref{stern}}{=}\,axa^{-1}}+\:axa^{-1}aya^{-1}\\
			&= \, axya^{-1}+ax-axy+xy+xa^{-1}-xya^{-1}+ay\\
			&\qquad+(1-x)ya^{-1}+(1-x)(1-y)-aya^{-1}-axy-axa^{-1}+axya^{-1}\\
			&= \, ax+ay-2axy+xa^{-1}+ya^{-1}-2xya^{-1}\\
			&\qquad+1-x-y+2xy-axa^{-1}-aya^{-1}+2axya^{-1}\\
			&= \, a(x\mathbin{\triangle} y)+(x\mathbin{\triangle} y)a^{-1}+1-(x\mathbin{\triangle} y)-a(x\mathbin{\triangle} y)a^{-1}\\
			&= \, \phi(x\mathbin{\triangle} y).
\end{align*} Since $\phi(0) = 1$, it follows that $\phi \colon \E(S) \to \GL(R)$ is a well-defined group homomorphism. From $e\perp aea^{-1}$, we deduce that $eR \cap aeR = eR \cap aea^{-1}R = \{ 0 \}$. Now, if $x \in \E(S)$ and $\phi(x) = 1$, then \begin{displaymath}
	eR \, \ni \, x \, = \, \phi(x)x \, = \, ax + \smallunderbrace{xa^{-1}x}_{=\,a^{-1}ax\mathrlap{a^{-1}x \, \stackrel{\eqref{stern}}{=} \, 0}} + \, (1-x)x + \smallunderbrace{axa^{-1}x}_{\stackrel{\eqref{stern}}{=} \, 0} \, = \, ax \, \in \, aeR
\end{displaymath}  and thus $x=0$. This shows that $\phi$ is an embedding. Finally, if $R$ is regular and $\rho$ is a pseudo-rank function on $R$, then \begin{align*}
	d_{\rho}(\phi(x),\phi(y)) \, &= \, \rho(\phi(x)-\phi(y))\\
		& = \, \rho\!\left(a(x-y)+(x-y)a^{-1}-(x-y)-a(x-y)a^{-1}\right) \\
		& \stackrel{\ref{remark:properties.pseudo.rank.function}\ref{remark:inequation.sum.pseudo.rank}}{\leq} \, \rho(a(x-y))+\rho\!\left((x-y)a^{-1}\right)\!+\rho(x-y) +\rho\!\left(a(x-y)a^{-1}\right) \\
		& \stackrel{a \in \GL(R)}{=} \, 4\rho(x-y) \, = \, 4d_{\rho}(x,y)
\end{align*} for all $x,y \in \E(S)$, i.e., $\phi \colon (\E(S),d_{\rho}) \to (\GL(R),d_{\rho})$ is indeed 4-Lipschitz. \end{proof}

\begin{cor}\label{corollary:boolean.subgroup} Let $R$ be a non-discrete irreducible, continuous ring. Then $\GL(R)$ contains a separable, uncountable, Boolean subgroup. \end{cor}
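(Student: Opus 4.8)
The plan is to exhibit the desired subgroup as the image, under the group embedding of Proposition~\ref{proposition:homomorphism.idempotent.eRe.GL(R)}, of a ``Cantor-type'' Boolean group of idempotents built inside a corner ring. Since $R$ is non-discrete, $\rk_{R}(R) = [0,1]$ by Remark~\ref{remark:rank.function.general}\ref{remark:characterization.discrete}, so Lemma~\ref{lemma:order}\ref{lemma:order.1} (applied with bottom element $0$, top element $1$, and value $\tfrac12$) furnishes $e \in \E(R)$ with $\rk_{R}(e) = \tfrac12$. As $\rk_{R}(1-e) = \tfrac12 = \rk_{R}(e)$, Lemma~\ref{lemma:conjugation.idempotent} yields $a \in \GL(R)$ with $aea^{-1} = 1-e$, so that $e \perp aea^{-1}$; note also $e \ne 0$.

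Next I would build the Boolean group of idempotents inside $eRe$. By Remark~\ref{remark:eRe.non-discrete.irreducible.continuous}, $eRe$ is again a non-discrete, irreducible, continuous ring; applying Lemma~\ref{lemma:order}\ref{lemma:order.1} in $eRe$ to obtain an idempotent of $\rk_{eRe}$-value $\tfrac12$, and then Lemma~\ref{lemma:order}\ref{lemma:order.2} in $eRe$, I obtain pairwise orthogonal $(e_{n})_{n \in \N_{>0}}$ in $\E(eRe)$ with $\rk_{eRe}(e_{n}) = 2^{-n}$, hence $\rk_{R}(e_{n}) = \tfrac12 \cdot 2^{-n}$. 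Since $(R,\rk_{R})$ is a complete rank ring (Theorem~\ref{theorem:unique.rank.function}) and the $e_{n}$ are pairwise orthogonal, Lemma~\ref{lemma:convergence.sequences} allows me to set $b_{J} \defeq \sum_{i \in J} e_{i} \in \E(R)$ for each $J \subseteq \N_{>0}$; note $b_{\N_{>0}} = e$, because $b_{\N_{>0}} \le e$ and $\rk_{R}(b_{\N_{>0}}) = \sum_{n \ge 1} \rk_{R}(e_{n}) = \rk_{R}(e)$. Using the $\ZZ(R)$-algebra embedding $\prod_{i} e_{i}Re_{i} \to R$ of Lemma~\ref{lemma:convergence.sequences}, one checks the identities $b_{J}b_{J'} = b_{J \cap J'}$ and $b_{J} + b_{J'} - 2 b_{J}b_{J'} = b_{J \mathbin{\triangle} J'}$, as well as the implication $b_{J} = b_{J'} \Rightarrow J = J'$ (for $i \in J \mathbin{\triangle} J'$ one would get $e_{i} = e_{i}b_{J} = e_{i}b_{J'} = 0$, a contradiction). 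Consequently $\mathcal{B} \defeq \{ b_{J} \mid J \subseteq \N_{>0} \}$ is uncountable, the $\Z$-linear span $S$ of $\mathcal{B}$ is a commutative unital (with unit $e = b_{\N_{>0}}$) subring of $eRe$ with $\mathcal{B} \subseteq \E(S)$, and $(\mathcal{B}, {\mathbin{\triangle}})$ is a subgroup of the Boolean group $(\E(S), {\mathbin{\triangle}})$.

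For separability, from Lemma~\ref{lemma:sum.embedding}\ref{lemma:sum.embedding.2} together with continuity of $\rk_{R}$ (or directly from Lemma~\ref{lemma:convergence.sequences}) I get $d_{R}(b_{J},b_{J'}) = \rk_{R}(b_{J}-b_{J'}) = \tfrac12 \sum_{i \in J \mathbin{\triangle} J'} 2^{-i}$; hence, given $\epsilon > 0$, truncating any $J$ to $J \cap \{1,\dots,N\}$ with $\tfrac12 \cdot 2^{-N} < \epsilon$ shows that the countable set $\{ b_{J} \mid J \in \Pfin(\N_{>0}) \}$ is dense in $(\mathcal{B}, d_{R})$. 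Finally, Proposition~\ref{proposition:homomorphism.idempotent.eRe.GL(R)}, applied to $e$, $a$ and $S$, shows that the map $\phi \colon \E(S) \to \GL(R)$, $x \mapsto ax + xa^{-1} + 1 - x - axa^{-1}$, is an injective group homomorphism which is $4$-Lipschitz, hence continuous, with respect to $d_{R}$. Therefore $\phi(\mathcal{B})$ is a subgroup of $\GL(R)$; it is Boolean (being a homomorphic image of the Boolean group $(\mathcal{B},{\mathbin{\triangle}})$), uncountable (by injectivity of $\phi$), and separable (as the continuous image of the separable metric space $(\mathcal{B}, d_{R})$), which is exactly what is claimed.

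The only slightly delicate point I anticipate is the bookkeeping that turns $\mathcal{B}$ into a faithful copy of the clopen algebra of $\{0,1\}^{\N}$ --- that is, justifying the multiplicative and symmetric-difference identities and the injectivity for \emph{infinite} index sets via the embedding of Lemma~\ref{lemma:convergence.sequences} --- together with verifying that the hypotheses of Proposition~\ref{proposition:homomorphism.idempotent.eRe.GL(R)} (notably that $S$ is a commutative unital subring of $eRe$ and that $e \perp aea^{-1}$) are met verbatim. Everything else is routine.
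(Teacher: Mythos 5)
Your proof is correct, and it arrives at the desired subgroup through the same two pivotal ingredients the paper uses — Lemma~\ref{lemma:conjugation.idempotent} to manufacture $a \in \GL(R)$ with $aea^{-1} = 1-e$, and Proposition~\ref{proposition:homomorphism.idempotent.eRe.GL(R)} to transport a Boolean group of idempotents into $\GL(R)$ via a $4$-Lipschitz embedding — but you construct the commutative ring $S$ and its Boolean group of idempotents in a genuinely different way. The paper takes a maximal chain $E$ in $(\E(R),\leq)$ via the Hausdorff maximal principle, appeals to an external result ([Corollary~7.20] of \cite{SchneiderGAFA}) to identify $(E,\leq)$ with $([0,1],\leq)$ isometrically, truncates it at the idempotent $e$ of rank $\tfrac12$ to get a chain $E'$ of size $|[0,\tfrac12]|$, and lets $S$ be the subring generated by $E'$; separability of $E'$ is then automatic. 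You instead build a countable pairwise-orthogonal family $(e_n)$ inside $eRe$ using Lemma~\ref{lemma:order} and Remark~\ref{remark:eRe.non-discrete.irreducible.continuous}, and take the Cantor-type family $\mathcal{B} = \{b_J = \sum_{i\in J} e_i : J \subseteq \N_{>0}\}$, relying on the completeness of $(R,d_R)$ via Lemma~\ref{lemma:convergence.sequences} to make sense of the infinite sums, on Lemma~\ref{lemma:sum.embedding}\ref{lemma:sum.embedding.2} and continuity of $\rk_R$ for the explicit metric formula $d_R(b_J,b_{J'}) = \tfrac12\sum_{i \in J\triangle J'} 2^{-i}$, and on that formula for density of $\{b_J : J \in \Pfin(\N_{>0})\}$. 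Your route is a bit longer but is self-contained within the lemmas already proved in the paper, and it has the pleasant side effect of producing a concrete isometric copy of the Cantor group $(2^{\N},\triangle)$ (up to the factor $\tfrac12$ and the dyadic weights) rather than invoking a nonconstructive maximal chain. One small point that is essentially automatic but worth saying explicitly: the identity $b_{\N_{>0}} = e$ follows since $b_{\N_{>0}} \leq e$ (continuity of multiplication) and $\rk_R(e - b_{\N_{>0}}) = \rk_R(e) - \sum_{n\geq 1}\rk_R(e_n) = 0$; you do note this, so the proof stands.
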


\begin{proof} According to the Hausdorff maximal principle, there exists a maximal chain~$E$ in $(\E(R),\leq)$. By~\cite[Corollary~7.20]{SchneiderGAFA}, the map ${{\rk_{R}}\vert_{E}}\colon (E,\leq)\to ([0,1],\leq)$ is an order isomorphism. Now, let $e\defeq({{\rk_{R}}\vert_{E}})^{-1}\!\left(\tfrac{1}{2}\right)$. Since $(E,\leq)$ is a chain, \begin{displaymath}
	E' \, \defeq \, \{ f \in E \mid f \leq e \} \, = \, \left\{ f \in E \left\vert \, \rk_{R}(f) \leq \tfrac{1}{2} \right\} \right.
\end{displaymath} is a commutative submonoid of the multiplicative monoid of $eRe$. Hence, the unital subring $S$ of $eRe$ generated by $E'$ is commutative, too. As ${{\rk_{R}}\vert_{E}}\colon (E,d_{R}) \to ([0,1],d)$ is isometric with respect to the standard metric $d$ on $[0,1]$ by Remark~\ref{remark:properties.pseudo.rank.function}\ref{remark:rank.order.isomorphism}, we see that $(E,d_{R})$ is separable, therefore $(E',d_{R})$ is separable. Due to Remark~\ref{remark:eRe.non-discrete.irreducible.continuous} and Remark~\ref{remark:properties.pseudo.rank.function}\ref{remark:rank.group.topology}, we know that $eRe$ is a topological ring with regard to the topology generated by $d_{R}$, wherefore $(S,d_{R})$ is separable, thus $(\E(S),d_{R})$ is separable. From $E' \subseteq \E(S)$ and $\vert E'\vert =\left\lvert\left[0,\tfrac{1}{2}\right]\right\rvert$, we infer that $\E(S)$ is uncountable. Since \begin{displaymath}
	\rk_{R}(1-e) \, \stackrel{\ref{remark:properties.pseudo.rank.function}\ref{remark:rank.difference.smaller.idempotent}}{=} \, 1-\rk_{R}(e) \, = \, 1-\tfrac{1}{2} \, = \, \tfrac{1}{2} \, = \, \rk_{R}(e),
\end{displaymath} Lemma~\ref{lemma:conjugation.idempotent} asserts the existence of $a \in \GL(R)$ with $aea^{-1} = 1-e$. By Proposition~\ref{proposition:homomorphism.idempotent.eRe.GL(R)}, there exists a continuous injective homomorphism from $\E(S)$ to $\GL(R)$, whose image necessarily constitutes a separable, uncountable, Boolean subgroup of $\GL(R)$. \end{proof}

\section{Dynamical independence of ideals}\label{section:dynamical.independence}

The focus of this section is on the following notion of dynamical independence of principal right ideals of a regular ring (Definition~\ref{definition:halperin.independence}), which turns out to be a key ingredient for the proof of our characterization of algebraic elements (Theorem~\ref{theorem:matrixrepresentation.case.algebraic}). 

\begin{definition}\label{definition:halperin.independence} Let $R$ be a regular ring, let $a\in R$, and let $m \in \N$. Then a pair $(I,J) \in \lat(R) \times \lat(R)$ will be called \emph{$(a,m)$-independent} if $(I,aI,\ldots,a^{m-1}I,J)\perp$. \end{definition}

\begin{lem}\label{lemma:independence.inductive} Let $R$ be an irreducible, continuous ring, let $a\in R$, let $m \in \N$, let $J,J_{1} \in \lat(R)$, and let $P \defeq \{ I\in \lat(R) \mid \text{$(I,J)$ $(a,m)$-independent}, \, I \subseteq J_{1} \}$. Then the partially ordered set $(P,{\subseteq})$ is inductive. In particular, every element of $P$ is contained in a maximal element of $(P,{\subseteq})$. \end{lem}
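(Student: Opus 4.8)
The plan is to check that $(P,{\subseteq})$ is inductive and then derive the ``in particular'' clause from Zorn's lemma. First, $P\neq\emptyset$: the tuple $(0,a0,\dots,a^{m-1}0,J)=(0,\dots,0,J)$ is independent (all the meets in the criterion of Remark~\ref{remark:independence.equivalence.intersection} are trivially $0$), so $0\in P$; also the whole statement is trivial when $m=0$, so I assume $m\geq 1$ below. It suffices to show that every non-empty chain $C\subseteq P$ has an upper bound in $P$, and the obvious candidate is $I_{0}\defeq\bigvee C\in\lat(R)$, which exists since $\lat(R)$, being a continuous geometry, is complete. Evidently $I_{0}\subseteq J_{1}$, so the whole task reduces to showing that $(I_{0},J)$ is $(a,m)$-independent, i.e. $(I_{0},aI_{0},\dots,a^{m-1}I_{0},J)\perp$. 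I will work with the dimension function $\delta\defeq\delta_{\lat(R)}$ of the irreducible continuous geometry $\lat(R)$ (Theorem~\ref{theorem:dimension.function.lattice}) and its metric $d\defeq d_{\delta}$. Since $\rk_{R}$ is a rank function (Theorem~\ref{theorem:unique.rank.function}), $\delta$ coincides with $\delta_{\rk_{R}}$, so Lemma~\ref{lemma:pseudo.dimension.function}\ref{lemma:multplication.Lipschitz} shows that left multiplication by any ring element, in particular by $a^{k}$, is $1$-Lipschitz on $(\lat(R),d)$, and Proposition~\ref{proposition:dimension.function.continuous} gives $\delta(\bigvee D)=\sup\delta(D)$ for every chain $D\subseteq\lat(R)$. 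I will also use the elementary reformulation: in a modular lattice carrying a dimension function, $(x_{1},\dots,x_{n})\perp$ holds if and only if $\delta(x_{1}\vee\dots\vee x_{n})=\sum_{i=1}^{n}\delta(x_{i})$ — this follows by telescoping the modularity identity to $\delta(x_{1}\vee\dots\vee x_{n})=\sum_{i}\delta(x_{i})-\sum_{i=1}^{n-1}\delta((x_{1}\vee\dots\vee x_{i})\wedge x_{i+1})$ and observing that, $\delta$ being strictly monotone, the non-negative terms of the second sum vanish exactly when the corresponding meets are $0$, i.e. (by Remark~\ref{remark:independence.equivalence.intersection}) exactly when $(x_{1},\dots,x_{n})\perp$.

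The main point — and the step I expect to be the chief obstacle, since it is where continuity of the geometry and of left multiplication both enter — is to verify that $\delta(a^{k}I_{0})=\sup_{I\in C}\delta(a^{k}I)$ for each $k\in\{0,\dots,m-1\}$. The inequality ``$\geq$'' is immediate, as $a^{k}I\subseteq a^{k}I_{0}$ for every $I\in C$. For ``$\leq$'', note that $I\subseteq I_{0}$ yields $d(I,I_{0})=\delta(I_{0})-\delta(I)$ and $a^{k}I\subseteq a^{k}I_{0}$ yields $d(a^{k}I,a^{k}I_{0})=\delta(a^{k}I_{0})-\delta(a^{k}I)$, so the $1$-Lipschitz property of multiplication by $a^{k}$ gives $\delta(a^{k}I_{0})-\delta(a^{k}I)\leq\delta(I_{0})-\delta(I)$ for all $I\in C$; taking the infimum over $I\in C$ and using $\sup_{I\in C}\delta(I)=\delta(\bigvee C)=\delta(I_{0})$ from Proposition~\ref{proposition:dimension.function.continuous} we get $\delta(a^{k}I_{0})\leq\sup_{I\in C}\delta(a^{k}I)$.

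To conclude, fix any $I\in C$. Since $(I,J)$ is $(a,m)$-independent, the reformulation above gives $\delta(I\vee aI\vee\dots\vee a^{m-1}I\vee J)=\sum_{k=0}^{m-1}\delta(a^{k}I)+\delta(J)$; as $I\vee aI\vee\dots\vee a^{m-1}I\vee J\subseteq I_{0}\vee aI_{0}\vee\dots\vee a^{m-1}I_{0}\vee J$, monotonicity of $\delta$ yields $\delta(I_{0}\vee aI_{0}\vee\dots\vee a^{m-1}I_{0}\vee J)\geq\sum_{k=0}^{m-1}\delta(a^{k}I)+\delta(J)$. Now let $\epsilon>0$. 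By the key step there are $I^{(0)},\dots,I^{(m-1)}\in C$ with $\delta(a^{k}I^{(k)})>\delta(a^{k}I_{0})-\epsilon$, and since $C$ is a chain there is $I^{\ast}\in C$ containing all of them, whence $\delta(a^{k}I^{\ast})>\delta(a^{k}I_{0})-\epsilon$ for every $k$ and therefore $\delta(I_{0}\vee aI_{0}\vee\dots\vee a^{m-1}I_{0}\vee J)\geq\sum_{k=0}^{m-1}\delta(a^{k}I^{\ast})+\delta(J)>\sum_{k=0}^{m-1}\delta(a^{k}I_{0})+\delta(J)-m\epsilon$. Letting $\epsilon\to 0$ and combining with the reverse inequality (automatic from the modular law) gives $\delta(I_{0}\vee aI_{0}\vee\dots\vee a^{m-1}I_{0}\vee J)=\sum_{k=0}^{m-1}\delta(a^{k}I_{0})+\delta(J)$, so $(I_{0},aI_{0},\dots,a^{m-1}I_{0},J)\perp$ by the reformulation, i.e. $I_{0}\in P$. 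Hence $(P,{\subseteq})$ is inductive. For the final assertion, given $I\in P$, the poset $\{I'\in P\mid I\subseteq I'\}$ is non-empty and inductive (a chain in it has the join as an upper bound, which lies in $P$ and contains $I$), so Zorn's lemma furnishes a maximal element of it, which is clearly maximal in $P$.
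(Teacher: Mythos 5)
Your proof is correct and takes a genuinely different route. The paper's argument is topological: it shows that $P$ is closed in $(\lat(R),d_{\lat(R)})$ by exhibiting it as an intersection of preimages of singletons under Lipschitz maps built from lattice operations and left multiplication by the $a^{i}$, and then invokes Proposition~\ref{proposition:dimension.function.continuous} only to place $\bigvee\mathcal{C}$ in the metric closure $\overline{\mathcal{C}}\subseteq P$. You instead reformulate independence as the dimension-additivity identity $\delta(x_{1}\vee\dots\vee x_{n})=\sum_{i}\delta(x_{i})$ --- a clean observation, obtained by telescoping modularity and using strict monotonicity, that the paper does not spell out --- and verify it for $\bigvee C$ by an explicit $\epsilon$-approximation, drawing on exactly the same two ingredients: the $1$-Lipschitz property of left multiplication (Lemma~\ref{lemma:pseudo.dimension.function}\ref{lemma:multplication.Lipschitz}) and the order-continuity of $\delta_{\lat(R)}$ along chains (Proposition~\ref{proposition:dimension.function.continuous}). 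In effect, the paper packages ``independence is a closed condition'' abstractly, whereas you unpack it numerically; the two proofs are of comparable length, and yours is arguably more self-contained, trading the appeal to Birkhoff's Lipschitz estimate for lattice meets and joins for a direct computation with $\delta$. One terse spot: ``taking the infimum'' in your key step really means choosing $I\in C$ with $\delta(I_{0})-\delta(I)<\epsilon$, deducing $\delta(a^{k}I_{0})<\delta(a^{k}I)+\epsilon\leq\sup_{I'\in C}\delta(a^{k}I')+\epsilon$, and letting $\epsilon\to 0$; the elided justification is fine, so this is a stylistic remark rather than a gap.
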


\begin{proof} First we show that $P$ is closed in $(\lat(R),d_{\lat(R)})$. Note that $\delta_{\lat(R)} = \delta_{\rk_{R}}$ thanks to Theorem~\ref{theorem:unique.rank.function}, Lemma~\ref{lemma:pseudo.dimension.function} and Theorem~\ref{theorem:dimension.function.lattice}. Applying~\cite[V.7, p.~76, Lemma]{BirkhoffBook} (or~\cite[I.6, Satz 6.2(IV), p.~46]{MaedaBook}) and Lemma~\ref{lemma:pseudo.dimension.function}\ref{lemma:multplication.Lipschitz}, we see that \begin{displaymath}
	\Phi \colon \, (\lat(R),d_{\lat(R)}) \, \longrightarrow \, (\lat(R),d_{\lat(R)}), \quad I \, \longmapsto \, I+J_{1}
\end{displaymath} is $1$-Lipschitz, \begin{displaymath}	
	\Psi \colon \, (\lat(R),d_{\lat(R)}) \, \longrightarrow \, (\lat(R),d_{\lat(R)}), \quad I \, \longmapsto \, J \cap \left( \sum\nolimits_{i=0}^{m-1} a^{i}I\right)
\end{displaymath} is $m$-Lipschitz, and for each $i\in\{0,\ldots,m-1\}$ the map \begin{displaymath}
	\Psi_{i} \colon \, (\lat(R),d_{\lat(R)}) \, \longrightarrow \, (\lat(R),d_{\lat(R)}), \quad I \, \longmapsto \, a^{i}I \cap \left( J + \sum\nolimits_{j=0,\,j\ne i}^{m-1} a^{j}I\right)
\end{displaymath} is $m$-Lipschitz. The continuity of those maps implies that
\begin{align*}
	P \, &= \, \{ I\in \lat(R) \mid (I,aI,\ldots,a^{m-1}I,J)\perp, \, I \subseteq J_{1} \} \\
		&\stackrel{\ref{remark:independence.equivalence.intersection}}{=} \, {\Phi^{-1}(\{J_{1}\})} \cap {\Psi^{-1}(\{\{0\}\})} \cap {\bigcap\nolimits_{i=0}^{m-1}\Psi_i^{-1}(\{\{0\}\})}
\end{align*} is closed in $(\lat(R),d_{\lat(R)})$, as claimed. 
	
We now deduce that $(P,{\subseteq})$ is inductive. Since $\{ 0\} \in P$, we see that $P$ is non-void. Consider a non-empty chain $\mathcal{C}$  in  $(P,\subseteq)$. As $P$ is closed in $(\lat(R),d_{{\lat(R)}})$, we conclude that $\overline{\mathcal{C}}\subseteq P$, where the closure is taken with respect to the topology induced by $d_{{\lat(R)}}$. For every $\epsilon\in\R_{>0}$, Proposition~\ref{proposition:dimension.function.continuous} asserts the existence of some $C\in\mathcal{C}$ such that \begin{displaymath}
	\delta_{\lat(R)}\!\left(\bigvee \mathcal{C}\right)\!-\delta_{\lat(R)}(C) \, \leq \, \epsilon ,
\end{displaymath} whence \begin{displaymath}
	d_{{\lat(R)}}\!\left(C,\bigvee\mathcal{C}\right)\! \, \stackrel{C\subseteq\bigvee\mathcal{C}}{=} \, \delta_{\lat(R)}\!\left(\bigvee \mathcal{C}\right)\! -\delta_{\lat(R)}(C) \, \leq \, \epsilon .
\end{displaymath} Thus, $\bigvee \mathcal{C}\in \overline{\mathcal{C}} \subseteq P$. In turn, $\bigvee \mathcal{C}$ constitutes the desired upper bound for $\mathcal{C}$ in $(P,{\subseteq})$. This shows that $(P,\subseteq)$ is inductive. The final assertion follows by Zorn's lemma. \end{proof}
	
\begin{lem}\label{lemma:independence.sum} Let $R$ be a regular ring, let $a\in R$, let $m \in \N$, let $I,I',J \in \lat(R)$, and $J' \defeq \sum\nolimits_{i=0}^{m-1}a^{i}I + J$. If $(I,J)$ is $(a,m)$-independent and $(I',J')$ is $(a,m)$-independent, then $(I+I',J)$ is $(a,m)$-independent. \end{lem}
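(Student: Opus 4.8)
The plan is to establish the independence
$(I+I',\,a(I+I'),\,\ldots,\,a^{m-1}(I+I'),\,J)\perp$
directly, by way of the additive description of independence from Remark~\ref{remark:independence.ideals.idempotents} (applicable since $R$ is regular), combined with the elementary fact that left multiplication by $a^{l}$ is additive, so that $a^{l}(I+I')=a^{l}I+a^{l}I'$ for every $l\in\{0,\ldots,m-1\}$.

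Concretely, I would pick $z_{0}\in I+I'$, $z_{1}\in a(I+I')$, \ldots, $z_{m-1}\in a^{m-1}(I+I')$ and $w\in J$ with $z_{0}+\ldots+z_{m-1}+w=0$, and argue that all of them vanish. First write $z_{l}=u_{l}+v_{l}$ with $u_{l}\in a^{l}I$ and $v_{l}\in a^{l}I'$. The key move is a regrouping: since each $u_{l}$ lies in $a^{l}I$ and $w\in J$, the element $w'\defeq u_{0}+\ldots+u_{m-1}+w$ lies in $\sum_{i=0}^{m-1}a^{i}I+J=J'$. Reading the relation as $v_{0}+\ldots+v_{m-1}+w'=0$ with $v_{l}\in a^{l}I'$ and $w'\in J'$, the $(a,m)$-independence of $(I',J')$ together with Remark~\ref{remark:independence.ideals.idempotents} forces $v_{0}=\ldots=v_{m-1}=0$ and $w'=0$. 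But $w'=0$ is precisely $u_{0}+\ldots+u_{m-1}+w=0$ with $u_{l}\in a^{l}I$ and $w\in J$, so the $(a,m)$-independence of $(I,J)$ together with Remark~\ref{remark:independence.ideals.idempotents} gives $u_{0}=\ldots=u_{m-1}=0$ and $w=0$. Therefore $z_{l}=u_{l}+v_{l}=0$ for all $l$ and $w=0$, and a final appeal to Remark~\ref{remark:independence.ideals.idempotents} yields $(I+I',\,a(I+I'),\,\ldots,\,a^{m-1}(I+I'),\,J)\perp$, i.e.\ $(I+I',J)$ is $(a,m)$-independent, as desired.

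I do not foresee any real obstacle here. The whole argument hinges on the single idea of absorbing the ``$I$-part'' $u_{0}+\ldots+u_{m-1}$ of a vanishing relation into the $J$-summand $w$, so that the combined term falls inside $J'$ and the hypothesis on $(I',J')$ can be invoked to kill the ``$I'$-parts'' $v_{l}$; the leftover relation among the $u_{l}$ and $w$ is then dispatched by the hypothesis on $(I,J)$. Aside from that, it is just a bookkeeping unwinding of the additive characterisation in Remark~\ref{remark:independence.ideals.idempotents}.
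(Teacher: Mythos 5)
Your proof is correct and follows essentially the same route as the paper's: take a vanishing relation among elements of $I+I'$, $a(I+I')$, \ldots, $a^{m-1}(I+I')$, $J$, split each summand into its $I$-part and $I'$-part, absorb the $I$-part and the $J$-summand into a single element of $J'$ so that the $(a,m)$-independence of $(I',J')$ kills the $I'$-parts, and then invoke the $(a,m)$-independence of $(I,J)$ on the remainder. The only cosmetic difference is that the paper parametrises elements of $a^iI$ as $a^ix_i$ with $x_i\in I$ whereas you pick $u_l\in a^lI$ directly; the logic is identical.
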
	

\begin{proof} Suppose that $(I,J)$ is $(a,m)$-independent and $(I',J')$ is $(a,m)$-independent. Let $x_{0},\ldots,x_{m-1} \in I$, $y_{0},\ldots,y_{m-1} \in I'$ and $z \in J$ be such that \begin{displaymath}
	\sum\nolimits_{i=0}^{m-1} a^{i}x_{i} + \sum\nolimits_{i=0}^{m-1} a^{i}y_{i} + z \, = \, 0 .
\end{displaymath} Since $(I',J')$ is $(a,m)$-independent and $\sum\nolimits_{i=0}^{m-1} a^{i}x_{i} + z \in J'$, it follows that $a^{i}y_{i} = 0$ for each $i \in \{ 0,\ldots,m-1\}$ and $\sum\nolimits_{i=0}^{m-1} a^{i}x_{i} + z = 0$. As $(I,J)$ is $(a,m)$-independent, the latter necessitates that $a^{i}x_{i} = 0$ for each $i \in \{ 0,\ldots,m-1\}$ and $z = 0$. According to Remark~\ref{remark:independence.ideals.idempotents}, this shows that $(I+I',\ldots,a^{m-1}(I+I'),J)$ is independent, i.e., $(I+I',J)$ is $(a,m)$-independent. \end{proof}

The subsequent Lemma~\ref{lemma:independence.halperin}, this section's main insight, refines and extends an argument from~\cite{Halperin62}. For its proof, the following characterization of the center of a non-discrete irreducible, continuous ring will be needed.

\begin{prop}[\cite{Halperin62}, Corollary and Remark to Lemma~2.2, p.~4]\label{proposition:center.halperin} If $R$ is a non-discrete irreducible, continuous ring, then \begin{displaymath}
	\ZZ(R) \, = \, \{ a \in R \mid \forall e \in \E(R) \colon \, eae = ae \} .
\end{displaymath} \end{prop}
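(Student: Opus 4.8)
The plan is to prove the two inclusions separately. The inclusion $\ZZ(R)\subseteq\{a\in R\mid \forall e\in\E(R)\colon eae=ae\}$ is immediate, since for central $a$ and any $e\in\E(R)$ we have $eae=e(ea)=ea=ae$.

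For the converse, suppose $a\in R$ satisfies $eae=ae$ for all $e\in\E(R)$; I want to show $a\in\ZZ(R)$. The first — and decisive — step is to observe that the same hypothesis applied to the complementary idempotent $1-e\in\E(R)$ gives $(1-e)a(1-e)=a(1-e)$, which after expanding and cancelling reads $eae=ea$. Combining the two identities yields $ae=eae=ea$ for every $e\in\E(R)$, i.e.\ $a$ lies in the centralizer of $\E(R)$ in $R$. It therefore remains to prove that this centralizer is exactly $\ZZ(R)$, that is, an element commuting with all idempotents is central.

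To finish, I would first upgrade commutation with idempotents to commutation with ``off-diagonal'' pieces: for $e\in\E(R)$ and $x\in(1-e)Re$ one has $x^{2}=0$, hence $1+x\in\GL(R)$ with inverse $1-x$ and $(1+x)e(1+x)^{-1}=e+x\in\E(R)$; as $a$ commutes with both $e$ and $e+x$, it commutes with $x$, so $a$ centralizes $(1-e)Re$ and, symmetrically, $eR(1-e)$, for every $e\in\E(R)$. Then I would use that $R$ is not a division ring to choose $n\geq 2$ and pairwise orthogonal idempotents $e_{1},\dots,e_{n}$ of equal rank with $e_{1}+\dots+e_{n}=1$ (e.g.\ $n=2$ with $\rk_{R}(e_{1})=\tfrac12$ via Lemma~\ref{lemma:order}\ref{lemma:order.1} if $R$ is non-discrete, and $n=m$ with the standard idempotents if $R\cong\M_{m}(D)$ with $m\geq 2$), and obtain a family of matrix units $s=(s_{ij})_{i,j=1}^{n}$ for $R$ from Lemma~\ref{lemma:matrixunits.idempotents}. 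For $i\neq j$ the Peirce component $s_{ii}Rs_{jj}$ is contained in $s_{ii}R(1-s_{ii})$ and $s_{jj}Rs_{ii}$ in $(1-s_{ii})Rs_{ii}$, so $a$ centralizes all of these; and for the diagonal components, fixing any $k\neq i$, the identity $s_{ii}bs_{ii}=(s_{ii}bs_{ik})(s_{ki})$ with $s_{ii}bs_{ik}\in s_{ii}Rs_{kk}$ and $s_{ki}\in s_{kk}Rs_{ii}$ reduces the diagonal case to the off-diagonal one (a product of two elements centralized by $a$ is centralized by $a$). Since $b=\sum_{i,j=1}^{n}s_{ii}bs_{jj}$ for every $b\in R$, this gives $ab=ba$, hence $a\in\ZZ(R)$.

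The slick part is the passage through $1-e$, which converts the seemingly one-sided hypothesis into genuine commutation with $\E(R)$; the step that actually needs care is the last one, where one must have enough idempotents to run the Peirce decomposition — this is where the hypothesis that $R$ is a non-trivial continuous ring is really used, in line with the statement being false for noncommutative division rings (where $\E(R)=\{0,1\}$), which are implicitly excluded here.
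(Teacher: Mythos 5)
The paper provides no proof of this statement at all---it is simply a citation to Halperin---so your argument is filling a genuine gap rather than deviating from the paper's route. The argument itself is correct. Passing to the complementary idempotent $1-e$ turns the one-sided hypothesis $eae=ae$ into $eae=ea$, hence $ea=eae=ae$ for every $e\in\E(R)$; the conjugation $(1+x)e(1+x)^{-1}=e+x$ for $x\in(1-e)Re$ (valid since $x^{2}=0$, $ex=0$, $xe=x$, so $1+x\in\GL(R)$ and $e+x\in\E(R)$) correctly upgrades commutation with idempotents to commutation with the off-diagonal Peirce pieces $(1-e)Re$ and, symmetrically, $eR(1-e)$; and the identity $s_{ii}bs_{ii}=(s_{ii}bs_{ik})s_{ki}$ with $k\ne i$ reduces the diagonal pieces to the off-diagonal ones, so that $b=\sum_{i,j}s_{ii}bs_{jj}$ closes the argument. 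Your closing caveat is exactly the right thing to flag and is not a defect of your proof: by the paper's own definitions a (possibly noncommutative) division ring $D$ \emph{is} an irreducible continuous ring, with $\E(D)=\{0,1\}$, and for noncommutative $D$ the displayed equality fails outright. Your argument needs at least two orthogonal equal-rank idempotents summing to $1$, which is available precisely when $R$ is not a division ring---via $\tfrac12\in\rk_{R}(R)=[0,1]$ and Lemma~\ref{lemma:order}\ref{lemma:order.1} in the non-discrete case, and via the standard $E_{11},\dots,E_{mm}$ if $R\cong\M_{m}(D)$ with $m\geq2$. Since the paper only ever applies Proposition~\ref{proposition:center.halperin} inside non-discrete rings (in the proof of Lemma~\ref{lemma:independence.halperin} it is applied to a corner $e_{0}Re_{0}$ of a non-discrete $R$, itself non-discrete by Remark~\ref{remark:eRe.non-discrete.irreducible.continuous}), this restriction is harmless downstream.
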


For the statement of the next result, let us clarify some notation and terminology. To this end, let $K$ be a field and let $R$ be a unital $K$-algebra. We denote by $K[X]$ the polynomial ring over $K$ and by $\deg \colon K[X]\setminus\{0\} \to \N$ the usual degree function. For any $a \in R$, we consider the induced evaluation map \begin{displaymath}
	K[X] \, \longrightarrow \, R, \quad p=\sum\nolimits_{i=0}^{m}c_{i}X_{i} \, \longmapsto \, p(a) \defeq p_{R}(a) \defeq \sum\nolimits_{i=0}^{m}c_{i}a^{i} ,
\end{displaymath} which is a unital $K$-algebra homomorphism.

\begin{lem}[cf.~\cite{Halperin62}, Lemma~5.1]\label{lemma:independence.halperin} Let $R$ be a non-discrete irreducible, continuous ring, let $K \defeq \ZZ(R)$, let $a \in R$, let $J \in \lat(R)$ and let $J_{1} \in \lat(R)$ be such that $J \subseteq J_{1}$, $aJ \subseteq J$, $aJ_{1} \subseteq J_{1}$. Suppose that $m\in\N$ is such that \begin{displaymath}
	\forall p \in K[X] \setminus \{0\} \colon \quad \deg(p)<m \ \Longrightarrow \ \rk_{R}(p(a))=1 .
\end{displaymath} Then the following hold. \begin{enumerate}
	\item\label{lemma:independence.halperin.1} If $J \ne J_{1}$, then there exists $I \in \lat(R)$ such that $(I,J)$ is $(a,m)$-independent and $\{ 0 \} \ne I \subseteq J_{1}$.
	\item\label{lemma:independence.halperin.2} Suppose that $a^{m}x \in \sum\nolimits_{i=0}^{m-1} Ka^{i}x + J$ for every $x \in J_{1}$. If $I \in \lat(R)$ is maximal such that $(I,J)$ is $(a,m)$-independent and $I \subseteq J_{1}$, then $J_{1} = \bigoplus\nolimits_{i=0}^{m-1}a^{i} I \oplus J$.
\end{enumerate} \end{lem}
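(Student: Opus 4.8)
The plan is to set $J' \defeq \sum_{i=0}^{m-1} a^{i} I + J$, to observe that it suffices to prove $J' = J_{1}$, and then to derive a contradiction from $J' \ne J_{1}$ by using part~\ref{lemma:independence.halperin.1} to enlarge $I$, contradicting its maximality.

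First I would collect the basic facts about $J'$. Since $(I,J)$ is $(a,m)$-independent, the sum defining $J'$ is direct, i.e.\ $J' = \bigoplus_{i=0}^{m-1} a^{i} I \oplus J$, so proving $J' = J_{1}$ yields exactly the asserted identity. The inclusion $J' \subseteq J_{1}$ is immediate from $I \subseteq J_{1}$, $aJ_{1} \subseteq J_{1}$ and $J \subseteq J_{1}$. The point requiring the hypothesis is that $aJ' \subseteq J'$: for $i \le m-2$ one has $a \cdot a^{i}I = a^{i+1}I \subseteq J'$ and $aJ \subseteq J \subseteq J'$, so only $a^{m}I \subseteq J'$ needs checking. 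For $x \in I \subseteq J_{1}$, the hypothesis gives $a^{m}x \in \sum_{i=0}^{m-1} Ka^{i}x + J$, and since $K = \ZZ(R)$ is central we have $ca^{i}x = a^{i}(xc) \in a^{i}I$ for every $c \in K$, whence $Ka^{i}x \subseteq a^{i}I$ and therefore $a^{m}x \in \sum_{i=0}^{m-1} a^{i}I + J = J'$. Thus $a^{m}I \subseteq J'$ and $aJ' \subseteq J'$.

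Now suppose toward a contradiction that $J' \ne J_{1}$. Then $J'$ again satisfies $J' \subseteq J_{1}$, $aJ' \subseteq J'$ and $aJ_{1} \subseteq J_{1}$, together with $J' \ne J_{1}$, so part~\ref{lemma:independence.halperin.1}, applied with $J'$ in place of $J$, produces $I' \in \lat(R)$ with $\{0\} \ne I' \subseteq J_{1}$ and $(I',J')$ $(a,m)$-independent. Since $J' = \sum_{i=0}^{m-1} a^{i}I + J$, Lemma~\ref{lemma:independence.sum} (with this $I$, $I'$ and $J$) then shows that $(I+I',J)$ is $(a,m)$-independent, and clearly $I+I' \subseteq J_{1}$. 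Finally, $(a,m)$-independence of $(I',J')$ forces $I' \cap J' = \{0\}$, and $I \subseteq J'$, so $I' \cap I = \{0\}$; as $I' \ne \{0\}$ this gives $I \subsetneq I+I'$. Hence $I+I'$ is an element of the poset $P$ of Lemma~\ref{lemma:independence.inductive} strictly above $I$, contradicting maximality of $I$. Therefore $J' = J_{1}$, which is the claim.

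The only genuinely delicate step is the verification that $aJ' \subseteq J'$, needed so that part~\ref{lemma:independence.halperin.1} may legitimately be invoked with $J'$ in place of $J$; this is precisely where the hypothesis $a^{m}x \in \sum_{i<m} Ka^{i}x + J$ for $x \in J_{1}$ is consumed, via the centrality of $K$. Everything else is bookkeeping with the independence relation, Lemma~\ref{lemma:independence.sum}, and the direct-sum conventions of Remark~\ref{remark:independence.ideals.idempotents}.
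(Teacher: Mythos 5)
Your argument for part \ref{lemma:independence.halperin.2} is correct and coincides essentially step for step with the paper's own proof of that part: form $J' \defeq \sum_{i=0}^{m-1} a^{i}I + J$, check that $J' \subseteq J_1$ and $aJ' \subseteq J'$ (the latter via centrality of $K$, which gives $Ka^{i}x \subseteq a^{i}I$ for $x \in I$), and if $J' \ne J_1$ apply part \ref{lemma:independence.halperin.1} to $J'$ together with Lemma~\ref{lemma:independence.sum} to enlarge $I$ and contradict maximality. Your explicit observation that $I \cap I' = \{0\}$ because $I \subseteq J'$ and $I' \cap J' = \{0\}$, so that $I \subsetneq I + I'$, is a small refinement the paper leaves implicit.

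The gap is that the lemma has two parts and you prove only part \ref{lemma:independence.halperin.2}, treating part \ref{lemma:independence.halperin.1} as a black box. Part \ref{lemma:independence.halperin.1} is where the substance lies, and the paper devotes the bulk of its proof to it: an induction on $m$ whose inductive step assumes, toward a contradiction, that $a^{m}I \subseteq I + aI + \cdots + a^{m-1}I + J$ for every $I \subseteq I_0$, then uses Proposition~\ref{proposition:center.halperin} to show that the coefficients $u_0,\ldots,u_{m-1}$ appearing in $a^{m}e_0 = x + \sum_{i=0}^{m-1} a^{i}u_i$ (with $e_0R = I_0$, $x \in J$, $u_i \in e_0Re_0$) must lie in $\ZZ(e_0Re_0) = Ke_0$. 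This produces a monic polynomial $p$ of degree $m$ with $p(a)e_0 = x \in J$ and $p(a) \in \GL(R)$ by the rank hypothesis, which forces $e_0 \in J$ and hence $I_0 = \{0\}$, the desired contradiction. Without an argument for part \ref{lemma:independence.halperin.1}, your appeal to it in step (2) is unjustified and the lemma as a whole remains unproved.
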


\begin{proof} \ref{lemma:independence.halperin.1} The argument, which follows the lines of~\cite[Proof of Lemma~5.1]{Halperin62}, proceeds by induction on $m \in \N$. If $m = 0$, then the statement is trivially satisfied for $I \defeq J_{1}$, as the $1$-tuple $(I,\ldots,a^{m-1}I,J) = (J)$ is clearly independent in $\lat(R)$ (cf.~Remark~\ref{remark:independence.equivalence.intersection}). For $m = 1$, we see that, thanks to Lemma~\ref{lemma:complement}, there exists $J' \in \lat(R)$ such that $J \oplus J' = J_{1}$, where $J' \neq \{0\}$ as $J \ne J_{1}$. For the induction step, let us assume that the desired implication is true for some $m \in \N_{>0}$. Suppose now that \begin{displaymath}
	\forall p\in K[X]\setminus \{0\}\colon\quad \deg(p)<m+1 \ \Longrightarrow \ \rk_{R}(p(a))=1.
\end{displaymath} As $X\in K[X]\setminus\{0\}$ and $\deg(X)=1<m+1$, we see that $\rk_{R}(a)=1$, so $a\in\GL(R)$ by Remark~\ref{remark:properties.rank.function}\ref{remark:invertible.rank}. Our induction hypothesis asserts the existence of some $I_{0} \in \lat(R)$ such that $\{ 0 \} \ne I_{0}\subseteq J_{1}$ and $(I_{0},\ldots,a^{m-1}I_{0},J)\perp$. We will deduce that \begin{equation}\label{eq3}
	\exists I \in \lat(R) \colon \quad I\subseteq I_{0}, \ \, a^{m}I \nsubseteq I+\ldots+a^{m-1}I+J.
\end{equation} For a proof of~\eqref{eq3} by contraposition, suppose that \begin{displaymath}
	\forall I\in \lat(R), \, I\subseteq I_{0} \colon \quad a^{m}I \subseteq I+\ldots+a^{m-1}I+J.
\end{displaymath} From this we conclude that \begin{equation}\label{eqhalp}
	\forall e\in\E(R)\cap I_{0}\ \exists x\in J\ \exists u_{0},\ldots,u_{m-1}\in eRe\colon\quad a^{m}e = x+\sum\nolimits_{i=0}^{m-1}a^{i}u_{i}.
\end{equation} Indeed, if $ e\in\E(R)\cap I_{0}$, then $a^{m}e \in eI_{0}+\ldots+a^{m-1}eI_{0}+J$, thus \begin{displaymath}
	a^{m}e \, \in \, eI_{0}e+\ldots+a^{m-1}eI_{0}e+Je \, \subseteq \, eRe+\ldots+a^{m-1}eRe+J .
\end{displaymath} Now, let us choose any $e_{0} \in \E(R)$ with $e_{0}R = I_{0}$. According to~\eqref{eqhalp}, there exist $x \in J$ and $u_{0},\ldots,u_{m-1} \in e_{0}Re_{0}$ such that $a^{m}e_{0} = x+\sum\nolimits_{i=0}^{m-1}a^{i}u_{i}$. We will show that $u_{0},\ldots,u_{m-1} \in \ZZ(e_{0}Re_{0})$. As $e_{0}Re_{0}$ is a non-discrete irreducible, continuous ring by Remark~\ref{remark:eRe.non-discrete.irreducible.continuous}, we may do so using Proposition~\ref{proposition:center.halperin}. To this end, let $e\in \E(e_{0}Re_{0})$. Then \begin{displaymath}
	a^{m}e \, = \, a^{m}e_{0}e \, = \, xe+\sum\nolimits_{i=0}^{m-1}a^{i}u_{i}e .
\end{displaymath} Moreover, since $\E(e_{0}Re_{0}) \subseteq {\E(R)} \cap {I_{0}}$, assertion~\eqref{eqhalp} provides the existence of $y \in J$ and $v_{0},\ldots,v_{m-1} \in eRe$ such that $a^{m}e = y+\sum\nolimits_{i=0}^{m-1}a^{i}v_{i}$. Hence, \begin{displaymath}
	0 \, = \, a^{m}e-a^{m}e \, = \, \smallunderbrace{(y-xe)}_{\in\, J} + \sum\nolimits_{i=0}^{m-1}\smallunderbrace{a^{i}(v_{i}-u_{i}e)}_{\in\, a^{i}I_{0}}.
\end{displaymath} As $(I_{0},\ldots,a^{m-1}I_{0},J)\perp$ and $a\in\GL(R)$, thus $v_{i}=u_{i}e$ for each $i\in\{0,\ldots,m-1\}$. It follows that, for every $i\in\{0,\ldots,m-1\}$, \begin{displaymath}
	eu_{i}e \, = \, ev_{i} \, = \, v_{i} \, = \, u_{i}e.
\end{displaymath} Due to Proposition~\ref{proposition:center.halperin}, this shows that $u_{0},\ldots,u_{m-1}\in \ZZ (e_{0}Re_{0})$. So, by Remark~\ref{remark:eRe.non-discrete.irreducible.continuous}, for each $i \in \{0,\ldots,m-1\}$ we find $z_{i} \in K$ such that $u_{i} = z_{i}e_{0}$. In turn, \begin{displaymath}
	a^{m}e_{0} \, = \, x+\sum\nolimits_{i=0}^{m-1}a^{i}z_{i}e_{0}
\end{displaymath} and therefore $x = \bigl(a^{m}-\sum\nolimits_{i=0}^{m-1}z_{i}a^{i}\bigr)e_{0}$. Concerning \begin{displaymath}
	p \, \defeq \, X^{m}-\sum\nolimits_{i=0}^{m-1}z_{i}X^{i} \, \stackrel{m > 0}{\in} \, K[X]\setminus\{0\},
\end{displaymath} we thus see that $p(a)e_{0} = x$. Furthermore, $\deg(p) < m+1$ and hence $\rk_{R}(p(a)) = 1$, so $p(a) \in \GL(R)$ by Remark~\ref{remark:properties.rank.function}\ref{remark:invertible.rank}. From $aJ \subseteq J$ we infer that $p(a)J \subseteq J$, so that $p(a)J = J$ by \cite[Lemma~9.4(B)]{SchneiderGAFA}, and therefore $p(a)^{-1}J = J$. We conclude that \begin{displaymath}
	e_{0} \, = \, p(a)^{-1}x \, \in \, p(a)^{-1}J \, = \, J
\end{displaymath} and hence \begin{displaymath}
	e_{0} \, \in \, I_{0} \cap J \, \stackrel{(I_0,J)\perp}{=} \, \{0\},
\end{displaymath} thus $e_{0} = 0$ and so $I_{0} = e_{0}R = \{0\}$, which gives the intended contradiction. This proves~\eqref{eq3}. Now, by~\eqref{eq3}, we find $I \in \lat(R)$ with $I \subseteq I_{0}$ and $a^{m}I \nsubseteq I + \ldots + a^{m-1}I + J$. By Remark~\ref{remark:bijection.annihilator} and Lemma~\ref{lemma:complement}, there exists $I' \in \lat(R)$ such that \begin{displaymath}
	((a^{m}I) \cap (I+\ldots+a^{m-1}I+J))\oplus I' \, = \, a^{m}I.
\end{displaymath} Necessarily, $I' \neq \{0\}$. Consider $I'' \defeq a^{-m}I' \in \lat(R) \setminus \{ 0 \}$ and observe that \begin{displaymath}
	I'' \, \subseteq \, a^{-m}a^{m}I \, = \, I \, \subseteq \, I_{0}.
\end{displaymath} 
Hence, $(I'',\ldots,a^{m-1}I'', J)\perp$. Moreover, as $I' \subseteq a^{m}I$, \begin{align*}
	a^{m}I''\cap (I''+\ldots+a^{m-1}I''+J) \, &\subseteq \, I' \cap\,(I+\ldots+a^{m-1}I+J)\\
			&= \, I'\cap((a^{m}I)\cap(I+\ldots+a^{m-1}I+J)) \, = \, \{0\}.
\end{align*} Thus, $(I'',\ldots,a^{m-1}I'',a^mI'',J)\perp$ by Remark~\ref{remark:independence.equivalence.intersection}. This completes the induction.
	
\ref{lemma:independence.halperin.2} By Lemma~\ref{lemma:independence.inductive}, we find $I \in \lat(R)$ maximal such that $(I,J)$ is $(a,m)$-independent and $I \subseteq J_{1}$. Our assumption entails that \begin{displaymath}
	a^{m}I \, \subseteq \, \sum\nolimits_{i=0}^{m-1} Ka_{i}I + J \, = \, \sum\nolimits_{i=0}^{m-1} a_{i}I + J \, \eqdef \, J' .
\end{displaymath} Since $aJ \subseteq J$, we conclude that $aJ' \subseteq J'$. Also, $J' \subseteq J_{1}$. We claim that $J' = J_{1}$. Suppose for contradiction that $J' \ne J_{1}$. Then, by~\ref{lemma:independence.halperin.1}, there exists $I'\in\lat(R)$ such that $(I',J')$ is $(a,m)$-independent and $\{ 0 \} \ne I' \subseteq J_{1}$. Thus, Lemma~\ref{lemma:independence.sum} implies that $I+I'$ is $(a,m)$-independent. As $I \subsetneq I+I' \subseteq J_{1}$, this contradicts maximality of $I$. Therefore, $J' = J_{1}$ as claimed. Since $I$ is $(a,m)$-independent, thus $J_{1} = J' = \bigoplus\nolimits_{i=0}^{m-1}a^{i} I \oplus J$. \end{proof}
	
Finally, let us record a sufficient condition for the hypothesis of Lemma~\ref{lemma:independence.halperin}. Recall that the center of an irreducible, regular ring constitutes a field by Remark~\ref{remark:irreducible.center.field}.

\begin{lem}\label{lemma:sufficient.condition.halperin} Let $R$ be an irreducible, continuous ring, let $K\defeq\ZZ(R)$, let $p\in K[X]$ be irreducible with $m\defeq \deg(p)$, and let $a\in R$. If there exists some $n\in \N_{>0}$ such that $p^{n}(a) = 0$, then \begin{displaymath}
	\forall q \in K[X]\setminus\{0\} \colon \quad \deg(q)<m \ \Longrightarrow \ \rk_{R}(q(a)) = 1 .
\end{displaymath} \end{lem}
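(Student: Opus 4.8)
The plan is to reduce the statement to the observation that the evaluation homomorphism $K[X] \to R,\ u \mapsto u(a)$ carries a B\'ezout identity in the polynomial ring into an invertibility statement in $R$.

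First I would record the elementary fact that, for $q \in K[X]\setminus\{0\}$ with $\deg(q) < m = \deg(p)$, the irreducible polynomial $p$ does not divide $q$: otherwise $q = pr$ with $r \ne 0$, forcing $\deg(q) = \deg(p) + \deg(r) \geq m$, a contradiction. Since $K[X]$ is a principal ideal domain and $p$ is irreducible, hence prime, this yields $\gcd(p,q) = 1$, and therefore also $\gcd(p^{n},q) = 1$, as $p$ is the only prime factor of $p^{n}$. By B\'ezout's identity in $K[X]$ there exist $s,t \in K[X]$ with $sp^{n} + tq = 1$.

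Next I would apply the evaluation map. Since $K[X] \to R,\ u \mapsto u(a)$ is a unital $K$-algebra homomorphism, it sends $sp^{n} + tq = 1$ to $s(a)\,p(a)^{n} + t(a)\,q(a) = 1$, and the hypothesis $p^{n}(a) = p(a)^{n} = 0$ collapses this to $t(a)\,q(a) = 1$. As the image $K[a]$ of the evaluation map is a commutative subring of $R$, we also get $q(a)\,t(a) = 1$, so $q(a) \in \GL(R)$. Finally, since $R$ is an irreducible, continuous ring, Theorem~\ref{theorem:unique.rank.function} makes $(R,\rk_{R})$ a rank ring, so Remark~\ref{remark:properties.rank.function}\ref{remark:invertible.rank} gives $\rk_{R}(q(a)) = 1$, which is the claim.

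The argument is entirely routine; the only point worth flagging is that the hypothesis controls $p^{n}(a)$ rather than $p(a)$, which is exactly why one works with $\gcd(p^{n},q) = 1$ in place of $\gcd(p,q) = 1$ — no genuine obstacle arises.
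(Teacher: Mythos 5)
Your B\'ezout argument is correct, and it takes a genuinely different (and somewhat more direct) route than the paper's. The paper also exploits coprimality of $p^{n}$ and $q$ but packages it via annihilators: it picks $e\in\E(R)$ with $eR=\rAnn(q(a))$, considers the ideal $I=\{f\in K[X]\mid f(a)e=0\}$ (which contains both $q$ and $p^{n}$), and argues by degree and irreducibility that a monic generator of $I$ would have to be a unit, forcing $I=K[X]$, hence $e=0$; the conclusion $\rk_{R}(q(a))=1$ then comes from $\rk_{R}(q(a))=1-\delta_{\rk_{R}}(\rAnn(q(a)))$, i.e.\ Lemma~\ref{lemma:pseudo.dimension.function}\ref{lemma:rank.dimension.annihilator}. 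You instead write down explicit B\'ezout cofactors for $p^{n}$ and $q$, push them through the evaluation homomorphism, and obtain a two-sided inverse for $q(a)$ inside the commutative subring $K[a]$, after which Remark~\ref{remark:properties.rank.function}\ref{remark:invertible.rank} gives the rank. Your version bypasses the detour through idempotents, $\rAnn$, and the dimension function, and in fact yields the slightly stronger (and in this setting equivalent) conclusion $q(a)\in\GL(R)$ for free, valid in any unital $K$-algebra; both proofs ultimately rest on the PID/UFD structure of $K[X]$ and the degree bound $\deg(q)<\deg(p)$.
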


\begin{proof} Let $q \in K[X]\setminus\{0\}$ with $\deg(q)<m$. By Remark~\ref{remark:bijection.annihilator} and Remark~\ref{remark:regular.idempotent.ideals}, there is $e \in \E(R)$ such that $\rAnn(q(a)) = eR$. Consider the ideal $I \defeq \{ f \in K[X] \mid f(a)e = 0 \}$ in $K[X]$. Suppose that $I \neq K[X]$. Since $K$ is a field by Remark~\ref{remark:irreducible.center.field} and thus $K[X]$ is a principal ideal domain, we now find $f \in K[X]$ such that $I=K[X]f$. From $0 \neq q \in I = K[X]f$, we infer that $\deg(f) \leq \deg(q) < m$ and $f \ne 0$. Moreover, $p^{n} \in I = K[X]f$ for some $n \in \N_{>0}$. Since $K[X]$ is a unique factorization domain, $f$ is not a unit in $K[X]$ (as $I\neq K[X]$), and $p$ is irreducible, we conclude that $f\in K[X]p$ and therefore $m = \deg(p) \leq \deg(f) < m$, which is absurd. Consequently, $I=K[X]$ and so $1\in I$. This necessitates that $e=0$ and hence $\rAnn(q(a))=\{0\}$, so that \begin{equation*}
	\rk_{R}(q(a)) \, \stackrel{\ref{lemma:pseudo.dimension.function}\ref{lemma:rank.dimension.annihilator}}{=} \, 1-\delta_{\rk_{R}}(\rAnn(q(a))) \, = \, 1. \qedhere
\end{equation*} \end{proof}

\section{Algebraic elements}\label{section:algebraic.elements}

Considering a field $K$ and a unital $K$-algebra $R$, an element $a\in R$ is said to be \emph{algebraic over $K$} if there exists $p\in K[X]\setminus\{0\}$ such that $p(a)=0$. The purpose of this section is to prove that the algebraic elements in a non-discrete irreducible, continuous ring are precisely those which are contained in some subalgebra isomorphic to a finite product of matrix algebras (Theorem~\ref{theorem:matrixrepresentation.case.algebraic}). Let us recall that, if $R$ is an irreducible, regular ring, then its center $\ZZ(R)$ constitutes a field by Remark~\ref{remark:irreducible.center.field} and the ring $R$ will be viewed as a unital $\ZZ(R)$-algebra.

In order to clarify some terminology, let $K$ be a field. Then a $K$-algebra $R$ is called \emph{matricial}~\cite[Chapter~15, p.~217, Definition]{GoodearlBook} if there exist $m\in\N_{>0}$ and $n_{1},\ldots,n_{m} \in \N_{>0}$ such that $R \cong_{K} \prod\nolimits_{i=1}^{m} \M_{n_{i}}(K)$, where (as in the following) the symbol $\cong_{K}$ indicates $K$-algebra isomorphism. As follows from a classical theorem (see, e.g.,~\cite[IX.1, Corollary~1.5, p.~361]{GrilletBook}), a $K$-algebra is both matricial and simple if and only if it is isomorphic to $\M_{n}(K)$ for some $n \in \N_{>0}$.

\begin{remark}\label{remark:matricial} \begin{enumerate}
	\item\label{remark:matricial.decomposition} Let $K$ be a field. A $K$-algebra $R$ is matricial if and only if there exist $m\in\N_{>0}$, $f_{1},\ldots,f_{m} \in \E(R)\setminus \{ 0 \}$ pairwise orthogonal with $1 = \sum_{i=1}^{m} f_{i}$, and simple, matricial unital $K$-subalgebras $R_{1}\leq f_{1}Rf_{1}, \, \ldots, \, R_{m}\leq f_{m}Rf_{m}$ such that $S = R_{1} + \ldots + R_{m}$.
	\item\label{remark:matricial.sum} Let $R$ be an irreducible, regular ring. For any $m\in\N_{>0}$, pairwise orthogonal elements $f_{1},\ldots,f_{m} \in \E(R)\setminus \{ 0 \}$ with $1 = \sum_{i=1}^{m} f_{i}$, and matricial unital $\ZZ(R)$-subalgebras $R_{1}\leq f_{1}Rf_{1}, \, \ldots, \, R_{m}\leq f_{m}Rf_{m}$, the set $R_{1} + \ldots + R_{m}$ is a matricial unital $\ZZ(R)$-subalgebra of $R$. This is a consequence of Lemma~\ref{lemma:sum.embedding}.
\end{enumerate} \end{remark}

Based on the terminology above, we give the following definition.

\begin{definition}\label{definition:matricial} Let $R$ be an irreducible, regular ring. An element of $R$ will be called \emph{matricial} if it is contained in some matricial unital $\ZZ(R)$-subalgebra of $R$. An element of $R$ will be called \emph{simply matricial} if it is contained in some simple, matricial unital $\ZZ(R)$-subalgebra of $R$. \end{definition}

We will show that, in a non-discrete irreducible, continuous ring, the set of algebraic elements coincides with the set of matricial elements. This result, Theorem~\ref{theorem:matrixrepresentation.case.algebraic}, which provides a central ingredient in the proof of Theorem~\ref{theorem:matricial.dense}, will be verified via four intermediate steps, detailed in Lemmata~\ref{lemma:matrixrepresentation.case.p=0}--\ref{lemma:matrixrepresentation.case.p^n=0}. The following facts are needed.

\begin{remark}[\cite{SchneiderGAFA}, Lemma~8.4(2)]\label{remark:root.K[X]X+K.invertible} Let $K$ be a field, let $R$ be a unital $K$-algebra, and let $a\in R$. If $p(a) = 0$ for some $p\in K[X]\cdot X+(K\setminus\{0\})$, then $a\in\GL(R)$. \end{remark}

\begin{lem}\label{lemma:properties.polynomials} Let $K$ be a field, $R$ be a unital $K$-algebra, $a\in R$ and $p\in K[X]$. \begin{enumerate}
	\item\label{lemma:annihilator.invariant} $a\rAnn(p(a))\subseteq \rAnn(p(a))$.
	\item\label{lemma:evaluation.eRe} Let $e\in\E(R)$. If $eae = ae$, then $ep(a)e = p(a)e = p_{eRe}(ae)$.
	\item\label{lemma:bezout} Let $q\in K[X]$ be such that $p$ and $q$ are coprime. Then \begin{displaymath}
				\qquad \rAnn((pq)(a)) \, = \, \rAnn(p(a))\oplus\rAnn(q(a)).
			\end{displaymath}
\end{enumerate} \end{lem}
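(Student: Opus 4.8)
The plan is to dispatch the three assertions one at a time, each reducing to routine manipulation of the evaluation homomorphism $K[X] \to R,\ q \mapsto q(a)$ together with the fact that any two elements of its image commute. For \ref{lemma:annihilator.invariant}, I would simply observe that $p(a)a = ap(a)$ (both sides equal $\sum_{i} c_{i}a^{i+1}$ when $p = \sum_{i} c_{i}X^{i}$), so if $x \in R$ satisfies $p(a)x = 0$, then $p(a)(ax) = a\bigl(p(a)x\bigr) = 0$, i.e. $ax \in \rAnn(p(a))$.

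For \ref{lemma:evaluation.eRe}, the crux is the auxiliary identity $ea^{i}e = a^{i}e$ for all $i \in \N$, which I would prove by induction: the base case $i = 0$ reads $ee = e$, and for the step one computes
\[ ea^{i+1}e \, = \, ea\,(a^{i}e) \, = \, ea\,(ea^{i}e) \, = \, (eae)(a^{i}e) \, = \, (ae)(a^{i}e) \, = \, a\,(ea^{i}e) \, = \, a^{i+1}e, \]
invoking the induction hypothesis for the second and fifth equalities and the hypothesis $eae = ae$ for the fourth. Expanding $p(a)e = \sum_{i} c_{i}a^{i}e = \sum_{i} c_{i}\,ea^{i}e = e\,p(a)\,e$ then gives $ep(a)e = p(a)e$. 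To identify this with $p_{eRe}(ae)$, I would first note $ae \in eRe$ (since $e(ae)e = (eae)e = (ae)e = ae$) and then check, by the same kind of induction using $e(ae) = eae = ae$, that the $i$-th power of $ae$ taken inside the corner ring $eRe$ equals $a^{i}e$; since the unit of $eRe$ is $e = a^{0}e$, the constant term behaves correctly and $p_{eRe}(ae) = \sum_{i} c_{i}a^{i}e = p(a)e$.

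For \ref{lemma:bezout}, I would use that $K[X]$ is a principal ideal domain, so coprimality of $p$ and $q$ yields $u,v \in K[X]$ with $up + vq = 1$; applying the evaluation homomorphism gives $u(a)p(a) + v(a)q(a) = 1$. Writing $(pq)(a) = p(a)q(a) = q(a)p(a)$ (commutativity again), the inclusion $\rAnn(p(a)) + \rAnn(q(a)) \subseteq \rAnn((pq)(a))$ is immediate, and each $\rAnn(\,\cdot\,)$ is a right ideal so this sum is a genuine submodule. For the reverse inclusion, given $x \in \rAnn((pq)(a))$ I would decompose $x = u(a)p(a)x + v(a)q(a)x$ and note that $q(a)\bigl(u(a)p(a)x\bigr) = u(a)(pq)(a)x = 0$ and, symmetrically, $p(a)\bigl(v(a)q(a)x\bigr) = 0$, placing the two summands in $\rAnn(q(a))$ and $\rAnn(p(a))$ respectively. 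Directness follows since any $x \in \rAnn(p(a)) \cap \rAnn(q(a))$ obeys $x = u(a)\bigl(p(a)x\bigr) + v(a)\bigl(q(a)x\bigr) = 0$. None of this presents a real obstacle; the only care needed is the bookkeeping with the non-commuting pair $e,a$ in \ref{lemma:evaluation.eRe} and keeping track that the displayed sums in \ref{lemma:bezout} are sums of right ideals.
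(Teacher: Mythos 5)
Your proposal is correct. Parts~\ref{lemma:annihilator.invariant} and~\ref{lemma:bezout} match the paper's argument verbatim (commutativity of $a$ with $p(a)$, resp.\ a Bézout identity $up+vq=1$ decomposing any $x\in\rAnn((pq)(a))$); for part~\ref{lemma:evaluation.eRe} the paper merely cites Proposition~9.3(2) and Lemma~10.5(1) of~\cite{SchneiderGAFA}, whereas you unwind that citation with a clean induction proving $ea^{i}e=a^{i}e$ and $(ae)^{i}_{eRe}=a^{i}e$, which is a self-contained and equally valid route to the same conclusion.
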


\begin{proof} \ref{lemma:annihilator.invariant} If $x\in \rAnn(p(a))$, then $p(a)ax=ap(a)x=0$, thus $ax\in \rAnn(p(a))$.
	
\ref{lemma:evaluation.eRe} This is a consequence of~\cite[Proposition~9.3(2)]{SchneiderGAFA} and~\cite[Lemma~10.5(1)]{SchneiderGAFA}.
	
\ref{lemma:bezout} Since $p$ and $q$ are coprime, there exist $s,t \in K[X]$ with $1=sp+tq$. So, \begin{equation}\label{eq--9}
	\forall x \in R \colon \quad x = s(a)p(a)x + t(a)q(a)x.
\end{equation} This directly implies that $\rAnn(p(a)) \cap \rAnn(q(a))= \{ 0 \}$. Moreover, both $\rAnn(p(a))$ and $\rAnn(q(a))$ are contained in \begin{displaymath}
	J \, \defeq \, \rAnn((pq)(a)) \, = \, \rAnn((qp)(a)) ,
\end{displaymath} thus $\rAnn(p(a)) + \rAnn(q(a)) \subseteq J$. Conversely, if $x \in J$, then $s(a)p(a)x \in \rAnn(q(a))$ and $t(a)q(a)x \in \rAnn(p(a))$, thus \begin{displaymath}
	x \, \stackrel{\eqref{eq--9}}{=} \, s(a)p(a)x + t(a)q(a)x \, \in \, \rAnn(q(a)) + \rAnn(p(a)) .
\end{displaymath} Hence, $J = \rAnn(p(a))\oplus\rAnn(q(a))$. \end{proof}

\begin{lem}\label{lemma:matrixrepresentation.case.p=0} Let $R$ be a non-discrete irreducible, continuous ring, let $K \defeq \ZZ(R)$, let $p \in K[X]$ be irreducible, consider $m \defeq \deg(p) \in \N_{>0}$, and let $c_{0},\ldots,c_{m} \in K$ be such that $p = \sum\nolimits_{i=0}^{m} c_{i}X^{i}$. If $a\in R$ and $p(a)=0$, then $a$ is simply matricial in $R$. \end{lem}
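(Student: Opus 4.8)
The plan is to realise $a$ as the image of the companion matrix of $p$ inside a copy of $\M_{m}(K)$ sitting in $R$, where $K \defeq \ZZ(R)$, this copy being produced from a cyclic decomposition of $R$ under left multiplication by $a$. First dispose of the case $m = 1$: then $p = c_{1}X + c_{0}$ with $c_{1} \neq 0$ forces $a = -c_{1}^{-1}c_{0} \in K$, and $\ZZ(R)$ is itself a simple, matricial unital $\ZZ(R)$-subalgebra of $R$, so $a$ is simply matricial. Assume henceforth $m \geq 2$. Since $p$ is irreducible of degree $m$ and $p^{1}(a) = p(a) = 0$, Lemma~\ref{lemma:sufficient.condition.halperin} gives $\rk_{R}(q(a)) = 1$ whenever $0 \neq q \in K[X]$ has $\deg q < m$; in particular (taking $q = X$) $\rk_{R}(a) = 1$, so $a \in \GL(R)$ by Remark~\ref{remark:properties.rank.function}\ref{remark:invertible.rank}. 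Setting $d_{i} \defeq -c_{m}^{-1}c_{i} \in K$, the equation $p(a) = 0$ rearranges to $a^{m} = \sum_{i=0}^{m-1}d_{i}a^{i}$, so that $a^{m}x \in \sum_{i=0}^{m-1}Ka^{i}x$ for every $x \in R$.

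Next I would extract the cyclic decomposition. Apply Lemma~\ref{lemma:independence.halperin}\ref{lemma:independence.halperin.2} with $J \defeq \{0\}$ and $J_{1} \defeq R$: the standing hypotheses $J \subseteq J_{1}$, $aJ \subseteq J$, $aJ_{1} \subseteq J_{1}$, the degree condition on $m$, and the displayed condition $a^{m}x \in \sum_{i=0}^{m-1}Ka^{i}x + J$ for all $x \in J_{1}$ are all in place by the previous paragraph. Picking, via Lemma~\ref{lemma:independence.inductive}, some $I \in \lat(R)$ maximal with $(I,\{0\})$ being $(a,m)$-independent, the lemma yields $R = \bigoplus_{i=0}^{m-1}a^{i}I$. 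By Remark~\ref{remark:independence.ideals.idempotents} there exist pairwise orthogonal $e_{0},\dots,e_{m-1} \in \E(R)$ with $e_{0}+\dots+e_{m-1} = 1$ and $e_{i}R = a^{i}I$ for each $i$; in particular $e_{0}R = I$, hence $e_{i}R = a^{i}e_{0}R$ for all $i$.

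Finally I would build a family of matrix units directly in $R$ and read off $a$. Put $\tilde{s}_{ij} \defeq a^{i}e_{0}a^{-j}e_{j}$ for $i,j \in \{0,\dots,m-1\}$ (legitimate since $a \in \GL(R)$). Two elementary facts drive the verification: (i) since $a^{i}e_{0} \in e_{i}R$, one has $e_{j}a^{i}e_{0} = \delta_{ij}a^{i}e_{0}$ for all $i,j$ (using $e_{j}e_{i}=\delta_{ij}e_{i}$); and (ii) since $f_{i} \defeq a^{i}e_{0}a^{-i}$ is an idempotent with $f_{i}R = a^{i}e_{0}R = e_{i}R$, one has $f_{i}e_{i} = e_{i}$. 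From (i) together with the cancellation $a^{-j}a^{k} = 1$ in case $j = k$ one gets $\tilde{s}_{ij}\tilde{s}_{k\ell} = \delta_{jk}\tilde{s}_{i\ell}$, and from (ii) one gets $\sum_{i}\tilde{s}_{ii} = \sum_{i}f_{i}e_{i} = \sum_{i}e_{i} = 1$, so $(\tilde{s}_{ij})$ is a family of matrix units for $R$. Using (ii) again, $ae_{j} = af_{j}e_{j} = a^{j+1}e_{0}a^{-j}e_{j} = \tilde{s}_{j+1,j}$ for $j < m-1$, while $ae_{m-1} = af_{m-1}e_{m-1} = a^{m}e_{0}a^{-(m-1)}e_{m-1} = \sum_{i=0}^{m-1}d_{i}\tilde{s}_{i,m-1}$; summing over $j$ and using $\sum_{j}e_{j}=1$ yields $a = \sum_{j=0}^{m-2}\tilde{s}_{j+1,j} + \sum_{i=0}^{m-1}d_{i}\tilde{s}_{i,m-1}$. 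By Remark~\ref{remark:matrixunits.ring.homomorphism} the $\tilde{s}_{ij}$ are the images of the standard matrix units under a unital $\ZZ(R)$-algebra embedding $\M_{m}(K) \to R$ whose image is a simple, matricial unital $\ZZ(R)$-subalgebra of $R$, and the displayed identity says precisely that this embedding sends the companion matrix of $p$ to $a$. Hence $a$ is simply matricial in $R$.

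The step I expect to be the main obstacle is the bookkeeping in the last paragraph: verifying that the off-diagonal products $\tilde{s}_{ij}\tilde{s}_{k\ell}$ with $j \neq k$ vanish (they do, because $e_{j}a^{k}e_{0} = 0$, i.e.\ via fact (i)) and that the normalisation $\sum_{i}\tilde{s}_{ii} = 1$ holds, both of which hinge on fact (ii), the identity $f_{i}e_{i} = e_{i}$ for two idempotents generating the same principal right ideal. Everything earlier — invertibility of $a$, the rearrangement $a^{m} = \sum_{i}d_{i}a^{i}$, and the consequent applicability of Lemma~\ref{lemma:independence.halperin} with $J = \{0\}$ — is arranged precisely so that this last computation goes through and so that the $\tilde{s}_{ij}$ can even be formed.
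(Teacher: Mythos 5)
Your proposal is correct and follows essentially the same route as the paper's own proof: obtain a cyclic decomposition $R = \bigoplus_{i=0}^{m-1} a^{i}I$ via Lemma~\ref{lemma:sufficient.condition.halperin}, Lemma~\ref{lemma:independence.inductive} and Lemma~\ref{lemma:independence.halperin}\ref{lemma:independence.halperin.2} applied with $J = \{0\}$ and $J_{1} = R$, then build matrix units from the resulting idempotents and read off $a$ as the companion matrix of $p$. Your $\tilde{s}_{ij} = a^{i}e_{0}a^{-j}e_{j}$ are in fact identical to the paper's $s_{ij} = a^{i-j}e_{j}$ (after reindexing), since $a^{-j}e_{j}$ already lies in $e_{0}R = I$ so the leading $e_{0}$ is redundant; the only genuine variation is that you establish $a \in \GL(R)$ via $\rk_{R}(a)=1$ from Lemma~\ref{lemma:sufficient.condition.halperin} rather than via Remark~\ref{remark:root.K[X]X+K.invertible}, which is equally valid.
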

	
\begin{proof} Note that $m > 0$ due to irreducibility of $p$. Let $a\in R$ with $p(a)=0$. If $m=1$, then $p = c_{1}X + c_{0}$ with $c_{1} \ne 0$, whence $a = -c_{0}c_{1}^{-1} \in K$, which entails the claim. Therefore, we may and will henceforth assume that $m > 1$, thus $p \in K[X]\cdot X + (K\setminus \{ 0 \})$ by irreducibility of $p$, and so $a \in \GL(R)$ by Remark~\ref{remark:root.K[X]X+K.invertible}. By Lemma~\ref{lemma:independence.inductive}, Lemma~\ref{lemma:sufficient.condition.halperin} and Lemma~\ref{lemma:independence.halperin}\ref{lemma:independence.halperin.2}, there exists $I\in\lat(R)$ such that $R = I\oplus aI \oplus \ldots \oplus a^{m-1}I$. By Remark~\ref{remark:independence.ideals.idempotents}, we find pairwise orthogonal $e_1,\ldots,e_{m}\in\E(R)$ with $e_1+\ldots+e_{m}=1$ and $e_{i}R=a^{i-1}I$ for each $i\in\{1,\ldots,m\}$. Now, let us define $s \in R^{m\times m}$ by setting \begin{displaymath}
	s_{ij} \, \defeq \, a^{i-j}e_{j} \qquad (i,j\in\{1,\ldots,m\}) .
\end{displaymath} For all $i,j\in\{1,\ldots,m\}$, we see that $s_{ij}e_{j}R = a^{i-j}e_{j}R = a^{i-j}a^{j-1}I = a^{i-1}I = e_{i}R$, hence $s_{ij} = s_{ij}e_{j} = e_{i}s_{ij}e_{j}$, i.e., $s_{ij}\in e_{i}Re_{j}$. In particular, $s_{ij}s_{k\ell} = s_{ij}e_{j}e_{k}s_{k\ell} = 0$ whenever $i,j,k,\ell\in\{1,\ldots,m\}$ and $j\ne k$. Moreover, for all $i,j,k \in\{1,\ldots,m\}$, \begin{displaymath}
	s_{ij}s_{jk} \, = \, a^{i-j}e_{j}s_{jk} \, = \, a^{i-j}s_{jk} \, = \, a^{i-j}a^{j-k}e_{k} \, = \, a^{i-k}e_{k} \, = \, s_{ik} .
\end{displaymath} Finally, since $s_{ii} = a^{0}e_{i} = e_{i}$ for every $i \in \{ 1,\ldots,m\}$, we readily conclude that $s_{11}+\ldots +s_{mm}=e_{1}+\ldots+e_{m}=1$. This shows that $s$ is a family of matrix units for $R$. Furthermore, \begin{align*}
	ae_{m}\, &= \, a^{m}a^{1-m}e_{m} \, = \, a^{m}s_{1m}\, \stackrel{p(a)=0}{=} \, \!\left(-c_{m-1}c_m^{-1}a^{m-1}-\ldots-c_{1}c_{m}^{-1}a-c_{0}c_{m}^{-1}\right)\! s_{1m}\\
		&=\, -c_{m-1}c_m^{-1}a^{m-1}s_{1m}-\ldots-c_1c_m^{-1}as_{1m}-c_0c_m^{-1}s_{1m}\\
		&=\, -c_{m-1}c_m^{-1}s_{mm}-\ldots-c_{1}c_m^{-1}s_{2m}-c_{0}c_m^{-1}s_{1m}.
\end{align*} Consequently, \begin{align*}
	a \, &= \, a(e_{1}+\ldots+e_{m}) \, = \, ae_{1} + \ldots + ae_{m-1} + ae_{m} \\
	&= \, s_{21} + \ldots + s_{m,\, m-1} + -c_{m-1}c_m^{-1}s_{mm}-\ldots-c_1c_m^{-1}s_{2m}-c_0c_m^{-1}s_{1m} .
\end{align*} Thus, $a$ is simply matricial in $R$ by Remark~\ref{remark:matrixunits.ring.homomorphism}. \end{proof}

\begin{lem}\label{lemma:matrixrepresentation.case.nilpotent} Let $R$ be an irreducible, regular ring, let $a\in R$, $n \in \N_{>0}$ and $I \in \lat(R)$ such that $R = \bigoplus\nolimits_{i=0}^{n-1} a^{i}I$ and $a^{n-1}I = \rAnn(a)$. Then $a$ is simply matricial in $R$. \end{lem}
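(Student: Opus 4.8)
The plan is to manufacture an explicit family of matrix units for $R$, indexed by $\{1,\dots,n\}$, out of the "shift" that $a$ induces on the decomposition $R = \bigoplus_{i=0}^{n-1}a^{i}I$, and then to read off $a$ as the image of the standard nilpotent Jordan block under the resulting embedding $\M_{n}(\ZZ(R)) \hookrightarrow R$. Write $K \defeq \ZZ(R)$, which is a field by Remark~\ref{remark:irreducible.center.field}.

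First I would apply Remark~\ref{remark:independence.ideals.idempotents} to the hypothesis $R = \bigoplus_{i=0}^{n-1}a^{i}I$ to obtain pairwise orthogonal idempotents $e_{1},\dots,e_{n}$ with $e_{1}+\dots+e_{n}=1$ and $e_{i}R = a^{i-1}I$ for each $i$. One then computes $ae_{i}R = a^{i}I = e_{i+1}R$ for $i<n$, while $ae_{n}R = a\,a^{n-1}I = a\rAnn(a) = \{0\}$, so $ae_{n}=0$ and hence $a = a(e_{1}+\dots+e_{n}) = \sum_{i=1}^{n-1}ae_{i}$. Moreover, for $i<n$ and $x\in R$, the equality $ae_{i}x=0$ forces $e_{i}x\in\rAnn(a) = e_{n}R$, hence $e_{i}x=0$ as $e_{i}\perp e_{n}$; thus $\rAnn(ae_{i}) = \rAnn(e_{i}) = (1-e_{i})R$ by Remark~\ref{remark:bijection.annihilator}.

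The next, and most delicate, step is to upgrade the shift $t_{i}\defeq ae_{i}\in e_{i+1}Re_{i}$ (for $i<n$) to a two-sided partial isomorphism between the corner ideals $e_{i}R$ and $e_{i+1}R$. Since $\rAnn(t_{i}) = (1-e_{i})R$, Lemma~\ref{lemma:partial.inverse} gives $c_{i}$ with $c_{i}t_{i}=e_{i}$; setting $b_{i}\defeq e_{i}c_{i}e_{i+1}\in e_{i}Re_{i+1}$ one gets $b_{i}t_{i}=e_{i}$, whereupon $t_{i}b_{i}$ is an idempotent $\leq e_{i+1}$ with $t_{i}b_{i}R = t_{i}R = e_{i+1}R$, which forces $t_{i}b_{i}=e_{i+1}$. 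This upgrade—"$t_{i}b_{i}=e_{i+1}$ rather than a proper subidempotent"—is the crux of the whole argument; everything else is bookkeeping.

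Finally, I would define $s\in R^{n\times n}$ by $s_{ii}\defeq e_{i}$, $s_{ij}\defeq t_{i-1}\cdots t_{j}$ for $i>j$, and $s_{ij}\defeq b_{i}\cdots b_{j-1}$ for $i<j$. Since $s_{ij}\in e_{i}Re_{j}$, orthogonality of the $e_{i}$ kills $s_{ij}s_{k\ell}$ whenever $j\neq k$, and $\sum_{i}s_{ii}=1$; for $j=k$ the relation $s_{ij}s_{j\ell}=s_{i\ell}$ drops out of a routine telescoping using $b_{i}t_{i}=e_{i}$ and $t_{i}b_{i}=e_{i+1}$ together with the absorption identities $t_{i}e_{i}=t_{i}=e_{i+1}t_{i}$ and $e_{i}b_{i}=b_{i}=b_{i}e_{i+1}$. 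Hence $s$ is a family of matrix units for $R$, so by Remark~\ref{remark:matrixunits.ring.homomorphism} the induced map $\M_{n}(K)\to R$ is an embedding whose image is a simple, matricial unital $K$-subalgebra of $R$; as $t_{i}=s_{i+1,i}$, the second step yields $a = \sum_{i=1}^{n-1}s_{i+1,i}$ inside this subalgebra, so $a$ is simply matricial. (The degenerate case $n=1$ gives $\rAnn(a)=R$, i.e.\ $a=0\in K$, which is trivially simply matricial and is also subsumed under the empty-product conventions above.)
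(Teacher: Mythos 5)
Your proof is correct and takes essentially the same approach as the paper: both obtain the orthogonal idempotents $e_i$ from Remark~\ref{remark:independence.ideals.idempotents}, build a family of matrix units with $s_{ii}=e_i$ from the shift structure that $a$ induces, and realize $a$ as the image of a nilpotent Jordan block under the embedding $\M_n(\ZZ(R))\hookrightarrow R$ of Remark~\ref{remark:matrixunits.ring.homomorphism}. The only difference is implementational: the paper invokes Lemma~\ref{lemma:partial.inverse} once to get a single element $b$ with $ba=e_1+\dots+e_{n-1}$ and then sets $s_{ij}:=a^{i-1}b^{j-1}e_j$, whereas you invoke it for each $i<n$ to obtain local inverses $b_i$ to the local shifts $t_i=ae_i$ and assemble $s_{ij}$ as products of $t$'s or $b$'s; both routes hinge on the same computation $\rAnn(ae_i)=(1-e_i)R$ (in the paper's version this is absorbed into the hypothesis $a^{n-1}I=\rAnn(a)$ directly).
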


\begin{proof} Let $K \defeq \ZZ(R)$. By Remark~\ref{remark:independence.ideals.idempotents}, we find pairwise orthogonal $e_{1},\ldots,e_{n}\in\E(R)$ with $e_{1}+\ldots+e_{n}=1$ and $e_{i}R=a^{i-1}I$ for each $i\in\{1,\ldots,n\}$. Since \begin{displaymath}
	\rAnn(a) \, = \, a^{n-1}I \, = \, e_{n}R \, = \, (1-(e_{1}+\ldots +e_{n-1}))R ,
\end{displaymath} Lemma~\ref{lemma:partial.inverse} asserts the existence of some $b \in R$ such that $ba = e_{1}+\ldots +e_{n-1}$. By induction, we see that \begin{equation}\label{induction.partial.inverse}
	\forall j \in \{ 1,\ldots,n \} \colon \quad b^{j-1}a^{j-1}e_{1} \, = \, e_{1} . 
\end{equation} Indeed, $b^{0}a^{0}e_{1} = e_{1}$, and if $j \in \{ 2,\ldots,n\}$ is such that $b^{j-2}a^{j-2}e_{1} \, = \, e_{1}$, then \begin{align*}
	b^{j-1}a^{j-1}e_{1} \, &= \, b^{j-2}baa^{j-2}e_{1} \, = \, b^{j-2}bae_{j-1}a^{j-2}e_{1} \\
	& = \, b^{j-2}(e_{1}+\ldots +e_{n-1})e_{j-1}a^{j-2}e_{1} \\
	& = \, b^{j-2}e_{j-1}a^{j-2}e_{1} \, = \, b^{j-2}a^{j-2}e_{1} \, = \, e_{1} .
\end{align*} In turn, $b^{j-1}e_{j}R = b^{j-1}a^{j-1}I = b^{j-1}a^{j-1}e_{1}R \stackrel{\eqref{induction.partial.inverse}}{=} e_{1}R$ for every $j \in \{ 1,\ldots,n \}$, thus \begin{equation}\label{induction.partial.inverse.2}
	\forall j \in \{ 1,\ldots,n \} \colon \quad b^{j-1}e_{j} \, = \, e_{1}b^{j-1}e_{j} . 
\end{equation} Now, let us define $s \in R^{n \times n}$ by setting \begin{displaymath}
	s_{ij} \, \defeq \, a^{i-1}b^{j-1}e_{j} \qquad (i,j\in\{1,\ldots,n\}) .
\end{displaymath} For all $i,j\in\{1,\ldots,n\}$, we observe that \begin{displaymath}
	s_{ij} \, = \, a^{i-1}b^{j-1}e_{j} \, \stackrel{\eqref{induction.partial.inverse.2}}{=} \, a^{i-1}e_{1}b^{j-1}e_{j} \, = \, e_{i}a^{i-1}e_{1}b^{j-1}e_{j} \, \in \, e_{i}Re_{j} .
\end{displaymath} In particular, $s_{ij}s_{k\ell} = s_{ij}e_{j}e_{k}s_{k\ell} = 0$ whenever $i,j,k,\ell\in\{1,\ldots,n\}$ and $j\ne k$. Moreover, for all $i,j,k \in\{1,\ldots,n\}$, \begin{align*}
	s_{ij}s_{jk} \, &= \, a^{i-1}b^{j-1}e_{j}s_{jk} \, = \, a^{i-1}b^{j-1}s_{jk} \, = \, a^{i-1}b^{j-1}a^{j-1}b^{k-1}e_{k} \\
	& \stackrel{\eqref{induction.partial.inverse.2}}{=} \, a^{i-1}b^{j-1}a^{j-1}e_{1}b^{k-1}e_{k} \, \stackrel{\eqref{induction.partial.inverse}}{=} \, a^{i-1}e_{1}b^{k-1}e_{k} \, \stackrel{\eqref{induction.partial.inverse.2}}{=} \, a^{i-1}b^{k-1}e_{k} \, = \, s_{ik} .
\end{align*} Finally, for each $i \in \{ 1,\ldots,n \}$, we find $x \in R$ such that $e_{i} = a^{i-1}e_{1}x$, wherefore \begin{displaymath}
	s_{ii} \, = \, a^{i-1}b^{i-1}e_{i} \, = \, a^{i-1}b^{i-1}a^{i-1}e_{1}x \, \stackrel{\eqref{induction.partial.inverse}}{=} \, a^{i-1}e_{1}x \, = \, e_{i} .
\end{displaymath} We conclude that $s_{11}+\dots +s_{nn}=e_{1}+\ldots+e_{n}=1$. Hence, $s$ is a family of matrix units for $R$. Moreover, $ae_{n} = 0$ as $e_{n} \in a^{n-1}I = \rAnn(a)$, and so \begin{displaymath}
	a \, = \, a(e_{1}+\ldots+e_{n}) \, = \, ae_{1} + \ldots + ae_{n-1}  \, = \, s_{21} + \ldots + s_{n,\, n-1} .
\end{displaymath} Thus, $a$ is simply matricial in $R$ by Remark~\ref{remark:matrixunits.ring.homomorphism}. \end{proof}

\begin{lem}\label{lemma:matrixrepresentation.case.tower} Let $R$ be an irreducible, continuous ring, let $K \defeq \ZZ(R)$, let $p\in K[X]$ be irreducible with $m \defeq \deg(p)$, let $a\in R$, $n \in \N_{>0}$ and $I \in \lat(R)$ be such that \begin{displaymath}
	R = \bigoplus\nolimits_{j=0}^{n-1} \bigoplus\nolimits_{i=0}^{m-1} a^{i}p(a)^{j}I , \qquad \rAnn (p(a)) = \bigoplus\nolimits_{i=0}^{m-1} a^{i}p(a)^{n-1}I .
\end{displaymath} Then $a$ is simply matricial in $R$. \end{lem}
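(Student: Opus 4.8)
The plan is to produce a family of $mn$ matrix units for $R$ whose $\ZZ(R)$-span contains $a$, combining the companion-matrix construction from the proof of Lemma~\ref{lemma:matrixrepresentation.case.p=0} (which handles the irreducible polynomial $p$) with the partial-inverse construction from the proof of Lemma~\ref{lemma:matrixrepresentation.case.nilpotent} (which handles the layering of $R$ by the powers of $p(a)$).

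Write $K\defeq\ZZ(R)$, fix $c_{0},\dots,c_{m}\in K$ with $p=\sum_{i=0}^{m}c_{i}X^{i}$ and $c_{m}\neq 0$, and set $b\defeq p(a)$; note $ab=ba$. First I would check that $b^{n}=0$: each $a^{i}b^{n-1}I$ is contained in $\rAnn(b)$, so $a^{i}b^{n}I=0$ for $0\le i\le m-1$, and hence $b^{n}R=\sum_{i,j}a^{i}b^{n+j}I=\sum_{i,j}b^{j}(a^{i}b^{n}I)=0$. If $m=1$, then $b$ satisfies the hypotheses of Lemma~\ref{lemma:matrixrepresentation.case.nilpotent} (one has $R=\bigoplus_{j=0}^{n-1}b^{j}I$ and $b^{n-1}I=\rAnn(b)$), so $b$, and therefore $a=c_{1}^{-1}(b-c_{0})$, is simply matricial; so I may assume $m\ge 2$. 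An irreducible polynomial of degree at least $2$ has nonzero constant term, so $p^{n}$ has nonzero constant term, and since $p^{n}(a)=b^{n}=0$, Remark~\ref{remark:root.K[X]X+K.invertible} gives $a\in\GL(R)$.

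By Remark~\ref{remark:independence.ideals.idempotents} I would fix pairwise orthogonal idempotents $e_{i,j}$ ($0\le i\le m-1$, $0\le j\le n-1$) with $\sum_{i,j}e_{i,j}=1$ and $e_{i,j}R=a^{i}b^{j}I$, set $e\defeq\sum_{i=0}^{m-1}e_{i,n-1}$ so that $eR=\rAnn(b)$, and invoke Lemma~\ref{lemma:partial.inverse} to obtain $\beta\in R$ with $\beta b=1-e$. Geometrically the hypotheses say that left multiplication by $a^{\pm 1}$ permutes the ideals $e_{i,j}R$ inside a fixed layer $j$, that $b$ sends $e_{i,j}R$ to $e_{i,j+1}R$ for $j<n-1$, and --- crucially --- that $\beta b$ restricts to the identity on every $e_{i,l}R$ with $l\le n-2$, since any such ideal lies in $(1-e)R$. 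I would then define
\begin{displaymath}
	t_{i,j}\defeq a^{i}b^{j}e_{0,0}\in e_{i,j}Re_{0,0},\qquad t'_{i,j}\defeq\beta^{j}a^{-i}e_{i,j}\in e_{0,0}Re_{i,j},
\end{displaymath}
where membership is verified by computing $t_{i,j}R=e_{i,j}R$ and $t'_{i,j}R=e_{0,0}R$ (the latter using that $\beta$ carries $e_{0,l}R$ onto $e_{0,l-1}R$ for $1\le l\le n-1$), and prove the single identity $t'_{i,j}t_{i,j}=e_{0,0}$: from $e_{i,j}t_{i,j}=t_{i,j}$ one gets $t'_{i,j}t_{i,j}=\beta^{j}b^{j}e_{0,0}$, which collapses to $e_{0,0}$ by peeling off factors $\beta b=1-e$ one at a time, each step using $b^{l}e_{0,0}\in e_{0,l}R\subseteq(1-e)R$ for $l\le n-2$. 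Granting this, a direct computation with the orthogonality of the $e_{i,j}$ shows that $s_{(i,j),(k,l)}\defeq t_{i,j}t'_{k,l}$, re-indexed by $\{1,\dots,mn\}$, is a family of matrix units for $R$, with $s_{(i,j),(i,j)}=e_{i,j}$ and hence $\sum_{i,j}s_{(i,j),(i,j)}=1$.

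Finally I would expand $a=\sum_{i,j}ae_{i,j}$: for $i<m-1$, $ae_{i,j}=a\,t_{i,j}t'_{i,j}=t_{i+1,j}t'_{i,j}=s_{(i+1,j),(i,j)}$, while for $i=m-1$ the relation $a^{m}=c_{m}^{-1}\bigl(b-\sum_{k=0}^{m-1}c_{k}a^{k}\bigr)$ combined with $b^{n}e_{0,0}=0$ gives
\begin{displaymath}
	ae_{m-1,j}=c_{m}^{-1}\epsilon_{j}\,s_{(0,j+1),(m-1,j)}-c_{m}^{-1}\sum_{k=0}^{m-1}c_{k}\,s_{(k,j),(m-1,j)},
\end{displaymath}
where $\epsilon_{j}=1$ for $j\le n-2$, and for $j=n-1$ the first summand is simply absent. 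Thus $a$ lies in the $\ZZ(R)$-span of the matrix units $s_{(i,j),(k,l)}$, which by Remark~\ref{remark:matrixunits.ring.homomorphism} is a unital $\ZZ(R)$-subalgebra of $R$ isomorphic to $\M_{mn}(\ZZ(R))$ --- simple and matricial --- so $a$ is simply matricial. The principal obstacle is the identity $t'_{i,j}t_{i,j}=e_{0,0}$: because $\beta$ is only a left inverse of $b$ and does not commute with $b$, the collapse $\beta^{j}b^{j}e_{0,0}=e_{0,0}$ must be carried out layer by layer, each step relying on the exact position of $a^{i}b^{l}I$ relative to $\rAnn(b)=eR$; arranging the transport elements $t_{i,j}$, $t'_{i,j}$ so that both this identity and the expansion of $a$ go through is the real work.
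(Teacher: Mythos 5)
Your proof is, up to reindexing (you use 0-based indices and factor the matrix units as $s_{(i,j),(k,l)}=t_{i,j}t'_{k,l}$), the same as the paper's: pick idempotents for the blocks $a^ip(a)^jI$, produce a partial inverse $\beta$ of $p(a)$ supported on $(1-e)R$, establish the collapsing identity $\beta^jp(a)^je_{0,0}=e_{0,0}$ layer by layer, and read off $a$ as a $\ZZ(R)$-linear combination of the resulting $mn\times mn$ matrix units. The one detail worth spelling out is that the diagonal identity $t_{i,j}t'_{i,j}=e_{i,j}$ does not follow directly from $t'_{i,j}t_{i,j}=e_{0,0}$; one needs to observe, as the paper does, that $t_{i,j}t'_{i,j}$ is an idempotent lying in $e_{i,j}Re_{i,j}$ with $t_{i,j}t'_{i,j}R=e_{i,j}R$, which forces it to equal $e_{i,j}$.
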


\begin{proof} First of all, let us note that $p(a)^{n-1}I \subseteq \bigoplus\nolimits_{i=0}^{m-1} a^{i}p(a)^{n-1}I = \rAnn(p(a))$ and thus $p(a)^nI=p(a)p(a)^{n-1}I=\{0\}$, wherefore \begin{displaymath}
	p(a)^{n}R \, = \, \sum\nolimits_{j=0}^{n-1}\sum\nolimits_{i=0}^{m-1} p(a)^{n}a^{i}p(a)^{j}I \, = \, \sum\nolimits_{j=0}^{n-1}\sum\nolimits_{i=0}^{m-1}a^{i}p(a)^{j}p(a)^{n}I \, = \, \{ 0 \} .
\end{displaymath} That is, $p^{n}(a) = p(a)^{n} = 0$. Now, let $c_{0},\ldots,c_{m} \in K$ be such that $p = \sum\nolimits_{i=0}^{m} c_{i}X^{i}$. By irreducibility of $p$, either $p = c_{1}X$ with $c_{1} \ne 0$, or $p\in K[X]\cdot X+(K\setminus\{0\})$. In the first case, $R = \bigoplus\nolimits_{j=0}^{n-1} a^{j}I$ and $\rAnn(a) = a^{n-1}I$, thus $a$ is simply matricial in $R$ by Lemma~\ref{lemma:matrixrepresentation.case.nilpotent}. Therefore, we henceforth assume that $p\in K[X]\cdot X+(K\setminus\{0\})$. Then $p^{n}\in K[X]\cdot X+(K\setminus\{0\})$ and so $a\in \GL(R)$ by Remark~\ref{remark:root.K[X]X+K.invertible}. Consider the bijection \begin{displaymath}
	\{1,\ldots,m\}\times \{1,\ldots,n\} \, \longrightarrow\, \{1,\ldots,mn\}, \quad (i,j) \,\longmapsto\, m(j-1)+i .
\end{displaymath} According to Remark~\ref{remark:independence.ideals.idempotents}, there exist pairwise orthogonal $e_{1},\ldots,e_{mn}\in \E(R)$ such that $1 = \sum\nolimits_{i=1}^{m}\sum\nolimits_{j=1}^{n} e_{m(j-1)+i}$ and \begin{equation}\label{eq-7}
	\forall i\in\{1,\ldots,m\} \ \forall j\in\{1,\ldots,n\}\colon \quad e_{m(j-1)+i}R \, = \, a^{i-1}p(a)^{j-1}I.
\end{equation} Consider $f \defeq \sum\nolimits_{i=1}^{m} \sum\nolimits_{j=1}^{n-1} e_{m(j-1)+i} = 1 - \sum\nolimits_{i=1}^{m} e_{m(n-1)+i} \in \E(R)$ and note that \begin{align*}
	(1-f)R \, &= \, \!\left(\sum\nolimits_{i=1}^{m} e_{m(n-1)+i}\right)\!R \\
	& = \, \bigoplus\nolimits_{i=1}^{m} e_{m(n-1)+i}R \, \stackrel{\eqref{eq-7}}{=} \, \bigoplus\nolimits_{i=0}^{m-1} a^{i}p(a)^{n-1}I \, = \, \rAnn (p(a)) .
\end{align*} Hence, by Lemma~\ref{lemma:partial.inverse}, there exists $b\in R$ with $bp(a) = f$. By induction, we see that \begin{equation}\label{eq6}
	\forall \ell\in\{1,\ldots,n\} \colon \quad b^{\ell-1}p^{\ell-1}(a)e_{1} \, = \, e_{1}.
\end{equation} Indeed, $b^{0}p^{0}(a)e_{1} = e_{1}$, and if $j \in \{ 2,\ldots,n \}$ satisfies $b^{j-2}p(a)^{j-2}e_{1} = e_{1}$, then \begin{displaymath}
	b^{j-1}p(a)^{j-1}e_{1} \, = \, b^{j-2}f\smallunderbrace{p(a)^{j-2}e_{1}}_{\in fR} \, = \, b^{j-2}p(a)^{j-2}e_{1} \, = \, e_{1}.
\end{displaymath} For any $i \in \{ 1,\ldots,m \}$ and $j \in \{ 1,\ldots,n\}$, from \begin{displaymath}
	b^{j-1}a^{-(i-1)}e_{m(j-1)+i} \, \stackrel{\eqref{eq-7}}{\in}\, b^{j-1}p(a)^{j-1}I \, \stackrel{\eqref{eq-7}}{=} \, b^{j-1}p(a)^{j-1}e_{1}R \, \stackrel{\eqref{eq6}}{=} \, e_{1}R
\end{displaymath} we infer that \begin{equation}\label{eq-6}
	b^{j-1}a^{-(i-1)}e_{m(j-1)+i} \, = \, e_{1}b^{j-1}a^{-(i-1)}e_{m(j-1)+i} .
\end{equation} Now, for any $i,i'\in\{1,\ldots,m\}$ and $j,j'\in\{1,\ldots,n\}$, we define
\begin{displaymath}
	s_{m(j-1)+i,\,m(j'-1)+i'} \, \defeq \, a^{i-1}p(a)^{j-1}b^{j'-1}a^{-(i'-1)}e_{m(j'-1)+i'}
\end{displaymath} and note that, by~\eqref{eq-7} and~\eqref{eq-6}, \begin{equation}\label{eq-range}
	s_{m(j-1)+i,\,m(j'-1)+i'} \, \in \, e_{m(j-1)+i}Re_{m(j'-1)+i'} .
\end{equation} Evidently, for any choice of $i,i',i'',i'''\in\{1,\ldots,m\}$ and $j,j',j'',j'''\in\{1,\ldots,n\}$ with $(i',j')\neq(i'',j'')$, from $e_{m(j'-1)+i'}\perp e_{m(j''-1)+i''}$, it follows that \begin{displaymath}
	s_{m(j-1)+i,\,m(j'-1)+i'} \cdot s_{m(j''-1)+i'',\,m(j'''-1)+i'''} \, \stackrel{\eqref{eq-range}}{=} \, 0 .
\end{displaymath} Moreover, for all $i,i',i''\in\{1,\ldots,m\}$ and $j,j',j''\in\{1,\ldots,n\}$, \begin{align*}
	&s_{m(j-1)+i,\,m(j'-1)+i'}\cdot s_{m(j'-1)+i',\, m(j''-1)+i''} \\
	&\quad = \, a^{i-1}p(a)^{j-1}b^{j'-1}a^{-(i'-1)}e_{m(j'-1)+i'}a^{i'-1}p(a)^{j'-1}b^{j''-1}a^{-(i''-1)}e_{m(j''-1)+i''} \\
	&\quad \stackrel{\eqref{eq-6}}{=} \, a^{i-1}p(a)^{j-1}b^{j'-1}a^{-(i'-1)}e_{m(j'-1)+i'}a^{i'-1}p(a)^{j'-1}e_{1}b^{j''-1}a^{-(i''-1)}e_{m(j''-1)+i''} \\
	&\quad \stackrel{\eqref{eq-7}}{=} \, a^{i-1}p(a)^{j-1}b^{j'-1}a^{-(i'-1)}a^{i'-1}p(a)^{j'-1}e_{1}b^{j''-1}a^{-(i''-1)}e_{m(j''-1)+i''} \\
	&\quad = \, a^{i-1}p(a)^{j-1}b^{j'-1}p(a)^{j'-1}e_{1}b^{j''-1}a^{-(i''-1)}e_{m(j''-1)+i''} \\
	&\quad \stackrel{\eqref{eq6}}{=} \, a^{i-1}p(a)^{j-1}e_{1}b^{j''-1}a^{-(i''-1)}e_{m(j''-1)+i''} \\
	&\quad \stackrel{\eqref{eq-6}}{=} \, a^{i-1}p(a)^{j-1}b^{j''-1}a^{-(i''-1)}e_{m(j''-1)+i''} \, = \, s_{m(j-1)+i,\,m(j''-1)+i''}.
\end{align*} Also, for each pair $(i,j)\in\{1,\ldots,m\} \times \{1,\ldots,n\}$, thanks to~\eqref{eq-7} we find $x\in R$ such that $e_{m(j-1)+i} = a^{i-1}p(a)^{j-1}e_{1}x$, which entails that \begin{align}\label{eq9}
	s_{m(j-1)+i,\,m(j-1)+i} \, &= \, a^{i-1}p(a)^{j-1}b^{j-1}a^{-(i-1)}e_{m(j-1)+i} \nonumber \\
	& = \, a^{i-1}p(a)^{j-1}b^{j-1}a^{-(i-1)} a^{i-1}p(a)^{j-1}e_{1}x \nonumber\\
	&= a^{i-1}p(a)^{j-1}b^{j-1}p(a)^{j-1}e_{1}x \, \stackrel{\eqref{eq6}}{=} \, a^{i-1}p(a)^{j-1}e_{1}x \nonumber \\
	& = \, e_{m(j-1)+i}
\end{align} In turn, \begin{displaymath}
	\sum\nolimits_{i=1}^{m}\sum\nolimits_{j=1}^{n} s_{m(j-1)+i,\,m(j-1)+i} \, \stackrel{\eqref{eq9}}{=} \, \sum\nolimits_{i=1}^{m}\sum\nolimits_{j=1}^{n} e_{m(j-1)+i} \, = \, 1 ,
\end{displaymath} and so $\left(s_{m(j-1)+i,\,m(j'-1)+i'}\right)_{i,i'\in \{1,\ldots,m\},\, j,j'\in \{1,\ldots,n\}}$ constitutes a family of matrix units for $R$. Now, if $j\in\{1,\ldots,n\}$ and $i,k\in\{1,\ldots,m\}$ are such that $i+k \leq m$, then
\begin{align}\label{eq10}
	a^{k}s_{m(j-1)+i,\,m(j-1)+i} \, &= \, a^{k+i-1}p(a)^{j-1}b^{j-1}a^{-(i-1)}e_{m(j-1)+i} \nonumber \\ 
	&= \, s_{m(j-1)+k+i,\,m(j-1)+i}.
\end{align} In particular, for all $j \in \{ 1,\ldots,n\}$ and $i \in \{ 1,\ldots,m-1\}$, \begin{align}\label{eq66}
	a e_{m(j-1)+i} \, \stackrel{\eqref{eq9}}{=} \, as_{m(j-1)+i,\,m(j-1)+i} \, \stackrel{\eqref{eq10}}{=} \, s_{m(j-1)+i+1,\,m(j-1)+i}.
\end{align} Furthermore, for every $j\in\{1,\ldots,n-1\}$, \begin{align}\label{eq11}
	p(a)s_{m(j-1)+1,\,m(j-1)+m} \, &= \, p(a) p(a)^{j-1}b^{j-1}a^{-(m-1)}e_{m(j-1)+m}\nonumber\\
	&= \, s_{mj+1,\,m(j-1)+m}
\end{align} and hence \begin{align}\label{67}
	& ae_{m(j-1)+m} \, \stackrel{\eqref{eq9}}{=} \, as_{m(j-1)+m,\,m(j-1)+m} \, \stackrel{\eqref{eq10}}{=} \, a^{m}s_{m(j-1)+1,\,m(j-1)+m}\nonumber\\
	&\qquad = \, \! \left(c_{m}^{-1}p(a)-\sum\nolimits_{i=0}^{m-1}c_{m}^{-1}c_{i}a^{i}\right)\!s_{m(j-1)+1,\,m(j-1)+m}\nonumber\\
	&\qquad \stackrel{\eqref{eq10}+\eqref{eq11}}{=} \, c_{m}^{-1}s_{mj+1,\,m(j-1)+m}-\sum\nolimits_{i=0}^{m-1}c_{m}^{-1}c_{i}s_{m(j-1)+i+1,\,m(j-1)+m}.
\end{align} Finally, since $s_{m(n-1)+1, \, m(n-1)+m} \stackrel{\eqref{eq-range}}{\in} e_{m(n-1)+1}R \stackrel{\eqref{eq-7}}{=} p(a)^{n-1}I \subseteq \rAnn(p(a))$, \begin{align}\label{68}
	ae_{m(n-1)+m} \, &\stackrel{\eqref{eq9}}{=} \, as_{m(n-1)+m, \, m(n-1)+m} \, \stackrel{\eqref{eq10}}{=} \, a^{m}s_{m(n-1)+1,\,m(n-1)+m} \nonumber\\
		&= \, \left(c_{m}^{-1}p(a)-\sum\nolimits_{i=0}^{m-1}c_{m}^{-1}c_{i}a^{i}\right) \! s_{m(n-1)+1, \, m(n-1)+m} \nonumber \\
		&\stackrel{\eqref{eq10}}{=} \, \sum\nolimits_{i=0}^{m-1}-c_{m}^{-1}c_{i}s_{m(n-1)+i+1,\,m(n-1)+m}.
\end{align} Combining~\eqref{eq66}, \eqref{67} and~\eqref{68}, we conclude that \begin{align*}
	a \, &= \, a\sum\nolimits_{i=1}^{m}\sum\nolimits_{j=1}^{n} e_{m(j-1)+i} \, = \, \sum\nolimits_{i=1}^{m}\sum\nolimits_{j=1}^{n} ae_{m(j-1)+i} \\
		&= \, \sum\nolimits_{i=1}^{m-1}\sum\nolimits_{j=1}^{n}s_{m(j-1)+i+1,\,m(j-1)+i}\\
		&\qquad +\sum\nolimits_{j=1}^{n-1}\left(c_m^{-1}s_{mj+1,\,m(j-1)+m}-\sum\nolimits_{i=0}^{m-1}c_m^{-1}c_{i}s_{m(j-1)+k+1,\,m(j-1)+m}\right)\\
		&\qquad +\sum\nolimits_{i=0}^{m-1}-c_{m}^{-1}c_{i}s_{m(n-1)+k+1,\,m(n-1)+m}.
\end{align*} Consequently, $a$ is simply matricial in $R$ by Remark~\ref{remark:matrixunits.ring.homomorphism}. \end{proof}

\begin{lem}\label{lemma:matrixrepresentation.case.p^n=0} Let $n \in \N_{>0}$. If $R$ is a non-discrete irreducible, continuous ring, $p\in \ZZ(R)[X]$ is irreducible, and $a\in R$ satisfies $p^{n}(a) = 0$, then $a$ is matricial in $R$. \end{lem}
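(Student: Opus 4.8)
The plan is to exhibit $a$ as a matricial element by producing pairwise orthogonal idempotents $f_{1},\ldots,f_{n}\in\E(R)$ with $f_{1}+\ldots+f_{n}=1$, each commuting with $a$, such that each $f_{k}af_{k}$ is simply matricial in the corner ring $f_{k}Rf_{k}$. Once this is done, $a=\sum_{k}f_{k}af_{k}$ lies in $\sum_{k}R_{k}'$ for suitable simple matricial unital $\ZZ(R)$-subalgebras $R_{k}'\leq f_{k}Rf_{k}$ with $f_{k}af_{k}\in R_{k}'$, and $\sum_{k}R_{k}'$ is matricial by Remark~\ref{remark:matricial}\ref{remark:matricial.sum}, whence $a$ is matricial. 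Write $K\defeq\ZZ(R)$, $m\defeq\deg(p)$ and $b\defeq p_{R}(a)$. We may assume $b\neq0$ (otherwise $p(a)=0$, and even Lemma~\ref{lemma:matrixrepresentation.case.p=0} applies) and, replacing $n$ by the least positive integer with $b^{n}=0$, also that $b^{n-1}\neq0$ and $n\geq2$. If $p=cX$ with $c\in K\setminus\{0\}$, then $a$ is nilpotent with $a^{n}=0$; otherwise $p\in K[X]\cdot X+(K\setminus\{0\})$, so $a\in\GL(R)$ by Remark~\ref{remark:root.K[X]X+K.invertible}. As $b$ commutes with $a$, so do its powers, and
\begin{displaymath}
	\{0\}=\rAnn(b^{0})\subseteq\rAnn(b)\subseteq\ldots\subseteq\rAnn(b^{n})=R
\end{displaymath}
is a chain of $a$-invariant principal right ideals (Lemma~\ref{lemma:properties.polynomials}\ref{lemma:annihilator.invariant}) with $b\,\rAnn(b^{\ell+1})\subseteq\rAnn(b^{\ell})$; moreover $\rk_{R}(q(a))=1$ whenever $q\in K[X]\setminus\{0\}$ has $\deg(q)<m$, by Lemma~\ref{lemma:sufficient.condition.halperin}.

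The technical heart is the following \emph{block decomposition}: there are $I_{1},\ldots,I_{n}\in\lat(R)$ with
\begin{displaymath}
	R=\bigoplus\nolimits_{k=1}^{n}\bigoplus\nolimits_{j=0}^{k-1}\bigoplus\nolimits_{i=0}^{m-1}a^{i}b^{j}I_{k}\qquad\text{and}\qquad\rAnn(b)=\bigoplus\nolimits_{k=1}^{n}\bigoplus\nolimits_{i=0}^{m-1}a^{i}b^{k-1}I_{k}.
\end{displaymath}
I would prove this by induction on $n$, along the lines of Lemma~\ref{lemma:independence.halperin} (cf.~\cite[Lemma~5.1]{Halperin62}). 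For $n=1$ one has $b=0$, and Lemma~\ref{lemma:independence.inductive}, Lemma~\ref{lemma:sufficient.condition.halperin} and Lemma~\ref{lemma:independence.halperin}\ref{lemma:independence.halperin.2} applied with $J=\{0\}$, $J_{1}=R$ (note $a^{m}x\in\sum_{i<m}Ka^{i}x$ for all $x$, since $b=0$) yield $I_{1}$ with $R=\bigoplus_{i<m}a^{i}I_{1}$ --- exactly as in the proof of Lemma~\ref{lemma:matrixrepresentation.case.p=0} --- and one sets $I_{k}\defeq\{0\}$ for $k\geq2$. For the inductive step one first peels off the top level: since $bx\in\rAnn(b^{n-1})$ for all $x\in R$ (as $b^{n}=0$), one has $a^{m}x\in\sum_{i<m}Ka^{i}x+\rAnn(b^{n-1})$ for all $x\in R$, so Lemma~\ref{lemma:independence.halperin}\ref{lemma:independence.halperin.2} with $J=\rAnn(b^{n-1})$, $J_{1}=R$ produces $I_{n}\in\lat(R)$, maximal with $(I_{n},\rAnn(b^{n-1}))$ being $(a,m)$-independent, such that $R=\bigoplus_{i<m}a^{i}I_{n}\oplus\rAnn(b^{n-1})$. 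It then remains to split off the lower levels $bI_{n},\ldots,b^{n-1}I_{n}$ from $\rAnn(b^{n-1})$ together with an $a$- and $b$-invariant complement which, by the induction hypothesis applied to index $n-1$, decomposes into blocks $I_{1},\ldots,I_{n-1}$; to keep everything compatible one threads the whole chain $\rAnn(b^{\ell})$ through the induction --- choosing the $I_{k}$ in decreasing order of $k$ and forcing the successive maximal right ideals (legitimate by the final assertion of Lemma~\ref{lemma:independence.inductive}) to contain the already-constructed levels $b^{j}I_{k'}$ with $k'>k$ --- proving simultaneously that $\rAnn(b^{\ell})\cap L=\bigoplus_{k}\bigoplus_{j=\max(0,k-\ell)}^{k-1}\bigoplus_{i}a^{i}b^{j}I_{k}$ for every $\ell$ and every $a$- and $b$-invariant $L\in\lat(R)$ with $b^{n}L=0$. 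This last step --- which rests on the closedness/maximality arguments behind Lemma~\ref{lemma:independence.inductive} and on careful bookkeeping of the right ideals $a^{i}b^{j}I_{k}$ --- is the main obstacle.

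Granting the block decomposition, one finishes as follows. By Remark~\ref{remark:independence.ideals.idempotents} there are pairwise orthogonal idempotents $f_{1},\ldots,f_{n}$ with $f_{1}+\ldots+f_{n}=1$ and $f_{k}R=\bigoplus_{j=0}^{k-1}\bigoplus_{i=0}^{m-1}a^{i}b^{j}I_{k}$ for each $k$. Each $f_{k}R$ is $a$-invariant: for $i<m-1$ one has $a\cdot a^{i}b^{j}I_{k}=a^{i+1}b^{j}I_{k}\subseteq f_{k}R$, while for $i=m-1$ one uses $c_{m}a^{m}=b-\sum_{i<m}c_{i}a^{i}$ (resp.\ $a^{m}=0$ in the nilpotent case) together with $b^{k}I_{k}\subseteq b\,\rAnn(b)=\{0\}$, which holds since $b^{k-1}I_{k}\subseteq\rAnn(b)$ by the second displayed identity; hence each $f_{k}$ commutes with $a$. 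Fix $k$ with $f_{k}\neq0$. Then $R_{k}\defeq f_{k}Rf_{k}$ is a non-discrete irreducible, continuous ring with $\ZZ(R_{k})\cong K$ (Remark~\ref{remark:eRe.non-discrete.irreducible.continuous}), and $a_{k}\defeq f_{k}af_{k}=af_{k}$ satisfies $p_{R_{k}}(a_{k})=bf_{k}$. Multiplying the identity $f_{k}R=\bigoplus_{j<k}\bigoplus_{i<m}a^{i}b^{j}I_{k}$ and the identity for $\rAnn(b)$ on the right by $f_{k}$ --- using Lemma~\ref{lemma:right.multiplication} and Lemma~\ref{lemma:right.multiplication.regular} to preserve the direct sums and the membership $I_{k}f_{k}\in\lat(R_{k})$, and Lemma~\ref{lemma:right.multiplication}\ref{lemma:annihilator.eRe} to compute the annihilator inside $R_{k}$ --- one obtains
\begin{displaymath}
	R_{k}=\bigoplus\nolimits_{j=0}^{k-1}\bigoplus\nolimits_{i=0}^{m-1}a_{k}^{i}\,p_{R_{k}}(a_{k})^{j}(I_{k}f_{k})\qquad\text{and}\qquad\rAnn(p_{R_{k}}(a_{k}))=\bigoplus\nolimits_{i=0}^{m-1}a_{k}^{i}\,p_{R_{k}}(a_{k})^{k-1}(I_{k}f_{k}).
\end{displaymath}
Thus Lemma~\ref{lemma:matrixrepresentation.case.tower} --- or Lemma~\ref{lemma:matrixrepresentation.case.nilpotent} in the nilpotent case $p=cX$, $m=1$ --- shows that $a_{k}$ is simply matricial in $R_{k}$, and by the first paragraph $a$ is matricial in $R$.
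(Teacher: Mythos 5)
Your overall strategy is recognisably the same as the paper's — build a Jordan-type decomposition of $R$ via the dynamical-independence machinery of Section~\ref{section:dynamical.independence}, then feed the blocks into Lemma~\ref{lemma:matrixrepresentation.case.tower} — but you organise it differently: you aim for a \emph{full} block decomposition
\[
	R=\bigoplus\nolimits_{k=1}^{n}\bigoplus\nolimits_{j=0}^{k-1}\bigoplus\nolimits_{i=0}^{m-1}a^{i}b^{j}I_{k},\qquad
	\rAnn(b)=\bigoplus\nolimits_{k=1}^{n}\bigoplus\nolimits_{i=0}^{m-1}a^{i}b^{k-1}I_{k}
\]
up front, with pairwise orthogonal $a$-commuting idempotents $f_{1},\dots,f_{n}$, whereas the paper peels off only the single \emph{top} block and recurses. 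Your ``finishing'' argument (given the block decomposition, pass to each corner $f_{k}Rf_{k}$ via Lemmata~\ref{lemma:right.multiplication} and~\ref{lemma:right.multiplication.regular}, apply Lemma~\ref{lemma:matrixrepresentation.case.tower} there, and assemble via Remark~\ref{remark:matricial}\ref{remark:matricial.sum}) is sound and checks out.

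However, there is a genuine gap, and you flag it yourself: the block decomposition is only sketched. The iterative construction you describe (``threading the whole chain $\rAnn(b^{\ell})$ through the induction,'' ``forcing the successive maximal right ideals to contain the already-constructed levels,'' and the auxiliary claim about $\rAnn(b^{\ell})\cap L$ for every invariant $L$) is not carried out, and as stated the auxiliary claim is not quite coherent — the $I_{k}$ you want are \emph{complements} of the higher-level contributions $b^{j}I_{k'}$ ($k'>k$), not ideals containing them, so the analogue of the paper's second step (constructing the $J_{j}$'s with $I_{j}=p(a)^{n-j}I_{n}\oplus J_{j}$) is exactly what is missing, and it is precisely the nontrivial part of the induction.

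The paper avoids the need for a full block decomposition via a cleaner trick that is worth noting: after the first and second steps produce the $I_{j}$'s and the complements $J_{j}$'s, one merely observes that $R=I\oplus J$ with $I=\bigoplus_{j<n}\bigoplus_{i<m}a^{i}p(a)^{j}I_{n}$ and $J=\bigoplus_{j\leq n}\bigoplus_{i<m}a^{i}J_{j}$ \emph{both} $a$-invariant, yielding a single idempotent $e$ with $ea=ae$; then Lemma~\ref{lemma:matrixrepresentation.case.tower} handles $eae$ in $eRe$, while $(1-e)a(1-e)$ in $(1-e)R(1-e)$ satisfies $p^{n-1}=0$ there (since $J\subseteq\rAnn(p(a)^{n-1})$), so the \emph{outer} induction hypothesis on $n$ applies directly to that corner — no simultaneous bookkeeping of $n$ blocks required. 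If you want to keep your ``all blocks at once'' formulation, you essentially have to reproduce the paper's steps one and two and then iterate them inside $(1-e)R(1-e)$, transferring the resulting right ideals back to $R$; doing that carefully is more work than the paper's version and buys you nothing for the stated lemma.
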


\begin{proof} We prove the claim by induction on $n \in \N_{>0}$. The induction base, at $n=1$, is provided by Lemma~\ref{lemma:matrixrepresentation.case.p=0}. Now, let $n \in \N_{>0}$ and suppose that the claim has been established for every strictly smaller positive integer. Let $R$ be a non-discrete irreducible, continuous ring, let $K\defeq \ZZ(R)$, let $p\in K[X]$ be irreducible, and let $a\in R$ be such that $p^{n}(a) = 0$. Note that $m \defeq \deg(p) \in \N_{>0}$ by irreducibility of $p$. Let $c_{0},\ldots,c_{m}\in K$ so that $p=\sum\nolimits_{i=0}^mc_iX^i$. By our induction hypothesis, we may and will assume that $n\in\N_{>0}$ is minimal such that $p^n(a)=0$. Moreover, let us note that \begin{equation}\label{equation0}
	\forall x \in R \colon \quad a^{m}x = \sum\nolimits_{i=0}^{m-1} -c_{i}c_{m}^{-1}a^{i}x + c_{m}^{-1}p(a)x .
\end{equation} We now proceed in three preparatory steps.
		
Our first step is to find $I_{1},\ldots,I_{n}\in \lat(R)$ such that \begin{equation}\label{first.step.1}
	\forall j \in \{ 1,\ldots,n \} \colon \quad \rAnn(p(a)^{j}) = I_{j} \oplus aI_{j}\oplus \ldots \oplus a^{m-1}I_{j}\oplus \rAnn(p(a)^{j-1})
\end{equation} and \begin{equation}\label{first.step.2}
	\forall j\in\{2,\ldots,n\} \colon \quad p(a)I_{j} \subseteq I_{j-1} .
\end{equation} The argument proceeds by downward induction starting at $j=n$. By Lemma~\ref{lemma:independence.inductive}, there exists $I_{n} \in \lat(R)$ maximal such that $(I_{n},\rAnn(p(a)^{n-1}))$ is $(a,m)$-independent. For every $x \in R = \rAnn(p^{n}(a)) = \rAnn(p(a)^{n-1}p(a))$, \begin{displaymath}
	a^{m}x \, \stackrel{\eqref{equation0}}{=} \, \sum\nolimits_{i=0}^{m-1} -c_{i}c_{m}^{-1}a^{i}x + c_{m}^{-1}p(a)x \, \in \, \sum\nolimits_{i=0}^{m-1} Ka^{i}x + \rAnn(p(a)^{n-1})
\end{displaymath} Also, \begin{displaymath}
	a\rAnn(p(a)^{n-1}) \, = \, a\rAnn(p^{n-1}(a)) \, \stackrel{\ref{lemma:properties.polynomials}\ref{lemma:annihilator.invariant}}{\subseteq} \, \rAnn(p^{n-1}(a)) \, = \, \rAnn(p(a)^{n-1}) .
\end{displaymath} Using Lemma~\ref{lemma:sufficient.condition.halperin} and Lemma~\ref{lemma:independence.halperin}\ref{lemma:independence.halperin.2}, we deduce that \begin{displaymath}
	\rAnn(p(a)^{n}) \, = \, R \, = \, I_{n} \oplus \ldots \oplus a^{m-1}I_{n} \oplus \rAnn(p(a)^{n-1}).
\end{displaymath} For the induction step, let $j\in\{2,\ldots,n\}$ and suppose that $I_{n},\ldots,I_{j}\in\lat(R)$ have been chosen with the desired properties. In particular, \begin{displaymath}
	\rAnn(p(a)^{j}) \, = \, I_{j} \oplus \ldots \oplus a^{m-1}I_{j} \oplus \rAnn(p(a)^{j-1}).
\end{displaymath} This immediately entails that $p(a)I_{j} \subseteq p(a)\rAnn(p(a)^{j}) \subseteq \rAnn(p(a)^{j-1})$. Moreover, since $\rAnn(p(a)) \subseteq \rAnn(p(a)^{j-1})$, the additive group homomorphism \begin{displaymath}
	\sum\nolimits_{i=0}^{m-1} a^{i}I_{j} \, \longrightarrow \, R, \quad x \, \longmapsto \, p(a)x
\end{displaymath} is injective: if $x\in \sum\nolimits_{i=0}^{m-1} a^{i}I_{j}$ and $p(a)x=0$, then \begin{displaymath}
	x \, \in \, \!\left(\sum\nolimits_{i=0}^{m-1} a^{i}I_{j}\right) \cap \rAnn(p(a)) \, = \, \{ 0 \} ,
\end{displaymath} i.e., $x = 0$. It follows that \begin{displaymath}
	(p(a)I_{j}, p(a)aI_{j},\ldots,p(a)a^{m-1}I_{j})\perp.
\end{displaymath} Since $(I_{j}, \ldots,a^{m-1}I_{j},\rAnn(p(a)^{j-1})\perp$, we also see that \begin{displaymath}
	\left(p(a)I_{j}\oplus \ldots\oplus p(a)a^{m-1}I_{j}\right) \cap \rAnn(p(a)^{j-2}) \, = \, \{0\}.
\end{displaymath} Consequently, by Remark~\ref{remark:independence.equivalence.intersection}, \begin{displaymath}
	(p(a)I_{j},\smallunderbrace{p(a)aI_{j}}_{=\,ap(a)I_{j}},\ldots,\smallunderbrace{p(a)a^{m-1}I_{j}}_{=\,a^{m-1}p(a)I_{j}},\rAnn(p(a)^{j-2}))\perp,
\end{displaymath} i.e., $\left(p(a)I_{j},\rAnn(p(a)^{j-2})\right)$ is $(a,m)$-independent. Thanks to Lemma~\ref{lemma:independence.inductive}, there exists $I_{j-1} \in \lat(R)$ maximal such that $\left(I_{j-1},\rAnn(p(a)^{j-2})\right)$ is $(a,m)$-independent and $p(a)I_{j} \subseteq I_{j-1} \subseteq \rAnn(p(a)^{j-1})$. For every $x \in \rAnn(p(a)^{j}) = \rAnn(p(a)^{j-1}p(a))$, \begin{displaymath}
	a^{m}x \, \stackrel{\eqref{equation0}}{=} \, \sum\nolimits_{i=0}^{m-1} -c_{i}c_{m}^{-1}a^{i}x + c_{m}^{-1}p(a)x \, \in \, \sum\nolimits_{i=0}^{m-1} Ka^{i}x + \rAnn(p(a)^{j-1})
\end{displaymath} Also, \begin{displaymath}
	a\rAnn(p(a)^{j-1}) \, = \, a\rAnn(p^{j-1}(a)) \, \stackrel{\ref{lemma:properties.polynomials}\ref{lemma:annihilator.invariant}}{\subseteq} \, \rAnn(p^{j-1}(a)) \, = \, \rAnn(p(a)^{j-1}) .
\end{displaymath} Applying Lemma~\ref{lemma:sufficient.condition.halperin} and Lemma~\ref{lemma:independence.halperin}\ref{lemma:independence.halperin.2}, we infer that \begin{displaymath}
	\rAnn(p(a)^{j-1}) \, = \, I_{j-1} \oplus \ldots \oplus a^{m-1}I_{j-1} \oplus \rAnn(p(a)^{j-2}) .
\end{displaymath} This finishes the induction.

Our second step is to find $J_{1},\ldots,J_{n}\in\lat(R)$ such that \begin{equation}\label{second.step.1}
	\forall j \in \{ 1,\ldots,n \} \colon \quad I_{j} = p(a)^{n-j}I_{n} \oplus J_{j}
\end{equation} and \begin{equation}\label{second.step.2}
	\forall j \in \{ 2,\ldots,n \} \colon \quad p(a)J_{j} \subseteq J_{j-1}
\end{equation} Again, the argument proceeds by downward induction starting at $j=n$, for which we define $J_{n} \defeq \{0\}$. For the induction step, let $j\in\{2,\ldots,n\}$ and suppose that $J_{n},\ldots,J_{j}\in\lat(R)$ have been chosen with the desired properties. Then \begin{displaymath}
	p(a)^{n-(j-1)}I_{n} \, \subseteq \, p(a)^{n-j}I_{n-1} \, \subseteq \, \ldots \, \subseteq \, p(a)I_{j} \, \subseteq \, I_{j-1}
\end{displaymath} and $I_{j} = p(a)^{n-j}I_{n} \oplus J_{j}$. The additive group homomorphism $I_{j} \to R, \, x \mapsto p(a)x$ is injective: indeed, if $x\in I_{j}$ and $p(a)x=0$, then \begin{displaymath}
	x \, \in \, I_{j} \cap \rAnn(p(a)) \, \subseteq \, I_{j} \cap \rAnn(p(a)^{j-1}) \, \stackrel{\eqref{first.step.1}}{=} \, \{ 0 \} ,
\end{displaymath} i.e., $x = 0$. Since $I_{j} = p(a)^{n-j}I_{n} \oplus J_{j}$, we conclude that $p(a)^{n-(j-1)}I_{n} \cap p(a)J_{j} = \{0\}$. Also, \begin{displaymath}
	p(a)J_{j} \, \stackrel{\eqref{second.step.1}}{\subseteq} \, p(a)I_{j} \, \stackrel{\eqref{first.step.2}}{\subseteq} \, I_{j-1} .
\end{displaymath} So, by Remark~\ref{remark:bijection.annihilator} and Lemma~\ref{lemma:complement}, there exists $J_{j-1}\in\lat(R)$ with $p(a)J_{j} \subseteq J_{j-1}$ and \begin{displaymath}
	I_{j-1} \, = \, p(a)^{n-(j-1)}I_{n} \oplus J_{j-1} ,
\end{displaymath} which finishes the induction.

As our third step, we show that \begin{equation}\label{third.step}
	\forall i \in \{ 0,\ldots,m-1\} \ \forall j \in \{ 1,\ldots,n\} \colon \quad {a^{i}p(a)^{n-j}I_{n}} \cap {a^{i}J_{j}} = \{ 0 \} .
\end{equation} For this purpose, we observe that irreducibility of $p$ implies that either $p = c_{1}X$ with $c_{1} \ne 0$, or $p\in K[X]\cdot X+(K\setminus\{0\})$. In the first case, $m=1$ and so~\eqref{third.step} follows directly from~\eqref{second.step.1}. In the second case, we see that $p^{n}\in K[X]\cdot X+(K\setminus\{0\})$, thus $a\in \GL(R)$ by Remark~\ref{remark:root.K[X]X+K.invertible} and so $a^{i} \in \GL(R)$ for each $i \in \{ 0,\ldots,m-1 \}$, wherefore~\eqref{third.step} is an immediate consequence of~\eqref{second.step.1}, too.
		
Due to the preparations above and the fact that $\rAnn(p(a)^{0}) = \rAnn(1) = \{ 0 \}$, \begin{align*}
	R \, = \, \rAnn(p(a)^{n}) \, &\stackrel{\eqref{first.step.1}}{=} \, \bigoplus\nolimits_{j=1}^{n} \bigoplus\nolimits_{i=0}^{m-1}a^{i}I_{j}\\
		&\stackrel{\eqref{second.step.1}}{=} \, \bigoplus\nolimits_{j=1}^{n} \bigoplus\nolimits_{i=0}^{m-1} a^{i}\!\left(p(a)^{n-j}I_{n}\oplus J_{j}\right)\\
		&\stackrel{\eqref{third.step}}{=} \, \bigoplus\nolimits_{j=1}^{n} \bigoplus\nolimits_{i=0}^{m-1} a^{i}p(a)^{n-j}I_{n}\oplus a^{i}J_{j} \\
		&= \, \! \left(\bigoplus\nolimits_{j=0}^{n-1} \bigoplus\nolimits_{i=0}^{m-1}a^{i}p(a)^{j}I_{n} \right) \! \oplus \! \left(\bigoplus\nolimits_{j=1}^{n} \bigoplus\nolimits_{i=0}^{m-1} a^{i}J_{j} \right) \! .
\end{align*} That is, $R = I \oplus J$ where \begin{align*}
	I \, \defeq \, \bigoplus\nolimits_{j=0}^{n-1} \bigoplus\nolimits_{i=0}^{m-1}a^{i}p(a)^{j}I_{n} , \qquad J \, \defeq \, \bigoplus\nolimits_{j=1}^{n} \bigoplus\nolimits_{i=0}^{m-1} a^{i}J_{j} .
\end{align*} Thanks to Remark~\ref{remark:independence.ideals.idempotents}, there exists $e\in\E(R)$ such that $I=eR$ and $J=(1-e)R$. Since $p(a)^{n} = p^{n}(a) = 0$ and thus $p(a)^{n}I_{n} = \{ 0 \}$, \begin{align*}
	&aI = \sum\nolimits_{j=0}^{n-1} \sum\nolimits_{i=0}^{m-1} aa^{i}p(a)^{j}I_{n} = \left( \sum\nolimits_{j=0}^{n-1} \sum\nolimits_{i=1}^{m-1} a^{i}p(a)^{j}I_{n} \right)\! + \! \left( \sum\nolimits_{j=0}^{n-1} a^{m}p(a)^{j}I_{n} \right) \\
	&\stackrel{\eqref{equation0}}{\subseteq} \left( \sum\nolimits_{j=0}^{n-1} \sum\nolimits_{i=0}^{m-1} a^{i}p(a)^{j}I_{n} \right)\! + \! \left( \sum\nolimits_{j=1}^{n} p(a)^{j}I_{n} \right) \subseteq \sum\nolimits_{j=0}^{n-1} \sum\nolimits_{i=0}^{m-1} a^{i}p(a)^{j}I_{n} = I .
\end{align*} Furthermore, as $p(a)J_{1} \stackrel{\eqref{second.step.1}}{\subseteq} p(a)I_{1} \stackrel{\eqref{first.step.1}}{=} \{ 0 \}$, \begin{align*}
	aJ \, &= \, \sum\nolimits_{j=1}^{n} \sum\nolimits_{i=0}^{m-1} aa^{i}J_{j} \, = \, \! \left( \sum\nolimits_{j=1}^{n} \sum\nolimits_{i=1}^{m-1} a^{i}J_{j} \right)\! + \! \left( \sum\nolimits_{j=1}^{n} a^{m}J_{j} \right) \\
	&\stackrel{\eqref{equation0}}{\subseteq} \, \! \left( \sum\nolimits_{j=1}^{n} \sum\nolimits_{i=0}^{m-1} a^{i}J_{j} \right)\! + \! \left( \sum\nolimits_{j=1}^{n} p(a)J_{j} \right) \\
	& \stackrel{\eqref{second.step.2}}{\subseteq} \, \! \left( \sum\nolimits_{j=1}^{n} \sum\nolimits_{i=0}^{m-1} a^{i}J_{j} \right)\! + \! \left( \sum\nolimits_{j=1}^{n-1} J_{j} \right)\! + p(a)J_{1} \, = \, \sum\nolimits_{j=1}^{n} \sum\nolimits_{i=0}^{m-1} a^{i}J_{j} \, = \, J .
\end{align*} Therefore, $aeR = aI\subseteq I = eR$ and $a(1-e)R = aJ\subseteq J = (1-e)R$. We conclude that $ae=eae$ and $a(1-e)=(1-e)a(1-e)$. Consequently, \begin{equation}\label{eq-4}
	a \, = \, ae + a(1-e) \, = \, eae + (1-e)a(1-e) \, \in \, eRe + (1-e)R(1-e).
\end{equation} This means that $ea=ae$ and thus entails that \begin{equation}\label{equation:polynomial.commutes}
	ep(a) \, = \, p(a)e \, \stackrel{\ref{lemma:properties.polynomials}\ref{lemma:evaluation.eRe}}{=} \, p_{eRe}(ae) \, = \, p_{eRe}(eae) .
\end{equation} From~\eqref{first.step.1} and minimality of $n$, we infer that $I_{n} \ne \{ 0 \}$ and thus $I \ne \{0\}$, whence $e \ne 0$. Therefore, $eRe$ is a non-discrete irreducible, continuous ring by Remark~\ref{remark:eRe.non-discrete.irreducible.continuous}. We observe that $I_{n}e \in \lat(eRe)$ by Lemma~\ref{lemma:right.multiplication.regular}\ref{lemma:right.multiplication.4} and, moreover, \begin{align*}
	eRe \, = \, Ie \, &\stackrel{\ref{lemma:right.multiplication.regular}\ref{lemma:right.multiplication.3}}{=} \, \bigoplus\nolimits_{j=0}^{n-1} \bigoplus\nolimits_{i=0}^{m-1}a^{i}p(a)^{j}I_{n}e \, = \, \bigoplus\nolimits_{j=0}^{n-1} \bigoplus\nolimits_{i=0}^{m-1}a^{i}p(a)^{j}eI_{n}e \\ 
	&\stackrel{\eqref{eq-4}+\ref{lemma:properties.polynomials}\ref{lemma:evaluation.eRe}}{=} \, \bigoplus\nolimits_{j=0}^{n-1} \bigoplus\nolimits_{i=0}^{m-1}(eae)^{i}p_{eRe}(eae)^{j}I_{n}e .
\end{align*} Furthermore, \begin{align*}
	\rAnn(p(a)) \, &\stackrel{\eqref{first.step.1}}{=} \, \bigoplus\nolimits_{i=0}^{m-1}a^{i}I_{1}\, \stackrel{\eqref{second.step.1}}{=} \, \bigoplus\nolimits_{i=0}^{m-1} a^{i}\!\left(p(a)^{n-1}I_{n}\oplus J_{1}\right)\\
		&\stackrel{\eqref{third.step}}{=} \, \bigoplus\nolimits_{i=0}^{m-1} a^{i}p(a)^{n-1}I_{n}\oplus a^{i}J_{1} \\
		&= \, \! \left(\bigoplus\nolimits_{i=0}^{m-1}a^{i}p(a)^{n-1}I_{n} \right) \! \oplus \! \left( \bigoplus\nolimits_{i=0}^{m-1} a^{i}J_{1} \right)
\end{align*} and thus $e\rAnn(p(a)) = \bigoplus\nolimits_{i=0}^{m-1}a^{i}p(a)^{n-1}I_{n}$, wherefore \begin{align*}
	&\rAnn(p_{eRe}(eae)) \cap eRe \, \stackrel{\eqref{equation:polynomial.commutes}}{=} \, \rAnn(p(a)e) \cap eRe \, \stackrel{\eqref{equation:polynomial.commutes}+\ref{lemma:right.multiplication}\ref{lemma:annihilator.eRe}}{=} \, e\rAnn(p(a))e \\
	& \qquad = \, \! \left(\bigoplus\nolimits_{i=0}^{m-1}a^{i}p(a)^{n-1}I_{n}\right)\! e \, \stackrel{\ref{lemma:right.multiplication.regular}\ref{lemma:right.multiplication.3}}{=} \,  \bigoplus\nolimits_{i=0}^{m-1}a^{i}p(a)^{n-1}I_{n}e \\
	& \qquad = \, \bigoplus\nolimits_{i=0}^{m-1}a^{i}p(a)^{n-1}eI_{n}e \, \stackrel{\eqref{eq-4}+\ref{lemma:properties.polynomials}\ref{lemma:evaluation.eRe}}{=} \, \bigoplus\nolimits_{i=0}^{m-1}(eae)^{i}p_{eRe}(eae)^{n-1}I_{n}e .
\end{align*} Hence, $eae$ is matricial in $eRe$ by Lemma~\ref{lemma:matrixrepresentation.case.tower}. Thus, if $1-e = 0$, then $a$ is matricial in $R$, as desired. Suppose now that $1-e \ne 0$, whence $(1-e)R(1-e)$ is a non-discrete irreducible, continuous ring by Remark~\ref{remark:eRe.non-discrete.irreducible.continuous}. Furthermore, since $J_{n} = \{0\}$ by~\eqref{second.step.1}, \begin{align*}
	J \, = \, \bigoplus\nolimits_{j=1}^{n-1} \bigoplus\nolimits_{i=0}^{m-1} a^{i}J_{j} \, &\stackrel{\eqref{second.step.1}+\eqref{first.step.1}}{\subseteq} \, \sum\nolimits_{j=1}^{n-1} \sum\nolimits_{i=0}^{m-1} a^{i}\rAnn(p(a)^{j}) \\
	& \stackrel{\ref{lemma:properties.polynomials}\ref{lemma:annihilator.invariant}}{\subseteq} \, \sum\nolimits_{j=1}^{n-1} \rAnn(p(a)^{j}) \, \subseteq \, \rAnn(p(a)^{n-1})
\end{align*} In particular, $1-e \in J \subseteq \rAnn(p^{n-1}(a))$ and hence \begin{displaymath}
	\left(p^{n-1}\right)_{(1-e)R(1-e)}\!((1-e)a(1-e)) \, \stackrel{\eqref{eq-4}+\ref{lemma:properties.polynomials}\ref{lemma:evaluation.eRe}}{=} \, p(a)^{n-1}(1-e)\, = \, 0.
\end{displaymath} Our induction hypothesis now asserts that $(1-e)a(1-e)$ is matricial in $(1-e)R(1-e)$. According to~\eqref{eq-4} and Remark~\ref{remark:matricial}\ref{remark:matricial.sum}, it follows that $a$ is matricial in $R$, which completes the induction. \end{proof}

Everything is prepared to deduce the desired characterization of algebraic elements of non-discrete irreducible, continuous rings.

\begin{thm}\label{theorem:matrixrepresentation.case.algebraic} Let $R$ be a non-discrete irreducible, continuous ring, let $K\defeq \ZZ(R)$. An element of $R$ is algebraic over $K$ if and only if it is matricial in $R$. \end{thm}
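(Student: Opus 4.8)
The plan is to establish the two implications separately; the backwards direction is immediate from finite-dimensionality, while the forwards direction is assembled from the prime-power case handled in Lemma~\ref{lemma:matrixrepresentation.case.p^n=0} by a Chinese-remainder-type decomposition. Throughout, recall that $K = \ZZ(R)$ is a field by Remark~\ref{remark:irreducible.center.field}, so $K[X]$ is a unique factorization domain (in fact a principal ideal domain).

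For the implication ``matricial $\Rightarrow$ algebraic'', suppose $a$ lies in a matricial unital $K$-subalgebra $A \leq R$, so that $A \cong_{K} \prod_{i=1}^{m} \M_{n_{i}}(K)$ for suitable $m, n_{1}, \ldots, n_{m} \in \N_{>0}$. Then $A$ is a finite-dimensional $K$-vector space, hence $1, a, a^{2}, \ldots$ are $K$-linearly dependent, which exhibits a non-zero $p \in K[X]$ with $p(a) = 0$.

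For the converse, let $a \in R$ be algebraic over $K$ and pick $p \in K[X] \setminus \{0\}$ with $p(a) = 0$. Factoring in $K[X]$ and discarding a leading unit, write $p$ as a product $\prod_{k=1}^{r} q_{k}^{n_{k}}$ of powers of pairwise non-associate irreducible polynomials $q_{1}, \ldots, q_{r} \in K[X]$; the polynomials $q_{1}^{n_{1}}, \ldots, q_{r}^{n_{r}}$ are then pairwise coprime. Iterating Lemma~\ref{lemma:properties.polynomials}\ref{lemma:bezout} and using that a leading unit does not affect annihilators, we obtain $R = \rAnn(p(a)) = \bigoplus_{k=1}^{r} \rAnn(q_{k}^{n_{k}}(a))$. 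Each $I_{k} \defeq \rAnn(q_{k}^{n_{k}}(a))$ lies in $\lat(R)$, so Remark~\ref{remark:independence.ideals.idempotents} supplies pairwise orthogonal $e_{1}, \ldots, e_{r} \in \E(R)$ with $e_{1} + \ldots + e_{r} = 1$ and $e_{k}R = I_{k}$ for each $k$; we keep only those indices with $e_{k} \ne 0$, noting that $\sum_{k \colon e_{k} \ne 0} e_{k} = 1$. By Lemma~\ref{lemma:properties.polynomials}\ref{lemma:annihilator.invariant} each $I_{k}$ is invariant under left multiplication by $a$, whence $ae_{k} \in I_{k} = e_{k}R$ gives $e_{k}ae_{k} = ae_{k}$; summing over $k$ yields $a = \sum_{k} e_{k}ae_{k}$, and from orthogonality of the $e_{k}$ it follows that $e_{k}a = ae_{k}$ for every $k$.

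Now fix $k$ with $e_{k} \ne 0$. By Remark~\ref{remark:eRe.non-discrete.irreducible.continuous}, the corner ring $e_{k}Re_{k}$ is again a non-discrete irreducible, continuous ring, with center identified with $K$. Since $e_{k} \in e_{k}R = I_{k} = \rAnn(q_{k}^{n_{k}}(a))$, we have $q_{k}^{n_{k}}(a)e_{k} = 0$, and Lemma~\ref{lemma:properties.polynomials}\ref{lemma:evaluation.eRe} (applicable as $e_{k}ae_{k} = ae_{k}$) shows that the evaluation of $q_{k}^{n_{k}}$ at $e_{k}ae_{k}$ inside $e_{k}Re_{k}$ equals $q_{k}^{n_{k}}(a)e_{k} = 0$. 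Applying Lemma~\ref{lemma:matrixrepresentation.case.p^n=0} inside $e_{k}Re_{k}$ to the irreducible polynomial $q_{k}$, we conclude that $e_{k}ae_{k}$ is matricial in $e_{k}Re_{k}$, say contained in a matricial unital $K$-subalgebra $A_{k} \leq e_{k}Re_{k}$. Finally, Remark~\ref{remark:matricial}\ref{remark:matricial.sum} tells us that $\sum_{k} A_{k}$ is a matricial unital $K$-subalgebra of $R$, and it contains $a = \sum_{k} e_{k}ae_{k}$; hence $a$ is matricial in $R$, as desired.

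I do not expect a genuine obstacle here: the whole content of the forwards direction has been pushed into Lemma~\ref{lemma:matrixrepresentation.case.p^n=0}, and what remains is a clean reduction via coprime factorization. The only points demanding care are verifying that the iterated Bézout splitting of $\rAnn(p(a))$ produces orthogonal idempotents that genuinely commute with $a$ (so that passing to the corners $e_{k}Re_{k}$ is legitimate and compatible with polynomial evaluation via Lemma~\ref{lemma:properties.polynomials}\ref{lemma:evaluation.eRe}), and the small bookkeeping needed to drop the indices $k$ for which $q_{k}^{n_{k}}(a)$ happens to be invertible.
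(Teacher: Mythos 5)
Your proof is correct and takes essentially the same approach as the paper's: factor an annihilating polynomial into prime powers, use Lemma~\ref{lemma:properties.polynomials}\ref{lemma:bezout} to split $R$ into orthogonal corners, and invoke Lemma~\ref{lemma:matrixrepresentation.case.p^n=0} in each corner before reassembling via Remark~\ref{remark:matricial}\ref{remark:matricial.sum}. One small remark: your bookkeeping condition ($e_k \ne 0$, equivalently $q_k^{n_k}(a) \notin \GL(R)$) for which summands to retain is in fact more precise than the paper's stated condition $p_i^{n_i}(a) \ne 0$, since a nonzero polynomial evaluation can still be invertible and thus yield a trivial annihilator.
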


\begin{proof} ($\Longrightarrow$) Let $a \in R$ be algebraic over $K$. Then there exists $f\in K[X]\setminus \{0\}$ such that $f(a)=0$. Since $K$ is a field by  Remark~\ref{remark:irreducible.center.field} and hence $K[X]$ is a principal ideal domain, there exist $m\in\N_{>0}$, $n_{1},\ldots,n_{m}\in \N_{>0}$ and $p_{1},\ldots,p_{m}\in K[X]\setminus \{0\}$ irreducible and pairwise distinct such that $f=p_{1}^{n_{1}}\cdots p_{m}^{n_{m}}$. It follows that $p_{1}^{n_{1}},\ldots, p_{m}^{n_{m}}$ are pairwise coprime. Upon permuting the indices, we may and will assume without loss of generality that \begin{displaymath}
	\{ i \in \{1,\ldots,m\} \mid \rAnn (p_{i}^{n_{i}}(a)) \ne \{ 0 \} \} \, = \, \{ 1,\ldots,\ell \} 
\end{displaymath} for some $\ell \in \{0,\ldots,m\}$. An $m$-fold application of Lemma~\ref{lemma:properties.polynomials}\ref{lemma:bezout} shows that \begin{align*}
	R \, &= \, \rAnn(f(a)) \, = \, \rAnn(p_{1}^{n_{1}}(a))\oplus\rAnn(p_{2}^{n_{2}}(a)\cdot\ldots\cdot p_{m}^{n_{m}}(a)) \, = \, \ldots\\
		&= \, \rAnn(p_{1}^{n_{1}}(a))\oplus \ldots \oplus \rAnn(p_{m}^{n_{m}}(a)) \, = \, \rAnn(p_{1}^{n_{1}}(a))\oplus \ldots\oplus \rAnn(p_{\ell}^{n_{\ell}}(a)) .
\end{align*} In particular, $\ell \ne 0$ as $R$ is non-zero. According to Remark~\ref{remark:independence.ideals.idempotents}, there exist pairwise orthogonal $e_{1},\ldots,e_{\ell}\in\E(R)\setminus \{0\}$ such that $e_{1}+\ldots+e_{\ell}=1$ and $e_{i}R=\rAnn(p_{i}^{n_{i}}(a))$ for each $i\in\{1,\ldots,\ell\}$. Moreover, for every $i\in\{1,\ldots,\ell\}$, \begin{displaymath}
	ae_{i}R \, = \, a\rAnn(p_{i}^{n_{i}}(a)) \, \stackrel{\ref{lemma:properties.polynomials}\ref{lemma:annihilator.invariant}}{\subseteq} \, \rAnn(p_{i}^{n_{i}}(a)) \, = \, e_{i}R,
\end{displaymath} thus $ae_{i} = e_{i}ae_{i}$. Consequently, as $e_{1},\ldots,e_{\ell}$ are pairwise orthogonal, \begin{displaymath}
	a \, = \, (e_{1}+\ldots+e_{\ell})a(e_{1}+\ldots+e_{\ell}) \, = \, e_{1}ae_{1}+\ldots+e_{\ell}ae_{\ell} \in e_{1}Re_{1} + \ldots + e_{\ell}Re_{\ell}.
\end{displaymath} For each $i\in\{1,\ldots,\ell\}$, we see that \begin{displaymath}
	0 \, = \, p^{n_{i}}_{i}(a)e_{i} \, \stackrel{\ref{lemma:properties.polynomials}\ref{lemma:evaluation.eRe}}{=} \, (p_{i}^{n_{i}})_{e_{i}Re_{i}}(e_{i}ae_{i}),
\end{displaymath} whence $e_{i}ae_{i}$ is matricial in $e_{i}Re_{i}$ by Remark~\ref{remark:eRe.non-discrete.irreducible.continuous} and Lemma~\ref{lemma:matrixrepresentation.case.p^n=0}. In turn, $a$ is matricial in $R$ due to Remark~\ref{remark:matricial}\ref{remark:matricial.sum}.
			
($\Longleftarrow$) Any matricial unital $K$-algebra is finite-dimensional over $K$, and any element of a finite-dimensional $K$-algebra is $K$-algebraic. This implies the claim. \end{proof}

\section{Approximation by matrix algebras}\label{section:approximation.by.matrix.algebras}

The purpose of this section is to prove two density results for non-discrete irreducible, continuous rings, namely Theorem~\ref{theorem:matricial.dense} and Corollary~\ref{corollary:simply.special.dense}, which may be viewed as refinements of work of von Neumann~\cite{VonNeumann37} and Halperin~\cite{Halperin62}. We continue to regard an irreducible, continuous ring $R$ as an algebra over its center $\ZZ(R)$, which is a field by Remark~\ref{remark:irreducible.center.field}. The following terminology, inspired by Definition~\ref{definition:matricial}, will be convenient.

\begin{definition}\label{definition:simply.special} Let $R$ be an irreducible, regular ring, and let $K \defeq \ZZ(R)$. An element $a\in R$ will be called \begin{enumerate}[label=---\,]
	\item \emph{special} if there exist $m\in\N_{>0}$, $n_{1},\ldots,n_{m}\in\N_{>0}$ and a unital $K$-algebra embedding $\phi\colon\prod\nolimits_{i=1}^m\M_{n_{i}}(K)\to R$ such that $a\in\phi(\prod\nolimits_{i=1}^{m} \SL_{n_{i}}(K))$,
	\item \emph{simply special} if there exist a positive integer $n\in\N_{>0}$ and a unital $K$-algebra embedding $\phi\colon \M_{n}(K)\to R$ such that $a\in \phi(\SL_{n}(K))$.
\end{enumerate} \end{definition}

Note that any special element of an irreducible, regular ring is invertible. Moreover, we record the following observation for the proof of Theorem~\ref{theorem:decomposition}.

\begin{remark}\label{remark:matricial.conjugation.invariant} Let $R$ be an irreducible, regular ring. The set of matricial (resp., simply matricial, special, simply special) elements of $R$ is invariant under the action of $\GL(R)$ on $R$ by conjugation. \end{remark}

The subsequent observations concerning matricial subalgebras will turn out useful in the proofs of Theorem~\ref{theorem:matricial.dense}, Proposition~\ref{proposition:simply.special.dense}, and Lemma~\ref{lemma:special.decomposition}\ref{lemma:special.decomposition.commutator}.
	
\begin{lem}\label{lemma:matricial.algebra.blow.up} Let $R$ be a non-discrete irreducible, continuous ring, let $K \defeq \ZZ(R)$ and $m,n\in \N_{>0}$. Every unital $K$-subalgebra of $R$ isomorphic to $\M_{n}(K)$ is contained in some unital $K$-subalgebra of $R$ isomorphic to $\M_{mn}(K)$. \end{lem}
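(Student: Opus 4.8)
Let $A \leq R$ be a unital $K$-subalgebra with $A \cong_{K} \M_{n}(K)$. By Remark~\ref{remark:matrixunits.ring.homomorphism}, the isomorphism is witnessed by a family of matrix units $s = (s_{ij})_{i,j \in \{1,\ldots,n\}} \in R^{n\times n}$ for $R$, so in particular $s_{11} + \ldots + s_{nn} = 1$ and $A = \{\sum_{i,j} a_{ij}s_{ij} \mid (a_{ij}) \in \M_{n}(K)\}$. The plan is to refine the idempotent $e \defeq s_{11}$ into $m$ pairwise orthogonal equivalent idempotents summing to $e$, assemble from these (together with the $s_{ij}$) a family of matrix units of size $mn$ for $R$, and then check that $A$ sits inside the image of the resulting embedding of $\M_{mn}(K)$.

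Concretely, first I would note $\rk_{R}(s_{11}) = \tfrac{1}{n}$ by Remark~\ref{remark:properties.pseudo.rank.function}\ref{remark:rank.matrixunits}, and since $R$ is non-discrete we have $\rk_{R}(R) = [0,1]$ by Remark~\ref{remark:rank.function.general}\ref{remark:characterization.discrete}. Applying Lemma~\ref{lemma:order}\ref{lemma:order.1} repeatedly inside the interval $[0, \tfrac{1}{n}]$, I would produce a chain $0 = g_{0} \leq g_{1} \leq \ldots \leq g_{m} = s_{11}$ of idempotents with $\rk_{R}(g_{k}) = \tfrac{k}{mn}$, and set $f_{k} \defeq g_{k} - g_{k-1} \in \E(R)$ for $k \in \{1,\ldots,m\}$; by Remark~\ref{remark:quantum.logic} these are pairwise orthogonal with $f_{1} + \ldots + f_{m} = s_{11}$ and $\rk_{R}(f_{k}) = \tfrac{1}{mn}$ for each $k$. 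Since $f_{1},\ldots,f_{m}$ are pairwise orthogonal idempotents of equal rank summing to the identity of the corner ring $s_{11}Rs_{11}$ — which is again a non-discrete irreducible, continuous ring by Remark~\ref{remark:eRe.non-discrete.irreducible.continuous}, with $\rk_{s_{11}Rs_{11}} = n\cdot\rk_{R}\vert_{s_{11}Rs_{11}}$ — Lemma~\ref{lemma:matrixunits.idempotents} applied inside $s_{11}Rs_{11}$ yields a family of matrix units $t = (t_{k\ell})_{k,\ell \in \{1,\ldots,m\}} \in (s_{11}Rs_{11})^{m\times m}$ for $s_{11}Rs_{11}$ with $t_{kk} = f_{k}$.

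Now I would define $\sigma = (\sigma_{(i,k),(j,\ell)})$ indexed by $\{1,\ldots,n\}\times\{1,\ldots,m\}$ by $\sigma_{(i,k),(j,\ell)} \defeq s_{i1}\,t_{k\ell}\,s_{1j}$. A direct computation using the matrix-unit relations for $s$ and for $t$ (together with $t_{k\ell} \in s_{11}Rs_{11}$, so $s_{1i}\,s_{i1}t_{k\ell} = s_{11}t_{k\ell} = t_{k\ell}$ and $t_{k\ell}s_{1j}s_{j1} = t_{k\ell}$) shows that $\sigma_{(i,k),(j,\ell)}\sigma_{(j',\ell'),(j'',\ell'')}$ equals $\sigma_{(i,k),(j'',\ell'')}$ when $(j,\ell) = (j',\ell')$ and $0$ otherwise, and that $\sum_{i,k}\sigma_{(i,k),(i,k)} = \sum_{i}s_{i1}\bigl(\sum_{k}t_{kk}\bigr)s_{1i} = \sum_{i}s_{i1}s_{11}s_{1i} = \sum_{i}s_{ii} = 1$; thus $\sigma$ is a family of $mn$ matrix units for $R$, inducing via Remark~\ref{remark:matrixunits.ring.homomorphism} an embedding $\psi\colon \M_{mn}(K) \to R$ with image $B \defeq \{\sum \lambda_{(i,k),(j,\ell)}\sigma_{(i,k),(j,\ell)}\}$. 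Finally I would verify $A \subseteq B$: each generator $s_{ij}$ of $A$ satisfies $s_{ij} = s_{i1}s_{11}s_{1j} = s_{i1}\bigl(\sum_{k}t_{kk}\bigr)s_{1j} = \sum_{k}\sigma_{(i,k),(j,k)} \in B$, and since $A$ is the $K$-linear span of the $s_{ij}$ and $B$ is a $K$-subalgebra, $A \subseteq B$. This $B \cong_{K} \M_{mn}(K)$ is the desired subalgebra. The only mildly delicate point — the "main obstacle" — is the bookkeeping in the product computation for $\sigma$, i.e.\ checking carefully that the mixed terms $s_{i1}t_{k\ell}s_{1j}\cdot s_{i'1}t_{k'\ell'}s_{1j'}$ collapse correctly; everything else is an application of the cited lemmas.
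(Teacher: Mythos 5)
Your proof is correct and follows essentially the same approach as the paper's: refine $s_{11}$ into $m$ pairwise orthogonal idempotents of equal rank, apply Lemma~\ref{lemma:matrixunits.idempotents} inside the corner ring $s_{11}Rs_{11}$ to obtain the matrix units $t$, and glue to a family of $mn$ matrix units via $s_{i1}t_{k\ell}s_{1j}$, then check $s_{ij} = \sum_{k} s_{i1}t_{kk}s_{1j}$. The only cosmetic difference is your indexing by pairs $(i,k)$ versus the paper's integer encoding $m(i-1)+k$, and your slightly more explicit construction of the $f_{k}$ as successive differences along a chain.
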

		
\begin{proof} Let $S$ be a unital $K$-algebra of $R$ isomorphic to $\M_n(K)$. By Remark~\ref{remark:matrixunits.ring.homomorphism}, there exists a family of matrix units $s \in R^{n\times n}$ for $R$ such that $S=\sum\nolimits_{i,j=1}^{n}Ks_{ij}$. Since $R$ is non-discrete, using Remark~\ref{remark:rank.function.general}\ref{remark:characterization.discrete}, Lemma~\ref{lemma:order}\ref{lemma:order.1} and Remark~\ref{remark:quantum.logic}\ref{remark:quantum.logic.1}, we find pairwise orthogonal elements $e_{1},\ldots,e_{m}\in\E(R)$ such that $s_{11}=\sum\nolimits_{k=1}^{m}e_{k}$ and \begin{displaymath}
	\forall k \in \{1,\ldots,m\} \colon \qquad \rk_{R}(e_{k}) = \tfrac{\rk_{R}(s_{11})}{m}.
\end{displaymath} The former entails that $e_{k} \in s_{11}Rs_{11}$ for each $k \in \{1,\ldots,m\}$. Since $s_{11}Rs_{11}$ is an irreducible, continuous ring by Remark~\ref{remark:eRe.non-discrete.irreducible.continuous} and \begin{displaymath}
	\forall k \in \{1,\ldots,m\} \colon \qquad \rk_{s_{11}Rs_{11}}(e_{k}) \stackrel{\ref{remark:eRe.non-discrete.irreducible.continuous}}{=} \tfrac{1}{\rk_{R}(s_{11})}\rk_{R}(e_{k}) = \tfrac{1}{m},
\end{displaymath} thanks to Lemma~\ref{lemma:matrixunits.idempotents} there exists a family of matrix units $t \in (s_{11}Rs_{11})^{m \times m}$ for $s_{11}Rs_{11}$ such that $t_{kk} = e_{k}$ for each $k \in \{1,\ldots,m\}$. Note that \begin{displaymath}
	\{1,\ldots,n\}\times \{1,\ldots,m\} \, \longrightarrow \, \{1,\ldots,mn\}, \quad (i,k) \, \longmapsto \, m(i-1)+k
\end{displaymath} is a bijection. Let us define $r\in R^{mn\times mn}$ by setting \begin{displaymath}
	r_{m(i-1)+k,\,m(j-1)+\ell} \, \defeq \, s_{i1}t_{k\ell}s_{1j} 
\end{displaymath} for all $i,j\in\{1,\ldots,n\}$ and $k,\ell\in\{1,\ldots,m\}$. We see that \begin{align*}
	r_{m(i-1)+k,\,m(j-1)+\ell} \cdot r&_{m(j-1)+\ell,\,m(i'-1)+k'} \, = \, s_{i1}t_{k\ell}s_{1j}s_{j1}t_{\ell k'}s_{1i'} \, = \, s_{i1}t_{k\ell}s_{11}t_{\ell k'}s_{1i'}\\
	&=s_{i1}t_{k\ell}t_{\ell k'}s_{1i'} \, = \, s_{i1}t_{kk'}s_{1i'} \, = \, r_{m(i-1)+k,m(i'-1)+k'}
\end{align*} for all $i,i',j\in\{1,\ldots,n\}$ and $k,k',\ell\in\{1,\ldots,m\}$. Also, if $i,i',j,j'\in\{1,\ldots,n\}$ and $k,k',\ell,\ell'\in\{1,\ldots,m\}$ are such that $m(j-1)+\ell\neq m(i'-1)+k'$, then either $j\neq i'$ and thus \begin{align*}
	r_{m(i-1)+k,\,m(j-1)+\ell} \cdot r_{m(i'-1)+k',\,m(j'-1)+\ell'} \, = \, s_{i1}t_{k\ell}\smallunderbrace{s_{1j}s_{i'1}}_{=\,0}t_{k'\ell'}s_{1j'} \, = \, 0,
\end{align*} or $j=i'$ and therefore $\ell\neq k'$, whence \begin{align*}
	r_{m(i-1)+k,\, m(j-1)+\ell} \cdot r_{m(i'-1)+k',\, m(j'-1)+\ell'} \, = \, s_{i1}\smallunderbrace{t_{k\ell}t_{k'\ell'}}_{=\,0}s_{1j'} \, = \, 0.
\end{align*} Moreover, for all $i,j\in\{1,\ldots,n\}$, \begin{align}\label{eqstar1}
	s_{ij} \, = \, s_{i1}s_{11}s_{1j} \, = \, s_{i1}\!\left(\sum\nolimits_{k=1}^{m}t_{kk}\right)\!s_{1j} \, &= \, \sum\nolimits_{k=1}^{m}s_{i1}t_{kk}s_{1j}\nonumber\\
				&= \, \sum\nolimits_{k=1}^{m}r_{m(i-1)+k,\,m(j-1)+k}.
\end{align} In particular, we conclude that \begin{displaymath}
	\sum\nolimits_{i=1}^{n}\sum\nolimits_{k=1}^{m} r_{m(i-1)+k,\,m(i-1)+k} \stackrel{\eqref{eqstar1}}{=} \, \sum\nolimits_{i=1}^{n}s_{ii} \, = \, 1.
\end{displaymath} This shows that $r\in R^{mn\times mn}$ is a family of matrix units for $R$. In turn, by Remark~\ref{remark:matrixunits.ring.homomorphism}, \begin{displaymath}
	T \, \defeq \, \sum\nolimits_{i,j=1}^{n}\sum\nolimits_{k,\ell=1}^{m}K r_{m(i-1)+k,\,m(j-1)+\ell}
\end{displaymath} is a unital $K$-subalgebra of $R$ with $T \cong_{K} \M_{mn}(K)$. Finally, \begin{align*}
	S \, &= \, \sum\nolimits_{i,j=1}^{n}Ks_{ij} \, \stackrel{\eqref{eqstar1}}{=} \, \sum\nolimits_{i,j=1}^{n}K\!\left(\sum\nolimits_{k=1}^{m} r_{m(i-1)+k,m(j-1)+k}\right)\\
	&\subseteq \, \sum\nolimits_{i,j=1}^{n}\sum\nolimits_{k=1}^{m}K r_{m(i-1)+k,\,m(j-1)+k} \, \subseteq \, T. \qedhere
\end{align*} \end{proof}	

\begin{lem}\label{lemma:sum.subalgebras.eRe.matricial} Let $m \in \N_{>0}$. Let $R$ be an irreducible, continuous ring, let $K\defeq \ZZ(R)$, $e_{1},\ldots,e_{m} \in \E(R)$ pairwise orthogonal with $1=\sum\nolimits_{i=1}^{m}e_{i}$, and $t,r_{1},\ldots,r_{m} \in \N_{>0}$~with \begin{displaymath}
	\forall i\in\{1,\ldots,m\}\colon\quad \rk_{R}(e_{i}) = \tfrac{r_{i}}{t}.
\end{displaymath} For each $i \in \{ 1,\ldots,m \}$, let $S_{i}$ be a unital $K$-subalgebra of $e_{i}Re_{i}$ with $S_{i} \cong_{K} \M_{r_{i}}(K)$. Then there exists a unital $K$-subalgebra $S\leq R$ with $\sum\nolimits_{i=1}^{m} S_{i} \leq S \cong_{K} \M_{t}(K)$. \end{lem}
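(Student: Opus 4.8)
The plan is to construct an explicit family of matrix units for $R$ of size $t$ which, restricted to suitable diagonal blocks, recovers the given subalgebras $S_i$. Write $K \defeq \ZZ(R)$. First I would record that $r_1 + \ldots + r_m = t$, since $\sum_{i=1}^{m}\rk_{R}(e_i) = \rk_{R}(1) = 1$ while $\rk_{R}(e_i) = r_i/t$; in particular each $e_i \neq 0$, so by Remark~\ref{remark:eRe.non-discrete.irreducible.continuous} every $e_iRe_i$ is an irreducible, continuous ring with $\rk_{e_iRe_i} = \tfrac{1}{\rk_{R}(e_i)}\rk_{R}\vert_{e_iRe_i}$. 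Since $S_i$ is a unital $K$-subalgebra of $e_iRe_i$ isomorphic to $\M_{r_i}(K)$, Remark~\ref{remark:matrixunits.ring.homomorphism} supplies a family of matrix units $s^{(i)} = (s^{(i)}_{k\ell})_{k,\ell} \in (e_iRe_i)^{r_i\times r_i}$ for $e_iRe_i$ with $S_i = \sum_{k,\ell=1}^{r_i} Ks^{(i)}_{k\ell}$; note that the unit of $S_i$ is $e_i$. Applying Remark~\ref{remark:properties.pseudo.rank.function}\ref{remark:rank.matrixunits} inside $e_iRe_i$ gives $\rk_{e_iRe_i}(s^{(i)}_{kk}) = 1/r_i$, whence $\rk_{R}(s^{(i)}_{kk}) = \rk_{R}(e_i)\cdot\tfrac{1}{r_i} = \tfrac{1}{t}$ for all $i$ and $k$.

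The $t$ idempotents $s^{(i)}_{kk}$ (for $1\le i\le m$, $1\le k\le r_i$) are pairwise orthogonal---within a block by the matrix-unit relations, across blocks because $e_i\perp e_j$---and sum to $\sum_i e_i = 1$, each of rank $1/t$. To glue the blocks I would fix a hub, say $s^{(1)}_{11}$, and connect each $s^{(i)}_{11}$ to it: since $\rk_{R}(s^{(i)}_{11}) = \rk_{R}(s^{(1)}_{11})$, Lemma~\ref{lemma:conjugation.idempotent} yields $g_i\in\GL(R)$ with $g_i s^{(1)}_{11} g_i^{-1} = s^{(i)}_{11}$ (take $g_1 = 1$), and then $p_i \defeq s^{(1)}_{11}g_i^{-1}s^{(i)}_{11}$ and $q_i \defeq s^{(i)}_{11}g_is^{(1)}_{11}$ satisfy $p_i\in s^{(1)}_{11}Rs^{(i)}_{11}$, $q_i\in s^{(i)}_{11}Rs^{(1)}_{11}$, $q_ip_i = s^{(i)}_{11}$ and $p_iq_i = s^{(1)}_{11}$ by direct computation.

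Then, fixing the bijection $(i,k)\mapsto n(i,k)\defeq r_1+\ldots+r_{i-1}+k$ from the admissible index pairs onto $\{1,\ldots,t\}$, I would define $r\in R^{t\times t}$ by $r_{n(i,k),\,n(j,k')}\defeq s^{(i)}_{k1}q_ip_js^{(j)}_{1k'}$. The remaining work is the (routine) verification that $r_{n(i,k),n(j,k')}\in s^{(i)}_{kk}Rs^{(j)}_{k'k'}$, that the matrix-unit multiplication rules hold---using $s^{(j)}_{1k'}s^{(j)}_{k'1} = s^{(j)}_{11}$, the absorption identities $p_js^{(j)}_{11}=p_j$, $s^{(1)}_{11}p_{i'}=p_{i'}$, $q_is^{(1)}_{11}=q_i$, the defining relations $p_jq_j = s^{(1)}_{11}$ and $q_ip_i = s^{(i)}_{11}$, and the vanishing of products of matrix units coming from distinct blocks or distinct rows---and that $\sum_{\ell=1}^{t} r_{\ell\ell} = \sum_i\sum_k s^{(i)}_{k1}(q_ip_i)s^{(i)}_{1k} = \sum_i\sum_k s^{(i)}_{kk} = 1$. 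This shows $r$ is a family of matrix units for $R$, so by Remark~\ref{remark:matrixunits.ring.homomorphism} the set $S\defeq\sum_{\ell,\ell'=1}^{t} Kr_{\ell\ell'}$ is a unital $K$-subalgebra of $R$ with $S\cong_K\M_t(K)$. Finally, since $r_{n(i,k),n(i,\ell)} = s^{(i)}_{k1}(q_ip_i)s^{(i)}_{1\ell} = s^{(i)}_{k1}s^{(i)}_{11}s^{(i)}_{1\ell} = s^{(i)}_{k\ell}$, each $S_i$ lies in $S$, hence $\sum_{i=1}^{m} S_i\le S$.

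I expect no conceptual obstacle; the only real effort is the bookkeeping in the last step, and the essential inputs are Ehrlich's equal-rank conjugacy lemma for idempotents (Lemma~\ref{lemma:conjugation.idempotent}) and the corner-ring rank formula (Remark~\ref{remark:eRe.non-discrete.irreducible.continuous}). A tempting shortcut---picking matrix units for $R$ of size $t$ with the $s^{(i)}_{kk}$ as diagonal via Lemma~\ref{lemma:matrixunits.idempotents}---does not obviously suffice, because the $\M_{r_i}(K)$-subalgebras induced on the diagonal blocks need not coincide with the prescribed $S_i$; the explicit gluing above is designed precisely to avoid this mismatch.
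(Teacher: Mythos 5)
Your proof is correct and rests on the same essential ingredients as the paper's: extracting matrix-unit families $s^{(i)}$ from each $S_i$ via Remark~\ref{remark:matrixunits.ring.homomorphism}, computing $\rk_R\bigl(s^{(i)}_{11}\bigr) = \tfrac{1}{t}$ for all $i$ via Remark~\ref{remark:eRe.non-discrete.irreducible.continuous} and Remark~\ref{remark:properties.pseudo.rank.function}\ref{remark:rank.matrixunits}, connecting the blocks by Ehrlich's conjugacy Lemma~\ref{lemma:conjugation.idempotent}, and reassembling a $t\times t$ family of matrix units to which Remark~\ref{remark:matrixunits.ring.homomorphism} is applied again. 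Where you diverge is purely organizational: the paper first settles $m=2$ by an explicit blockwise definition of $\tilde s$ using a single intertwiner $g$, and then handles general $m$ by induction, invoking the $m=2$ case inside the corner ring $(e_1+\ldots+e_m)R(e_1+\ldots+e_m)$ (which requires rescaling ranks at each stage). You instead build the full $t\times t$ family in one pass, using $s^{(1)}_{11}$ as a hub and a system of intertwiners $g_i$, packaged as $p_i = s^{(1)}_{11}g_i^{-1}s^{(i)}_{11}$ and $q_i = s^{(i)}_{11}g_is^{(1)}_{11}$ with $q_ip_i=s^{(i)}_{11}$, $p_iq_i=s^{(1)}_{11}$; the identities you cite do follow from $g_is^{(1)}_{11}=s^{(i)}_{11}g_i$, and the diagonal-block recovery $r_{n(i,k),n(i,\ell)}=s^{(i)}_{k\ell}$ is exactly what yields $S_i\le S$. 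Your direct approach avoids the induction at the cost of a slightly heavier case analysis when verifying the matrix-unit relations (cross-block products must be seen to vanish because $e_iRe_i\cdot e_jRe_j=\{0\}$ for $i\neq j$); the paper keeps the verification to two blocks but iterates. Your closing observation — that simply invoking Lemma~\ref{lemma:matrixunits.idempotents} on the diagonal idempotents $s^{(i)}_{kk}$ would not work, since the resulting off-diagonal units need not generate the prescribed $S_i$ — is the correct subtlety, and is precisely why both the paper and you build the off-diagonal entries by hand.
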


\begin{proof} We first prove the claim for $m = 2$. For each $\ell \in \{1,2\}$, let us consider a unital $K$-subalgebra $S_{\ell} \leq e_{\ell}Re_{\ell}$ with $S_{\ell} \cong_{K} \M_{r_{\ell}}(K)$, so that by Remark~\ref{remark:matrixunits.ring.homomorphism} there exists a family of matrix units $s^{(\ell)} \in (e_{\ell}Re_{\ell})^{r_{\ell} \times r_{\ell}}$ for $e_{\ell}Re_{\ell}$ with $S_{\ell} = \sum\nolimits_{i,j=1}^{r_{\ell}} Ks_{i,j}^{(\ell)}$. For every $\ell \in \{ 1,2 \}$, we observe that \begin{displaymath}
	\rk_{R}\!\left(s_{1,1}^{(\ell)}\right)\! \, \stackrel{\ref{remark:eRe.non-discrete.irreducible.continuous}+\ref{remark:properties.pseudo.rank.function}\ref{remark:rank.matrixunits}}{=} \, \tfrac{1}{r_{\ell}}\rk_{R}(e_{\ell}) \, = \, \tfrac{1}{r_{\ell}}\tfrac{r_{\ell}}{t} \, = \, \tfrac{1}{t} .
\end{displaymath} Hence, by Lemma~$\ref{lemma:conjugation.idempotent}$, there exists $g\in \GL(R)$ such that $gs_{1,1}^{(1)}=s_{1,1}^{(2)}g$. Note that \begin{displaymath}
	1 \, = \, \rk_{R}(e_{1}) + \rk_{R}(e_{2}) \, = \, \tfrac{r_{1}}{t} + \tfrac{r_{2}}{t} \, = \, \tfrac{r_{1}+r_{2}}{t},
\end{displaymath} thus $t = r_{1} + r_{2}$.  Now, let us define $\tilde{s}\in R^{t\times t}$ by setting \begin{displaymath}
	\tilde{s}_{i,j} \, \defeq \,
		\begin{cases}
			\, s_{i,j}^{(1)} & \text{if } i,j \in \{1,\ldots,r_{1}\} , \\
			\, s_{i-r_1,j-r_1}^{(2)} & \text{if }  i,j\in\{r_{1}+1,\ldots,t\}, \\
			\, s_{i,1}^{(1)}g^{-1}s_{1,j-r_1}^{(2)}& \text{if } i\in\{1,\ldots,r_{1}\}, \, j\in\{r_{1}+1,\ldots,t\}, \\
			\, s_{i-r_1,1}^{(2)}gs_{1,j}^{(1)}& \text{if } i\in\{r_{1}+1,\ldots,t\}, \, j\in\{1,\ldots,r_1\} .
		\end{cases}
\end{displaymath} Straightforward calculations show that $\tilde{s}$ is a family of matrix units in $R$. Therefore, by Remark~\ref{remark:matrixunits.ring.homomorphism}, $S \defeq \sum\nolimits_{i,j=1}^{t} K\tilde{s}_{i,j}$ is a unital $K$-algebra of $R$ with $S \cong_{K} \M_{t}(K)$. Finally, \begin{displaymath}
	S_{1} + S_{2} \, = \, \sum\nolimits_{i,j=1}^{r_{1}} Ks_{i,j}^{(1)} + \sum\nolimits_{i,j=1}^{r_{2}} Ks_{i,j}^{(2)} \, \subseteq \, \sum\nolimits_{i,j=1}^{t} K\tilde{s}_{i,j} \, = \, S.
\end{displaymath} This completes the proof for $m=2$.

The proof of the general statement proceeds by induction on $m \in \N_{>0}$. For $m=1$, the statement is trivial. For the induction step, suppose that the claim is true for some $m \in \N_{>0}$. Let $e_{1},\ldots,e_{m+1} \in\E(R)$ be pairwise orthogonal with $1=\sum\nolimits_{i=1}^{m+1}e_{i}$ and $t,r_{1},\ldots,r_{m+1} \in \N_{>0}$ be such that $\rk_{R}(e_{i}) = \tfrac{r_{i}}{t}$ for each $i \in \{1,\ldots,m+1\}$. Furthermore, for every $i \in \{ 1,\ldots,m+1 \}$, let $S_{i}$ be a unital $K$-subalgebra of $e_{i}Re_{i}$ with $S_{i} \cong_{K} \M_{r_{i}}(K)$. Considering \begin{displaymath}
	e \, \defeq \, \sum\nolimits_{i=1}^{m} e_{i} \, \stackrel{\ref{remark:quantum.logic}\ref{remark:quantum.logic.2}}{\in} \, \E(R)
\end{displaymath} and $t' \defeq \sum_{i=1}^{m} r_{i} = t\sum_{i=1}^{m} \rk_{R}(e_{i}) = t\rk_{R}(e)$, we see that \begin{displaymath}
	\rk_{eRe}(e_{i}) \, \stackrel{\ref{remark:eRe.non-discrete.irreducible.continuous}}{=} \, \tfrac{1}{\rk_{R}(e)}\rk_{R}(e_{i}) \, = \, \tfrac{t}{t'}\tfrac{r_{i}}{t} \, = \, \tfrac{r_{i}}{t'}
\end{displaymath} for each $i \in \{ 1,\ldots,m \}$. Thus, by our induction hypothesis, there exists a unital $K$-subalgebra $S'$ of $eRe$ such that $\sum_{i=1}^{m} S_{i} \leq S' \cong_{K} \M_{t'}(K)$. Moreover, as \begin{displaymath}
	e \perp e_{m+1}, \qquad \rk_{R}(e) = \tfrac{t'}{t} , \qquad \rk_{R}(e_{m+1}) = \tfrac{r_{m+1}}{t} ,
\end{displaymath} the assertion for the special case proven above yields a unital $K$-subalgebra $S$ of $eRe$ such that $\M_{t}(K) \cong_{K} S \geq S'+S_{m+1} \geq \sum\nolimits_{i=1}^{m+1} S_{i}$, as desired. \end{proof}

We arrive at our main density theorem, which builds on~\cite{VonNeumann37,Halperin62}: the relevant result~\cite[Theorem~8.1]{Halperin62} had been announced in~\cite[\S7, (21)]{VonNeumann37} without proof.

\begin{thm}\label{theorem:matricial.dense} Let $R$ be a non-discrete irreducible, continuous ring. Then the set of simply matricial elements of $R$ is dense in $(R,d_{R})$. \end{thm}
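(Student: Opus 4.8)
The plan is to combine the classical density of algebraic elements (von Neumann~\cite{VonNeumann37}, Halperin~\cite{Halperin62}, i.e., \cite[Theorem~8.1]{Halperin62}) with our structural characterization of algebraic elements in Theorem~\ref{theorem:matrixrepresentation.case.algebraic}, upgrading ``matricial'' to ``simply matricial'' by means of Lemma~\ref{lemma:matricial.algebra.blow.up} and Lemma~\ref{lemma:sum.subalgebras.eRe.matricial}. So the first step is to invoke the cited density result: the set of elements of $R$ algebraic over $K \defeq \ZZ(R)$ is dense in $(R,d_{R})$. Hence it suffices to show that every algebraic element $a \in R$ is a limit of simply matricial elements; in fact I will show that every algebraic element is \emph{itself} simply matricial, which immediately gives the theorem. (If one prefers, one can settle for approximation, but Theorem~\ref{theorem:matrixrepresentation.case.algebraic} makes the stronger statement available.)

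The second step is the passage from matricial to simply matricial. Let $a \in R$ be algebraic over $K$. By Theorem~\ref{theorem:matrixrepresentation.case.algebraic}, $a$ lies in some matricial unital $K$-subalgebra $M \leq R$, so by Remark~\ref{remark:matricial}\ref{remark:matricial.decomposition} there are $m \in \N_{>0}$, pairwise orthogonal idempotents $f_{1},\ldots,f_{m} \in \E(R)\setminus\{0\}$ with $\sum_{i=1}^{m} f_{i} = 1$, and simple matricial unital $K$-subalgebras $R_{i} \leq f_{i}Rf_{i}$ (so $R_{i} \cong_{K} \M_{n_{i}}(K)$ for some $n_{i} \in \N_{>0}$) with $M = R_{1} + \ldots + R_{m}$ and $a \in M$. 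Now I want to enlarge this to a \emph{single} matrix algebra. The obstacle is that the ranks $\rk_{R}(f_{i})$ need not be commensurable with the given ``block sizes'' $n_{i}$; concretely, to apply Lemma~\ref{lemma:sum.subalgebras.eRe.matricial} I need a common denominator $t$ and integers $r_{i}$ with $\rk_{R}(f_{i}) = r_{i}/t$ \emph{and} $n_{i} \mid r_{i}$ (so that $R_{i}$ embeds into some $\M_{r_{i}}(K) \leq f_{i}Rf_{i}$ by Lemma~\ref{lemma:matricial.algebra.blow.up}). Since $\rk_{R}(f_{i}) \in \rk_{R}(R) \cap \mathbb{Q}$ — indeed $R_{i} \cong_{K} \M_{n_{i}}(K)$ forces $\rk_{R}(f_{i}) = n_{i}\,\rk_{R}(s_{11})$ for a family of matrix units, hence by Remark~\ref{remark:properties.pseudo.rank.function}\ref{remark:rank.matrixunits} we get $\rk_{R}(f_{i}) \in \tfrac{1}{\N}\N$ — I can choose a positive integer $N$ divisible by every $n_{i}$ and such that $N\,\rk_{R}(f_{i}) \in \N$ for each $i$; set $t \defeq N$ and $r_{i} \defeq N\,\rk_{R}(f_{i})$, which is then divisible by $n_{i}$. (Here I should double check, using Remark~\ref{remark:eRe.non-discrete.irreducible.continuous}, that $f_{i}Rf_{i}$ is a non-discrete irreducible, continuous ring with $\ZZ(f_{i}Rf_{i}) = K f_{i}$, so that Lemma~\ref{lemma:matricial.algebra.blow.up} applies inside $f_{i}Rf_{i}$.)

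The third step assembles everything: by Lemma~\ref{lemma:matricial.algebra.blow.up} applied in $f_{i}Rf_{i}$, each $R_{i}$ is contained in some unital $K$-subalgebra $S_{i} \leq f_{i}Rf_{i}$ with $S_{i} \cong_{K} \M_{r_{i}}(K)$; then Lemma~\ref{lemma:sum.subalgebras.eRe.matricial} (with $e_{i} \defeq f_{i}$, $\rk_{R}(f_{i}) = r_{i}/t$) yields a unital $K$-subalgebra $S \leq R$ with $\sum_{i=1}^{m} S_{i} \leq S \cong_{K} \M_{t}(K)$. Since $a \in M = R_{1} + \ldots + R_{m} \subseteq S_{1} + \ldots + S_{m} \subseteq S$ and $S$ is a simple matricial unital $K$-subalgebra of $R$, the element $a$ is simply matricial, as desired. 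Combining this with the density of algebraic elements gives that simply matricial elements are dense in $(R,d_{R})$, completing the proof. The main point of care — the ``hard part'' — is the commensurability bookkeeping of step two (choosing $t$ and the $r_{i}$ so that both the rank constraint and the divisibility $n_{i} \mid r_{i}$ hold simultaneously, and checking the hypotheses of Lemmata~\ref{lemma:matricial.algebra.blow.up} and~\ref{lemma:sum.subalgebras.eRe.matricial} are met); everything else is a direct application of results already proved.
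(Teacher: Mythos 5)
Your plan goes wrong at the second step, where you assert that every algebraic element is \emph{itself} simply matricial. That is false, and the intended argument cannot be repaired to prove it. Concretely: take any idempotent $f \in \E(R)$ with $\rk_{R}(f)$ irrational (such exist since $\rk_{R}(R) = [0,1]$ by Remark~\ref{remark:rank.function.general}\ref{remark:characterization.discrete}). Then $f$ satisfies $X^{2}-X = 0$, hence is algebraic; but if $f$ lay in a unital $K$-subalgebra $S \cong_{K} \M_{n}(K)$, then $f$ would correspond under $\cong_{K}$ to an idempotent matrix of some rank $k$, so by Remark~\ref{remark:rank.function.general}\ref{remark:uniqueness.rank.embedding} one would have $\rk_{R}(f) = k/n \in \Q$, a contradiction. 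So algebraic $\not\Rightarrow$ simply matricial.

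The specific error in your reasoning is the application of Remark~\ref{remark:properties.pseudo.rank.function}\ref{remark:rank.matrixunits}. That remark applies to a family of matrix units \emph{for $R$}, i.e.\ one whose diagonal sums to $1$ in $R$. The family of matrix units coming from $R_{i} \cong_{K} \M_{n_{i}}(K)$ is a family of matrix units for $f_{i}Rf_{i}$ (its diagonal sums to $f_{i}$), so it only yields $\rk_{R}\!\left(s^{(i)}_{11}\right) = \rk_{R}(f_{i})/n_{i}$, and the identity $\rk_{R}(f_{i}) = n_{i}\rk_{R}\!\left(s^{(i)}_{11}\right)$ is then a tautology, not a proof of rationality. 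The ranks $\rk_{R}(f_{i})$ are genuinely arbitrary positive reals summing to $1$. As a consequence, there need not exist any $t, r_{1}, \ldots, r_{m} \in \N_{>0}$ with $\rk_{R}(f_{i}) = r_{i}/t$, and Lemma~\ref{lemma:sum.subalgebras.eRe.matricial} cannot be invoked.

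The paper's proof therefore does not show that the algebraic element $b$ is simply matricial; instead it replaces $b$ by a nearby element $c$ whose ``block idempotents'' \emph{do} have rational ranks. After getting $b$ algebraic with $d_{R}(a,b) \leq \epsilon/2$ and the decomposition $b \in R_{1} + \ldots + R_{m}$, one picks rationals $q_{i} \in \Q_{>0}$ with $q_{i} \leq \rk_{R}(f_{i})$ and $1 - \epsilon/2 \leq \sum q_{i} < 1$, uses~\cite[Corollary~7.20(1)]{SchneiderGAFA} and~\cite[Corollary~9.12(1)]{SchneiderGAFA} to produce idempotents $e_{i} \in \E(f_{i}Rf_{i})$ with $\rk_{R}(e_{i}) = q_{i}$ that centralize $R_{i}$ in the precise sense $e_{i}xe_{i} = xe_{i}$ for $x \in R_{i}$, sets $e_{m+1} \defeq 1 - \sum e_{i}$ (which has rational rank $q_{m+1} \in (0,\epsilon/2]$), and replaces $b$ by $c \defeq b(1-e_{m+1}) = \sum_{i} e_{i}bf_{i}e_{i}$. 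Then $d_{R}(b,c) \leq \rk_{R}(e_{m+1}) \leq \epsilon/2$, and each compressed piece $S_{i} \defeq e_{i}R_{i}e_{i} \cong_{K} R_{i}$ now lives over an idempotent of rational rank, so the bookkeeping you outlined (choosing $t$ and $r_{i}$ with $q_{i} = r_{i}/t$ and $n_{i} \mid r_{i}$, then applying Lemma~\ref{lemma:matricial.algebra.blow.up} and Lemma~\ref{lemma:sum.subalgebras.eRe.matricial}) does go through for $c$. Your third step is essentially the paper's endgame, but it needs the rational-rank normalization of $c$ first; applied directly to $b$ it fails.
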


\begin{proof} Define $K\defeq \ZZ(R)$. Let $a\in R$ and $\epsilon\in \R_{>0}$. Due to~\cite[Theorem~8.1]{Halperin62}, there is a $K$-algebraic element $b\in R$ with $d_R(a,b)\leq \tfrac{\epsilon}{2}$. By Theorem~\ref{theorem:matrixrepresentation.case.algebraic} and Remark~\ref{remark:matricial}\ref{remark:matricial.decomposition}, we find $m \in\N_{>0}$, $n_{1},\ldots,n_{m} \in \N_{>0}$, $f_{1},\ldots,f_{m} \in \E(R)\setminus \{ 0 \}$ pairwise orthogonal with $1 = \sum_{i=1}^{m} f_{i}$, and unital $K$-subalgebras $R_{1}\leq f_{1}Rf_{1}, \, \ldots, \, R_{m}\leq f_{m}Rf_{m}$ such that $b \in R_{1} + \ldots + R_{m}$ and $R_{i} \cong_{K} \M_{n_{i}}(K)$ for each $i \in \{ 1,\ldots,m\}$. It follows that $\sum\nolimits_{i=1}^{m}\rk_{R}(f_{i}) = 1$ and $\rk_{R}(f_{i}) > 0$ for each $i \in \{1,\ldots,m\}$. Since $\Q$ is dense in $\R$, there exist $q_{1},\ldots,q_{m} \in \Q_{>0}$ such that $1-\tfrac{\epsilon}{2}\leq \sum\nolimits_{i=1}^{m}q_{i} < 1$ and \begin{displaymath}
	\forall i \in \{1,\ldots,m\} \colon \qquad q_{i} \, \leq \, \rk_{R}(f_{i}).
\end{displaymath} For each $i \in \{1,\ldots,m\}$, recall that $f_{i}Rf_{i}$ is a non-discrete irreducible, continuous ring with $\rk_{f_{i}Rf_{i}} = \tfrac{1}{\rk_R(f_{i})}{{\rk_{R}}\vert_{f_{i}Rf_{i}}}$ and $\ZZ(f_{i}Rf_{i}) = Kf_{i} \subseteq R_{i}$ by Remark~\ref{remark:eRe.non-discrete.irreducible.continuous} and observe that \begin{displaymath}
	\dim_{\ZZ(f_{i}Rf_{i})}(R_{i}) \, = \, \dim_{K}(R_{i}) \, = \, \dim_{K}(\M_{n_{i}}(K)) \, = \, n_{i}^{2} \, < \, \infty,
\end{displaymath} wherefore~\cite[Corollary~7.20(1)]{SchneiderGAFA} and~\cite[Corollary~9.12(1)]{SchneiderGAFA} assert the existence of some $e_{i} \in \E(f_{i}Rf_{i})$ such that $\rk_{f_{i}Rf_{i}}(e_i) = \tfrac{q_{i}}{\rk_R(f_{i})}$, i.e., $\rk_{R}(e_{i}) = q_{i}$, and \begin{equation}\label{star}
	\forall x \in R_{i} \colon \qquad e_{i}xe_{i} \, = \, xe_{i} .
\end{equation} Thus, by~\cite[Lemma~10.5(1)]{SchneiderGAFA}, for each $i\in\{1,\ldots,m \}$, the map \begin{displaymath}
	R_{i} \, \longrightarrow \, e_{i}R_{i}e_{i}, \quad x \, \longmapsto \, xe_{i}
\end{displaymath} constitutes a surjective unital non-zero $K$-algebra homomorphism, hence an isomorphism by simplicity of $R_{i} \cong_{K} \M_{n_{i}}(K)$ (see, e.g.,~\cite[IX.1, Corollary~1.5, p.~361]{GrilletBook}), so that $S_{i} \defeq e_{i}R_{i}e_{i} \cong_{K} R_{i} \cong_{K} \M_{n_{i}}(K)$. Since $e_{1} \in \E(f_{1}Rf_{1}), \, \ldots , \, e_{m} \in \E(f_{m}Rf_{m})$, we see that $e_{1},\ldots,e_{m}$ are pairwise orthogonal, too. Concerning \begin{displaymath}
	e_{m+1} \, \defeq \, 1-\sum\nolimits_{i=1}^{m} e_{i} \, \stackrel{\ref{remark:quantum.logic}}{\in} \, \E(R),
\end{displaymath} we note that \begin{displaymath}
	q_{m+1} \, \defeq \, \rk_{R}(e_{m+1}) \, \stackrel{\ref{remark:properties.pseudo.rank.function}\ref{remark:rank.difference.smaller.idempotent}}{=} \, 1-\sum\nolimits_{i=1}^{m}\rk_{R}(e_{i}) \, = \, 1-\sum\nolimits_{i=1}^{m}q_{i} \, \in \, \!\left(0,\tfrac{\epsilon}{2}\right]\!\cap\Q.
\end{displaymath} As $\rk_{R}(e_{m+1})>0$ and therefore $e_{m+1}\neq 0$, the unital $K$-subalgebra \begin{displaymath}
	S_{m+1} \, \defeq \, Ke_{m+1} \, \leq \, e_{m+1}Re_{m+1}
\end{displaymath} is isomorphic to $K \cong_{K} \M_{n_{m+1}}(K)$, where $n_{m+1}\defeq 1$. Now, consider \begin{displaymath}
	c \, \defeq \, b(1-e_{m+1}) \, = \, \sum\nolimits_{i=1}^{m}be_{i} \, = \, \sum\nolimits_{i=1}^{m}\smallunderbrace{bf_{i}}_{\in R_{i}}\!e_{i} \, \stackrel{\eqref{star}}{=} \, \sum\nolimits_{i=1}^{m} e_{i}bf_{i}e_{i} \, \in \, \sum\nolimits_{i=1}^{m+1} S_{i}
\end{displaymath} and observe that \begin{displaymath}
	d_{R}(b,c) \, = \, \rk_{R}(b-c) \, = \, \rk_{R}(be_{m+1}) \, \leq \, \rk_{R}(e_{m+1}) \, \leq \, \tfrac{\epsilon}{2},
\end{displaymath} thus \begin{displaymath}
	d_{R}(a,c) \, \leq \, d_{R}(a,b) + d_{R}(b,c) \, \leq \, \epsilon.
\end{displaymath} We will show that $c$ is simply matricial, which will finish the proof. To this end, choose $t,r_{1},\ldots,r_{m+1}\in\N_{>0}$ such that \begin{displaymath}
	\forall i\in\{1,\ldots,m+1\}\colon\qquad q_{i} = \tfrac{r_{i}}{t},\quad n_{i}\vert r_{i}.
\end{displaymath} For each $i \in \{1,\ldots,m+1\}$, since $e_{i}Re_{i}$ is a non-discrete irreducible, continuous ring by Remark~\ref{remark:eRe.non-discrete.irreducible.continuous}, \begin{displaymath}
	\ZZ(e_{i}Re_{i}) \, \stackrel{\ref{remark:eRe.non-discrete.irreducible.continuous}}{=} \, Ke_{i} \, \leq \, S_{i} \, \leq \, e_{i}Re_{i},
\end{displaymath} and $S_{i} \cong_{K} \M_{n_{i}}(K)$, we infer from Lemma~\ref{lemma:matricial.algebra.blow.up} the existence of a unital $K$-subalgebra $T_{i} \leq e_{i}Re_{i}$ with $S_{i} \subseteq T_{i} \cong_{K} \M_{r_{i}}(K)$. Moreover, as $e_{1},\ldots,e_{m+1}$ are pairwise orthogonal with $1=\sum\nolimits_{i=1}^{m+1}e_{i}$ and \begin{displaymath}
	\forall i\in\{1,\ldots,m+1\}\colon\qquad\rk_{R}(e_{i}) = q_{i} = \tfrac{r_{i}}{t},
\end{displaymath} by Lemma~$\ref{lemma:sum.subalgebras.eRe.matricial}$ there is a unital $K$-subalgebra $T \leq R$ with $\sum_{i=1}^{m+1}T_{i} \subseteq T \cong_{K} \M_{t}(K)$. We conclude that \begin{displaymath}
	c \, \in \, \sum\nolimits_{i=1}^{m+1} S_{i} \, \subseteq \, \sum\nolimits_{i=1}^{m+1} T_{i} \, \subseteq \, T \, \cong_{K} \, \M_{t}(K) ,
\end{displaymath} so $c$ is simply matricial. \end{proof}
	
We deduce our second density result, which concerns unit groups of non-discrete irreducible, continuous rings.

\begin{prop}\label{proposition:simply.special.dense} Let $R$ be a non-discrete irreducible, continuous ring, let $K \defeq \ZZ(R)$, let $a \in \GL(R)$ and $\epsilon \in \R_{>0}$. Then there exists $n \in \N_{>0}$ such that, for every $m \in \N_{>0}$ with $n \vert m$, there exist a unital $K$-algebra embedding  $\phi \colon \M_{m}(K) \to R$ and an element $A \in \SL_{m}(K)$ such that $d_{R}(a,\phi(A)) < \epsilon$. \end{prop}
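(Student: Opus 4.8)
The strategy is to start from a simply matricial approximant of $a$ furnished by Theorem~\ref{theorem:matricial.dense}, repair it inside a suitable matrix subalgebra --- first restoring invertibility, then normalizing the determinant --- and finally invoke the blow-up construction of Lemma~\ref{lemma:matricial.algebra.blow.up} together with the multiplicativity of the determinant. Write $K \defeq \ZZ(R)$. By Theorem~\ref{theorem:matricial.dense} there is a simply matricial $c \in R$ with $d_{R}(a,c) < \tfrac{\epsilon}{4}$; fix a simple matricial unital $K$-subalgebra $S_{0} \leq R$ with $c \in S_{0}$ and $S_{0} \cong_{K} \M_{n_{0}}(K)$. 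I would then let $n$ be any multiple of $n_{0}$ with $\tfrac{1}{n} < \tfrac{\epsilon}{2}$ and claim that this $n$ works.

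Two facts about matrix-unit embeddings underpin the argument. First, by Remark~\ref{remark:matrixunits.ring.homomorphism} a unital $K$-algebra embedding $\psi \colon \M_{n}(K) \to R$ amounts to a family of matrix units $s \in R^{n\times n}$ via $\psi(E_{ij}) = s_{ij}$; writing an arbitrary $x \in \M_{n}(K)$ of rank $r$ as $u\bigl(\sum_{i=1}^{r}E_{ii}\bigr)v$ with $u,v \in \GL_{n}(K)$ and using Remark~\ref{remark:properties.pseudo.rank.function}\ref{remark:rank.matrixunits} together with orthogonality of the $s_{ii}$ gives $\rk_{R}(\psi(x)) = \tfrac{r}{n} = \tfrac{1}{n}\rank(x)$, so that $d_{R}(\psi(x),\psi(y)) = \tfrac{1}{n}\rank(x-y)$ for all $x,y$. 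Second, inspecting the proof of Lemma~\ref{lemma:matricial.algebra.blow.up}: given $S \leq R$ with $S \cong_{K} \M_{n}(K)$ and $m = nk$, the lemma produces $T \leq R$ with $S \subseteq T \cong_{K} \M_{m}(K)$ whose matrix units $r$ satisfy $s_{ij} = \sum_{\kappa=1}^{k} r_{k(i-1)+\kappa,\,k(j-1)+\kappa}$; hence, identifying $\M_{m}(K) = \M_{n}(K) \otimes \M_{k}(K)$, the corresponding embeddings $\psi \colon \M_{n}(K)\to S$ and $\phi \colon \M_{m}(K)\to T$ satisfy $\phi(x \otimes I_{k}) = \psi(x)$ for all $x \in \M_{n}(K)$.

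Now for the correction. Applying Lemma~\ref{lemma:matricial.algebra.blow.up} to $S_{0}$ gives $S \leq R$ with $c \in S_{0} \subseteq S \cong_{K} \M_{n}(K)$; let $\psi \colon \M_{n}(K) \to S$ be an isomorphism and $C \defeq \psi^{-1}(c)$. Since $\rk_{R}$ is $1$-Lipschitz, the rank formula gives $\rank(C) = n\,\rk_{R}(c) \geq n(\rk_{R}(a) - d_{R}(a,c)) > n\bigl(1-\tfrac{\epsilon}{4}\bigr)$, so $n-\rank(C) < \tfrac{n\epsilon}{4}$; elementary linear algebra (bring $C$ to the form $u\,\mathrm{diag}(I_{r},0)\,v$ and replace the zero block by $I_{n-r}$) yields $C' \in \GL_{n}(K)$ with $\rank(C'-C) = n-\rank(C)$. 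Put $C'' \defeq C'\,\mathrm{diag}(\det(C')^{-1},1,\ldots,1) \in \SL_{n}(K)$, so that $\rank(C''-C') \leq 1$. Then, using that $\psi$ is isometric for the rank metrics,
\[
	d_{R}(a,\psi(C'')) \, \leq \, d_{R}(a,c) + \tfrac{1}{n}\rank(C-C') + \tfrac{1}{n}\rank(C'-C'') \, < \, \tfrac{\epsilon}{4} + \tfrac{\epsilon}{4} + \tfrac{1}{n} \, < \, \epsilon .
\]
Finally, for any $m \in \N_{>0}$ with $n \mid m$, write $m = nk$ and apply Lemma~\ref{lemma:matricial.algebra.blow.up} to $S$ to obtain $\phi \colon \M_{m}(K) \to R$ with $\phi(x\otimes I_{k}) = \psi(x)$; then $A \defeq C'' \otimes I_{k}$ satisfies $\det(A) = \det(C'')^{k} = 1$, i.e.\ $A \in \SL_{m}(K)$, and $\phi(A) = \psi(C'')$, whence $d_{R}(a,\phi(A)) < \epsilon$, as required.

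The delicate point is that a single fixed element $\psi(C'')$ --- produced once inside an $\M_{n}(K)$-subalgebra --- simultaneously serves every admissible $m$: under the blow-up $\M_{n}(K)\hookrightarrow\M_{m}(K)$ it is realized by $C''\otimes I_{k}$, and because the determinant is multiplicative with $1^{k}=1$ this stays in $\SL_{m}(K)$. Making the compatibility $\phi(x\otimes I_{k})=\psi(x)$ precise from the explicit matrix units in the proof of Lemma~\ref{lemma:matricial.algebra.blow.up}, and the bookkeeping that $c$ itself need not be invertible (handled by the corank perturbation from $C$ to $C'$), are the only steps requiring genuine care; the remaining estimates are routine.
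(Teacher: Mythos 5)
Your proof is correct and shares the paper's overall architecture: approximate $a$ by a simply matricial element via Theorem~\ref{theorem:matricial.dense}, enlarge the ambient matrix subalgebra via Lemma~\ref{lemma:matricial.algebra.blow.up}, restore invertibility by a small-rank perturbation, and normalize the determinant by rescaling one column. The two arguments diverge, however, in how they handle the universal quantifier over $m$. The paper sets $n \defeq n_{0}\lceil 3/\epsilon\rceil$ and, for each admissible $m$, re-runs the invertibility and determinant corrections inside $\M_{m}(K)$, absorbing the $1/m$ cost of the column rescaling via $1/m \leq 1/n \leq \epsilon/3$; its approximant $\phi(A)$ therefore varies with $m$. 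You instead perform the corrections once inside $\M_{n}(K)$ to produce $C'' \in \SL_{n}(K)$ and then, for each $m = nk$, take $A \defeq C'' \otimes I_{k}$: the matrix units constructed in the proof of Lemma~\ref{lemma:matricial.algebra.blow.up} realize the inclusion $\M_{n}(K)\hookrightarrow\M_{nk}(K)$ as $x \mapsto x\otimes I_{k}$, and $\det(C''\otimes I_{k}) = \det(C'')^{k} = 1$ keeps $A$ in $\SL_{m}(K)$ while $\phi(A) = \psi(C'')$ is the same fixed element for every $m$. Your route thus delivers a slightly stronger statement --- a single approximant serving all admissible $m$ simultaneously --- at the cost of peeking into the proof of Lemma~\ref{lemma:matricial.algebra.blow.up} rather than using only its statement; that reliance can be dropped by noting that every unital $K$-algebra embedding $\M_{n}(K)\hookrightarrow\M_{nk}(K)$ is $\GL_{nk}(K)$-conjugate to $x\mapsto x\otimes I_{k}$ and the determinant is conjugation-invariant.
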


\begin{proof} Let $a \in \GL(R)$ and $\epsilon \in \R_{>0}$. By Theorem~\ref{theorem:matricial.dense}, there exist $n_{0} \in \N_{>0}$ and a unital $K$-subalgebra $S \leq R$ isomorphic to $\M_{n_{0}}(K)$ with $\inf_{b \in S} d_{R}(a,b) < \tfrac{\epsilon}{3}$. Define \begin{displaymath}
	n \, \defeq \, n_{0} \cdot \left\lceil\tfrac{3}{\epsilon}\right\rceil \, \in \, \N_{>0}.
\end{displaymath} Now, let $m \in \N_{>0}$ with $n \vert m$. Thanks to Lemma~\ref{lemma:matricial.algebra.blow.up}, there exists a unital $K$-algebra embedding $\phi \colon \M_{m}(K) \to R$ such that $S \leq \phi (\M_{m}(K))$. In particular, we find $B \in \M_{m}(K)$ such that $d_{R}(a,\phi(B)) < \tfrac{\epsilon}{3}$. Since \begin{displaymath}
	\rk_{\M_{m}(K)}(B) \, \stackrel{\ref{remark:rank.function.general}\ref{remark:uniqueness.rank.embedding}}{=} \, \rk_{R}(\phi(B)) \, \stackrel{\ref{remark:properties.pseudo.rank.function}\ref{remark:rank.continuous}}{>} \, \rk_{R}(a)-\tfrac{\epsilon}{3} \, = \, 1-\tfrac{\epsilon}{3},
\end{displaymath} an elementary argument using linear algebra shows the existence of some $C\in \GL_{m}(K)$ such that $d_{\M_{m}(K)}(B,C) < \tfrac{\epsilon}{3}$. Furthermore, upon multiplying one column (or one row) of $C$ by $\det(C)^{-1}$, we obtain $A\in \SL_{m}(K)$ such that \begin{displaymath}
	d_{\M_m(K)}(C,A) \, \leq \, \tfrac{1}{m} \, \leq \, \tfrac{1}{n} \, \leq \, \tfrac{\epsilon}{3}.
\end{displaymath} Therefore, \begin{align*}
	d_{R}(a,\phi(A)) \, &\leq \, d_{R}(a,\phi(B)) + d_{R}(\phi(B),\phi(C)) + d_{R}(\phi(C),\phi(A)) \\
			&\stackrel{\ref{remark:rank.function.general}\ref{remark:uniqueness.rank.embedding}}{=} \, d_{R}(a,\phi(B)) + d_{\M_m(K)}(B,C) + d_{\M_m(K)}(C,A) \, < \, \epsilon . \qedhere
\end{align*} \end{proof}

\begin{cor}\label{corollary:simply.special.dense} Let $R$ be a non-discrete irreducible, continuous ring. The set of simply special elements of $R$ is dense in $(\GL(R),d_{R})$. \end{cor}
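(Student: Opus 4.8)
The plan is to read the assertion off directly from Proposition~\ref{proposition:simply.special.dense}. Fix $a \in \GL(R)$ and $\epsilon \in \R_{>0}$, and put $K \defeq \ZZ(R)$. Proposition~\ref{proposition:simply.special.dense} provides some $n \in \N_{>0}$ with the stated approximation property. Applying it to the admissible choice $m \defeq n$ (which trivially satisfies $n \mid m$), we obtain a unital $K$-algebra embedding $\phi \colon \M_{n}(K) \to R$ together with an element $A \in \SL_{n}(K)$ such that $d_{R}(a,\phi(A)) < \epsilon$.

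By Definition~\ref{definition:simply.special}, the element $\phi(A)$ is simply special in $R$, and it lies in the open $d_{R}$-ball of radius $\epsilon$ centred at $a$. Since $a \in \GL(R)$ and $\epsilon \in \R_{>0}$ were arbitrary, this shows that every point of $\GL(R)$ is a $d_{R}$-limit of simply special elements, i.e.\ the set of simply special elements of $R$ is dense in $(\GL(R),d_{R})$.

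There is essentially no obstacle at this stage: all the substance has already been absorbed into Theorem~\ref{theorem:matricial.dense} (density of simply matricial elements, via the characterization of algebraic elements and the blow-up/amalgamation lemmas for matricial subalgebras) and into Proposition~\ref{proposition:simply.special.dense} (the passage from a nearby matricial element of $R$ to a nearby element of some $\SL_{m}(K)$, obtained by first perturbing inside a large matrix subalgebra to an invertible matrix over $K$ and then adjusting one row or column to make the determinant $1$). The corollary is merely the specialization $m = n$ of that proposition combined with the definition of \emph{simply special}.
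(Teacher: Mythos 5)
Your proof is correct and matches the paper's approach exactly: the paper simply states that the corollary is an immediate consequence of Proposition~\ref{proposition:simply.special.dense}, and your argument (applying that proposition with $m=n$ and invoking Definition~\ref{definition:simply.special}) is precisely the unpacking of that one-line deduction.
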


\begin{proof} This is an immediate consequence of Proposition~\ref{proposition:simply.special.dense}. \end{proof}

\section{Decomposition into locally special elements}\label{section:decomposition.into.locally.special.elements}

This section is devoted to proving our main decomposition result for unit groups of non-discrete irreducible, continuous rings (Theorem~\ref{theorem:decomposition}) and deducing some structural consequences (Theorem~\ref{theorem:width}). The following notion, which rests on Remark~\ref{remark:eRe.non-discrete.irreducible.continuous}, Lemma~\ref{lemma:convergence.sequences} and Lemma~\ref{lemma:local.decomposition}, is fundamental to these results.

\begin{definition}\label{definition:locally.special} Let $R$ be an irreducible, continuous ring. An element $a\in R$ will be called \emph{locally special} if there exist $(e_{n})_{n\in\N}\in \E(R)^{\N}$ pairwise orthogonal and $(a_{n})_{n \in \N} \in \prod_{n \in \N} e_{n}Re_{n}$ such that \begin{enumerate}
	\item[---\,] for each $n \in \N$, the element $a_n$ is simply special in $e_{n}Re_{n}$,
	\item[---\,] and $a = \prod\nolimits_{n\in \N} a_{n}+1-e_{n}$.
\end{enumerate} \end{definition}

\begin{remark}\label{remark:locally.special} Let $R$ be an irreducible, continuous ring. \begin{enumerate}
	\item\label{remark:locally.special.1} Every special element of $R$ is locally special in $R$. This can be seen using Remark~\ref{remark:eRe.non-discrete.irreducible.continuous} (or, alternatively, Remark~\ref{remark:matrixunits.ring.homomorphism}).
	\item\label{remark:locally.special.2} Every locally special element of $R$ belongs to $\GL(R)$, as follows from Lemma~\ref{lemma:convergence.sequences} and Lemma~\ref{lemma:subgroup.unit.group}. 
\end{enumerate} \end{remark}

For the proof of Theorem~\ref{theorem:decomposition}, we make a few preparations.

\begin{lem}\label{lemma:simply.special.involution} Let $R$ be an irreducible, continuous ring and $e_{1}, e_{2} \in \E(R)$ with $e_{1} \perp e_{2}$ and $\rk_R(e_{1}) = \rk_R(e_{2}) = \tfrac{1}{3}$. Then there exists a simply special involution $u \in \I(R)$ such that $ue_{1}u = e_{2}$. \end{lem}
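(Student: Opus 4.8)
\emph{Plan.} The plan is to realise $u$ inside a copy of $\M_{3}(K)$ embedded in $R$, where $K \defeq \ZZ(R)$. First I would put $e_{3} \defeq 1-e_{1}-e_{2}$ and observe, using Remark~\ref{remark:quantum.logic}\ref{remark:quantum.logic.2} (to get $e_{1}+e_{2}\in\E(R)$), Remark~\ref{remark:quantum.logic}\ref{remark:quantum.logic.1} (to get $e_{3}\in\E(R)$ with $e_{3}\perp(e_{1}+e_{2})$), and the additivity of $\rk_{R}$ on orthogonal idempotents together with Remark~\ref{remark:properties.pseudo.rank.function}\ref{remark:rank.difference.smaller.idempotent}, that $e_{1},e_{2},e_{3}$ are pairwise orthogonal, sum to $1$, and each have rank $\tfrac{1}{3}$. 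Lemma~\ref{lemma:matrixunits.idempotents} then provides a family of matrix units $s\in R^{3\times 3}$ for $R$ with $s_{ii}=e_{i}$ for $i\in\{1,2,3\}$, hence, by Remark~\ref{remark:matrixunits.ring.homomorphism}, a unital $K$-algebra embedding $\phi\colon\M_{3}(K)\to R$ with $\phi(E_{ij})=s_{ij}$, where $(E_{ij})_{i,j\in\{1,2,3\}}$ denotes the standard matrix units of $\M_{3}(K)$.

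The heart of the argument is to exhibit an involution in $\SL_{3}(K)$ conjugating the diagonal idempotent $E_{11}$ to $E_{22}$. The bare transposition permutation matrix does the swap but has determinant $-1$; the remedy is to negate the remaining diagonal entry, i.e.\ to take
\begin{displaymath}
	P \, \defeq \, E_{12}+E_{21}-E_{33} \, = \, \begin{pmatrix} 0 & 1 & 0\\ 1 & 0 & 0\\ 0 & 0 & -1 \end{pmatrix} \, \in \, \M_{3}(K).
\end{displaymath}
A one-line computation gives $P^{2}=1$, $\det(P)=1$ (so $P\in\SL_{3}(K)$), and $PE_{11}P=E_{22}$. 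Then $u\defeq\phi(P)=s_{12}+s_{21}-s_{33}$ is the required element: it is an involution because $u^{2}=\phi(P^{2})=\phi(1)=1$, so $u\in\I(R)$; it is simply special by Definition~\ref{definition:simply.special}, witnessed by the embedding $\phi$ and the fact that $P\in\SL_{3}(K)$; and, since $u^{-1}=u$ and $\phi(E_{ii})=s_{ii}=e_{i}$,
\begin{displaymath}
	u e_{1} u \, = \, \phi(P)\phi(E_{11})\phi(P) \, = \, \phi(PE_{11}P) \, = \, \phi(E_{22}) \, = \, s_{22} \, = \, e_{2}.
\end{displaymath}

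There is no genuinely hard step here: the whole content is the observation that a $3\times 3$ ``signed transposition'' lies in the special linear group while still being an involution that conjugates $E_{11}$ to $E_{22}$ --- which is precisely why the hypothesis asks for rank $\tfrac{1}{3}$, as one needs a third idempotent of the same rank to split off and to absorb the sign. The only points that require a little care are purely bookkeeping: that $\phi$ is automatically injective (since $\M_{3}(K)$ is simple and $R\neq 0$, as recorded in Remark~\ref{remark:matrixunits.ring.homomorphism}), so that the ``embedding'' clause of Definition~\ref{definition:simply.special} is met, and that $s_{11}=e_{1}$ and $s_{22}=e_{2}$, so that the conjugation identity in $R$ reads off correctly from the one in $\M_{3}(K)$.
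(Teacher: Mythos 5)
Your proof is correct and follows essentially the same route as the paper: split off $e_{3}=1-e_{1}-e_{2}$, apply Lemma~\ref{lemma:matrixunits.idempotents} to get matrix units with $s_{ii}=e_{i}$, and take $u=\phi(U)$ for the signed transposition $U=E_{12}+E_{21}-E_{33}\in\SL_{3}(K)$. The matrix you call $P$ is exactly the matrix $U$ in the paper's proof, and the verification that $u$ is a simply special involution conjugating $e_{1}$ to $e_{2}$ is identical.
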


\begin{proof} Let $K \defeq \ZZ(R)$ and $e_{3} \defeq 1-e_{1}-e_{2}$. Then $e_{3} \in \E(R)$, the elements $e_{1},e_{2},e_{3}$ are pairwise orthogonal, and $1 = e_{1} + e_{2} + e_{3}$. So, $\rk_{R}(e_{3})=\tfrac{1}{3}$. Thanks to Lemma~\ref{lemma:matrixunits.idempotents}, there exists a family of matrix units $s\in R^{3\times 3}$ for $R$ such that $e_{i} = s_{ii}$ for each $i\in\{1,2,3\}$. Let us define $B,C,U\in \M_{3}(K)$ by \begin{displaymath}
	B \defeq \begin{pmatrix}
				1 & 0 & 0 \\
				0 & 0 & 0 \\
				0 & 0 & 0 \\
			\end{pmatrix}\!, \quad C\defeq \begin{pmatrix}
				0 & 0 & 0 \\
				0 & 1 & 0 \\
				0 & 0 & 0 \\
			\end{pmatrix}\!, \quad U\defeq \begin{pmatrix}
				0 & 1 & 0 \\
				1 & 0 & 0 \\
				0 & 0 & -1 \\
			\end{pmatrix}\!.
\end{displaymath} Then $U\in\SL_{3}(K)$, $U=U^{-1}$ and $UBU=C$. The unital $K$-algebra homomorphism \begin{displaymath}
	\phi \colon\, \M_{3}(K)\,\longrightarrow\, R, \quad (a_{ij})_{i,j\in\{1,2,3\}}\,\longmapsto\,\sum\nolimits_{i,j=1}^{3} a_{ij}s_{ij}
\end{displaymath} satisfies $\phi(B)=s_{11}=e_{1}$ and $\phi(C)=s_{22}=e_{2}$. Consequently, $u\defeq\phi(U) \in \I(R)$ is simply special and \begin{displaymath}
	ue_{1}u \, = \, \phi(U)\phi(B)\phi(U) \, = \, \phi(UBU) \, = \, \phi(C) \, = \, e_{2} . \qedhere
\end{displaymath} \end{proof}

\begin{lem}\label{lemma:simply.special.involution.2} Let $R$ be an irreducible, continuous ring and $e,f \in \E(R)\setminus \{ 0 \}$ with $e \perp f $ and $\rk_{R}(e)=2\rk_{R}(f)$. Then there exists $v \in \I(\Gamma_{R}(e+f))$ with $v(e+f)$ simply special in $(e+f)R(e+f)$ such that $vfv \leq e$. \end{lem}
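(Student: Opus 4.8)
The plan is to pass to the corner ring determined by $g \defeq e+f$, split $e$ into two idempotents each of the same rank as $f$, and then apply Lemma~\ref{lemma:simply.special.involution} inside that corner ring to the pair consisting of $f$ and one of those two idempotents, in such a way that the conjugate of $f$ lands below $e$.

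First I would record the basic data. Since $e \perp f$, the element $g \defeq e+f$ lies in $\E(R)$, and $g \geq e \neq 0$, so $g \neq 0$. From $\rk_{R}(e) = 2\rk_{R}(f)$ we obtain $\rk_{R}(g) = \rk_{R}(e)+\rk_{R}(f) = 3\rk_{R}(f)$, hence $\rk_{R}(f) = \tfrac{1}{3}\rk_{R}(g)$. By Remark~\ref{remark:eRe.non-discrete.irreducible.continuous}, $gRg$ is an irreducible, continuous ring with $\rk_{gRg} = \tfrac{1}{\rk_{R}(g)}\,{\rk_{R}}\vert_{gRg}$; in particular $\rk_{gRg}(f) = \tfrac{1}{3}$ and $\rk_{gRg}(e) = \tfrac{2}{3}$. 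Next I would split $e$: as $0 \leq \rk_{R}(f) \leq \rk_{R}(e)$ and $\rk_{R}(f) \in \rk_{R}(R)$, Lemma~\ref{lemma:order}\ref{lemma:order.1} (applied to the idempotents $0 \leq e$) yields $e_{1} \in \E(R)$ with $e_{1} \leq e$ and $\rk_{R}(e_{1}) = \rk_{R}(f)$. Put $e_{2} \defeq e - e_{1}$; by Remark~\ref{remark:quantum.logic}\ref{remark:quantum.logic.1}, $e_{2} \in \E(R)$, $e_{1} \perp e_{2}$, $e_{2} \leq e$, and $\rk_{R}(e_{2}) = \rk_{R}(e)-\rk_{R}(e_{1}) = \rk_{R}(f)$ by Remark~\ref{remark:properties.pseudo.rank.function}\ref{remark:rank.difference.smaller.idempotent}. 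Since $e_{1},e_{2} \leq e \perp f$, the elements $e_{1},e_{2},f$ are pairwise orthogonal with $e_{1}+e_{2}+f = g$, and each has $\rk_{gRg}$-rank $\tfrac{1}{3}$.

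Now for the heart of the argument: applying Lemma~\ref{lemma:simply.special.involution} inside the irreducible, continuous ring $gRg$ to the orthogonal pair $f,e_{2}$ (note that I use $e_{2}$, not $e_{1}$, precisely so that the image of $f$ falls below $e$), I obtain a simply special involution $w \in \I(gRg)$ with $wfw = e_{2}$. Set $v \defeq w + 1 - g$. By Lemma~\ref{lemma:subgroup.unit.group}, $v \in \Gamma_{R}(g)$ since $w \in \GL(gRg)$, and a direct computation using $w = gwg$ gives $v^{2} = w^{2} + 1 - g = g + 1 - g = 1$, so $v \in \I(\Gamma_{R}(g)) = \I(\Gamma_{R}(e+f))$. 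Moreover $v(e+f) = vg = wg = w$, which is simply special in $gRg = (e+f)R(e+f)$ by construction. Finally, using $gf = fg = f$ (because $f \leq g$) and hence $(1-g)f = 0$, one computes $vfv = (w+1-g)f(w+1-g) = wfw = e_{2} \leq e$, as required.

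The argument involves no genuine obstacle beyond bookkeeping; the single point that must be handled with care is feeding the correct pair of rank-$\tfrac{1}{3}$ idempotents into Lemma~\ref{lemma:simply.special.involution} — namely $f$ together with a subidempotent $e_{2}$ of $e$, so that the conjugate of $f$ by the resulting involution sits inside $e$ — together with the routine verifications that the rank hypotheses of that lemma hold in the corner ring $gRg$, and that $v = w + 1 - g$ is an involution of $\Gamma_{R}(e+f)$ whose $(e+f)$-part is exactly $w$.
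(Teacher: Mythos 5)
Your proof is correct and follows essentially the same route as the paper's: pass to the corner ring $(e+f)R(e+f)$, produce a subidempotent of $e$ of $\rk_R$-rank equal to $\rk_R(f)$, apply Lemma~\ref{lemma:simply.special.involution} to that subidempotent together with $f$, and transport the resulting simply special involution back to $\Gamma_{R}(e+f)$ via $w \mapsto w+1-(e+f)$. The one cosmetic difference is that the paper conjugates $f$ directly onto the idempotent $e'\leq e$ obtained from Lemma~\ref{lemma:order}\ref{lemma:order.1}, whereas you introduce $e_1$ with $\rk_R(e_1)=\rk_R(f)$ and then conjugate onto $e_2 = e-e_1$; since $e_1$ itself already satisfies $e_1\leq e$, $e_1\perp f$, and $\rk_{gRg}(e_1)=\tfrac13$, the detour through $e_2$ is unnecessary (though harmless).
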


\begin{proof} By Lemma~\ref{lemma:order}\ref{lemma:order.1}, there exists $e' \in \E(R)$ with $e' \leq e$ and $\rk_{R}(e') = \rk_{R}(f)$. Using Remark~\ref{remark:quantum.logic}\ref{remark:quantum.logic.2}, we see that $e+f \in \E(R)$, and both $e' \leq e \leq e+f$ and $f \leq e+f$. Our Remark~\ref{remark:eRe.non-discrete.irreducible.continuous} asserts that $S \defeq (e+f)R(e+f)$ is an irreducible, continuous ring. We calculate that \begin{align*}
	\rk_{R}(e+f) \, &= \, \rk_{R}(e) + \rk_{R}(f) \, = \, \tfrac{3}{2}\rk_{R}(e), \\
	\rk_{S}(e') \, &\stackrel{\ref{remark:eRe.non-discrete.irreducible.continuous}}{=} \, \tfrac{1}{\rk_{R}(e+f)}\rk_{R}(e') \, = \, \tfrac{2\rk_{R}(f)}{3\rk_{R}(e)} \, = \, \tfrac{1}{3}, \\
	\rk_{S}(f) \, &\stackrel{\ref{remark:eRe.non-discrete.irreducible.continuous}}{=} \, \tfrac{1}{\rk_{R}(e+f)}\rk_{R}(f) \, = \, \tfrac{2\rk_{R}(f)}{3\rk_{R}(e)} \, = \, \tfrac{1}{3} .
\end{align*} Moreover, from $e \perp f$ and $e' \leq e$, we deduce that $e' \perp f$. Therefore, by Lemma~\ref{lemma:simply.special.involution}, there exists $u \in \I(S)$ simply special in $S$ such that $e' = ufu$. Inspecting \begin{displaymath}
	v \, \defeq \, u + 1-(e+f) \, \stackrel{\ref{lemma:subgroup.unit.group}}{\in} \, \I(\Gamma_{R}(e+f)) ,
\end{displaymath} we deduce that \begin{align*}
	vfv \, &= \, (u + 1 - (e+f))f(u + 1 - (e+f)) \\
	& = \, ufu + uf - uf + fu + f - f - fu - f + f \, = \, ufu \, = \, e' \, \leq \, e . \qedhere
\end{align*} \end{proof}

\begin{lem}\label{lemma:partial.approximation} Let $R$ be a non-discrete irreducible, continuous ring, let $e \in \E(R)\setminus \{ 0 \}$, $t \in (0,\rk_{R}(e)]$, and $a \in \Gamma_{R}(e)$. Then there exist $b \in \Gamma_{R}(e)$ and $f \in \E(R)$ such that \begin{enumerate}
	\item[---\,] $be$ is simply special in $eRe$,
	\item[---\,] $f \leq e$ and $\rk_{R}(f) = t$,
	\item[---\,] $b^{-1}a \in \Gamma_{R}(f)$.
\end{enumerate} \end{lem}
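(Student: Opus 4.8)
The plan is to transport the statement into the corner ring $eRe$ via the group isomorphism $\Gamma_{R}(e) \to \GL(eRe), \, x \mapsto xe$ of Lemma~\ref{lemma:subgroup.unit.group}, to approximate $ae$ by a simply special element of $eRe$ using Corollary~\ref{corollary:simply.special.dense}, and then to tame the residual error by means of Lemma~\ref{lemma:invertible.rank.idempotent} together with Lemma~\ref{lemma:order}\ref{lemma:order.1}.

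First I would note that, since $R$ is non-discrete irreducible and continuous and $e \ne 0$, Remark~\ref{remark:eRe.non-discrete.irreducible.continuous} guarantees that $eRe$ is again non-discrete irreducible and continuous, with $\rk_{eRe} = \tfrac{1}{\rk_{R}(e)}{{\rk_{R}}\vert_{eRe}}$; in particular the topology induced on $\GL(eRe)$ by $d_{eRe}$ coincides with that induced by ${d_{R}}\vert_{eRe \times eRe}$. Hence Corollary~\ref{corollary:simply.special.dense}, applied to $eRe$, lets me choose a simply special element $c$ of $eRe$ with $d_{R}(ae,c) < \tfrac{t}{2}$. Setting $b \defeq c + 1 - e$, Lemma~\ref{lemma:subgroup.unit.group} gives $b \in \Gamma_{R}(e)$, and $be = c$ is simply special in $eRe$, which is the first of the three required properties.

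Next I would bound the distance of $b^{-1}a$ from $1$. Since $\Gamma_{R}(e)$ is a group, $b^{-1}a \in \Gamma_{R}(e)$; moreover $a = ae + 1 - e$ yields $b - a = c - ae$, so that $\rk_{R}(1 - b^{-1}a) = \rk_{R}\!\left(b^{-1}(b - a)\right) = \rk_{R}(c - ae) = d_{R}(ae,c) < \tfrac{t}{2}$, using that left multiplication by the unit $b^{-1}$ preserves $\rk_{R}$. Lemma~\ref{lemma:invertible.rank.idempotent}, applied with $\rho = \rk_{R}$ to $b^{-1}a \in \Gamma_{R}(e)$, then produces $f_{0} \in \E(R)$ with $f_{0} \leq e$, $b^{-1}a \in \Gamma_{R}(f_{0})$ and $\rk_{R}(f_{0}) \leq 2\rk_{R}(1 - b^{-1}a) < t \leq \rk_{R}(e)$. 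As $R$ is non-discrete, $\rk_{R}(R) = [0,1]$ by Remark~\ref{remark:rank.function.general}\ref{remark:characterization.discrete}, whence $t \in [\rk_{R}(f_{0}),\rk_{R}(e)] \cap \rk_{R}(R)$, and Lemma~\ref{lemma:order}\ref{lemma:order.1} furnishes $f \in \E(R)$ with $f_{0} \leq f \leq e$ and $\rk_{R}(f) = t$. Finally, from $f_{0} \leq f$ and Lemma~\ref{lemma:subgroup.unit.group}\ref{lemma:subgroup.unit.group.order} we get $b^{-1}a \in \Gamma_{R}(f_{0}) \leq \Gamma_{R}(f)$, so all three conditions hold.

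I do not anticipate a genuine obstacle; the argument is a routine assembly of the preceding results. The only points that call for a little care are keeping track of the scaling factor $\rk_{R}(e)$ when passing the density statement from $eRe$ to $R$, and calibrating the approximation quality (here $\tfrac{t}{2}$) so that the bound $\rk_{R}(f_{0}) < t$ delivered by Lemma~\ref{lemma:invertible.rank.idempotent} is strict, which is precisely what allows $f_{0}$ to be enlarged to an idempotent below $e$ of rank exactly $t$.
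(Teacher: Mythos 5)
Your argument is correct and coincides with the paper's own proof: approximate $ae$ in $\GL(eRe)$ by a simply special $c$ via Corollary~\ref{corollary:simply.special.dense} and Remark~\ref{remark:eRe.non-discrete.irreducible.continuous}, set $b = c+1-e$, control $\rk_R(1-b^{-1}a) = \rk_R(b-a)$, apply Lemma~\ref{lemma:invertible.rank.idempotent} and then enlarge the resulting idempotent via Lemma~\ref{lemma:order}\ref{lemma:order.1} and Lemma~\ref{lemma:subgroup.unit.group}\ref{lemma:subgroup.unit.group.order}. The only cosmetic difference is that you arrange the approximation to be strictly less than $t/2$, whereas the paper uses a non-strict bound; strictness is not actually needed, since Lemma~\ref{lemma:order}\ref{lemma:order.1} already applies when $\rk_R(f_0) \le t \le \rk_R(e)$.
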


\begin{proof} Note that $ae \in \GL(eRe)$ by Lemma~\ref{lemma:subgroup.unit.group}. By Corollary~\ref{corollary:simply.special.dense} and Remark~\ref{remark:eRe.non-discrete.irreducible.continuous}, there exists $c \in \GL(eRe)$ simply special in $eRe$ and such that \begin{displaymath}
	\rk_{eRe}(c-ae) \, \leq \, \tfrac{t}{2\rk_{R}(e)}.
\end{displaymath} Defining $b \defeq c+1-e \in \Gamma_{R}(e)$, we see that \begin{displaymath}
	\rk_{R}(b-a) \, = \, \rk_{R}(c-ae) \, \stackrel{\ref{remark:eRe.non-discrete.irreducible.continuous}}{=} \, \rk_{R}(e)\rk_{eRe}(c-ae) \, \leq \, \tfrac{t}{2} .
\end{displaymath} By Lemma~\ref{lemma:invertible.rank.idempotent}, there exists $\bar{e} \in \E(R)$ such that $\bar{e} \leq e$, $b^{-1}a \in \Gamma_{R}({\bar{e}})$, and \begin{displaymath}
	\rk_{R}(\bar{e}) \, \leq \, 2\rk_{R}\!\left(1-b^{-1}a\right)\! \, = \, 2\rk_{R}(b-a) \, \leq \, t.
\end{displaymath} Moreover, by Remark~\ref{remark:rank.function.general}\ref{remark:characterization.discrete} and Lemma~\ref{lemma:order}\ref{lemma:order.1}, there exists $f \in \E(R)$ such that $\bar{e} \leq f \leq e$ and $\rk_{R}(f) = t$. Finally, \begin{displaymath}
	b^{-1}a \, \in \, \Gamma_{R}(\bar{e}) \, \stackrel{\ref{lemma:subgroup.unit.group}\ref{lemma:subgroup.unit.group.order}}{\subseteq} \, \Gamma_{R}(f) . \qedhere
\end{displaymath} \end{proof}

We are ready to embark on the proof of the announced decomposition theorem. Our argument proving Theorem~\ref{theorem:decomposition} is inspired by work of Fathi~\cite[Lemme~9]{Fathi} on corresponding properties of the group~$\Aut([0,1],\lambda)$.

\begin{thm}\label{theorem:decomposition} Let $R$ be a non-discrete irreducible, continuous ring. Then every element $a\in\GL(R)$ admits a decomposition \begin{displaymath}
	a \, = \, bu_{1}v_{1}v_{2}u_{2}v_{3}v_{4}
\end{displaymath} where \begin{enumerate}
	\item[---\,] $b \in \GL(R)$ is simply special,
	\item[---\,] $u_{1},u_{2} \in \GL(R)$ are locally special,
	\item[---\,] $v_{1},v_{2},v_{3},v_{4} \in \GL(R)$ are locally special involutions.
\end{enumerate} \end{thm}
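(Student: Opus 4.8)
The plan is to first peel off the simply special factor and then reduce to a decomposition problem inside a proper corner. Applying Lemma~\ref{lemma:partial.approximation} with $e = 1$ and $t = \tfrac12$ gives $a = b c$, where $b \in \GL(R)$ is simply special in $R$ (this is the case $e=1$, so $be = b$ and $eRe = R$) and $c \in \Gamma_{R}(e_{0})$ for some $e_{0} \in \E(R)$ with $\rk_{R}(e_{0}) = \tfrac12$. Hence it suffices to show: every $c \in \Gamma_{R}(e_{0})$ with $\rk_{R}(e_{0}) \le \tfrac12$ can be written as $c = u_{1}v_{1}v_{2}u_{2}v_{3}v_{4}$ with $u_{1},u_{2}$ locally special and $v_{1},v_{2},v_{3},v_{4}$ locally special involutions. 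Throughout, I will use that, by Lemma~\ref{lemma:subgroup.unit.group}, an element $\beta \in \Gamma_{R}(f)$ for which $\beta f$ is simply special in $fRf$ is itself locally special (take the one-term local decomposition), and that conjugates of simply special elements are simply special (Remark~\ref{remark:matricial.conjugation.invariant}).

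The core of the argument is a Fathi-style \emph{localize-and-relocate} iteration, modelled on~\cite[Lemme~9]{Fathi}. Given $c \in \Gamma_{R}(e')$ with $s \defeq \rk_{R}(e')$ small, Lemma~\ref{lemma:partial.approximation} writes $c = \beta\,\tilde c$ with $\beta$ locally special (as above) and $\tilde c \in \Gamma_{R}(\bar f)$, where $\bar f \le e'$ and $\rk_{R}(\bar f) = \tfrac{s}{2}$. Provided there is enough free rank, one then produces a \emph{simply special involution} $w$ with $w\bar f w$ orthogonal to $e'$: using Lemma~\ref{lemma:conjugation.idempotent} to match $\bar f$ with two further corners of rank $\tfrac s2$ and a $3\times3$-block construction exactly as in Lemma~\ref{lemma:simply.special.involution} (swap the first two corners, act by $-1$ on the third, so the determinant is $1$ and the square is $1$), one gets $w = w^{-1}$ simply special with $w\bar f w \perp e'$. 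Then $c = \beta\, w\,(w\tilde c w)\, w$ with $w\tilde c w \in \Gamma_{R}(w\bar f w)$ supported on a corner of rank $\tfrac s2$ disjoint from $e'$ (Lemma~\ref{lemma:subgroup.unit.group}\ref{lemma:subgroup.unit.group.conjugation}). Iterating and using Remark~\ref{remark:rank.function.general}\ref{remark:characterization.discrete} and Lemma~\ref{lemma:order}\ref{lemma:order.1} to find the required fresh corners, one obtains convergent infinite products (convergence being guaranteed, via Lemma~\ref{lemma:convergence.sequences} and Lemma~\ref{lemma:local.decomposition}, by the geometric decay of the successive ranks $2^{-k}s$), and one arranges the successive supports of the locally special pieces so that they form a single pairwise-orthogonal family, yielding a locally special factor; likewise, choosing the relocation involutions $w_{k}$ with \emph{pairwise disjoint} supports makes them commute (Lemma~\ref{lemma:subgroup.unit.group}\ref{lemma:subgroup.unit.group.orthogonal}), so that the accumulated product of $w_{k}$'s is again an involution, and locally special by construction.

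To land in exactly the claimed shape $u_{1}v_{1}v_{2}u_{2}v_{3}v_{4}$ the iteration is run in two passes: starting from $c \in \Gamma_{R}(e_{0})$ with $\rk_{R}(e_{0}) = \tfrac12$, a first pass extracts a block $c = u_{1}v_{1}v_{2}c'$ with $c' \in \Gamma_{R}(e_{1})$, $\rk_{R}(e_{1}) \le \tfrac13$, and a second pass applied to $c'$ (where the ranks are now small enough that there is permanently room to relocate into fresh disjoint corners) extracts $c' = u_{2}v_{3}v_{4}$; combining with $a = bc$ finishes the proof. The main obstacle is precisely this rank bookkeeping together with the two structural verifications it supports: that the total rank of all scratch corners used across a pass stays below $1$ — which is what forces two passes and two relocation involutions per pass — so that the relocated supports can be taken pairwise disjoint, and that the interleaved supports of the locally special pieces genuinely assemble into a pairwise-orthogonal family whose associated infinite product (via Lemma~\ref{lemma:local.decomposition}) is the desired locally special factor, with the limiting product of commuting involutions being a locally special involution.
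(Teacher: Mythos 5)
Your global strategy --- peel off a simply special factor via Lemma~\ref{lemma:partial.approximation} with $e=1$, then run a Fathi-style localize-and-relocate iteration using Lemma~\ref{lemma:partial.approximation} inside a corner together with a simply special relocation involution, and finally assemble the pieces via Lemmas~\ref{lemma:convergence.sequences} and~\ref{lemma:local.decomposition} --- is exactly the strategy of the paper. However, there is a genuine gap where you gather the relocation involutions. Your single step has the nested form $c_{k} = \beta_{k}w_{k}c_{k+1}w_{k}$, so after $n$ steps the $w$'s accumulate on \emph{both} ends: $c = \beta_{0}w_{0}\cdots\beta_{n-1}w_{n-1}c_{n}w_{n-1}\cdots w_{0}$. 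You then claim the $w_{k}$ can be chosen with pairwise disjoint supports, so that they commute and the right-hand tail converges to a locally special involution. This cannot be arranged: $w_{k}$ moves $\bar f_{k}\leq e_{k}'$ onto a fresh corner $e_{k+1}'$, and by the construction (Lemma~\ref{lemma:simply.special.involution} or~\ref{lemma:simply.special.involution.2}) its support covers all of $e_{k+1}'$; meanwhile $w_{k+1}$ must move some $\bar f_{k+1}\leq e_{k+1}'$, so its support meets $e_{k+1}'$ as well. Consecutive $w_{k},w_{k+1}$ therefore always share support and need not commute. Your two-pass remark addresses rank budget, not this structural overlap, so the right tail $\lim_{n}w_{n-1}\cdots w_{0}$ is neither an involution nor locally special, and the left factor is not locally special either.

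The paper resolves this with two devices you are missing. First, it introduces the conjugate involution $u_{i} \defeq (b_{i}^{-1}a_{i})v_{i}(b_{i}^{-1}a_{i})^{-1}$, which turns the nested expression into a genuine one-sided telescoping product: $b_{i}u_{i}v_{i} = a_{i}a_{i+1}^{-1}$, hence $a = b_{1}\prod_{i\geq 1}b_{i+1}u_{i}v_{i}$ with nothing accumulating on the right. Second, since the triple $(b_{i+1},u_{i},v_{i})$ is supported in $e_{i}+e_{i+1}$ and hence overlaps its neighbours, the paper splits the index set into odd and even: within each parity class the supports are pairwise orthogonal, so Lemma~\ref{lemma:local.decomposition}, Lemma~\ref{lemma:subgroup.unit.group}\ref{lemma:subgroup.unit.group.orthogonal}, and Lemma~\ref{lemma:convergence.sequences}\ref{lemma:convergence.orthogonal} give exactly two locally special factors and four locally special involutions. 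This is what produces the $7$-term shape $b\,u_{1}v_{1}v_{2}\,u_{2}v_{3}v_{4}$; your sketch, as written, would not. Also note the first relocation requires Lemma~\ref{lemma:simply.special.involution.2} (the corner version with $\rk_{R}(e)=2\rk_{R}(f)$), not Lemma~\ref{lemma:simply.special.involution} directly, which is why the paper starts with $t=\tfrac{1}{8}$ rather than $t=\tfrac{1}{2}$.
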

	
\begin{proof} Let $a\in \GL(R)$. Define $a_{1} \defeq a$. By Lemma~\ref{lemma:partial.approximation}, there exist $f_{1} \in \E(R)$ with $\rk_{R}(f_{1}) = \tfrac{1}{8}$ and $b_{1} \in \GL(R)$ simply special with $b_{1}^{-1}a_{1} \in \Gamma_{R}(f_{1})$. Due to Remark~\ref{remark:rank.function.general}\ref{remark:characterization.discrete} and Lemma~\ref{lemma:order}\ref{lemma:order.1}, there exists $e \in \E(R)$ with $f_{1} \leq e$ and $\rk_{R}(e) = \tfrac{1}{2}$. By Lemma~\ref{lemma:order}\ref{lemma:order.2}, we find $(e_{m})_{m\in\N_{>0}}\in\E(R)^{\N_{>0}}$ pairwise orthogonal with $e_{1} = e$ and $\rk_{R}(e_{n}) = 2^{-n}$ for each $n \in \N_{>0}$. Now, by Lemma~\ref{lemma:simply.special.involution.2}, there exists $v_{1} \in \I(\Gamma_{R}(e_{2}+f_{1}))$ with $v_{1}(e_{2}+f_{1})$ simply special in $(e_{2}+f_{1})R(e_{2}+f_{1})$ such that $v_{1}f_{1}v_{1} \leq e_{2}$.

Building on the above, recursively we construct sequences \begin{displaymath}
	(f_{i})_{i \in \N_{>1}} \in \E(R)^{\N_{>1}}, \ \ (a_{i})_{i\in\N_{>1}}, \, (b_{i})_{i\in\N_{>1}} \in \GL(R)^{\N_{>1}}, \ \ (v_{i})_{i\in\N_{>1}} \in \I(R)^{\N_{>1}}
\end{displaymath} such that, for every $i \in \N_{>0}$, \begin{align}
	&f_{i}\leq e_{i}, \label{eq:nesting} \\
	&a_{i+1},b_{i+1}\in\Gamma_{R}(e_{i+1}) , \ \ b_{i}^{-1}a_{i}\in \Gamma_{R}(f_{i}) , \label{eq:support} \\
	&b_{i+1}e_{i+1} \textnormal{ simply special in } e_{i+1}Re_{i+1} , \label{eq:simply.special} \\
	&v_{i} \in \Gamma_{R}(e_{i+1}+f_{i}) , \label{eq:support.involution} \\
	&v_{i}(e_{i+1}+f_{i}) \textnormal{ simply special in } (e_{i+1}+f_{i})R(e_{i+1}+f_{i}) , \label{eq:simply.special.involution} \\
	&v_{i}f_{i}v_{i} \leq e_{i+1} , \label{eq:similar.idempotents} \\
	&a_{i+1} = v_{i}b_{i}^{-1}a_{i}v_{i} . \label{eq:connection} 
\end{align} For the recursive step, let $i\in \N_{>0}$. Consider \begin{displaymath}
	a_{i+1} \, \defeq \, v_{i}b_{i}^{-1}a_{i}v_{i} \, \stackrel{\eqref{eq:support}}{\in} \, v_{i}\Gamma_{R}(f_{i})v_{i} \, \stackrel{\ref{lemma:subgroup.unit.group}\ref{lemma:subgroup.unit.group.conjugation}}{=} \, \Gamma_{R}(v_{i}f_{i}v_{i}) \, \stackrel{\eqref{eq:similar.idempotents}+\ref{lemma:subgroup.unit.group}\ref{lemma:subgroup.unit.group.order}}{\subseteq} \, \Gamma_{R}(e_{i+1}) .
\end{displaymath} By Lemma~\ref{lemma:partial.approximation}, there exist $b_{i+1} \in \Gamma_{R}(e_{i+1})$ with $b_{i+1}e_{i+1}$ simply special in $e_{i+1}Re_{i+1}$ and $f_{i+1} \in \E(R)$ with $f_{i+1} \leq e_{i+1}$ and $\rk_{R}(f_{i+1}) = \tfrac{1}{2}\rk_{R}(e_{i+2})$ such that \begin{displaymath}
	b_{i+1}^{-1}a_{i+1} \, \in \, \Gamma_{R}(f_{i+1}) .
\end{displaymath} Thanks to Lemma~\ref{lemma:simply.special.involution.2}, there exists $v_{i+1} \in \I(\Gamma_{R}(e_{i+2}+f_{i+1}))$ with $v_{i+1}(e_{i+2}+f_{i+1})$ simply special in $(e_{i+2}+f_{i+1})R(e_{i+2}+f_{i+1})$ such that $v_{i+1}f_{i+1}v_{i+1} \leq e_{i+2}$.
		
The remainder of the proof makes use of the fact that the rank ring $(R,{\rk_{R}})$ is complete by Theorem~\ref{theorem:unique.rank.function}. From~\eqref{eq:nesting} and pairwise orthogonality of the sequence~$(e_{n})_{n \in \N_{>0}}$, we infer that both $(e_{2i}+f_{2i-1})_{i\in\N_{>1}}$ and $(e_{2i+1}+f_{2i})_{i\in\N_{>1}}$ are pairwise orthogonal. Using Lemma~\ref{lemma:local.decomposition}, we observe that \begin{displaymath}
	b \, \defeq \, \prod\nolimits_{i=1}^{\infty}b_{2i+1}, \qquad b' \, \defeq \, \prod\nolimits_{i=1}^{\infty}b_{2i}
\end{displaymath} are locally special in $R$, and \begin{displaymath}
	v \, \defeq \, \prod\nolimits_{i=1}^{\infty}v_{2i-1}, \qquad v' \, \defeq \, \prod\nolimits_{i=1}^{\infty}v_{2i}
\end{displaymath} are locally special involutions in $R$. Now, for each $i \in \N_{>0}$, since $v_{i} \in \I(\Gamma_{R}(e_{i+1}+f_{i}))$ by~\eqref{eq:support.involution} and \begin{displaymath}
	b_{i}^{-1}a_{i} \, \stackrel{\eqref{eq:support}}{\in} \, \Gamma_{R}(f_{i}) \, \stackrel{\ref{remark:quantum.logic}\ref{remark:quantum.logic.2}+\ref{lemma:subgroup.unit.group}\ref{lemma:subgroup.unit.group.order}}{\subseteq} \, \Gamma_{R}(e_{i+1}+f_{i}) ,
\end{displaymath} we see that \begin{displaymath}
	u_{i} \, \defeq \, b_{i}^{-1}a_{i}v_{i}\!\left( b_{i}^{-1}a_{i}\right)^{-1}\! \, \in \, \I(\Gamma_{R}(e_{i+1}+f_{i})) 
\end{displaymath} and, moreover, \begin{displaymath}
	u_{i}(e_{i+1}+f_{i}) \, \stackrel{\ref{lemma:subgroup.unit.group}}{=} \, b_{i}^{-1}a_{i}(e_{i+1}+f_{i})v_{i}(e_{i+1}+f_{i})\!\left( b_{i}^{-1}a_{i}(e_{i+1}+f_{i})\right)^{-1}
\end{displaymath} is simply special in $(e_{i+1}+f_{i})R(e_{i+1}+f_{i})$ by~\eqref{eq:simply.special.involution}, Remark~\ref{remark:eRe.non-discrete.irreducible.continuous} and Remark~\ref{remark:matricial.conjugation.invariant}. In turn, by Lemma~\ref{lemma:local.decomposition}, \begin{displaymath}
	u \, \defeq \, \prod\nolimits_{i=1}^{\infty}u_{2i-1}, \qquad u' \, \defeq \, \prod\nolimits_{i=1}^{\infty}u_{2i}
\end{displaymath} are locally special involutions in $R$. Moreover, we observe that, for every $i \in \N_{>0}$, \begin{equation}\label{eq:better.connection}
	b_{i}u_{i}v_{i} \, = \, b_{i}b_{i}^{-1}a_{i}v_{i}\!\left(b_{i}^{-1}a_{i}\right)^{-1}\!v_{i} \, = \, a_{i}v_{i}\!\left(b_{i}^{-1}a_{i}\right)^{-1}\!v_{i} \, \stackrel{\eqref{eq:connection}}{=} \, a_{i}a_{i+1}^{-1} .
\end{equation} Combining this with the pairwise orthogonality of $(e_{n})_{n \in \N_{>0}}$ and the resulting pairwise orthogonality of both $(e_{2i}+e_{2i-1})_{i\in\N_{>1}}$ and $(e_{2i+1}+e_{2i})_{i\in\N_{>1}}$, we conclude that \begin{align}
	b_{1}buv \, &= \, b_{1}\!\left(\prod\nolimits_{i=2}^{\infty} b_{2i-1}\right)\! \left(\prod\nolimits_{i=1}^{\infty}u_{2i-1} \right)\!\left(\prod\nolimits_{i=1}^{\infty}v_{2i-1}\right) \nonumber \\
		&= \, b_{1}\!\left(\prod\nolimits_{i=2}^{\infty} b_{2i-1}\right)\! u_{1}\!\left(\prod\nolimits_{i=2}^{\infty}u_{2i-1} \right)\! v_{1}\!\left(\prod\nolimits_{i=2}^{\infty}v_{2i-1}\right) \nonumber \\
		&\stackrel{\ref{lemma:convergence.sequences}\ref{lemma:convergence.orthogonal}+\ref{lemma:local.decomposition}+\ref{lemma:subgroup.unit.group}\ref{lemma:subgroup.unit.group.orthogonal}}{=} \, b_{1}u_{1}v_{1}\!\left(\prod\nolimits_{i=2}^{\infty} b_{2i-1}\right)\!\left(\prod\nolimits_{i=2}^{\infty}u_{2i-1} \right)\! \left(\prod\nolimits_{i=2}^{\infty}v_{2i-1}\right) \nonumber \\
		&\stackrel{\ref{lemma:local.decomposition}}{=} \, b_{1}u_{1}v_{1}\!\left(\prod\nolimits_{i=2}^{\infty} b_{2i-1}u_{2i-1}v_{2i-1}\right)\! \, \stackrel{\eqref{eq:better.connection}}{=} \, a_{1}a_{2}^{-1}\!\left(\prod\nolimits_{i=2}^{\infty} a_{2i-1}a_{2i}^{-1}\right)\nonumber \\
		&\stackrel{\ref{lemma:local.decomposition}}{=} \, a_{1}a_{2}^{-1}\!\left(\prod\nolimits_{i=2}^{\infty}a_{2i-1}\right)\!\left(\prod\nolimits_{i=2}^{\infty}a_{2i}^{-1}\right) \nonumber \\
		& \stackrel{\ref{lemma:convergence.sequences}\ref{lemma:convergence.orthogonal}+\ref{lemma:local.decomposition}+\ref{lemma:subgroup.unit.group}\ref{lemma:subgroup.unit.group.orthogonal}}{=} \, a_{1}\!\left(\prod\nolimits_{i=2}^{\infty}a_{2i-1}\right)\! a_{2}^{-1}\!\left(\prod\nolimits_{i=2}^{\infty}a_{2i}^{-1}\right) \nonumber \\
		&= \, a_{1}\!\left(\prod\nolimits_{i=1}^{\infty}a_{2i+1}\right)\!\left(\prod\nolimits_{i=1}^{\infty}a_{2i}^{-1}\right)\!, \label{eq:partial.decomposition.1} \\
	b'u'v' \, &= \, \left(\prod\nolimits_{i=1}^{\infty}b_{2i}\right)\!\left(\prod\nolimits_{i=1}^{\infty}u_{2i}\right) \! \left(\prod\nolimits_{i=1}^{\infty}v_{2i}\right) \! \, \stackrel{\ref{lemma:local.decomposition}}{=} \, \prod\nolimits_{i=1}^{\infty}b_{2i}u_{2i}v_{2i} \nonumber\\
		& \stackrel{\eqref{eq:better.connection}}{=} \, \prod\nolimits_{i=1}^{\infty}a_{2i}a_{2i+1}^{-1} \, \stackrel{\ref{lemma:local.decomposition}}{=} \, \!\left(\prod\nolimits_{i=1}^{\infty}a_{2i}\right)\!\left(\prod\nolimits_{i=1}^{\infty}a_{2i+1}^{-1}\right) \! , \label{eq:partial.decomposition.2}
\end{align} so that finally \begin{align*}
	a \, &= \, a_{1} \, = \, a_{1}\!\left(\prod\nolimits_{i=1}^{\infty}a_{2i+1}a_{2i+1}^{-1}\right)\!\left(\prod\nolimits_{i=1}^{\infty}a_{2i}^{-1}a_{2i}\right) \nonumber \\
		&\stackrel{\ref{lemma:local.decomposition}}{=} \, a_{1}\!\left(\prod\nolimits_{i=1}^{\infty}a_{2i+1}\right)\!\left(\prod\nolimits_{i=1}^{\infty}a_{2i+1}^{-1}\right)\!\left(\prod\nolimits_{i=1}^{\infty}a_{2i}^{-1}\right)\!\left(\prod\nolimits_{i=1}^{\infty}a_{2i}\right) \\
		&\stackrel{\ref{lemma:convergence.sequences}\ref{lemma:convergence.orthogonal}+\ref{lemma:local.decomposition}+\ref{lemma:subgroup.unit.group}\ref{lemma:subgroup.unit.group.orthogonal}}{=} \, a_{1}\!\left(\prod\nolimits_{i=1}^{\infty}a_{2i+1}\right)\!\left(\prod\nolimits_{i=1}^{\infty}a_{2i}^{-1}\right)\!\left(\prod\nolimits_{i=1}^{\infty}a_{2i}\right)\!\left(\prod\nolimits_{i=1}^{\infty}a_{2i+1}^{-1}\right) \\
		&\stackrel{\eqref{eq:partial.decomposition.1}+\eqref{eq:partial.decomposition.2}}{=} \, b_{1}buvb'u'v'. \qedhere
\end{align*} \end{proof}

The result above has further implications for the associated unit groups, which are detailed in Theorem~\ref{theorem:width}. As usual, a \emph{commutator} in a group $G$ is any element of the form $[x,y] \defeq xyx^{-1}y^{-1}$ where $x,y \in G$. A group $G$ is said to be \emph{perfect} if its commutator subgroup, i.e., the subgroup of $G$ generated by the set $\{ [x,y] \mid x,y \in G \}$, coincides with $G$.
 	
\begin{remark}\label{remark:product.involutions.commutators} Let $\ell \in \N$, let $G$ be a group, let $(G_{i})_{i\in I}$ be a family of groups, and let $\phi \colon \prod\nolimits_{i\in I} G_{i} \to G$ be a homomorphism. Then \begin{displaymath}
	\phi\!\left( \prod\nolimits_{i \in I} \I(G_{i})^{\ell} \right)\! \, = \, \phi\!\left( {\I\!\left(\prod\nolimits_{i \in I} G_{i}\right)}^{\ell}\right) \, \subseteq \, \I(G)^{\ell}
\end{displaymath} and, for all $m \in \N$ and $w \in \free(m)$, \begin{displaymath}
	\phi\!\left( \prod\nolimits_{i \in I} w(G_{i})^{\ell} \right)\! \, = \, \phi\!\left(\! {w\!\left(\prod\nolimits_{i \in I} G_{i}\right)}^{\ell}\right) \, \subseteq \, w(G)^{\ell} .
\end{displaymath} \end{remark}

Once again, let us recall that the center $\ZZ(R)$ of an irreducible, regular ring $R$ constitutes a field by Remark~\ref{remark:irreducible.center.field}.

\begin{lem}\label{lemma:special.decomposition} Let $R$ be a non-discrete irreducible, continuous ring. \begin{enumerate}
	\item\label{lemma:special.decomposition.commutator} Every simply special element of $R$ is a commutator in $\GL(R)$.
	\item\label{lemma:special.decomposition.word} Suppose that $\ZZ(R)$ is algebraically closed. If $m \in \N$ and $w \in \free (m)\setminus \{ \epsilon \}$, then every simply special element of $R$ belongs to $w(\GL(R))^{2}$.
\end{enumerate} \end{lem}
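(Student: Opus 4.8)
The plan is to transport both assertions, via the defining $K$-algebra embedding, into classical facts about commutators and word values in the finite-size matrix groups $\GL_{m}(K)$ and $\SL_{m}(K)$, where $K\defeq\ZZ(R)$. So fix a simply special element $a\in R$, say $a=\phi(A)$ with $n\in\N_{>0}$, a unital $K$-algebra embedding $\phi\colon\M_{n}(K)\to R$, and $A\in\SL_{n}(K)$. Since any unital ring embedding sends units to units and satisfies $\phi(x^{-1})=\phi(x)^{-1}$, the restriction $\phi\vert_{\GL_{n}(K)}\colon\GL_{n}(K)=\GL(\M_{n}(K))\to\GL(R)$ is a group homomorphism; hence $\phi(w_{0}(g_{1},\dots,g_{k}))=w_{0}(\phi(g_{1}),\dots,\phi(g_{k}))$ for every word $w_{0}$ and all $g_{1},\dots,g_{k}\in\GL_{n}(K)$, and in particular $\phi$ maps commutators in $\GL_{n}(K)$ to commutators in $\GL(R)$.

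For part~\ref{lemma:special.decomposition.commutator}, I would first reduce to a matrix size at which the commutator width of $\SL_{m}(K)$ inside $\GL_{m}(K)$ is known to be $1$. Since $R$ is non-discrete, Lemma~\ref{lemma:matricial.algebra.blow.up} (inspecting its proof) yields, for a multiple $m$ of $n$ chosen with $m\geq 3$ (e.g.\ $m=3n$), a unital $K$-algebra embedding $\psi\colon\M_{m}(K)\to R$ with $\phi(B)=\psi(B\otimes I_{m/n})$ for all $B\in\M_{n}(K)$; in particular $a=\psi(A\otimes I_{m/n})$ with $A\otimes I_{m/n}\in\SL_{m}(K)$. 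By Thompson's theorem \cite{Thompson61,Thompson62,ThompsonPortugaliae}, every element of $\SL_{m}(K)$ is a commutator in $\GL_{m}(K)$ once $m\geq 3$, so there are $X,Y\in\GL_{m}(K)$ with $A\otimes I_{m/n}=[X,Y]$, whence $a=\psi([X,Y])=[\psi(X),\psi(Y)]$ with $\psi(X),\psi(Y)\in\GL(R)$.

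For part~\ref{lemma:special.decomposition.word}, assume in addition that $K$ is algebraically closed, and let $m\in\N$ and $w\in\free(m)\setminus\{\epsilon\}$. It suffices to show $\SL_{n}(K)=w(\SL_{n}(K))^{2}$ and then push the resulting factorisation of $A$ through $\phi$. For $n=1$ this is trivial, as $\SL_{1}(K)=\{1\}$ and $w(1,\dots,1)=1$. For $n\geq 2$, the group $\SL_{n}(K)$ is a connected semisimple linear algebraic group, so Borel's theorem \cite{Borel} asserts that the word map $\SL_{n}(K)^{m}\to\SL_{n}(K),\,g\mapsto w(g)$, is dominant; its image, being constructible and dense in the irreducible variety $\SL_{n}(K)$, therefore contains a non-empty Zariski-open subset $U\subseteq w(\SL_{n}(K))$. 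As $\SL_{n}(K)$ is connected, the open sets $U$ and $gU^{-1}$ meet for every $g\in\SL_{n}(K)$, so $U\cdot U=\SL_{n}(K)$ and thus $w(\SL_{n}(K))^{2}=\SL_{n}(K)$. Consequently $A=w(X_{1},\dots,X_{m})\,w(Y_{1},\dots,Y_{m})$ for suitable $X_{i},Y_{i}\in\SL_{n}(K)\subseteq\GL_{n}(K)$, and applying $\phi$ gives $a=w(\phi(X_{1}),\dots,\phi(X_{m}))\,w(\phi(Y_{1}),\dots,\phi(Y_{m}))\in w(\GL(R))^{2}$.

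The main obstacle lies in part~\ref{lemma:special.decomposition.commutator}: the matrix-group statement ``every element of $\SL_{m}(K)$ is a commutator in $\GL_{m}(K)$'' genuinely fails for $(m,K)=(2,\mathbb{F}_{2})$, so it cannot be applied to $A$ directly; the role of Lemma~\ref{lemma:matricial.algebra.blow.up} is precisely to pass uniformly in $K$ to matrix size $\geq 3$, and one must check that the enlarged embedding realises $a$ as $A\otimes I_{m/n}$ rather than as an unrelated matrix. Everything else — the compatibility of $\phi$ with units and words, and the density argument in part~\ref{lemma:special.decomposition.word} — is routine.
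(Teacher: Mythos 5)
Your proposal is correct, and it uses exactly the paper's ingredients: the blow-up Lemma~\ref{lemma:matricial.algebra.blow.up}, Thompson's theorems on commutators in $\SL_n$, and Borel's theorem on word maps. Part~\ref{lemma:special.decomposition.word} is essentially identical to the paper's argument. In part~\ref{lemma:special.decomposition.commutator}, the paper splits into cases: it applies Thompson's results directly when $(n,\lvert K\rvert)\neq(2,2)$, and only in the exceptional case blows up to a copy of $\M_4(\mathbb{F}_2)$, then uses the rank function to see that the preimage $B$ of $a$ is invertible, which over $\mathbb{F}_2$ already gives $B\in\GL_4(\mathbb{F}_2)=\SL_4(\mathbb{F}_2)$. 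You instead blow up uniformly to size $3n$ and track that the inclusion coming from the proof of Lemma~\ref{lemma:matricial.algebra.blow.up} has the tensor form $B\mapsto B\otimes I_3$, so that $\det$ is preserved and the preimage lies in $\SL_{3n}(K)$ regardless of $K$. This is a legitimate reading of that proof — the matrix units $r$ there satisfy $s_{ij}=\sum_k r_{m(i-1)+k,\,m(j-1)+k}$, which is exactly $E_{ij}\otimes I_m$ under the block identification — and so your variant is correct; it trades the paper's case split for reliance on internal structure of the blow-up rather than on the stated conclusion of Lemma~\ref{lemma:matricial.algebra.blow.up}. Both are sound.
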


\begin{proof} Let $K \defeq \ZZ(R)$ and $a \in R$ be simply special in $R$. Then there exist $n \in \N_{>0}$ and a unital $K$-algebra embedding $\phi \colon \M_{n}(K) \to R$ such that $a \in \phi(\SL_{n}(K))$.
	
\ref{lemma:special.decomposition.commutator} If $(n,\vert K\vert )\neq (2,2)$, then every element of $\SL_{n}(K)$ is a commutator in $\GL_{n}(K)$ by Thompson's results~\cite{Thompson61,Thompson62,ThompsonPortugaliae}, which implies that $a$ is a commutator in $\GL(R)$, as desired. Suppose now that $(n,\vert K\vert ) = (2,2)$. According to Lemma~\ref{lemma:matricial.algebra.blow.up}, there exists a unital $K$-algebra $S\leq R$ such that $\phi(\M_{2}(K)) \leq S \cong \M_{4}(K)$. In particular, we find a unital $K$-algebra embedding $\psi\colon \M_{4}(K)\to R$ such that $a \in \psi(\M_{4}(K))$. Consider any $B\in \M_{4}(K)$ such that $a = \psi(B)$. Then \begin{displaymath}
	\rk_{\M_{4}(K)}(B) \, \stackrel{\ref{remark:rank.function.general}\ref{remark:uniqueness.rank.embedding}}{=} \, \rk_{R}(\psi(B)) \, = \, \rk_{R}(a) \, \stackrel{\ref{remark:properties.rank.function}\ref{remark:invertible.rank}}{=} \, 1
\end{displaymath} and therefore \begin{displaymath}
	B \, \stackrel{\ref{remark:properties.rank.function}\ref{remark:invertible.rank}}{\in} \, \GL_{4}(K) \, \stackrel{\vert K \vert = 2}{=} \, \SL_{4}(K) .
\end{displaymath} Thus, $B$ is a commutator in $\GL_{n}(K)$ by~\cite{Thompson61}, whence $a$ is a commutator in $\GL(R)$. This completes the argument.

\ref{lemma:special.decomposition.word} Let $m \in \N$ and $w \in \free (m)\setminus \{ \epsilon \}$. The work of Borel~\cite[\S1, Theorem~1]{Borel} (see also Larsen's proof~\cite[Lemma~3]{Larsen}) asserts that $w(\SL_{n}(K))$ contains a dense open subset of $\SL_{n}(K)$ (with respect to the Zariski topology), whence $\SL_{n}(K) = w(\SL_{n}(K))^{2}$ thanks to~\cite[I, Proposition~1.3(a), p.~47]{BorelBook}, as argued in~\cite[\S1, p.~157, Remark~3]{Borel}. Consequently, $a \in \phi(\SL_{n}(K)) = \phi(w(\SL_{n}(K))^{2}) \subseteq w(\GL(R))^{2}$. \end{proof}
	
\begin{lem}\label{lemma:locally.special.decomposition} Let $R$ be a non-discrete irreducible, continuous ring. \begin{enumerate}
	\item\label{lemma:locally.special.decomposition.involutions} Every locally special element of $R$ is a product of $4$ involutions in $\GL(R)$.
	\item\label{lemma:locally.special.decomposition.commutators} Every locally special element of $R$ is a commutator in $\GL(R)$.
	\item\label{lemma:locally.special.decomposition.word} Suppose that $\ZZ(R)$ is algebraically closed. If $m \in \N$ and $w \in \free (m)\setminus \{ \epsilon \}$, then every locally special element of $R$ belongs to $w(\GL(R))^{2}$.
\end{enumerate} \end{lem}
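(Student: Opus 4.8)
The plan is to deduce all three assertions from the corresponding facts about the corner rings $e_{n}Re_{n}$, transported back into $\GL(R)$ along the group embedding furnished by Lemma~\ref{lemma:local.decomposition}. So let $a\in R$ be locally special, witnessed by pairwise orthogonal $(e_{n})_{n\in\N}\in\E(R)^{\N}$ and $(a_{n})_{n\in\N}\in\prod_{n\in\N}e_{n}Re_{n}$ with each $a_{n}$ simply special in $e_{n}Re_{n}$ and $a=\prod_{n\in\N}(a_{n}+1-e_{n})$; put $e\defeq\sum_{n\in\N}e_{n}$. We may assume $e_{n}\neq 0$ for every $n$ (a zero corner carries no simply special element), so that each $e_{n}Re_{n}$ is again a non-discrete irreducible, continuous ring with centre isomorphic to $\ZZ(R)$ by Remark~\ref{remark:eRe.non-discrete.irreducible.continuous}. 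Composing the componentwise isomorphisms $\GL(e_{n}Re_{n})\to\Gamma_{R}(e_{n})$, $b\mapsto b+1-e_{n}$, of Lemma~\ref{lemma:subgroup.unit.group} with the group embedding $\prod_{n\in\N}\Gamma_{R}(e_{n})\to\Gamma_{R}(e)\leq\GL(R)$ of Lemma~\ref{lemma:local.decomposition}, we obtain a group homomorphism
\[
	\Phi\colon\, \prod\nolimits_{n\in\N}\GL(e_{n}Re_{n})\,\longrightarrow\,\GL(R),\qquad (b_{n})_{n\in\N}\,\longmapsto\,\prod\nolimits_{n\in\N}(b_{n}+1-e_{n}),
\]
which satisfies $\Phi((a_{n})_{n\in\N})=a$.

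For part~\ref{lemma:locally.special.decomposition.involutions} I would invoke the theorem of Gustafson, Halmos and Radjavi~\cite{GustafsonHalmosRadjavi76} that over any field every square matrix of determinant $\pm1$---in particular every element of $\SL_{k}(\ZZ(R))$---is a product of at most four involutions. Writing each $a_{n}$ as $\phi_{n}(A_{n})$ for a unital $\ZZ(R)$-algebra embedding $\phi_{n}\colon\M_{k_{n}}(\ZZ(R))\to e_{n}Re_{n}$ and some $A_{n}\in\SL_{k_{n}}(\ZZ(R))$, and decomposing $A_{n}$ into four involutions of $\GL_{k_{n}}(\ZZ(R))$ (padding with the identity if necessary), one obtains $a_{n}\in\I(\GL(e_{n}Re_{n}))^{4}$, since $\phi_{n}$ being unital, $\phi_{n}(B)$ is an involution of $\GL(e_{n}Re_{n})$ for each involution $B$. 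Thus $(a_{n})_{n\in\N}\in\prod_{n\in\N}\I(\GL(e_{n}Re_{n}))^{4}$, and Remark~\ref{remark:product.involutions.commutators} yields $a=\Phi((a_{n})_{n\in\N})\in\I(\GL(R))^{4}$.

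Parts~\ref{lemma:locally.special.decomposition.commutators} and~\ref{lemma:locally.special.decomposition.word} have the same shape, using Lemma~\ref{lemma:special.decomposition} in place of~\cite{GustafsonHalmosRadjavi76}. Applying Lemma~\ref{lemma:special.decomposition}\ref{lemma:special.decomposition.commutator} to each non-discrete irreducible, continuous ring $e_{n}Re_{n}$, we may write $a_{n}=[x_{n},y_{n}]$ with $x_{n},y_{n}\in\GL(e_{n}Re_{n})$; then $(a_{n})_{n\in\N}=\bigl[(x_{n})_{n\in\N},(y_{n})_{n\in\N}\bigr]$ in $\prod_{n\in\N}\GL(e_{n}Re_{n})$, so $a=\Phi((a_{n})_{n\in\N})=\bigl[\Phi((x_{n})_{n\in\N}),\Phi((y_{n})_{n\in\N})\bigr]$ is a commutator in $\GL(R)$. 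For part~\ref{lemma:locally.special.decomposition.word}, if $\ZZ(R)$ is algebraically closed then so is each $\ZZ(e_{n}Re_{n})\cong\ZZ(R)$, hence for every $m\in\N$ and $w\in\free(m)\setminus\{\epsilon\}$ Lemma~\ref{lemma:special.decomposition}\ref{lemma:special.decomposition.word} gives $a_{n}\in w(\GL(e_{n}Re_{n}))^{2}$ for each $n$; since $(a_{n})_{n\in\N}\in\prod_{n\in\N}w(\GL(e_{n}Re_{n}))^{2}$, Remark~\ref{remark:product.involutions.commutators} yields $a=\Phi((a_{n})_{n\in\N})\in w(\GL(R))^{2}$.

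Once the homomorphism $\Phi$ is identified, the argument is purely formal; the only external input is the Gustafson--Halmos--Radjavi bound used for part~\ref{lemma:locally.special.decomposition.involutions}, and the sole point requiring care is that this bound equals four independently of the matrix sizes $k_{n}$ and of the ground field---this uniformity is exactly what permits the infinite product to collapse to a product of just four involutions of $\GL(R)$.
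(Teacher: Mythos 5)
Your argument is correct and coincides with the paper's proof in all essentials: you invoke the same group homomorphism $\prod_{n}\GL(e_nRe_n)\to\GL(R)$ from Lemma~\ref{lemma:local.decomposition} (via Lemma~\ref{lemma:subgroup.unit.group}), decompose each simply special $a_n$ in $\GL(e_nRe_n)$ using Gustafson--Halmos--Radjavi for part~(A) and Lemma~\ref{lemma:special.decomposition} for parts~(B) and~(C), and push through using Remark~\ref{remark:product.involutions.commutators}. The only cosmetic difference is that you spell out the matrix-level involution decomposition and the componentwise commutator bracket explicitly, where the paper leaves these steps compressed.
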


\begin{proof} Let $b \in \GL(R)$ be locally special. Then we find $(e_{n})_{n\in\N} \in (\E(R)\setminus \{ 0 \})^{\N}$ pairwise orthogonal and $(b_{n})_{n\in\N} \in \prod\nolimits_{n \in \N} \GL(e_{n}Re_{n})$ such that the map \begin{align*}
	\phi\colon\, \prod\nolimits_{n \in \N} \GL(e_{n}Re_{n}) \, \longrightarrow \, \GL(R), \quad (a_{n})_{n\in\N} \, \longmapsto \, \prod\nolimits_{n \in \N} a_{n}+1-e_{n}	
\end{align*} satisfies $\phi((b_{n})_{n\in\N}) = b$ and, for each $n\in\N$, the element $b_{n}$ is simply special in $e_{n}Re_{n}$. We proceed by separate arguments for~\ref{lemma:locally.special.decomposition.involutions}, \ref{lemma:locally.special.decomposition.commutators}, and~\ref{lemma:locally.special.decomposition.word}.

\ref{lemma:locally.special.decomposition.involutions} Let $K \defeq \ZZ(R)$. For every $n \in \N$, since $\ZZ(e_{n}Re_{n}) \cong K$ by Remark~\ref{remark:eRe.non-discrete.irreducible.continuous}, there exist $m_{n} \in \N_{>0}$ and a unital $K$-algebra homomorphism $\psi_{n} \colon \M_{m_{n}}(K) \to e_{n}Re_{n}$ such that $b_{n} \in \psi_{n}(\SL_{m_{n}}(K))$. For each $n\in\N$, due to~\cite{GustafsonHalmosRadjavi76}, every element of $\SL_{m_{n}}(K)$ is a product of $4$ involutions in $\GL_{m_{n}}(K)$, hence $b_{n}$ is a product of $4$ involutions in~$\GL(e_{n}Re_{n})$. Therefore, by Theorem~\ref{theorem:unique.rank.function}, Lemma~\ref{lemma:local.decomposition} and Remark~\ref{remark:product.involutions.commutators}, the element $b$ is a product of $4$ involutions in $\GL(R)$. 

\ref{lemma:locally.special.decomposition.commutators} For every $n \in \N$, Remark~\ref{remark:eRe.non-discrete.irreducible.continuous} and Lemma~\ref{lemma:special.decomposition}\ref{lemma:special.decomposition.commutator} together assert that $b_{n}$ is a commutator in $\GL(e_{n}Re_{n})$. As Lemma~\ref{lemma:local.decomposition} asserts that $\phi$ is a group homomorphism, Remark~\ref{remark:product.involutions.commutators} implies that $b$ is a commutator in $\GL(R)$.

\ref{lemma:locally.special.decomposition.word} Let $m \in \N$ and $w \in \free (m)\setminus \{ \epsilon \}$. Then \begin{displaymath}
	b \, = \, \phi((b_{n})_{n\in\N}) \, \stackrel{\ref{remark:eRe.non-discrete.irreducible.continuous}+\ref{lemma:special.decomposition}\ref{lemma:special.decomposition.word}}{\in} \, \phi\!\left( \prod\nolimits_{n \in \N} w(\GL(e_{n}Re_{n}))^{2} \right)\! \, \stackrel{\ref{lemma:local.decomposition}+\ref{remark:product.involutions.commutators}}{\subseteq} \, w(\GL(R))^{2} .\qedhere
\end{displaymath} \end{proof}	

Everything is prepared to deduce the announced width bounds.

\begin{thm}\label{theorem:width} Let $R$ be a non-discrete irreducible, continuous ring. \begin{enumerate}
	\item\label{theorem:width.involutions} Every element of $\GL(R)$ is a product of $16$ involutions.
	\item\label{theorem:width.commutators} Every element of $\GL(R)$ is a product of $7$ commutators. In particular, $\GL(R)$ is perfect.
	\item\label{theorem:width.word} Suppose that $\ZZ(R)$ is algebraically closed. For all $m \in \N$ and $w \in \free (m)\setminus \{ \epsilon \}$,
				\begin{displaymath}
					\qquad \GL(R) \, = \, w(\GL(R))^{14} .
				\end{displaymath} In particular, $\GL(R)$ is verbally simple.
\end{enumerate} \end{thm}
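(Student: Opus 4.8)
The plan is to derive all three statements mechanically from the decomposition $a = bu_{1}v_{1}v_{2}u_{2}v_{3}v_{4}$ furnished by Theorem~\ref{theorem:decomposition}, using Lemma~\ref{lemma:special.decomposition} to handle the simply special factor $b$ and Lemma~\ref{lemma:locally.special.decomposition} to handle the six locally special factors $u_{1},u_{2},v_{1},v_{2},v_{3},v_{4}$. Concretely, I would fix $a\in\GL(R)$, invoke Theorem~\ref{theorem:decomposition}, substitute the cheapest available expression for each of the seven factors, and concatenate; the content of the proof is entirely this accounting.

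For~\ref{theorem:width.involutions}: the factor $b$ lies in $\phi(\SL_{n}(\ZZ(R)))$ for some unital $\ZZ(R)$-algebra embedding $\phi$, and every element of $\SL_{n}(\ZZ(R))$ is a product of four involutions in $\GL_{n}(\ZZ(R))$ by~\cite{GustafsonHalmosRadjavi76}, so $b$ is a product of four involutions in $\GL(R)$ (this is the argument already used within the proof of Lemma~\ref{lemma:locally.special.decomposition}\ref{lemma:locally.special.decomposition.involutions}); each of $u_{1},u_{2}$ is a product of four involutions by Lemma~\ref{lemma:locally.special.decomposition}\ref{lemma:locally.special.decomposition.involutions}; and $v_{1},v_{2},v_{3},v_{4}$ are themselves involutions. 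This yields $4+4+4+4 = 16$ involutions. For~\ref{theorem:width.commutators}: $b$ is a single commutator by Lemma~\ref{lemma:special.decomposition}\ref{lemma:special.decomposition.commutator}, while each of the six locally special factors is a single commutator by Lemma~\ref{lemma:locally.special.decomposition}\ref{lemma:locally.special.decomposition.commutators} (it is irrelevant that four of them happen to be involutions — a locally special element is one commutator regardless), so $a$ is a product of $1+6 = 7$ commutators; perfectness of $\GL(R)$ is then immediate. For~\ref{theorem:width.word}, assuming $\ZZ(R)$ algebraically closed and fixing $m\in\N$ and $w\in\free(m)\setminus\{\epsilon\}$: $b$ lies in $w(\GL(R))^{2}$ by Lemma~\ref{lemma:special.decomposition}\ref{lemma:special.decomposition.word}, each of the six locally special factors lies in $w(\GL(R))^{2}$ by Lemma~\ref{lemma:locally.special.decomposition}\ref{lemma:locally.special.decomposition.word}, and multiplying the seven contributions gives $a\in w(\GL(R))^{14}$, hence $\GL(R) = w(\GL(R))^{14}$. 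Verbal simplicity follows at once: for $w=\epsilon$ one has $w(\GL(R)) = \{1\}$, so the subgroup it generates is trivial, while for $w\neq\epsilon$ the equality $\GL(R) = w(\GL(R))^{14}$ shows that $w(\GL(R))$ generates $\GL(R)$.

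The hard part is already behind us: all the substance is contained in Theorem~\ref{theorem:decomposition} and in Lemmata~\ref{lemma:special.decomposition} and~\ref{lemma:locally.special.decomposition}, so what remains is purely combinatorial. The only two points asking for a moment's attention are, first, that the leading factor $b$ is \emph{simply} special rather than merely locally special — so for part~\ref{theorem:width.involutions} one quotes~\cite{GustafsonHalmosRadjavi76} directly rather than Lemma~\ref{lemma:locally.special.decomposition}\ref{lemma:locally.special.decomposition.involutions} (equivalently, one views $b$ as a single block of a locally special element) — and, second, that the exceptional case $(n,\lvert\ZZ(R)\rvert) = (2,2)$ relevant to part~\ref{theorem:width.commutators} is already absorbed inside Lemma~\ref{lemma:special.decomposition}\ref{lemma:special.decomposition.commutator} via a blow-up to a $4\times 4$ matrix subalgebra, so no extra argument is needed.
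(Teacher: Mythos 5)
Your proposal is correct and follows essentially the same route as the paper: decompose $a$ via Theorem~\ref{theorem:decomposition} and then count contributions using Lemmata~\ref{lemma:special.decomposition} and~\ref{lemma:locally.special.decomposition}, with tallies $4+4+4+4=16$, $1+6=7$, and $2\cdot7=14$. If anything, you are a little more precise than the paper's own terse proof, which cites only Lemma~\ref{lemma:locally.special.decomposition} and thus tacitly treats the simply special factor $b$ as locally special — a reading that does not literally fit Definition~\ref{definition:locally.special}, since the index set there is all of $\N$ and the zero ring has no simply special elements — whereas your explicit appeal to Lemma~\ref{lemma:special.decomposition} and to~\cite{GustafsonHalmosRadjavi76} for $b$ is the cleaner account of the same idea.
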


\begin{proof} \ref{theorem:width.involutions} This follows from Theorem~\ref{theorem:decomposition}, Remark~\ref{remark:locally.special}\ref{remark:locally.special.1}, and Lemma~\ref{lemma:locally.special.decomposition}\ref{lemma:locally.special.decomposition.involutions}.
	
\ref{theorem:width.commutators} This is a consequence of Theorem~\ref{theorem:decomposition}, Remark~\ref{remark:locally.special}\ref{remark:locally.special.1}, and Lemma~\ref{lemma:locally.special.decomposition}\ref{lemma:locally.special.decomposition.commutators}.

\ref{theorem:width.word} This is due to Theorem~\ref{theorem:decomposition}, Remark~\ref{remark:locally.special}\ref{remark:locally.special.1}, and Lemma~\ref{lemma:locally.special.decomposition}\ref{lemma:locally.special.decomposition.word}. \end{proof}

\section{Steinhaus property}\label{section:steinhaus.property}

This section is dedicated to the proof of Theorem~\ref{theorem:194-Steinhaus}. Among other things, we will make use of the following two basic facts about metric spaces.

\begin{remark}\label{remark:metric.space} Let $X$ be a metric space. \begin{enumerate}
	\item\label{remark:uncountable.separable} If $X$ is separable, then every discrete subspace of $X$ is countable (see, e.g.,~\cite[4.1, Theorem~4.1.15, p.~255]{EngelkingBook}).
	\item \label{remark:neighborhood} A subset $U\subseteq X$ is a neighborhood of a point $x \in X$ if and only if, for every sequence $(x_{n})_{n \in \N}$ in $X$ converging to $x$, there is $m \in \N$ with~$x_{m} \in U$. While ($\Longrightarrow$) is trivial, the implication ($\Longleftarrow$) follows by contraposition, considering any sequence from the non-empty set $\prod_{n \in \N} \! \left\{y\in X\setminus U \left\vert \, d(x,y)<\tfrac{1}{n+1}\right\}\right.$. 
\end{enumerate}	\end{remark}
	
We now proceed to the proof of Theorem~\ref{theorem:194-Steinhaus}. The global strategy follows the ideas of \cite[Theorem~3.1]{KittrellTsankov} (see also~\cite[Proposition~A.1]{BenYaacovBerensteinMelleray}), but our setting requires a very careful analysis of several algebraic peculiarities.
	
\begin{definition} Let $R$ be a unital ring and $e\in\E(R)$. A subset $W\subseteq \GL(R)$ is called \emph{full} for $e$ if, for every $t \in \GL(eRe)$, there exists $s \in W$ such that $t = se = es$. \end{definition}

\begin{lem}\label{lemma:full.c_mW} Let $(R,\rho)$ be a complete rank ring and let $(e_{m})_{m \in \N} \in \E(R)^{\N}$ be pairwise orthogonal. Then the following hold. \begin{enumerate}
	\item\label{lemma:full.c_mW.A} For every $(t_{m})_{m \in \N} \in \prod_{m \in \N} \GL(e_{m}Re_{m})$, there exists $t \in \GL(R)$ such that \begin{displaymath}
			\qquad \forall m \in \N \colon \quad t_{m} = te_{m} = e_{m}t . 
		\end{displaymath}
	\item\label{lemma:full.c_mW.B} For every sequence $(W_{m})_{m \in \N}$ of subsets of $\GL(R)$ with $\GL(R) = \bigcup\nolimits_{m \in \N} W_{m}$, there exists $m \in \N$ such that $W_{m}$ is full for $e_{m}$.
\end{enumerate} \end{lem}

\begin{proof} \ref{lemma:full.c_mW.A} Let $(t_{m})_{m \in \N} \in \prod_{m \in \N} \GL(e_{m}Re_{m})$. By Lemma~\ref{lemma:local.decomposition} and Lemma~\ref{lemma:subgroup.unit.group}, \begin{displaymath}
	t \, \defeq \, \left(\sum\nolimits_{n \in \N} t_{n}\right) \! +\! \left(1-\sum\nolimits_{n \in \N} e_{n} \right)\! \, \in \, \GL(R).
\end{displaymath} For every $m \in \N$, we see that \begin{align*}
	&te_{m} \, = \, \! \left(\sum\nolimits_{n \in \N} t_{n} + 1 - \sum\nolimits_{n \in \N} e_{n} \right) \! e_{m} \, \stackrel{\ref{remark:properties.pseudo.rank.function}\ref{remark:rank.group.topology}}{=} \, t_{m}, \\
	&e_{m}t \, = \, e_{m}\!\left(\sum\nolimits_{n \in \N} t_{n} + 1 - \sum\nolimits_{n \in \N} e_{n}\right)\! \, \stackrel{\ref{remark:properties.pseudo.rank.function}\ref{remark:rank.group.topology}}{=} \, t_{m} .
\end{align*}

\ref{lemma:full.c_mW.B} Let $(W_{m})_{m \in \N}$ be a sequence of subsets of $\GL(R)$ with $\GL(R) = \bigcup\nolimits_{m \in \N} W_{m}$. For contradiction assume that, for each $m \in \N$, the set $W_{m}$ is not full for~$e_{m}$, i.e., there exists a sequence $(t_{m})_{m \in \N} \in \prod_{m \in \N} \GL(e_{m}Re_{m})$ such that \begin{equation}
	\forall m \in \N \ \forall s \in W_{m} \colon \quad \neg (t_{m} = se_{m} = e_{m}s) . \label{equation:full0}
\end{equation} Now, by~\ref{lemma:full.c_mW.A}, we find some $t \in \GL(R)$ such that $t_{m} = te_{m} = e_{m}t$ for each $m \in \N$. Due to~\eqref{equation:full0}, it follows that $t \notin W_{m}$ for every $m \in \N$. Hence, $t \notin \bigcup_{m\in \N} W_{m} = \GL(R)$, which is the desired contradiction. \end{proof}

\begin{lem}\label{lemma:full.W^2} Let $(R,\rho)$ be a complete rank ring, let $W\subseteq \GL(R)$ be symmetric and countably syndetic and let $(e_m)_{m\in \N}\in \E(R)^{\N}$ be pairwise orthogonal. Then there exists $m\in \N$  such that $W^2$ is full for $e_m$. \end{lem}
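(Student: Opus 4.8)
The plan is to reduce Lemma~\ref{lemma:full.W^2} to Lemma~\ref{lemma:full.c_mW} by exploiting countable syndeticity together with a single extra application of fullness to the identity element. First, since $W$ is countably syndetic, I would fix a countable (necessarily non-empty, as $1 \in \GL(R)$) set $C \subseteq \GL(R)$ with $\GL(R) = CW$, write $C = \{c_{m}\mid m\in\N\}$, and set $W_{m}\defeq c_{m}W$. Then $\bigcup_{m\in\N} W_{m} = CW = \GL(R)$, so Lemma~\ref{lemma:full.c_mW}, applied to the given pairwise orthogonal sequence $(e_{m})_{m\in\N}$ and to $(W_{m})_{m\in\N}$, produces some $m$ for which $c_{m}W$ is full for $e\defeq e_{m}$.

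Next I would unpack what fullness of $c_{m}W$ for $e$ provides. If $s\in c_{m}W$ witnesses fullness for some $t\in\GL(eRe)$, i.e.\ $se=t$ and $s(1-e)=(1-e)s(1-e)$, then writing $s = se + s(1-e) = t + (1-e)s(1-e)$ and using $t=et\in eRe$ together with $e(1-e)=0$ one checks $es = se = ese = t$; in particular every such witness $s$ commutes with $e$. I would apply this to two choices of $t$: to $t=e$, the multiplicative unit of $eRe$, obtaining $w_{0}\in W$ with $a\defeq c_{m}w_{0}$ commuting with $e$ and $eae=e$ (hence also $ea^{-1}e=e$, since $a^{-1}$ commutes with $e$ and $eae\in\GL(eRe)$ with inverse $ea^{-1}e$); and to an arbitrary $t\in\GL(eRe)$, obtaining $w_{1}\in W$ with $b\defeq c_{m}w_{1}$ commuting with $e$ and $ebe=t$.

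The decisive move is then to cancel the nuisance factor $c_{m}$: put $s\defeq a^{-1}b=(c_{m}w_{0})^{-1}(c_{m}w_{1})=w_{0}^{-1}w_{1}$. Symmetry of $W$ gives $w_{0}^{-1}\in W$, so $s\in W^{2}$. Since $a^{-1}$ and $b$ both commute with $e$, so does $s$, and a short computation in which the cross terms vanish (because $e(1-e)=0$) gives $ese=(ea^{-1}e)(ebe)=e\,t=t$, hence $se=ese=t$; similarly, $s$ commuting with $e$ forces $s(1-e)=(1-e)s=(1-e)s(1-e)$. As $t\in\GL(eRe)$ was arbitrary, $W^{2}$ is full for $e=e_{m}$, which is the assertion.

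The only point that requires care --- and the step where the argument would collapse without symmetry of $W$ --- is precisely this cancellation: one must invoke fullness of $c_{m}W$ twice, once for the target $t$ and once for the identity of $eRe$, and then use that $w_{0}^{-1}$ again lies in $W$ so that the quotient $w_{0}^{-1}w_{1}$ lands in $W^{2}$. Everything else is routine idempotent bookkeeping around the corner-ring identities $ese=es=se$ valid for elements commuting with $e$, and I do not anticipate any genuine obstacle there.
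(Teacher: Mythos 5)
Your proof is correct and takes essentially the same approach as the paper: invoke Lemma~\ref{lemma:full.c_mW} to get $m$ with $c_mW$ full for $e_m$, apply fullness of $c_mW$ twice, and cancel the $c_m$ factor to land in $W^2$. The only cosmetic difference is the choice of the second target: you apply fullness to the identity $e\in\GL(eRe)$ (giving a $t$-independent witness $a$), whereas the paper applies it to $st=t^2$ (giving a $t$-dependent $\tilde{s}$); both lead to the same cancellation $W^{-1}W=W^2$.
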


\begin{proof}  Since $W$ is countably syndetic, there exists a sequence $(c_{m})_{m\in \N} \in \GL(R)^{\N}$ such that $\GL(R) = \bigcup_{m \in \N} c_{m}W$. By Lemma~\ref{lemma:full.c_mW}\ref{lemma:full.c_mW.B}, there exists $m \in \N$ such that $c_{m}W$ is full for $e_{m} \eqdef e$. We will show that $W^{2}$ is full for $e$, too. To this end, let $t \in \GL(eRe)$. Since $c_{m}W$ is full for $e$, there exists $s\in c_{m}W$ such that \begin{equation}
	t \, = \, se \, = \, es . \label{eq--13}
\end{equation} Since $t \in \GL(eRe)$, we see that \begin{displaymath}
	st \, = \, set \, \stackrel{\eqref{eq--13}}{=} \, t^{2} \, \in \, \GL(eRe) .
\end{displaymath} Consequently, again by fullness of $c_{m}W$ for $e$, there exists $\tilde{s} \in c_{m}W$ such that \begin{equation}
	st \, = \, \tilde{s}e \, = \, e\tilde{s} . \label{eq--15}
\end{equation} Hence, the element $s^{-1}\tilde{s} \in (c_{m}W)^{-1}(c_{m}W) = W^{-1}W = W^{2}$ satisfies, as desired, \begin{displaymath}
	s^{-1}\tilde{s}e \, \stackrel{\eqref{eq--15}}{=} \, t \, \stackrel{\eqref{eq--15}}{=} \, s^{-1}e\tilde{s} \, = \, s^{-1}ess^{-1}\tilde{s} \, \stackrel{\eqref{eq--13}}{=} \, s^{-1}ses^{-1}\tilde{s} \, = \, es^{-1}\tilde{s} . \qedhere
\end{displaymath} \end{proof}
	
\begin{lem}\label{lemma:Gamma_{R}(e).subset.W^192} Let $R$ be a non-discrete irreducible, continuous ring, let $W \subseteq \GL(R)$ be symmetric and countably syndetic. Then there is $e \in \E(R) \setminus \{ 0 \}$ with $\Gamma_{R}(e) \subseteq W^{192}$. \end{lem}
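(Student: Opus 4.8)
The plan is to run a Kittrell--Tsankov-type argument, but to organise the ``bounded word'' bookkeeping around the involution-width bound of Theorem~\ref{theorem:width}\ref{theorem:width.involutions}: indeed $192=16\cdot 12$, and the strategy is to produce a small nonzero idempotent $e$ such that \emph{every} involution of $\GL(R)$ at rank-distance at most $\rk_R(e)$ from $1$ already lies in $W^{12}$, and then to conclude. For the conclusion, fix such an $e$; under the isomorphism $\GL(eRe)\cong\Gamma_{R}(e)$ of Lemma~\ref{lemma:subgroup.unit.group} every element of $\Gamma_{R}(e)$ has the form $u+1-e$ with $u\in\GL(eRe)$, and applying Theorem~\ref{theorem:width}\ref{theorem:width.involutions} inside the ring $eRe$ (still non-discrete irreducible continuous by Remark~\ref{remark:eRe.non-discrete.irreducible.continuous}) writes $u$ as a product of $16$ involutions of $\GL(eRe)$; each corresponding involution $v+1-e$ of $\GL(R)$ satisfies $\rk_R(1-(v+1-e))=\rk_R(e-eve)\le\rk_R(e)$, hence lies in $W^{12}$, and so $u+1-e\in W^{192}$.

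The heart of the matter is therefore to produce such an $e$ together with the uniform bound $W^{12}$ for small-rank-distance involutions. Here I would first extract from $W$ the material for controlling involutions: from Corollary~\ref{corollary:boolean.subgroup} take a separable, uncountable, Boolean subgroup $B\le\GL(R)$; as $W$ is countably syndetic, some coset $gW$ meets $B$ in an uncountable set $B_{0}$, which is non-discrete in $B$ by separability (Remark~\ref{remark:metric.space}\ref{remark:uncountable.separable}), and, $B$ being Boolean, $b_{1}b_{2}=b_{1}^{-1}b_{2}\in W^{-1}W=W^{2}$ for all $b_{1},b_{2}\in B_{0}$; since $B_{0}\cdot B_{0}$ accumulates at $1$, this places into $W^{2}$ a family of involutions of arbitrarily small rank-distance from $1$, of the particular ``swap'' shape coming from the construction behind Corollary~\ref{corollary:boolean.subgroup}. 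Second, I would apply Lemma~\ref{lemma:full.W^2} to a pairwise orthogonal sequence of idempotents whose ranks are drawn from this list of realised small rank-distances (legitimate by Lemma~\ref{lemma:order}, and arranged to have total rank at most $1$) to obtain an idempotent $e$ for which $W^{2}$ is full \emph{and} such that, simultaneously, a swap involution exchanging $e$ with an orthogonal copy lies in $W^{2}$. The bound $W^{12}$ for every involution at rank-distance $\le\rk_{R}(e)$ is then assembled from: a conjugacy step, moving an arbitrary such involution onto a sub-idempotent of $e$ using the classification of conjugacy of involutions by rank-distance (Proposition~\ref{proposition:conjugation.involution.rank}) together with the available swap, at controlled cost; a fullness step, using that $W^{2}$ is full for $e$ to capture the conjugated involution modulo an uncontrolled complementary block; and a cancellation step, removing that block by a further fullness application of the same kind. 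In characteristic $2$ the argument is run separately, the rank-distance-$\tfrac12$ reductions of Lemma~\ref{lemma:involution.rank.distance.char.neq2} being replaced by the rank-distance-$\tfrac14$ reductions of Lemma~\ref{lemma:involution.rank.distance.char=2} (which is why that lemma carries the factor $\tfrac14$); the two cases together are what pin the constant at $12$.

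I expect the main obstacle to be exactly this middle assembly: turning the merely existential facts ``$W^{2}$ is full for $e$'' and ``$W^{2}$ contains some small-rank-distance swap-type involution'' into a genuinely uniform bound on all involutions of small rank-distance, with a constant that does not drift. The delicate points are (i) that the conjugations relating two involutions of equal rank-distance must be realised inside a \emph{fixed} power of $W$ rather than in an arbitrary conjugate of it --- this is precisely where one must tie the fullness idempotent to the swap extracted from the Boolean subgroup, matching their ranks by the preliminary choice of sequence; (ii) that the cascade of complementary blocks produced by repeated fullness applications terminates after a number of steps independent of the involution being captured; and (iii) that the characteristic-$2$ computations, where involutions are governed by $2$-nilpotents rather than idempotents, go through in parallel without worsening the constant. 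None of these steps is conceptually deep in isolation; their combination is the ``careful analysis of algebraic peculiarities'' that the constant $192$ records.
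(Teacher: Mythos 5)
Your high-level architecture --- $192 = 16\cdot 12$, with the factor $16$ from Theorem~\ref{theorem:width}\ref{theorem:width.involutions} applied inside $eRe$ via Lemma~\ref{lemma:subgroup.unit.group}, a uniform $W^{12}$ bound on the relevant involutions, and the tools Corollary~\ref{corollary:boolean.subgroup}, Lemma~\ref{lemma:full.W^2}, Proposition~\ref{proposition:conjugation.involution.rank}, and Lemmas~\ref{lemma:involution.rank.distance.char.neq2}/\ref{lemma:involution.rank.distance.char=2} --- matches the paper. But the middle assembly that you flag as the obstacle has a genuine gap. Your intermediate target, that \emph{every} involution of $\GL(R)$ at rank-distance at most $\rk_{R}(e)$ from~$1$ lies in $W^{12}$, is too strong; the argument only controls involutions \emph{supported on} a fixed idempotent, i.e.\ $\I(\Gamma_{R}(f))$ for a suitable $f\le e$. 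The distinction is where the bounded word length comes from: given $u,s\in\I(\Gamma_{R}(e))$ with $\rk_{R}(1-u)=\rk_{R}(1-s)$, Proposition~\ref{proposition:conjugation.involution.rank} applied inside $eRe$ (Remark~\ref{remark:eRe.non-discrete.irreducible.continuous}) produces a conjugator $v\in\Gamma_{R}(e)$, and it is precisely because $v$ is supported on $e$ that fullness of $W^{2}$ for $e$ yields $\tilde v\in W^{2}$ with $ve=\tilde ve$ and $\tilde v(1-e)=(1-e)\tilde v(1-e)$ for which $\tilde vs\tilde v^{-1}=u$, giving $u\in W^{2}W^{2}W^{-2}=W^{6}$ in one pass. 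For an involution of small rank-distance not in $\Gamma_{R}(e)$, the conjugator is an arbitrary unit and no bounded-word realisation is available. Your conclusion step only feeds in elements of $\I(\Gamma_{R}(e))$ in any case, so the overreach is a misstatement rather than fatal --- but the proof you sketch for the stronger claim is what cannot be made to work.

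Relatedly, the ``cancellation step'' you invoke does not occur and signals a missing idea. The second fullness condition $\tilde v(1-e)=(1-e)\tilde v(1-e)$, together with $s$ and $u$ acting trivially on $1-e$, already forces $\tilde vs\tilde v^{-1}(1-e)=1-e=u(1-e)$, so no residual block survives the conjugation; and your three-step scheme (conjugacy, fullness, cancellation) would not pin down a constant. The paper's $12=6+6$ comes instead from the decomposition lemmas: after calibrating $\rk_{R}(f)=\theta\rk_{R}(1-s)$ with $\theta\in\{2,4\}$ depending on $\cha(R)$, Lemmas~\ref{lemma:involution.rank.distance.char.neq2}/\ref{lemma:involution.rank.distance.char=2} write every involution of $\Gamma_{R}(f)$ as a product of \emph{two} involutions at rank-distance exactly $\rk_{R}(1-s)$, hence each in $W^{6}$. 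The characteristic only determines $\theta$ (the scale of $f$), not the constant $12$. Finally, there is no need to pre-couple the fullness idempotent with the Boolean-subgroup data as you propose: take an arbitrary orthogonal partition $\sum_{m}e_{m}=1$ into nonzero idempotents, let Lemma~\ref{lemma:full.W^2} hand you $e=e_{m}$ with $W^{2}$ full, and only then run Corollary~\ref{corollary:boolean.subgroup} inside $eRe$ to extract $s\in W^{2}\cap\I(\Gamma_{R}(e))$ with $0<\rk_{R}(1-s)<\rk_{R}(e)/\theta$; no a priori matching of ranks is required.
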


\begin{proof} By Remark~\ref{remark:rank.function.general}\ref{remark:characterization.discrete} and Lemma~\ref{lemma:order}, there exists a sequence $(e_{m})_{m\in\N} \in \E(R)^{\N}$ of pairwise orthogonal non-zero elements with $\sum\nolimits_{m\in\N}e_{m} = 1$. Thanks to Theorem~\ref{theorem:unique.rank.function} and Lemma~\ref{lemma:full.W^2}, we find $m\in\N$ such that $W^{2}$ is full for $e_{m} \eqdef e$. Let \begin{displaymath}
	\theta \, \defeq \, \begin{cases}
					\, 4 & \text{if } \cha(R)=2 , \\
					\, 2 & \text{otherwise}.
				\end{cases}
\end{displaymath} We proceed in three intermediate steps, marked by the items~\eqref{claim1}, \eqref{claim2} and~\eqref{claim3}.
		
First we prove the existence of an element $s\in W^2\cap \I(\Gamma_{R}(e))$ such that \begin{equation}\label{claim1}
	0 \, < \, \rk_{R}(1-s) \, < \, \tfrac{\rk_{R}(e)}{\theta}.
\end{equation} By Remark~\ref{remark:eRe.non-discrete.irreducible.continuous}, Corollary~\ref{corollary:boolean.subgroup} and Lemma~\ref{lemma:subgroup.unit.group}, $\Gamma_{R}(e)$ admits an uncountable, separable subgroup $\Gamma$ consisting entirely of involutions. As $W$ is countably syndetic in $\GL(R)$, there exists a countable subset $C \subseteq \GL(R)$ such that $\GL(R) = CW$. Since $\Gamma$ is uncountable, there exists $c \in C$ such that $\Gamma \cap cW$ is uncountable. As $\Gamma$ is separable, Remark~\ref{remark:metric.space}\ref{remark:uncountable.separable} implies that $\Gamma \cap cW$ is not discrete, thus there exist $s_{1},s_{2} \in \Gamma \cap cW$ such that \begin{displaymath}
	0 \, < \, d_{R}(s_{1},s_{2}) \, < \, \tfrac{\rk_{R}(e)}{\theta} .
\end{displaymath} Consider $s\defeq s_{1}s_{2} \in \Gamma \subseteq \I(\Gamma_{R}(e))$. Moreover, \begin{displaymath}
	s \, = \, s_{1}s_{2} \, = \, s_{1}^{-1}s_{2} \, \in \, (cW)^{-1}(cW) \, = \, W^{-1}W \, = \, W^{2}
\end{displaymath} and \begin{displaymath}
	\rk_{R}(1-s) \, = \, \rk_{R}\!\left(1-s_{1}^{-1}s_{2}\right)\! \, = \, \rk_{R}(s_{1}-s_{2}) \, = \, d_{R}(s_{1},s_{2}),
\end{displaymath} thus $0<\rk_{R}(1-s)<\tfrac{\rk_{R}(e)}{\theta}$, which proves~\eqref{claim1}.
			
Second we show that \begin{equation}\label{claim2}
	\forall u \in \I(\Gamma_{R}(e)) \colon \quad \rk_{R}(1-u) = \rk_{R}(1-s) \ \Longrightarrow \ u \in W^6 .
\end{equation} Let $u \in \I(\Gamma_{R}(e))$ with $\rk_{R}(1-u) = \rk_{R}(1-s)$. We recall that $\Gamma_{R}(e) \cong \GL(eRe)$ due to Lemma~\ref{lemma:subgroup.unit.group}. Hence, by Remark~\ref{remark:eRe.non-discrete.irreducible.continuous} and Proposition~\ref{proposition:conjugation.involution.rank}, there exists $v \in \Gamma_{R}(e)$ such that $vsv^{-1} = u$. Since $ve \in \GL(eRe)$ by Lemma~\ref{lemma:subgroup.unit.group} and $W^{2}$ is full for $e$, we find $\tilde{v} \in W^{2}$ such that \begin{equation}\label{eqstar}
	ve \, = \, \tilde{v}e \, = \, e\tilde{v} .
\end{equation} We deduce that \begin{equation}\label{eqstarstar}
	\tilde{v}^{-1}e \, \stackrel{\eqref{eqstar}}{=} \, e\tilde{v}^{-1} \, \stackrel{\eqref{eqstar}}{=} \, v^{-1}e .       
\end{equation} Therefore, both \begin{align*}
	\tilde{v}s\tilde{v}^{-1}e \, &\stackrel{\eqref{eqstarstar}}{=} \, \tilde{v}sv^{-1}e \, \stackrel{sv^{-1}\in \Gamma_{R}(e)}{=} \, \tilde{v}e sv^{-1}e \, \stackrel{\eqref{eqstar}}{=} \, vesv^{-1}e \, \stackrel{sv^{-1}\in \Gamma_{R}(e)}{=} \, vsv^{-1}e=ue
\end{align*} and \begin{align*}
	\tilde{v}s\tilde{v}^{-1}(1-e) &\stackrel{\eqref{eqstarstar}}{=} \tilde{v}s(1-e)\tilde{v}^{-1} \stackrel{s\in\Gamma_{R}(e)}{=} \tilde{v}(1-e)\tilde{v}^{-1} \stackrel{\eqref{eqstarstar}}{=} \tilde{v}\tilde{v}^{-1}(1-e) \stackrel{u\in \Gamma_{R}(e)}{=} u(1-e) .
\end{align*} Consequently, \begin{displaymath}
	\tilde{v}s\tilde{v}^{-1} \, = \, \tilde{v}s\tilde{v}^{-1}e + \tilde{v}s\tilde{v}^{-1}(1-e) \, = \, ue + u(1-e) \, = \, u
\end{displaymath} and hence $u = \tilde{v}s\tilde{v}^{-1} \in W^{2}W^{2}W^{-2} = W^{6}$, as desired in~\eqref{claim2}.
			
Next we prove the existence of an element $f \in \E(R)\setminus\{0\}$ such that \begin{equation}\label{claim3}
	\I(\Gamma_{R}(f)) \, \subseteq \, W^{12}.
\end{equation} Note that \begin{displaymath}
	2\rk_{R}(1-s) \, \leq \, \theta\rk_{R}(1-s) \, \stackrel{\eqref{claim1}}{<} \, \rk_{R}(e).
\end{displaymath} By Lemma~\ref{lemma:invertible.rank.idempotent}, there is $f' \in \E(R)$ with $f' \leq e$, $s \in \Gamma_{R}(f')$ and $\rk_{R}(f') \leq \theta\rk_{R}(1-s)$. Furthermore, invoking Remark~\ref{remark:rank.function.general}\ref{remark:characterization.discrete} and Lemma~\ref{lemma:order}\ref{lemma:order.1}, we find $f \in \E(R)$ such that $f' \leq f \leq e$ and $\rk_{R}(f)=\theta\rk_{R}(1-s)$. Also, $\Gamma_{R}(f') \subseteq \Gamma_{R}(f)$ by Lemma~\ref{lemma:subgroup.unit.group}\ref{lemma:subgroup.unit.group.order}, whence $s \in \Gamma_{R}(f)$. Note that $s\ne 1$ by~\eqref{claim1}, which necessitates that $f\neq0$. Thanks to Remark~\ref{remark:eRe.non-discrete.irreducible.continuous} and the conjunction of Lemma~\ref{lemma:involution.rank.distance.char.neq2} and Lemma~\ref{lemma:involution.rank.distance.char=2}, \begin{align*}
	\I(fRf) \, &\subseteq \, \left\{gh\left\vert \, g,h\in\I(fRf),\, \rk_{fRf}(f-g)=\rk_{fRf}(f-h)=\tfrac{1}{\theta}\right\} \right. \\
	&\stackrel{\ref{remark:eRe.non-discrete.irreducible.continuous}}{=} \, \left\{gh\left\vert \, g,h\in\I(fRf),\, \rk_{R}(f-g)=\rk_{R}(f-h)=\rk_{R}(1-s) \right\} . \right.
\end{align*} Applying the isomorphism $\GL(fRf) \to \Gamma_{R}(f), \, a \mapsto a + 1-f$ from Lemma~\ref{lemma:subgroup.unit.group}, we conclude that \begin{align*}
	\I(\Gamma_{R}(f)) \, &\subseteq \, \{gh\mid g,h\in\I(\Gamma_{R}(f)), \, \rk_{R}(1-g)=\rk_{R}(1-h)=\rk_{R}(1-s)\}\\
		&\stackrel{\ref{lemma:subgroup.unit.group}\ref{lemma:subgroup.unit.group.order}}{\subseteq} \, \{gh\mid g,h\in\I(\Gamma_{R}(e)), \, \rk_{R}(1-g)=\rk_{R}(1-h)=\rk_{R}(1-s)\}\\
		&\stackrel{\text{\eqref{claim2}}}{\subseteq} \, W^{6}W^{6} \, = \, W^{12},
\end{align*} i.e., \eqref{claim3} holds.
			
Finally, by Theorem~\ref{theorem:width}\ref{theorem:width.involutions} and Remark~\ref{remark:eRe.non-discrete.irreducible.continuous}, every element of $\Gamma_{R}(f)\overset{\ref{lemma:subgroup.unit.group}}{\cong} \GL(fRf)$ is a product of $16$ involutions, whence \begin{displaymath}
	\Gamma_{R}(f) \, = \, \I(\Gamma_{R}(f))^{16} \, \overset{\eqref{claim3}}{\subseteq}\, W^{12\cdot 16} \, = \, W^{192}. \qedhere
\end{displaymath} \end{proof}		
	
\begin{lem}\label{lemma:GL(R).covered.by.c_nW} Let $R$ be an irreducible, continuous ring, let $W \subseteq \GL(R)$ be symmetric and countably syndetic, and let $e \in \E(R) \setminus \{ 0 \}$ and $\ell \in \N$ be such that $\Gamma_{R}(e) \subseteq W^{\ell}$. Then $W^{\ell+2}$ is an identity neighborhood in $\GL(R)$. \end{lem}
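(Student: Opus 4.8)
The plan is to reduce the statement to a sequential one and then to \emph{transport} a near-identity unit into $\Gamma_{R}(e)$ by a conjugator that can be chosen inside $W$. If $R\cong\M_{n}(D)$ for a division ring $D$, then $\GL(R)$ is discrete for $d_{R}$ and $W^{\ell+2}\ni 1$ is trivially an identity neighbourhood; so by Remark~\ref{remark:rank.function.general}\ref{remark:characterization.discrete} I may assume $\rk_{R}(R)=[0,1]$. By Remark~\ref{remark:metric.space}\ref{remark:neighborhood} it suffices to show that every sequence $(a_{k})_{k\in\N}$ in $\GL(R)$ with $a_{k}\to 1$ has a term in $W^{\ell+2}$. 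I argue by contradiction: fix such a sequence with $a_{k}\notin W^{\ell+2}$ for all $k$; then $\rk_{R}(1-a_{k})\to 0$, and after passing to a subsequence I may assume $\rk_{R}(1-a_{k})$ is as small as needed below.

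Next I localize each $a_{k}$. For every $k$, Remark~\ref{remark:regular.idempotent.ideals} provides $h_{k}\in\E(R)$ with $h_{k}R=\rAnn(1-a_{k})$, whence $a_{k}h_{k}=h_{k}$ and so $1-a_{k}=(1-a_{k})(1-h_{k})$; thus $(1-a_{k})R\subseteq(1-h_{k})R$. Since both right ideals have dimension $\rk_{R}(1-a_{k})=1-\rk_{R}(h_{k})$ (Lemma~\ref{lemma:pseudo.dimension.function}\ref{lemma:rank.dimension.annihilator}, Theorem~\ref{theorem:unique.rank.function}) in the continuous geometry $\lat(R)$, strict monotonicity of the dimension function forces $(1-a_{k})R=(1-h_{k})R$. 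Applying Lemma~\ref{lemma:Gamma.annihilator.right-ideal} with its ``$e$'' taken to be $1-h_{k}$ and its ``$f$'' taken to be $h_{k}$ then yields $a_{k}\in\Gamma_{R}(e_{k}')$, where $e_{k}':=1-h_{k}$ and $\rk_{R}(e_{k}')=\rk_{R}(1-a_{k})\to 0$. Independently, using non-discreteness I fix (as in the proof of Lemma~\ref{lemma:Gamma_{R}(e).subset.W^192}) a sequence $(d_{m})_{m\in\N}$ of pairwise orthogonal non-zero idempotents with $\sum_{m}d_{m}=1$, and invoke Theorem~\ref{theorem:unique.rank.function} together with Lemma~\ref{lemma:full.W^2} to obtain an index $m_{0}$ for which $W^{2}$ is full for $e_{\ast}:=d_{m_{0}}$.

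The core step is the transport. For $k$ large we have $\rk_{R}(e_{k}')<\min\{\rk_{R}(e),\rk_{R}(e_{\ast})\}$, so Lemma~\ref{lemma:order}\ref{lemma:order.1} and Lemma~\ref{lemma:conjugation.idempotent} provide idempotents $q_{k}\le e$ of rank $\rk_{R}(e_{k}')$ that are conjugate to $e_{k}'$. The goal is to produce $w\in W$ with $wq_{k}w^{-1}=e_{k}'$, for then
\begin{displaymath}
	w^{-1}a_{k}w\in w^{-1}\Gamma_{R}(e_{k}')w\stackrel{\ref{lemma:subgroup.unit.group}\ref{lemma:subgroup.unit.group.conjugation}}{=}\Gamma_{R}(w^{-1}e_{k}'w)=\Gamma_{R}(q_{k})\stackrel{\ref{lemma:subgroup.unit.group}\ref{lemma:subgroup.unit.group.order}}{\subseteq}\Gamma_{R}(e)\subseteq W^{\ell},
\end{displaymath}
and hence $a_{k}\in W\cdot W^{\ell}\cdot W=W^{\ell+2}$, contradicting $a_{k}\notin W^{\ell+2}$. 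To find such a $w$, I would follow the mechanism of the proof of Lemma~\ref{lemma:Gamma_{R}(e).subset.W^192}: conjugation by an element that is block-diagonal with respect to $e_{\ast}$ depends only on its $e_{\ast}$-block, so one first builds a suitable matching element inside the non-discrete corner $e_{\ast}Re_{\ast}$ (the smallness of $\rk_{R}(e_{k}')$ supplying the room, and the freedom in the choices of $q_{k}$ and of the $d_{m}$ supplying the flexibility), and then replaces it, via fullness of $W^{2}$ for $e_{\ast}$, by an element of $W$ with the same $e_{\ast}$-block without changing the outcome of the conjugation.

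The main obstacle is precisely this transport step: the unit that conjugates $e_{k}'$ onto a sub-idempotent of $e$ is a priori uncontrolled, and the whole difficulty is to substitute for it a genuine element of $W$ — spending only one copy of $W$ on each side, so that the final count is exactly $\ell+2$ — which is exactly what the fullness of $W^{2}$ for $e_{\ast}$, combined with the block-diagonal argument, is designed to achieve.
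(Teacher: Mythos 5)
The preliminary reductions are sound: the discrete case is trivial, the sequential criterion from Remark~\ref{remark:metric.space}\ref{remark:neighborhood} applies, and your localization step showing $a_{k}\in\Gamma_{R}(e'_{k})$ with $\rk_{R}(e'_{k})=\rk_{R}(1-a_{k})$ (via $h_{k}R=\rAnn(1-a_{k})$, strict monotonicity of the dimension function, and Lemma~\ref{lemma:Gamma.annihilator.right-ideal}) is correct. The problem is the transport step, which you yourself flag as ``the main obstacle'': it does not work as described, and the tool you reach for (fullness) is not adequate to it.

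Fullness of $W^{2}$ for $e_{\ast}$ says: given any $t\in\GL(e_{\ast}Re_{\ast})$ there is $s\in W^{2}$ with $se_{\ast}=t$ and $s(1-e_{\ast})=(1-e_{\ast})s(1-e_{\ast})$. This prescribes the $e_{\ast}Re_{\ast}$-block of $s$ and forces the $(1-e_{\ast})$-part to be block-lower-triangular, but it gives no further control over $s(1-e_{\ast})$. In the proof of Lemma~\ref{lemma:Gamma_{R}(e).subset.W^192}, this is enough precisely because \emph{both} elements being conjugated ($s$ and $u$) lie in $\Gamma_{R}(e_{\ast})$, so only the $e_{\ast}$-block and the trivial action on $1-e_{\ast}$ matter in the computation of $\tilde v s\tilde v^{-1}$. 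In your setting, however, you want $w q_{k}w^{-1}=e'_{k}$ where $e'_{k}$ is an arbitrary idempotent determined by $a_{k}$; there is no reason for $e'_{k}$ to lie in $e_{\ast}Re_{\ast}+1-e_{\ast}$, and once it does not, the conjugation $wq_{k}w^{-1}$ genuinely depends on the $(1-e_{\ast})$-part of $w$, which fullness does not control. Moreover, the choice of $e_{\ast}=d_{m_{0}}$ is dictated by Lemma~\ref{lemma:full.W^2} and cannot be adapted to the sequence $(e'_{k})$, so ``the freedom in the choices of $q_{k}$ and of the $d_{m}$'' is not actually available. Finally, even if the mechanism applied, fullness delivers $s\in W^{2}$, not $s\in W$, so your count would come out as $W^{\ell+4}$, not $W^{\ell+2}$; your assertion that one can ``spend only one copy of $W$ on each side'' is not justified by anything in the proposal.

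The paper's proof of this lemma does not use fullness at all. It uses the covering $\GL(R)=\bigcup_{n}c_{n}W$ in a diagonal way: after passing to a subsequence with $\sum_{n}\rk_{R}(1-t_{n})<\tfrac12\rk_{R}(e)$, it considers the conjugated family $c_{n}t_{n}c_{n}^{-1}$ (same index $n$ on both!), collects all of the corresponding supports into a single idempotent $f$ with $\rk_{R}(f)<\rk_{R}(e)$, finds $e_{0}\le e$ of the same rank and $g\in\GL(R)$ with $ge_{0}g^{-1}=f$, and then lets the covering determine the index: $g\in c_{m}W$ for some $m$, whence $s:=c_{m}^{-1}g\in W$. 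At this specific $m$ one checks $c_{m}t_{m}c_{m}^{-1}\in\Gamma_{R}(f)$ via Lemma~\ref{lemma:Gamma.annihilator.right-ideal}, so $s^{-1}t_{m}s=g^{-1}(c_{m}t_{m}c_{m}^{-1})g\in\Gamma_{R}(e_{0})\subseteq\Gamma_{R}(e)\subseteq W^{\ell}$, giving $t_{m}\in W^{\ell+2}$. The conjugator is a single $g$ that works for the \emph{union} of supports, rather than one $w_{k}$ per term; it is the covering that selects the surviving index $m$, and this is why one spends only $W$ on each side. This coupling of the conjugated sequence to the covering is the idea your proposal is missing, and I do not see how to replace it with a fullness argument.
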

	
\begin{proof} By Remark~\ref{remark:metric.space}\ref{remark:neighborhood}, it suffices to check that, for every sequence $(t_{n})_{n \in \N}$ in $\GL(R)$ converging to $1$, there exists $m\in\N$ such that $t_{m} \in W^{\ell+2}$. To this end, let $(t_{n})_{n\in\N}\in\GL(R)^{\N}$ with $\lim_{n \to \infty} \rk_{R}(1-t_{n}) = 0$. Upon passing to a subsequence, we may and will assume that \begin{equation}\label{eq--18}
	\sum\nolimits_{n \in \N} \rk_{R}(1-t_{n}) \, < \, \tfrac{1}{2}\rk_{R}(e).
\end{equation} Let us note that \begin{equation}\label{eq--28}
	\forall a \in R \colon \quad \delta_{\latop(R)}(Ra) \, \stackrel{\ref{remark:rank.function.general}\ref{remark:duality}}{=} \, 1-\delta_{\lat(R)}(\rAnn(Ra)) \, \stackrel{\ref{theorem:unique.rank.function}+\ref{lemma:pseudo.dimension.function}\ref{lemma:rank.dimension.annihilator}}{=} \, \rk_{R}(a) .
\end{equation} Since $W$ is countably syndetic, there exists a sequence $(c_n)_{n\in \N}\in\GL(R)^{\N}$ such that $\GL(R)=\bigcup\nolimits_{n \in \N} c_nW$. We see that \begin{align}
	\delta_{\latop(R)}\!\left(\bigvee\nolimits_{n \in \N} R(1-c_{n}t_{n}c_{n}^{-1})\right) \,
		&\stackrel{\ref{proposition:dimension.function.continuous}}{=} \, \sup\nolimits_{m \in \N} \delta_{\latop(R)}\!\left(\bigvee\nolimits_{n=0}^{m}R(1-c_{n}t_{n}c_{n}^{-1})\right) \nonumber \\
		&\leq \, \sup\nolimits_{m \in \N} \sum\nolimits_{n=0}^{m}\delta_{\latop(R)}(R(1-c_{n}t_{n}c_{n}^{-1}))\nonumber\\
		&= \, \sum\nolimits_{n \in \N} \rk_{R}(1-c_{n}t_{n}c_{n}^{-1})\nonumber\\
		&= \, \sum\nolimits_{n \in \N} \rk_{R}(1-t_{n}) \label{eq--4} 
\end{align} and therefore \begin{align}\label{eq--19}
	\delta_{\lat(R)}\left(\bigwedge\nolimits_{n \in \N} \rAnn(R(1-c_{n}t_{n}c_{n}^{-1}))\right) \,
		&\stackrel{\ref{remark:bijection.annihilator}}{=} \, \delta_{\lat(R)}\left(\rAnn\left(\bigvee\nolimits_{n \in \N} R(1-c_{n}t_{n}c_{n}^{-1})\right)\right)\nonumber\\
		&\stackrel{\eqref{eq--28}}{=} \, 1-\delta_{\latop(R)}\left(\bigvee\nolimits_{n \in \N} R(1-c_{n}t_{n}c_{n}^{-1})\right)\nonumber\\
		&\stackrel{\eqref{eq--4}}{\geq} \, 1-\sum\nolimits_{n \in \N} \rk_{R}(1-t_{n})\nonumber\\
		&\stackrel{\eqref{eq--18}}{>} \, 1-\tfrac{1}{2}\rk_{R}(e).
\end{align} Consider \begin{displaymath}
	J \, \defeq \, \bigwedge\nolimits_{n \in \N} \rAnn(1-c_{n}t_{n}c_{n}^{-1}) \, \in \, \lat(R) .
\end{displaymath} Since $R$ is regular, there exists $\tilde{e}\in\E(R)$ such that $\tilde{e}R = J$. By \cite[Lemma~7.5]{SchneiderGAFA}, there exists $f\in\E(R)$ such that $1-\tilde{e}\leq f$ and \begin{displaymath}
	fR \, = \, (1-\tilde{e})R + \bigvee\nolimits_{n\in\N}(1-c_{n}t_{n}c_{n}^{-1})R
\end{displaymath} We observe that \begin{align*}
	\delta_{\lat(R)}((1-\tilde{e})R) \, &=\, \rk_{R}(1-\tilde{e}) \stackrel{\ref{remark:properties.pseudo.rank.function}\ref{remark:rank.difference.smaller.idempotent}}{=} \, 1-\rk_{R}(\tilde{e}) \, = \, 1-\delta_{\lat(R)}(J) \\
	& \stackrel{\eqref{eq--19}}{<} \, 1-\!\left(1-\tfrac{1}{2}\rk_{R}(e)\right)\! \, = \, \tfrac{1}{2}\rk_{R}(e) 
\end{align*} and hence \begin{align*}
	\rk_{R}(f) \, &= \, \delta_{\lat(R)}(fR) \, \leq \, \delta_{\lat(R)}((1-\tilde{e})R) + \delta_{\lat(R)}\!\left(\bigvee\nolimits_{n \in \N} (1-c_{n}t_{n}c_{n}^{-1})R\right) \\
	&< \, \tfrac{1}{2}\rk_R(e) + \delta_{\lat(R)}\!\left(\bigvee\nolimits_{n \in \N} (1-c_{n}t_{n}c_{n}^{-1})R\right) \\
	&\stackrel{\eqref{eq--4}}{\leq} \, \tfrac{1}{2}\rk_R(e) + \sum\nolimits_{n \in \N} \rk_R(1-t_{n}) \, \stackrel{\eqref{eq--18}}{<} \, \rk_{R}(e) .
\end{align*} By Lemma~\ref{lemma:order}\ref{lemma:order.1}, there exists $e_{0}\in\E(R)$ such that $e_{0} \leq e$ and $\rk_{R}(e_{0}) = \rk_{R}(f)$. By Lemma~\ref{lemma:conjugation.idempotent}, there exists $g\in\GL(R)$ such that $ge_{0}g^{-1} = f$. Since $\GL(R) = \bigcup\nolimits_{n=0}^{\infty} c_{n}W$, there is $m \in \N$ such that $g \in c_{m}W$. Consider $s \defeq c_{m}^{-1}g \in W$ and note that \begin{align}
	e_{0} \, = \, s^{-1}se_{0}s^{-1}s \, = \, s^{-1}c_{m}^{-1}ge_{0}g^{-1}c_{m}s \, = \, s^{-1}c_{m}^{-1}fc_{m}s .\label{something}
\end{align} Furthermore, we see that \begin{enumerate}
	\item[---\,] $1-\tilde{e}\leq f$,
	\item[---\,] $\tilde{e}\in\bigwedge_{n \in \N} \rAnn(1-c_{n}t_{n}c_{n}^{-1})\subseteq \rAnn(1-c_{m}t_{m}c_{m}^{-1})$,
	\item[---\,] $(1-c_{m}t_{m}c_{m}^{-1})R\subseteq\bigvee_{n \in \N} (1-c_{n}t_{n}c_{n}^{-1})R\subseteq fR$.
\end{enumerate} Consequently, Lemma~\ref{lemma:Gamma.annihilator.right-ideal} asserts that $c_{m}t_{m}c_{m}^{-1} \in \Gamma_{R}(f)$. In turn, \begin{displaymath}
	s^{-1}t_{m}s \, \stackrel{\ref{lemma:subgroup.unit.group}\ref{lemma:subgroup.unit.group.conjugation}}{\in} \, \Gamma_{R}(s^{-1}c_{m}^{-1}fc_{m}s) \, \stackrel{\eqref{something}}{=} \, \Gamma_{R}(e_{0}) \, \stackrel{\ref{lemma:subgroup.unit.group}\ref{lemma:subgroup.unit.group.order}}{\subseteq} \, \Gamma_{R}(e) \, \subseteq \, W^{\ell} 
\end{displaymath} and thus $t_{m} = ss^{-1}t_{m}ss^{-1} \in sW^{\ell}s^{-1} \subseteq W^{\ell+2}$, as desired. \end{proof}
	
We arrive at this section's main result.
	
\begin{thm}\label{theorem:194-Steinhaus} Let $R$ be an irreducible, continuous ring. Then the topological group $\GL(R)$ is $194$-Steinhaus. In particular, $\GL(R)$ has automatic continuity. \end{thm}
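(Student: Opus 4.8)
The plan is to combine the two preceding lemmas with the case distinction between discrete and non-discrete rings. Let $W \subseteq \GL(R)$ be symmetric and countably syndetic; I must show $W^{194}$ is an identity neighbourhood. First I would treat the non-discrete case, which is the substance. By Lemma~\ref{lemma:Gamma_{R}(e).subset.W^192} there is $e \in \E(R)\setminus\{0\}$ with $\Gamma_{R}(e) \subseteq W^{192}$. Then Lemma~\ref{lemma:GL(R).covered.by.c_nW}, applied with $\ell = 192$, yields that $W^{194}$ is an identity neighbourhood in $\GL(R)$. Hence $\GL(R)$ is $194$-Steinhaus.

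It remains to handle the discrete case. If $R$ is discrete, then by Remark~\ref{remark:rank.function.general}\ref{remark:characterization.discrete} we have $R \cong \M_{n}(D)$ for a division ring $D$ and some $n \in \N_{>0}$, so $\GL(R) \cong \GL_{n}(D)$ carries the discrete topology; thus $\{1\}$ itself is an identity neighbourhood, and $W^{194} \supseteq W \ni 1$ is automatically one. So the Steinhaus property holds trivially here. (Strictly speaking one should note $1 \in W$: since $W$ is symmetric, picking any $w \in W$ gives $w^{-1} \in W$, so $1 = w w^{-1} \in W^{2} \subseteq W^{194}$; and in the discrete topology any set containing $1$ is an identity neighbourhood.) Combining the two cases gives the first assertion.

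For the ``in particular'' clause, I would invoke~\cite[Theorem~3]{RosendalSolecki}: a topological group that is $n$-Steinhaus for some $n$ has automatic continuity, i.e.\ every homomorphism from it to a separable topological group is continuous. Applying this with $n = 194$ finishes the proof.

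The main obstacle is entirely concentrated in the non-discrete case, and that obstacle has already been discharged by Lemma~\ref{lemma:Gamma_{R}(e).subset.W^192} (the delicate analysis of involution geometry, Boolean subgroups, and fullness) together with Lemma~\ref{lemma:GL(R).covered.by.c_nW} (covering $\GL(R)$ by countably many translates and using the annihilator/dimension estimates to produce a suitable idempotent). Given those two lemmas, the theorem itself is a short bookkeeping argument: assemble the exponent $192 + 2 = 194$, dispose of the discrete case, and quote the Rosendal--Solecki implication from the Steinhaus property to automatic continuity.
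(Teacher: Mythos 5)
Your proof is correct and follows exactly the same approach as the paper: combine Lemma~\ref{lemma:Gamma_{R}(e).subset.W^192} with Lemma~\ref{lemma:GL(R).covered.by.c_nW} (with $\ell = 192$) in the non-discrete case, dispose of the discrete case trivially, and quote Rosendal--Solecki for the automatic-continuity consequence. The only discrepancy is the citation for the last step: the paper invokes Proposition~2 of~\cite{RosendalSolecki} rather than Theorem~3, but this is cosmetic and does not affect the argument.
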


\begin{proof} Since any discrete group is even $0$-Steinhaus, the desired conclusion is trivial if $R$ is discrete. If $R$ is non-discrete, then the claim follows from Lemma~\ref{lemma:Gamma_{R}(e).subset.W^192} and Lemma~\ref{lemma:GL(R).covered.by.c_nW}. In turn, $\GL(R)$ has automatic continuity by~\cite[Proposition~2]{RosendalSolecki}. \end{proof}
	
\section{Proofs of corollaries}\label{section:proofs.of.corollaries}
	
\begin{proof}[Proof of Corollary~\ref{corollary:unique.polish.topology}] Note that $\GL(R)$ is closed in $(R,d_R)$ by Remark~\ref{remark:properties.rank.function}\ref{remark:GL(R).closed}. Since closed subspaces of complete metric spaces are complete, Theorem~\ref{theorem:unique.rank.function} entails that $(\GL(R),d_{R})$ is complete. Suppose now that $(R,d_{R})$ is separable. As subspaces of separable metric spaces are separable, we conclude that $(\GL(R),d_{R})$ is separable. Hence, the rank topology $\tau$ on $\GL(R)$, which is a group topology by Remark~\ref{remark:properties.pseudo.rank.function}\ref{remark:rank.group.topology}, is also Polish. Let $\sigma$ be another Polish group topology on $\GL(R)$. Then Theorem~\ref{theorem:194-Steinhaus} asserts that the identity homomorphism $\id\colon (\GL(R),\tau)\to (\GL(R),\sigma)$ is continuous, i.e., $\sigma\subseteq \tau$. By the open mapping theorem for Polish groups (see, e.g.,~\cite[Theorem~3]{RosendalSuarez}), $\id \colon (\GL(R),\tau)\to (\GL(R),\sigma)$ is also open, that is, $\tau\subseteq \sigma$. Therefore, $\sigma=\tau$. \end{proof} 
	
\begin{remark}\label{remark:separable} Let $R$ be an irreducible, continuous ring. With respect to the rank topology, $R$ is separable if and only if $\GL(R)$ is separable. The implication ($\Longrightarrow$) is obvious, as argued in the proof of Corollary~\ref{corollary:unique.polish.topology}. In order to prove ($\Longleftarrow$), consider any maximal chain $E$ in $(\E(R),{\leq})$, the existence of which is guaranteed by the Hausdorff maximal principle. Note that $\rk_{R}(E) = \rk_{R}(R)$ due to~\cite[Corollary~7.19]{SchneiderGAFA}, therefore~\cite[II.XVII, Theorem~17.1(d), p.~224]{VonNeumannBook} asserts that \begin{displaymath}
	\GL(R) \times E \times \GL(R) \, \longrightarrow \, R, \quad (u,e,v) \, \longmapsto \, uev
\end{displaymath} is surjective. Moreover, this map is continuous for the rank topology by Remark~\ref{remark:properties.pseudo.rank.function}\ref{remark:rank.group.topology}. Since ${{\rk_{R}}\vert_{E}}\colon (E,d_{R}) \to ([0,1],d)$ is isometric with regard to the standard metric $d$ on $[0,1]$ by Remark~\ref{remark:properties.pseudo.rank.function}\ref{remark:rank.order.isomorphism}, we see that $(E,d_{R})$ is separable. Consequently, if $\GL(R)$ is separable with respect to the rank topology, then so is $R$. \end{remark}	
	
\begin{proof}[Proof of Corollary~\ref{corollary:inert}] The homeomorphism group $\Homeo(X)$ of any metrizable compact space $X$, endowed with the topology of uniform convergence, constitutes a separable topological group (see, e.g.,~\cite[I.9, Example~9.B 8), p.~60]{KechrisBook}). Any action of $\GL(R)$ by homeomorphisms on a non-empty metrizable compact space $X$ gives rise to a homomorphism from $\GL(R)$ to $\Homeo(X)$, which then has to be continuous with respect to the rank topology due to Theorem~\ref{theorem:194-Steinhaus}, wherefore the action is (jointly) continuous. Hence, the claim follows by~\cite[Corollary~1.6]{SchneiderGAFA}. \end{proof}
	
\begin{proof}[Proof of Corollary~\ref{corollary:fixpoint}] Let $K$ be a finite field. As explained in the proof of Corollary~\ref{corollary:inert}, our Theorem~\ref{theorem:194-Steinhaus} implies that every action of $\GL(\M_{\infty}(K))$ by homeomorphisms on a metrizable compact space is continuous for the rank topology. Since the topological group $\GL(\M_{\infty}(K))$ is \emph{extremely amenable}~\cite{CarderiThom}, this entails the claim. \end{proof}						
		


\end{document}